\documentclass[11pt,reqno]{amsart}
\usepackage{amsmath,amssymb,amsthm,mathtools}
\usepackage{mathrsfs,bm,color}
\usepackage{fancyhdr,lastpage}
\usepackage{enumitem,calc}
\usepackage{bbm}
\usepackage{graphicx}

\usepackage[utf8]{inputenc}
\usepackage[T1]{fontenc}
\usepackage{lmodern}
\usepackage[normalem]{ulem}
\usepackage{verbatim}
\usepackage{bbm}
\usepackage{stmaryrd}
\usepackage{amsmath}
\usepackage{amssymb}
\usepackage{dsfont}
\usepackage{amsthm}
\usepackage{hyperref}
\hypersetup{
	colorlinks,
	linkcolor={red!50!black},
	citecolor={green!50!black},
	urlcolor={blue!80!black}
}

\usepackage{xcolor}

\def\Tr{{\rm Tr}}
\setlength{\textheight}{22cm}
\setlength{\topmargin}{-.5cm}
\setlength{\evensidemargin}{0cm}
\setlength{\oddsidemargin}{0cm}
\setlength{\textwidth}{16cm}
\usepackage{amsmath}%
\usepackage{amsfonts}%
\usepackage{amssymb}%
\usepackage{graphicx}
\usepackage{dsfont}

\newcommand{\Var}{\operatorname{Var}}

\theoremstyle{plain} 
\newtheorem{theorem}{Theorem}[section]
\newtheorem{theo}[theorem]{Theorem}
\newtheorem{lem}[theorem]{Lemma}
\newtheorem*{lemma*}{Lemma}
\newtheorem{coro}[theorem]{Corollary}
\newtheorem*{corollary*}{Corollary}
\newtheorem{prop}[theorem]{Proposition}
\newtheorem*{proposition*}{Proposition}

\newtheorem*{assumption*}{Assumption}

\newtheorem*{definition*}{Definition}

\newtheorem*{example*}{Example}

\newtheorem*{remark*}{Remark}
\newtheorem{rem}[theorem]{Remark}

\newtheorem{hyp}[theorem]{Hypothesis}
\def\circt2{{\small{\circ} \atop t }}
\def\circt{{\stackrel{\tiny\circ}{t}}}

\numberwithin{equation}{section}

\newcommand{\E}{\mathbb{E}}
\newcommand{\N}{\mathbb{N}}

\newcommand{\R}{\mathbb{R}}

\renewcommand{\P}{\mathbb{P}}
\newcommand\given[1][]{\:#1\vert\:}
\DeclareMathOperator{\1}{\mathbbm{1}}

\newcommand{\ba}{{\beta}}
\newcommand{\bb}{{\beta_{SNR}}}
\newcommand{\bc}{{\beta_{S}}}

\newcommand{\pert}{\mathrm{pert}}

\newcommand{\cL}{\mathcal{L}}

\newcommand{\cB}{\mathcal{B}}
\newcommand{\cS}{\mathcal{S}}

\newcommand{\sP}{\mathscr{P}}
\newcommand{\cM}{\mathcal{M}}

\newcommand{\cC}{\mathcal{C}}

\newcommand{\cI}{{\mathcal I}}

\renewcommand{\P}{\mathbb{P}}
\newcommand{\mmm}{\mathrel{}\mid\mathrel{}}

\newcommand{\bR}{\bm{R}}

\newcommand{\bQ}{\bm{Q}}
\newcommand{\bM}{\bm{M}}

\newcommand{\bx}{{\bm{x}}}
\newcommand{\bu}{{\bm{u}}}
\newcommand{\bt}{{\bm{t}}}

\newcommand\bg{{\bm{g}}}
\newcommand{\out}{\mathrm{out}}

\renewcommand{\leq}{\leqslant}
\renewcommand{\geq}{\geqslant}
\renewcommand{\epsilon}{\varepsilon}

\def\supp{{\mbox{supp}}}
\def\pP{{\mathbb P}}
\def\pQ{{\mathbb Q}}
\def\pG{{\mathbb G}}

\newcommand{\iii}{i_1,\dots,i_p}
\def\by{{\mathbf{y}}}
\def\bs{{\mathbf{s}}}

\def\R{{\mathbb R}}

\title[Estimating rank-one matrices with mismatched prior]{Estimating rank-one matrices with mismatched prior \\ and noise: Universality and Large Deviations}

\author{Alice Guionnet, Justin Ko, Florent Krzakala, Lenka Zdeborov\'a}

\email{aguionnet@ens-lyon.fr}
\email{justin.ko@ens-lyon.fr}
\email{florent.krzakala@epfl.ch}
\email{lenka.zdeborova@epfl.ch}

\thanks{This project   has received funding from the European Research Council (ERC) under the European Union
	Horizon 2020 research and innovation program (grant agreement No. 884584), as well as from the Swiss National Science Foundation grant SNFS OperaGOST, $200021\_200390$.}

\begin{document}
	\maketitle
\begin{abstract}
We prove a universality result that reduces the free energy of rank-one matrix estimation problems in the setting of mismatched prior and noise to the computation of the free energy for a modified Sherrington-Kirkpatrick spin glass. Our main result is an almost sure large deviation principle for the overlaps between the truth signal and the estimator for both the Bayes-optimal and mismatched settings. Through the large deviations principle, we recover the limit of the free energy in mismatched inference problems and the universality of the overlaps.
\end{abstract}
 
	\section{Introduction}
Estimating factors of noisy low-rank matrices is a fundamental problem with many applications in machine learning and statistics. Consider the following probabilistic rank-one matrix estimation problem: one has access to noisy observations $Y_{ij}$ of a $N \times N$ rank-one matrix $w_{ij}= \frac{x^0_ix^0_j}{\sqrt{N}}$, and the goal is to estimate the vector $\bx^0 \in \R^N$ using either a Bayesian or a maximum likelihood approach. Many important problems in statistics and machine learning can be expressed in this way, such as sparse PCA \cite{zou2006sparse}, the Wigner spiked model \cite{johnstone2009consistency}, community detection \cite{deshpande2017asymptotic}, matrix completion \cite{candes2012exact}, submatrix localization \cite{banks2018information}, or synchronization \cite{javanmard2016phase}.

There have been a number of results for such problems in the {\it Bayes-optimal} case, where the statistician knows both the prior information on ${\bx}^0$ (the prior $\pP_0$), and the statistics of the noise (the likelihood $\pP_\out(Y\given w$)), and is able to characterize the information-theoretically optimal performance \cite{deshpande2017asymptotic,krzakala2016mutual,dia2016mutual,lelarge2017fundamental,el2018estimation,mourrat2021hamilton}. In this paper, we consider the more difficult task of characterizing the asymptotic performance for estimators that  mismatch the prior and the noise distribution, including Bayesian ones where one assumes a prior or a noise distribution different from the ones that were used to generate the data, as well as a risk minimization approaches where one optimized the assumed likelihoods.

This generality comes with increased technical difficulty. While the Bayes optimal approach can be asymptotically characterized by a {\it replica symmetric} (to use spin glass theory terminology \cite{mezard2009information,abbe2013conditional,COJAOGHLAN2018694}) formula providing the asymptotic mutual information that can be proven rather simply \cite{el2018estimation}, the general situation we consider here, and discussed in the physics literature in \cite{lenkarmt}, requires an approach reminiscent of the Parisi formula for the Sherrington-Kirkpatrick model \cite{talagrand2006parisi}. To overcome these difficulties, we prove a finer result where we estimate constrained free energies and obtain a
quenched large deviation principle. Our main objectives are three-fold: First, we aim to establish a general replica symmetry-breaking formula for these models, irrespective of non-matching prior, different channels, or mismatching noise. Second, we seek to derive a formula for the large deviation of the overlap, also known as the Franz-Parisi potential in statistical physics. Lastly, we aim to demonstrate the universality of these formulas across various types of noises, and in particular, the universality of Gaussian noise.
	
We summarize our results and their application below:	
	\begin{itemize}
            \item In Theorem~\ref{mainfree}, we provide an asymptotic Parisi-type formula for the free energy of the rank-one matrix estimation problem for any mismatching separable prior and noise. This allows characterizing the asymptotic performances of  empirical Bayesian setting when one does not know the parameters and to study the maximum a posteriori error (MAP) (minimum of the loss) under various hypotheses, which is the classical statistics approach. In particular, this proves the conjecture for the free energy from  \cite{lenkarmt}.
		\item In Theorem~\ref{quenchedldp} we also provide a large deviation principle for the order parameters of the problems, particularly for the overlap between the reconstructed signal and the original ground truth one. This quantity is called the Parisi-Franz potential in statistical physics \cite{franz1997phase}, and such a result has its own interest as it generalizes the results of \cite{PVS}. The Parisi-Franz potential is thought to be fundamental in understanding the computationally easy-hard (or information-algorithmic gaps) transition \cite{zdeborova2016statistical,bandeira2022franz}.
  
  \item We further show that the large deviation as a function of the overlap is universal and depends on the actual and assumed likelihood only through their (generalized) Fisher information so that the Gaussian noise case can capture any separable likelihood. This leads to a very strong universality principle, conjectured in \cite{lesieur2015mmse} and generalizing the more limited Bayes optimal one \cite{krzakala2016mutual}. Concretely, this means 
  that an entire set of problems and noisy output (including community detection, Laplace noise, submatrix localization and others...) have universal Bayes-optimal error corresponding to those of a Gaussian problem (note that, previously, only the universality of the free energy was proven, not one of the  overlaps). This  also shows that the maximum a posteriori error (so-called MAP) is also universal, given the zero temperature large deviation is also universal. 
	\end{itemize}


Mismatched inference problems have been the focus of several recent works \cite{Camillimismatch,barbier2021performance,barbier2022price,farzadmismatch}. In \cite{farzadmismatch}, the authors considered Gaussian prior and Gaussian additive noise and mismatched the variances of both, the technical difficulty is then solved using rotation invariance of the priors and spherical integrals. Authors of \cite{barbier2022price} studied a similarly specific rotationally invariant estimator through approximate message passing. Authors of \cite{Camillimismatch} also considered a Gaussian mismatched problem analogue to a spin glass model with a Mattis interaction, as a proof technique, they used adaptive interpolation with Rademacher priors. The authors of \cite{barbier2021performance} study a different mismatched problem by relating it to the Shcherbina-Tirozzi spin glass model which is not directly related to the matrix estimation problems we consider in this work. Compared to these works, we consider a much more generic mismatch and prove a strong universality result that reduces generic mismatched estimation problems to a Gaussian framework, which was not studied in the above works. We analyze these Gaussian models and the respective overlaps from a large deviations point of view. These overlaps are central objects and often encode the behaviors of optimal estimators. This framework also allows us to consider general factorized priors. Through a covering argument we are able to use the large deviations principle to recover a formula for the free energy expressed as an iterated variational formula. Similar variational formulas for the free energy have been previously obtained for generic versions of finite rank estimation problems  in the Bayesian optimal setting \cite{  Reeves_tensor, MourratXia-tensor, MourratXiaChen-tensor,barbiertensor,lelarge2017fundamental,JCThomasSparse} and in the Bayesian optimal setting with generic noises in \cite{alberici2021multi,alberici2021multi_boltzmann,alberici2022statistical, BehneReeves-hetero,AJFL_inhomo}. Our work includes the Bayesian optimal setting but also explores the general setting where the Nishimori identity may not hold.

The large deviations principle studied in this work is closely connected with the limit of the Franz--Parisi potential of a general Hamiltonian consisting of the sum of the usual SK Hamiltonian, a magnetization term, and a self-overlap term. This large deviations principle encompasses several previously studied spin glass models \cite{PVS,chenferro,Camillimismatch, lelarge2017fundamental,erbasubmatrix} (see Section~\ref{sec:example} for a detailed discussion).  The main technical contribution of this paper is a unifying formula for the limit of the Franz--Parisi potential. Similar large deviation principles or Franz--Parisi potentials for classical spin glass models, without the magnetization term and self-overlap terms have also been computed and applied in several areas \cite{PanchenkoChaos,ChenChaos2,JagSpectralGap,JagSphericalSpectralGap,JagMetastable,FlorentAhmedFranzParisi,Franzlargedeviations,JagStatisticalThresholds,BFK_TAP, kosphere, kocs,ThomasVector,PVS}.

The large deviations rate function we compute is a powerful tool to understand phase diagrams of such inference problems. For instance, one can hope to obtain a characterization of the replica symmetric regime outside of the Nishimori line in inference problems, by studying the minimizers of such functionals. The phase diagram in a subset of models in our class of free energies were previously studied in \cite{ChenTucaParisi,JagganathTobascoPhasediagram}. Generalizations to higher rank models and spiked tensor estimation problems can also be proved in the future.

In contrast to classical spin glass models, the main technical difficulty is the localization around the overlap $R_{10} = \frac{\bx \cdot \bx^0}{N}$ between the signal and estimator. Standard techniques to control this overlap rely on the Nishimori identity and the concentration of overlaps \cite{Barbier_overlapconc}, which are not applicable in the mismatched setting. Instead, we combine techniques from large deviations, and spin glasses to study a localized version of the constrained optimization problem. When localized around configurations with finite entropy, we are able to do a smooth approximation of the free energy with respect to the overlaps, allowing us to regularize the laws of the overlaps through a perturbation enforcing a localized version of the Ghirlanda--Guerra identities \cite{GGI} and in particular ultrametricity of overlaps \cite{PUltra} is adapted to our setting. As such, we are able to obtain a Parisi type variational formula \cite{parisi1979infinite,talagrand2006parisi}. We then use uniform bounds on our estimates to remove the localization and recover an almost sure large deviations principle, solving the original problem.  The uniform control along the boundary points of configurations with finite entropy were particularly difficult, but this was resolved using tools from large deviations such as exposed hyperplanes and Rockafellar's theorem. 

The paper is structured as follows. In the next Section~\ref{sec:setting} we discuss the setting and the main theorems. We then present the proofs of theorems on universality in Section~\ref{secuniversality}, of the large deviation upper bound in Section~\ref{sec:upbd} and lower bound in Section~\ref{sec:ldlb}, and of the expression of the  free energy in Section~\ref{proofthmquenched}.
	
\section{Setting and main theorems}
\label{sec:setting}
We now formally describe the problem and our results. We consider  non-Bayes optimal inference for rank one statistical inference problems, where we want to recover a rank one signal observed via some arbitrary separable noise in the presence of an arbitrary separable prior information. In this setting, the statistician does not have perfect information, so the posterior distribution may not be the optimal one in these models. Our main goal is to prove the general replica symmetry-breaking formula for the free energy of these models. 
	
	In the inference problems, we want to study  Boltzmann-Gibbs measures of the form {
		\begin{equation}\label{BG}
			d\pG_N^{Y}(\bx) = \frac{1}{Z_X(Y)}  \prod_{1 \leq i< j \leq N} e^{g( Y_{ij}, \frac{x_i x_j}{\sqrt{N}}  )} \prod_{1 \leq i \leq N} d\pP_X(x_i).
		\end{equation}
		In the sequel, we will assume that $\pP_{X}$  is a  probability measure supported in the compact set $[-C,C]$ of  the real line. }
	We consider that the ground truth signal was generated using the distribution $\pP_0$ and the observed data $Y$ were generated from some output channel $\pP_\out\big(Y \given \frac{x_i x_j}{\sqrt{N}}\big)$. In the Bayes optimal case, $\pP_X$ and  $\pP_\out\big( Y_{ij}\given \frac{x_ix_j}{N}\big)$ are known to the statistician, i.e.
	\[
	d\pP_X = d\pP_0 \quad \mbox{ and }\, \, d\pP_\out(Y\given w) = e^{g(Y,w)} dY
	\]
	so that in this case
	\[
	d \pG_N^{Y}(\bx) = d\pP(X = \bx \given Y) = \frac{1}{Z_X(Y)}  \prod_{1 \leq i < j \leq N} \frac {d\pP_\out\big( Y_{ij}\given[\big] \frac{x_ix_j}{\sqrt{N}}\big)}{dY} \prod_{1 \leq i \leq N}d \pP_0(x_i).
	\]
	
	The main consequence is that in the Bayes optimal setting, the average with respect to the Gibbs measure are generated from the true posterior distribution. As an important consequence, 
	the Nishimori property holds, allowing 
	to replace the signal with a uniform sample from the posterior and vice versa, a key step to use classical spin glass theory to estimate the free energy $\frac{1}{N}  \E_Y \log Z_X(Y)$. We want to compute the free energy $\frac{1}{N}  \E_Y \log Z_X(Y)$ and study the Boltzmann-Gibbs measures $\pG_{N}^{Y}$
	in the general case when
	\[
	\pP_X \neq \pP_0 \quad\text{or}\quad g(Y,w) \neq \ln \frac{d\pP_\out(Y\given w)}{dY}=:g^{0}(Y,w) .
	\]
	The main technical consequence is that for this Gibbs measures the Nishimori property may not hold. As we will see, the standard overlap concentration proofs fail, so we will have to invoke the general Ghirlanda--Guerra identities to observe replica symmetry breaking in these models.

	\subsection{Main results}
	We define$$
	Z_N^Y = \int e^{\sum_{ij} g(Y_{ij} | \frac{x_i x_j}{\sqrt{N}}) } \, d\pP^{\otimes N}_X(\bx) 
	$$
	and given {a sequence of measurable sets  $A = A(\bx^0) \subset \R^N$ that may depend on $\bx^0$ but not on $W$,} we define the corresponding constrained partition function 
	\[
	Z_N^Y(A) = \int \1(\bx\in A) e^{\sum_{ij} g(Y_{ij} | \frac{x_i x_j}{\sqrt{N}}) } \, d\pP^{\otimes N}_X(\bx) .
	\]
	In this article we study the Boltzmann--Gibbs measure 
	
	\[
	\pG_N^Y(A) = \frac{Z_N^Y(A)}{Z_N}
	\]
	so that $\pG_{N}^{Y} (A)=Z_N^Y(A) /Z_{N}^{Y}$. We 
	prove an almost sure large deviation principle for the law of the overlaps 
	$$R_{1,1}=\frac{1}{N}\sum_{i=1}^{N} x_{i}^{2}, \quad R_{1,0}=\frac{1}{N}\sum_{i=1}^{N} x_{i}x_{i}^{0}$$
	under $\pG_{N}^{Y}$ as well as universality of these large deviations. 
	The main steps is to compute the free energy
	\begin{equation}\label{eq:FEgrowingrank}
		F_N(g) = \frac{1}{N} \bigg( \E_Y  \Big(\log Z_N^Y - \sum_{i < j} g(Y_{ij},0) \Big) \bigg)
	\end{equation}
	and  the constrained free energy
	\begin{equation}\label{eq:FEgrowingrank}
		F_N(g: A) = \frac{1}{N} \bigg( \E_Y  \Big(\log Z_{N}^{Y}(A) - \sum_{i < j} g(Y_{ij},0) \Big) \bigg)
	\end{equation}
	where $A=\{ \bx: (R_{1,1}, R_{1,0})\in F \}$ where $F$ is a measurable set of $\mathbb R^{2}$. 
	We need to subtract the $ g(Y_{ij},0)$ terms otherwise the free energy will  not grow on the order $N$. The second summation is trivial to compute. We begin by describing our technical hypotheses. We first need to assume that the signal is compactly supported.
	\begin{hyp}[Compact Support]\label{hypcompact}
		$\pP_0$ and $\pP_X$ are compactly supported probability measures on the real line so that $x_0$ and $x$ take values in $[-C,C]$ for some finite $C$.
	\end{hyp}
	This hypothesis  implies that, uniformly, we have 
	\begin{equation}
		|w_{ij}| = \Big| \frac{ x_i x_j}{\sqrt{N}} \Big| \leq \frac{C^2}{\sqrt{N}} .
	\end{equation}
	This uniform bound will allow to expand the function $g$ in the variables $w_{ij}$. To do so, we need to assume sufficient regularity of the function $g$, namely that, if $\|\cdot \|$ denotes the supremum norm: 
	\begin{hyp}[Regularity]\label{hypg}
		The function $g(Y,w)$ and $g^0(Y,w)$ is three times differentiable in the $w$ coordinate and twice differentiable respectively and
		\[
		\E_{\pP_\out(Y \given 0)} [ (\partial_w g(Y ,0) )^3 ],~ \|\partial_w^{(2)}g(\cdot ,0)\|,~ \|\partial_w^{(3)}g\|, ~\|\partial_w g(\cdot ,0)\|, ~\|\partial_w^{(2)}g(\cdot ,0)\| 
		\]
		are bounded.
	\end{hyp}
	Our last hypothesis is a requirement for our function $g$ to be a consistent estimator of $\pP_0$, namely
	\begin{hyp}[Consistent Estimator]\label{hypderiv}
		For $Y \sim \pP_\out(Y\given 0)$,
		\[
		\E_{\pP_{\out} (Y\given 0)} \big[ \partial_w g(Y,0) \big]  = \int \partial_{w}g(y,0) d\pP_{\out}(y \given 0) = \int \partial_w g(y,0)  e^{g_{w}^0 (y,0)} dy = 0 .
		\]
	\end{hyp}
	For example, if $g$ corresponds to the classical rank 1 Gaussian estimation problem, then this requirement is equivalent to assume that our guess of the output distribution $\pP_{\out}(\cdot\given0)$ in the absence of a signal is centered.   Without this hypothesis,
	the normalized free energy diverges. If $\E_{\pP_\out(Y|0)} [  \partial_w g(Y,0) ] \neq 0$, then we would need to normalize the free energy by $N^{3/2}$ instead of $N$. In the Bayes optimal case, this condition is automatically satisfied.

	Under these technical restrictions on $g$ and $g^0$, we are able to reduce the non-Bayes optimal problem to the appropriate Gaussian estimation problem with generalized covariance. Consider the Hamiltonian given for  3 real numbers $\bar\ba=(\ba, \bb,\bc)$ by: 
	\begin{align}\label{eq:nonbayesoptimalHamiltonian1}
		H_N^{{\bar\ba}}(\bx) &= \sum_{i<j} \ba \frac{W_{ij}}{ \sqrt{  N}} x_ix_j + \frac{\bb}{ N} (x_ix_j)(x_i^{0} x_j^{0}) +  \frac{\bc}{2N}  (x_i x_j )^2
	\end{align}
	where $W_{ij}$ are iid standard Gaussians and the covariance parameters are given by \eqref{eq:delta1},\eqref{eq:delta2} and \eqref{eq:delta3}. The Gibbs measure associated with this Hamiltonian is denoted by
	\begin{equation}\label{BGbeta}
		\pG^{\bar\beta}_N(x) = \frac{1}{ Z^{N} _{Y}} e^{H_N^{{\bar\ba}}(\bx)} \, d\pP^{\otimes N}_X(\bx)
	\end{equation}
	
	The corresponding free energy associated with \eqref{eq:nonbayesoptimalHamiltonian1} is given by
	\begin{equation}\label{deff}
		F_N({\bar\ba}) = \frac{1}{N} \E_Y  \Big(\log \int e^{H_N^{{\bar\ba}}(\bx)} \, d \pP_X^{\otimes N}(\bx)  \Big).
	\end{equation}
	To state a large deviations principle, given any set $A = A_N(\bx^0) \subset \R^N$ that can depend on $x^0$, we also define the constrained free energy in the spirit of $F_{N}(g:A)$ defined in \eqref{eq:FEgrowingrank}
	
	\[
	F_N( {\bar\ba} : A) = \frac{1}{N} \bigg( \E_Y  \Big(\log \int \1(\bx\in A) e^{H_N^{{\bar\ba}}(\bx)} \, d \pP_X^{\otimes N}(\bx)   \Big) \bigg). 
	\]
	
	We first state a universality result that will imply that the free energy of general inference models are equivalent to the free energy of the Gaussian estimation problem.
	
	\begin{prop}[Universality] \label{prop:universality1}
		If Hypothesis \ref{hypcompact}, \ref{hypg}, and \ref{hypderiv} hold,
		then the free energy of the vector spin models satisfy for $N$ large enough 
		\[
		\big| F_N(g) - F_N(\bar\beta) \big| =  O( N^{-1/2} )
		\]
		where $\bar\beta=(\ba,\bb,\bc)$ is given by 
		\begin{itemize}
			\item 
			\begin{equation}\label{eq:delta1}
				\ba = \left[ \E_{\pP_\out(Y|0)} \bigg[ (\partial_w g(Y,0))^2 \bigg] \right]^{\frac{1}{2}}
			\end{equation}
			\item 
			\begin{equation}\label{eq:delta2}
				\bb =  \E_{\pP_\out(Y|0)} \bigg[ \partial_w g(Y,0) \partial_w \ln\pP_\out(Y\given 0) \bigg]
			\end{equation}
			\item
			\begin{equation}\label{eq:delta3}
				\bc = \E_{\pP_\out(Y|0)} \bigg[  \partial_{w}^{2} g(Y,0) \bigg].
			\end{equation}
		\end{itemize}
		
		More generally, for any sequence of measurable sets  $A = A_N(\bx^0) \subset \R^N$ such that $
		\liminf_{N \to \infty} F_{N}(0:A)>-\infty$,
		\[
		\big| F_N(g:A) - F_N(\bar\beta:A) \big| =  O( N^{-1/2} ).
		\]
	\end{prop}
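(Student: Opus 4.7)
The strategy is to Taylor-expand $g(Y_{ij}, w_{ij})$ in the small variable $w_{ij} = x_i x_j/\sqrt{N}$ and then carry out a Lindeberg-type universality swap that replaces the non-Gaussian noise by a Gaussian. Hypothesis~\ref{hypcompact} gives $|w_{ij}| \leq C^2/\sqrt{N}$ and Hypothesis~\ref{hypg} bounds the third $w$-derivative of $g$, so
\begin{equation*}
g(Y_{ij}, w_{ij}) - g(Y_{ij},0) \;=\; \partial_w g(Y_{ij},0)\, w_{ij} \;+\; \tfrac{1}{2}\partial_w^{(2)} g(Y_{ij},0)\, w_{ij}^2 \;+\; R_{ij},
\end{equation*}
with $|R_{ij}| \leq \tfrac{1}{6}\|\partial_w^{(3)} g\|\,|w_{ij}|^3 = O(N^{-3/2})$. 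Summed over $\binom{N}{2}$ bonds the remainder contributes $O(N^{1/2})$ to $\log Z_N^Y$, i.e.~$O(N^{-1/2})$ to $F_N$, safely inside the target tolerance.

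Next I would extract the conditional mean given $\bx^0$. Since $Y_{ij} \sim \pP_\out(\cdot \mid w^0_{ij})$ with $w^0_{ij}=x_i^0 x_j^0/\sqrt{N}$, Taylor expanding the density $e^{g^0(y,w)}$ at $w=0$ and invoking Hypothesis~\ref{hypderiv} (which kills the zeroth order mean of $\partial_w g(\cdot,0)$) gives
\begin{equation*}
\E\!\left[\partial_w g(Y_{ij},0)\mid \bx^0\right] = \bb\, w^0_{ij} + O(N^{-1}), \qquad \E\!\left[\partial_w^{(2)} g(Y_{ij},0)\mid \bx^0\right] = \bc + O(N^{-1/2}).
\end{equation*}
Plugged into the expansion above, these means reproduce exactly the Mattis coupling $\tfrac{\bb}{N} x_i^0 x_j^0 x_i x_j$ and the self-overlap term $\tfrac{\bc}{2N}(x_ix_j)^2$ of $H_N^{\bar\ba}$, all discrepancies fitting in the error budget. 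What remains is $\sum_{i<j} \xi_{ij}\, w_{ij}$ where $\xi_{ij} := \partial_w g(Y_{ij},0) - \E[\partial_w g(Y_{ij},0)\mid \bx^0]$ are independent and centered, with $\Var(\xi_{ij}) = \ba^2 + O(N^{-1/2})$ and uniformly bounded third moment by Hypothesis~\ref{hypg}, plus a centered fluctuation of the quadratic term that is smaller still.

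The final step is a classical Lindeberg/smart-path interpolation: order the bonds $(i,j)$ and successively replace each $\xi_{ij}$ by $\ba W_{ij}$, estimating the change in $\E\log Z_N^Y$ by a third-order Taylor expansion in the single swapped coefficient. Derivatives of $\log Z_N^Y$ with respect to one bond variable are Gibbs cumulants of $x_i x_j$, bounded by $C^6$ via the spin bound, and crucially independent of any indicator $\1_A$ inserted in the integrand. Multiplying by $|w_{ij}|^3 = O(N^{-3/2})$ and summing over bonds yields $O(N^{-1/2})$; the $O(N^{-1/2})$ variance mismatch and the centered fluctuations of the quadratic term contribute at the same order. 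For the constrained statement these bounds apply verbatim, uniformly in $A$; the assumption $\liminf_N F_N(0:A)>-\infty$ serves to guarantee that $F_N(g:A)$ and $F_N(\bar\ba:A)$ are simultaneously finite so that the $O(N^{-1/2})$ comparison is meaningful and not a difference of two $-\infty$'s. The main delicate point I anticipate is precisely this uniform Lindeberg control over sets $A$ for which $\pG_N^Y(A)$ may be exponentially small: one must verify that the conditional Gibbs cumulants of $x_ix_j$ restricted to $A$ stay bounded by spin boundedness alone, without degrading as the mass of $A$ shrinks, and handle any concentration loss by absorbing it into the lower-bound hypothesis on $F_N(0:A)$.
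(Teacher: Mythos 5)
Your overall route matches the paper's: second-order Taylor expansion with a uniformly $O(N^{-3/2})$ per-bond remainder (Lemma~\ref{lem:univ1}), extraction of the conditional means of $\partial_w g$ and $\partial_w^{(2)}g$ given $\bx^0$ to produce the $\bb$- and $\bc$-terms, and then an interpolation swapping the centered linear fluctuations for Gaussians of matched variance $\ba^2$ (Lemma~\ref{lem:univ3}, the Carmona--Hu argument). Whether that last step is done by a continuous path $\sqrt{t}\tilde W_{ij}+\sqrt{1-t}W_{ij}$ or a discrete Lindeberg swap is immaterial, and your observation that the Gibbs cumulants of $x_ix_j$ are controlled by $C^6$ independently of the mass of $A$ is exactly the right reason why a spin constraint does not disturb the argument; your reading of the role of $\liminf_N F_N(0:A)>-\infty$ is also correct.

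The one genuine gap is the quadratic-fluctuation term. After centering, $\frac{1}{2N}\sum_{i<j}\bigl(\partial_w^{(2)}g(Y_{ij},0)-\gamma_{ij}\bigr)(x_ix_j)^2$ has no Gaussian counterpart in $H_N^{\bar\ba}$, so it cannot be absorbed into the Lindeberg swap and must be shown to vanish on its own. You assert it is ``smaller still'' and ``contributes at the same order,'' but neither a term-by-term bound nor a pointwise-in-$\bx$ CLT estimate delivers that: each summand is $O(1/N)$ with $\Theta(N^2)$ of them, so the naive supremum over spins is $O(N)$, and even for fixed $\bx$ the fluctuation is of order $1$, not $N^{-1/2}$. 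The needed uniform-in-$\bx$ control comes from operator-norm concentration: the symmetric matrix $\Delta$ with entries $\partial_w^{(2)}g(Y_{ij},0)-\gamma_{ij}$ has $\|\Delta\|_{op}=O(\sqrt N)$ with exponentially high probability, so $\bigl|\sum_{i<j}\Delta_{ij}\,x_i^2x_j^2\bigr|\leq \|\Delta\|_{op}\sum_i x_i^4=O(N^{3/2})$ uniformly over spins, giving an exponent $O(\sqrt N)$ and hence free-energy error $O(N^{-1/2})$. This is precisely what the paper's Lemma~\ref{lem:univ2} supplies, and your proposal needs some such argument (operator norm or a chaining/net bound) inserted before the claimed rate is justified.
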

	
	\begin{rem}
		In the Bayes optimal case when $g(Y,w) = \ln \pP_{\out} (Y \given w)$, these parameters simplify greatly
		\[
		\ba^{2} = \bb= - \bc.
		\]
	\end{rem}

	Our main goal is to compute the limit of the free energy
	$ F_N(\bar\beta:A).
	$
	To state our main theorem, let us describe its limit which is defined in the spirit of Parisi formula.  It will depend   on the functional order parameters which  are increasing sequences such that for some integer number $r$
	\begin{equation}\label{eq:zetaseq1}
		\zeta_{-1} = 0 < \zeta_0< \dots < \zeta_{r-1} < 1
	\end{equation}
	and
	\begin{equation}\label{eq:Qseq1}
		0 = Q_0 \leq Q_1 \leq \dots \leq Q_{r-1} \leq Q_r =  S.
	\end{equation}
	For good choices of sequences \eqref{eq:zetaseq1} and \eqref{eq:Qseq1}, these sequences can be interpreted as a discrete approximation of the limiting  distribution of the overlap
	\[
	R_{1,2} = \frac{1}{N}\sum_{i = 1}^N x_i^1 x_i^2
	\]
	of two replicas $\bx^1$ and $\bx^2$ from the limiting constrained Gibbs measure,  given for any measurable subset $A$ of $\mathbb R^{N}$. 
	To define our limits, we first recursively define a random variable coming from Ruelle probability cascades \cite[Chapter 2]{PBook}. We start by defining recursively the random variables $X_r, X_{r - 1}, \dots, X_0$ that depend on $x^{0}$, the sequences \eqref{eq:zetaseq1} and \eqref{eq:Qseq1}, and real parameters $\lambda,\mu$.  
	Let $X_r$ be the random variable
	\[
	X_r = \log  \int e^{\ba \sum_{j = 1}^r z_i x  + \lambda x^2 + \mu x x^0  } \, d \pP_X (x)
	\]
	where $z_j$ are Gaussian random variables with covariance
	\[
	\Var(z_j) = Q_j - Q_{j-1}
	\]
	and $x^0$ is an independent random variable with distribution $\pP_0$.
	We define recursively for $0 \leq p \leq r-1$ the random variables 
	\begin{equation}\label{recur}
		X_j = \frac{1}{\zeta_j} \log \E_{z_{j + 1}} e^{\zeta_j X_{j + 1}}.
	\end{equation}
	We let $X_{0}=X_{0}(\lambda,\mu,Q,\zeta)[x^{0}]$ be the resulting function of $x^{0}$ at $j=0$. 
	We finally define the   function $\varphi_{\bar\ba}$ on  $[0,C]^{+}\times [-C,C]$  given by
	\begin{equation}\label{defvarphi}
		\varphi_{\bar\beta}(S,M)=\inf_{\mu,\lambda,\zeta,Q} \bigg( \E_{0} [X_0(\lambda,\mu,Q,\zeta)] - \mu S - \lambda M - \frac{\ba^2}{4} \sum_{k = 0}^{r - 1} \zeta_k ( Q^2_{k + 1} - Q^2_k ) + \frac{\bb M^2}{2}  + \frac{\bc S^2}{4}  \bigg),\end{equation}
	where the infimum is over $\mu,\lambda \in \R$, $r$ levels of symmetry breaking, and sequences $\zeta$ and $\bQ$ satisfying \eqref{eq:zetaseq1} and \eqref{eq:Qseq1}. 
	The average $\E_0$ is with respect to $\pP_0$ since the recursive quantity $X_0$ depends on $x_0$. 
	One of our main theorems is  the following estimates on the free energies of our model:
	\begin{theo}[Limit of the Free Energy]\label{mainfree} 
		For any real numbers ${\bar\ba}=(\ba,\bb,\bc)$,
		\[
		\lim_{N \to \infty} F_N (\bar\ba) = \sup \varphi_{\bar\ba}.	\]
		In particular, if Hypothesis \ref{hypcompact}, \ref{hypg}, and \ref{hypderiv} hold, with $\beta$ given by \eqref{eq:delta1},\eqref{eq:delta2} and \eqref{eq:delta3}, we have
		$$\lim_{N \to \infty} F_N (g)  = \sup \varphi_{\bar\ba}.$$
	\end{theo}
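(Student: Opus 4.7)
First, the second assertion follows immediately from the first combined with Proposition~\ref{prop:universality1}, since $|F_N(g)-F_N(\bar\beta)| = O(N^{-1/2}) \to 0$. So the plan focuses on proving $\lim_N F_N(\bar\beta) = \sup\varphi_{\bar\beta}$.

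The strategy is a covering argument built on the constrained free energies $F_N(\bar\beta:A)$. Under Hypothesis~\ref{hypcompact}, the overlaps $R_{1,1}\in[0,C^2]$ and $R_{1,0}\in[-C^2,C^2]$ take values in a compact rectangle, which I partition into finitely many cells $F_1^\epsilon,\dots,F_{K_\epsilon}^\epsilon$ of diameter at most $\epsilon$. Setting $A_k^\epsilon = \{\bx : (R_{1,1}(\bx),R_{1,0}(\bx))\in F_k^\epsilon\}$, the decomposition $Z_N^Y = \sum_{k=1}^{K_\epsilon} Z_N^Y(A_k^\epsilon)$ yields the pointwise sandwich
$$\max_k \tfrac{1}{N}\log Z_N^Y(A_k^\epsilon) \;\leq\; \tfrac{1}{N}\log Z_N^Y \;\leq\; \tfrac{\log K_\epsilon}{N} + \max_k \tfrac{1}{N}\log Z_N^Y(A_k^\epsilon).$$
Since $H_N^{\bar\beta}$ is Lipschitz in the disorder $W$ with constant $O(\sqrt{N})$ (Hypothesis~\ref{hypcompact} uniformly bounds $|x_ix_j/\sqrt{N}|$), Gaussian concentration shows $\frac{1}{N}\log Z_N^Y(A)$ fluctuates by $O(N^{-1/2})$, so $\E$ and $\max_k$ can be commuted. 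This gives $F_N(\bar\beta) = \max_k F_N(\bar\beta:A_k^\epsilon) + o_N(1)$ for each fixed $\epsilon$.

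The core step is to identify the limit of each constrained free energy with the Franz--Parisi potential, namely
$$\lim_{\epsilon\downarrow 0}\lim_{N\to\infty} F_N(\bar\beta: A_k^\epsilon) = \varphi_{\bar\beta}(S_k,M_k),$$
where $(S_k,M_k)$ is the center of $F_k^\epsilon$. This is exactly the content of Theorem~\ref{quenchedldp} and its proof in Sections~\ref{sec:upbd} and \ref{sec:ldlb}: the upper bound uses a Guerra-type interpolation against a Ruelle probability cascade parameterized by $(\zeta,Q,\lambda,\mu)$, producing the infimum defining $\varphi_{\bar\beta}$ in \eqref{defvarphi}; the matching lower bound perturbs the Hamiltonian to impose a localized version of the Ghirlanda--Guerra identities on the cell, yielding ultrametricity of replicas whose overlaps lie in $F_k^\epsilon$ and then a cavity/Aizenman--Sims--Starr synchronization that recovers the same variational expression. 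Taking the max over $k$ and sending $\epsilon\downarrow 0$ gives $\lim_N F_N(\bar\beta) = \sup_{S,M}\varphi_{\bar\beta}(S,M)$, provided $\varphi_{\bar\beta}$ is upper semicontinuous in $(S,M)$.

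The hardest point will be controlling the constrained free energy uniformly on cells sitting at the boundary of the feasible region for $(R_{1,1},R_{1,0})$, e.g.~at extreme values of the support of $\pP_X$. There the prior entropy degenerates, the perturbation used to enforce the localized Ghirlanda--Guerra identities may fail to produce the required ultrametric structure, and the cavity lower bound does not apply directly. As the introduction signals, this is where exposed hyperplanes and Rockafellar's theorem enter: one dualizes the infimum in \eqref{defvarphi} into an outer supremum over $(\lambda,\mu)$, so that boundary $(S,M)$ are approached along exposed points of the conjugate functional where the variational problem is stable under small perturbations, thereby transferring the interior lower bound to the boundary and yielding the required uniform control to collapse the partition as $\epsilon\downarrow 0$.
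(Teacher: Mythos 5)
Your overall plan — deduce the mismatched result from the Gaussian one via universality, then prove the Gaussian limit by covering the overlap space, sandwiching the full free energy between $\max_k$ and $\frac{\log K_\epsilon}{N}+\max_k$, commuting $\E$ with $\max$ via concentration, and identifying each constrained limit with $\varphi_{\bar\beta}$ — is essentially the paper's argument (Lemma~\ref{lem:upperboundcovering} and the ensuing localization argument). Two points, one genuine gap and one citation error, deserve attention.

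The genuine gap is the lower bound. The constrained‐free‐energy limit you invoke is Theorem~\ref{technicalldp} (not Theorem~\ref{quenchedldp}, which is the LDP and is itself a downstream consequence), and the lower‐bound half of Theorem~\ref{technicalldp} is stated only for the \emph{localized} free energy $\frac{1}{N}\E_Y\1_{\cB_\delta}\log\int_{\Sigma_\epsilon(S,M)}e^{H_N^{\bar\beta}}\,d\pP_X^{\otimes N}$, with the indicator $\1_{\cB_\delta}$ on atypical realizations of $\bx^0$. Your proposal treats $\lim_{\epsilon\downarrow 0}\lim_N F_N(\bar\beta:A_k^\epsilon)=\varphi_{\bar\beta}(S_k,M_k)$ as a clean two‐sided limit, but only the upper bound holds without $\1_{\cB_\delta}$. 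To obtain the matching lower bound on the actual $F_N(\bar\beta)$ you must first show that the total free energy dominates its $\cB_\delta$-localized version, i.e.~that the contribution of $\cB_\delta^c$ is negligible; this is precisely Lemma~\ref{lemdelta} in the paper, which controls $\frac{1}{N}\E\1_{\cB_\delta^c}\log Z_N$ by bounding the Hamiltonian uniformly on $\{\|W\|_{op}\leq L\sqrt{N}\}$ and using Sanov's theorem to make $\pP(\cB_\delta^c)$ exponentially small. Your proposal alludes to a ``localized Ghirlanda--Guerra'' perturbation inside the proof of the constrained limit, but that is a different localization (internal to the cavity argument); the step of peeling off $\1_{\cB_\delta}$ at the level of the global free energy is missing and must be supplied.

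Two smaller remarks. First, in the commutation of $\E$ with $\max_k$ you invoke only Gaussian concentration in $W$; the paper also needs concentration in $\bx^0$ (Talagrand's inequality for the convex Lipschitz function $\bx^0\mapsto\E_W F_N$), since the indicator $\1(|R_{1,0}-M|\le\epsilon)$ makes the constrained free energy depend on $\bx^0$. Second, the paper uses an adaptive cover with cell radius $\epsilon_{S,M}(\eta)$ chosen \emph{after} the near‐optimizing Lagrange multipliers $(\lambda_\eta,\mu_\eta)$, precisely so that $\epsilon_{S,M}(|\lambda_\eta|+|\mu_\eta|)\le\eta$ uniformly over the finite subcover; a fixed‐$\epsilon$ partition as you describe works too, but then you must separately argue (or truncate at $-1/\eta$ as the paper does) that the multipliers can be taken bounded over the cells used, since otherwise the $O(\epsilon(|\lambda|+|\mu|))$ error in Proposition~\ref{prop:upbd} is not controlled.
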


	The main difficulty to prove this theorem compared to, e.g. \cite{deshpande2017asymptotic,krzakala2016mutual,dia2016mutual,lelarge2017fundamental,el2018estimation} is that usual concentration of overlaps do not apply because we are outside the Nishimori line. To overcome this difficulty, we prove a finer result where we estimate constrained free energies and  obtain
	a  quenched large deviation principle.
	In fact, Theorem \ref{mainfree} extends to the restricted free energies $F_{N}(\bar\beta:A)$ and also holds almost surely. This leads us to the following quenched  large deviation principles. We recall the definition \eqref{BG} of the Boltzmann-Gibbs measure $\pG_{N}^{Y}$ as well as definition  \eqref{BGbeta} of the Boltzmann-Gibbs measure $\pG^{\bar\beta}_N$. We will consider the couple $(R_{1,1},R_{1,0})$ of the overlaps:
	$$R_{1,1}=\frac{1}{N}\sum_{i=1}^{N}(x_{i})^{2},\qquad
	R_{1,0}=\frac{1}{N}\sum_{i=1}^{N} x_{i}x_{i}^{0}\,.$$ We will see that $(R_{1,1},R_{1,0})$ asymptotically live 
	in the closed subset $\mathcal C$  of $[0,C^{2}]\times [-C^{2},C^{2}]$  given by 
	\begin{equation}\label{defC}\mathcal C=\cap_{\rho,t\in [-1,1]^{2}}\{(S,M):  \E_{x^{0}}[\mbox{essinf}_{x}\{\rho x^{2}+ tx x^{0}\}] \le \rho S+tM \le  \E_{x^{0}}[\mbox{esssup}_{x}\{\rho  x^{2}+ tx x^{0}\}]\}.\end{equation}
We note that $\varphi_{\bar\ba}$ is equal to $-\infty$ if $(S,M)$ does not belong to $\mathcal C$ by taking $Q_{1}=\cdots=Q_{r-1}=0$ and $\lambda,\mu$ going to infinity. Our main theorem is the following quenched large deviation principle:
	\begin{theo}\label{quenchedldp}  For every real numbers $\bar\ba=(\ba,\bb,\bc)$, the law of $(R_{1,1},R_{1,0})$ under $\pG^{\bar\beta}_N$ satisfies an almost sure large deviation principle with speed $N$ and good rate function $I^{FP}_{\bar\ba}$ which is infinite if $(S,M)$ do not belong to $\mathcal C$ and otherwise is given by
		$$I_{\bar\beta}^{FP}(S,M)=-\varphi_{\bar\ba}(S,M)+\sup_{(s,m)\in\cC}\varphi_{\bar\ba}(s,m)\,.$$
		In other words, 
		\begin{itemize}
			\item $I_{\bar\beta}^{FP}$ is a good rate function in the sense that its level sets 
   \[
    \{ (S,M) \mmm I_{\bar\beta}^{FP}(S,M) \leq L\}
   \]
   are compact for all $L \geq 0$. 
			\item for any closed subset $F$ of $\mathbb R^{2}$, for almost all $(W,\bx^{0})$,
			$$\limsup_{N\rightarrow\infty}\frac{1}{N}\log \pG^{\bar\beta}_{N}((R_{1,1},R_{1,0})\in F)\le-\inf_{(S,M)\in F} I^{FP}_{{\bar\beta}}(S,M)$$
			\item  for any open subset $O$ of $\mathbb R^{2}$, for almost all $(W,\bx^{0})$,
			$$\liminf_{N\rightarrow\infty}\frac{1}{N}\log \pG^{\bar\beta}_{N}((R_{1,1},R_{1,0})\in O)\ge-\inf_{(S,M)\in O} I_{\bar\beta}^{FP}(S,M)\,.$$
		\end{itemize}

	\end{theo}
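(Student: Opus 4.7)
The plan is to deduce the quenched large deviations principle from a localized version of Theorem~\ref{mainfree}, applied to the constrained free energies $F_N(\bar\beta : A_{(S,M),\epsilon})$ associated with small balls in the overlap space. Let $A_{(S,M),\epsilon} = \{ \bx : |R_{1,1}(\bx) - S| < \epsilon,\ |R_{1,0}(\bx,\bx^0) - M| < \epsilon \}$. The starting identity is
\[
\frac{1}{N}\log \pG^{\bar\beta}_N\bigl((R_{1,1},R_{1,0}) \in F\bigr) \;=\; \frac{1}{N}\log Z_N^Y(A_F) - \frac{1}{N}\log Z_N^Y,
\]
so the LDP reduces to computing the two free energies, almost surely, on localized sets $A_F$.

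The first step is almost sure concentration. Since $H_N^{\bar\beta}$ is Gaussian in $W_{ij}$ and the priors are compactly supported by Hypothesis~\ref{hypcompact}, the map $W \mapsto \frac{1}{N}\log Z_N^Y(A)$ is Lipschitz with constant of order $N^{-1/2}$; Gaussian concentration together with a Borel--Cantelli argument gives
\[
\lim_{N\to\infty}\Bigl( \tfrac{1}{N}\log Z_N^Y(A) - F_N(\bar\beta : A)\Bigr) = 0 \quad \text{almost surely},
\]
uniformly in $A$ (and similarly for $Z_N^Y$). The second step, which is the main technical ingredient, is the localized free energy limit
\[
\lim_{\epsilon\to 0}\lim_{N\to\infty} F_N(\bar\beta : A_{(S,M),\epsilon}) \;=\; \varphi_{\bar\beta}(S,M),
\]
for every $(S,M)\in\mathcal C$. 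The upper bound here is obtained by perturbing the Hamiltonian to enforce a localized version of the Ghirlanda--Guerra identities and ultrametricity on the restricted Gibbs measure, then running the Parisi/Aizenman--Sims--Starr scheme with Ruelle probability cascades to recover the variational expression $\varphi_{\bar\beta}(S,M)$; the matching lower bound is obtained by Guerra-type interpolation against a Ruelle cascade indexed by the sequences $(\zeta,Q)$ attaining the infimum defining $\varphi_{\bar\beta}(S,M)$. Unconditional Theorem~\ref{mainfree} is then recovered by taking $F = \mathcal C$.

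With these two inputs in hand, the LDP follows by the standard compact-covering argument. For the upper bound, a closed subset $F \subset \mathcal C$ (recall $\mathcal C$ is compact) is covered by finitely many balls $A_{(S_i,M_i),\epsilon}$; the union bound combined with Step~1 and Step~2 gives
\[
\limsup_{N\to\infty} \tfrac{1}{N}\log \pG^{\bar\beta}_N(A_F) \;\le\; \max_i \varphi_{\bar\beta}(S_i,M_i) - \sup_{\mathcal C}\varphi_{\bar\beta} + o_\epsilon(1),
\]
and upper semi-continuity of $\varphi_{\bar\beta}$ lets $\epsilon\to 0$. For closed sets meeting the complement of $\mathcal C$, the definition \eqref{defC} together with Laplace's method on the prior gives exponential decay of the Gibbs mass via the very test functions $\rho x^2 + t x x^0$ appearing in $\mathcal C$, so $I^{FP}_{\bar\beta} = +\infty$ outside $\mathcal C$ is consistent. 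The lower bound on an open $O$ comes from choosing any $(S,M)\in O$ and $\epsilon$ small enough that $A_{(S,M),\epsilon} \subset A_O$, then applying Step~2 and taking the supremum over $(S,M) \in O$. Goodness of $I^{FP}_{\bar\beta}$ is immediate since $\mathcal C$ is compact and $\varphi_{\bar\beta}$ is upper semi-continuous as an infimum of continuous functions of $(S,M)$.

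The hard part is Step~2: the absence of the Nishimori identity invalidates the usual overlap concentration, and the infimum in $\varphi_{\bar\beta}(S,M)$ may be attained at $\lambda,\mu \to \pm\infty$ near the boundary of $\mathcal C$, so the perturbation controlling the Ghirlanda--Guerra identities must be chosen carefully to retain enough entropy. I expect that boundary points are handled via exposed-hyperplane and Rockafellar-type arguments that transfer the large-deviation variational formula from interior points to $\partial\mathcal C$ by a limiting procedure, so that the same formula $\varphi_{\bar\beta}(S,M)$ remains valid uniformly, enabling the covering argument above to conclude.
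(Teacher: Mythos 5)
Your high-level plan matches the paper's: deduce the almost-sure LDP from the annealed localized free-energy limit (Theorem~\ref{technicalldp}) via concentration plus a compact covering argument. There is, however, a genuine gap in Step~1, and a minor directional error in Step~2.

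The gap: you justify the concentration $\frac{1}{N}\log Z_N^Y(A_{(S,M),\epsilon}) \to F_N(\bar\beta:A_{(S,M),\epsilon})$ almost surely by Gaussian Lipschitz concentration in $W$, but $F_N(\bar\beta:A)$ also averages over $\bx^{0}$, and the concentration in $\bx^{0}$ is where all the difficulty sits. The map $\bx^{0} \mapsto \frac{1}{N}\log Z_N^Y(A_{(S,M),\epsilon})$ is not Lipschitz: the hard indicator $\1(|R_{1,0}-M|\le\epsilon)$ jumps, and for $\bx^{0}$ outside a neighborhood of $\mathcal B_\delta$ the constrained integral can be empty, giving $\log 0 = -\infty$. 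Your claim that the concentration holds ``uniformly in $A$'' is false as stated. The paper's remedy — which you do not mention in this step — is to (i) replace the indicator by the $C^1$ surrogate $\chi^N_{M,\epsilon}$ so that $\partial_{x^0_i}\E_W\log\tilde Z$ is uniformly bounded (by a constant depending on $\cL$, not $N$), allowing Azuma--Hoeffding over $\bx^0$, and (ii) localize to $\bx^0\in\cB_\delta$ and argue via Sanov that the complement contributes $O(Ne^{-c_\delta N})$; sandwich inequalities like \eqref{mid} then transfer the smoothed-indicator estimates back to the hard-indicator quantity. Without this, the passage from the averaged statement of Theorem~\ref{technicalldp} to the $\pP_{W,\bx^0}$-almost-sure statement does not go through. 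You do invoke $\cB_\delta$ and careful perturbations inside Step~2 (the free-energy computation itself), but the decisive use of these devices is precisely in the quenched concentration step that you treat as routine.

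A secondary point: you have the two directions of the free-energy bound reversed. In the paper, Guerra's interpolation with Ruelle probability cascades gives the \emph{upper} bound on the constrained free energy (Section~\ref{sec:upbd}), and the perturbation enforcing the Ghirlanda--Guerra identities combined with the Aizenman--Sims--Starr cavity scheme gives the \emph{lower} bound (Section~\ref{sec:ldlb}). Your write-up assigns them the opposite roles. This does not affect the logic of the covering argument, but it would lead you astray if you tried to carry out Step~2 as written.
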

	
	\begin{rem}
		This theorem is new as well for the SK model where $\bb=\bc=0$, see \cite{PVS} for the large deviations of $R_{11}$.
	\end{rem}

        Combining this with the universality of the free energy in Proposition~\ref{prop:universality1}, we immediately arrive at a LDP for the overlaps under the Boltzmann--Gibbs measure $\pG_{N}^{Y}$. This large deviations principle is universal in the sense that it only depends on the model parameters $\bar \ba = (\ba,\bb,\bc)$.

        \begin{coro}
        \label{coro2.9}
            If Hypothesis \ref{hypcompact}, \ref{hypg}, and \ref{hypderiv} hold, and $(\ba,\bb,\bc)$ are given by \eqref{eq:delta1},\eqref{eq:delta2} and \eqref{eq:delta3},  the law of $(R_{1,1},R_{1,0}) $ under $\pG_{N}^{Y}$ satisfies an almost sure  large deviation principle with speed $N$ and good rate function $I_{\bar\beta}^{FP}$.
        \end{coro}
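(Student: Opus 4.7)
The plan is to transfer the almost sure LDP from the Gaussian surrogate $\pG_N^{\bar\beta}$ established in Theorem~\ref{quenchedldp} to the inference measure $\pG_N^Y$ via the universality of restricted free energies (Proposition~\ref{prop:universality1}) combined with sample-path concentration.

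For any Borel $F\subset\R^{2}$, let $A=A_N(\bx^{0})=\{\bx : (R_{1,1},R_{1,0})\in F\}$ and define the un-averaged restricted free energies
\[
\tilde{F}_N(g:A) := \frac{1}{N}\Big(\log Z_N^{Y}(A) - \sum_{i<j} g(Y_{ij},0)\Big), \qquad \tilde{F}_N(\bar\beta:A) := \frac{1}{N}\log \int \1_{A}(\bx)\, e^{H_N^{\bar\beta}(\bx)}\, d\pP_X^{\otimes N}(\bx).
\]
Then
\[
\frac{1}{N}\log\pG_N^{Y}\bigl((R_{1,1},R_{1,0})\in F\bigr) \;=\; \tilde{F}_N(g:A)-\tilde{F}_N(g),
\]
so the corollary reduces to showing the almost sure limit $\tilde{F}_N(g:A)\to \sup_{\bar A\cap\mathcal C}\varphi_{\bar\beta}$ (and the analogous limit for open sets). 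The companion almost sure convergence $\tilde{F}_N(\bar\beta:A)\to \sup_{\bar A\cap\mathcal C}\varphi_{\bar\beta}$ is already established by (the argument underlying) Theorem~\ref{quenchedldp}.

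The matching between $\tilde{F}_N(g:A)$ and $\tilde F_N(\bar\beta:A)$ rests on two ingredients. First, Proposition~\ref{prop:universality1} gives $\E\tilde F_N(g:A)=\E\tilde F_N(\bar\beta:A)+O(N^{-1/2})$ whenever $\liminf_{N\to\infty}F_N(0:A)>-\infty$. Second, each side concentrates around its expectation almost surely: in the Gaussian case by Gaussian concentration of a Lipschitz function of the iid $W_{ij}$ (exploiting the bounded support of $\pP_X$), and in the $Y$-case by a bounded-differences / Efron--Stein argument applied to the independent $Y_{ij}$, which is made effective by the expansion $g(Y_{ij},w_{ij})-g(Y_{ij},0)=\partial_w g(Y_{ij},0) w_{ij}+O(w_{ij}^{2})$ (Hypothesis~\ref{hypg}) together with the uniform bound $|w_{ij}|\le C^{2}/\sqrt N$. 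Borel--Cantelli then upgrades the resulting sub-exponential deviations to almost sure $o(1)$ error, yielding the claimed limit for $\tilde F_N(g:A)$. Applying this to closed $F$ delivers the upper bound and to open $F$ the lower bound, both with rate function $I_{\bar\beta}^{FP}$; goodness of $I_{\bar\beta}^{FP}$ is directly inherited from Theorem~\ref{quenchedldp} since the rate function is identical.

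The main technical obstacle is the entropic nondegeneracy hypothesis $\liminf_{N\to\infty} F_N(0:A)>-\infty$: for $F$ meeting the boundary of $\mathcal C$, the prior measure of $A_N(\bx^0)$ can decay super-exponentially, so Proposition~\ref{prop:universality1} is not directly applicable. Since $I_{\bar\beta}^{FP}\equiv +\infty$ off $\mathcal C$, this is resolved by restricting attention to sets $F$ meeting the relative interior of $\mathcal C$ and then approximating arbitrary closed/open targets by inner/outer families of such sets, exactly as in the proof of Theorem~\ref{quenchedldp}.
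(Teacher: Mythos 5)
Your proof is correct and essentially the same argument the paper has in mind: the paper asserts the corollary follows "immediately" from combining Proposition~\ref{prop:universality1} with Theorem~\ref{quenchedldp}, and you have supplied the mechanics of that combination — write $\frac{1}{N}\log\pG_N^Y(A)$ as a difference of restricted free energies, match expectations via universality, concentrate each side around its expectation (Gaussian concentration in $W$ for the surrogate, bounded differences in the conditionally independent $Y_{ij}$ and Azuma--Hoeffding in $\bx^0$ for the inference model, exactly as in Section~\ref{proofthmquenched}), and Borel--Cantelli to pass to almost sure statements, with the boundary/infinite-entropy cases handled by restriction to the interior of $\cC$ and approximation. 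The only minor caveat is that the clause "the corollary reduces to showing the almost sure limit $\tilde F_N(g:A)\to\sup_{\bar A\cap\cC}\varphi_{\bar\beta}$" is a slight overstatement — for the LDP one only needs the $\limsup$/$\liminf$ bounds for closed/open $A$, not a genuine limit for arbitrary $A$ — but your subsequent sentences apply the bounds correctly, so this is a wording issue rather than a gap.
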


        If the rate function has a unique minimizer, then we also arrive at concentration of the overlaps under the generic $\pG_{\bar\beta}^N$ and the Boltzmann--Gibbs measure $\pG_{N}^{Y}$. This almost sure limit of the overlaps only depend on $(\ba,\bb,\bc)$. 
 
	\begin{coro}
 \label{coro2.10}
		If the rate function $I_{\bar\beta}^{FP}$ has a unique minimizer $(S_{{\bar\beta}},M_{{\bar\beta}})$, then $(R_{11},R_{10})$ converges almost surely towards $(S_{{\bar\beta}},M_{{\bar\beta}})$ under $\pG^{\bar\beta}_N$, but also under $\pG_{N}^{Y}$ for any $g$ satisfying Hypothesis \ref{hypcompact}, \ref{hypg}, and \ref{hypderiv}  and so that $(\ba,\bb,\bc)$ are given by \eqref{eq:delta1},\eqref{eq:delta2} and \eqref{eq:delta3}.
	\end{coro}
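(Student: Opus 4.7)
The claim will follow directly from the large deviations principle in Theorem~\ref{quenchedldp} (and from its extension to $\pG_N^Y$ provided by Corollary~\ref{coro2.9}), by the standard observation that an LDP with a unique minimizer forces exponential concentration at that minimizer. The plan is to bound $\pG^{\bar\beta}_N$ of the complement of an arbitrary neighborhood of $(S_{\bar\beta}, M_{\bar\beta})$ using the LDP upper bound, and then to let the neighborhood shrink along a countable sequence.

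Concretely, I would fix $\epsilon > 0$ and let $B_\epsilon$ denote the open ball of radius $\epsilon$ around $(S_{\bar\beta}, M_{\bar\beta})$. By Hypothesis~\ref{hypcompact}, the overlaps $(R_{11}, R_{10})$ are always confined to the compact box $K = [0, C^2] \times [-C^2, C^2]$, so $B_\epsilon^c \cap K$ is a compact set that does not contain the minimizer. Because $I^{FP}_{\bar\beta}$ is a good rate function (hence lower semicontinuous) whose unique zero is $(S_{\bar\beta}, M_{\bar\beta})$, it attains a strictly positive infimum on this compact set, so
$$\delta := \inf_{(S, M) \in B_\epsilon^c \cap K} I^{FP}_{\bar\beta}(S, M) > 0.$$
Applying the LDP upper bound from Theorem~\ref{quenchedldp} to the closed set $B_\epsilon^c$ then gives, almost surely in $(W, \bx^0)$,
$$\limsup_{N \to \infty} \frac{1}{N} \log \pG^{\bar\beta}_N\big( (R_{11}, R_{10}) \in B_\epsilon^c \big) \leq -\delta < 0,$$
so $\pG^{\bar\beta}_N(B_\epsilon^c) \to 0$ exponentially fast almost surely.

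To conclude, I would run this argument along a countable sequence $\epsilon_k \downarrow 0$ and intersect the corresponding full measure events in the disorder: on the resulting full measure set, $\pG^{\bar\beta}_N$ concentrates on every open neighborhood of $(S_{\bar\beta}, M_{\bar\beta})$, which is exactly the stated almost sure convergence of $(R_{11}, R_{10})$ under the Gibbs measure. Replacing Theorem~\ref{quenchedldp} by Corollary~\ref{coro2.9} in the same argument yields the analogous statement under $\pG_N^Y$. There is essentially no hard step here; the only mild subtlety is the correct reading of \emph{almost sure convergence} as the statement that the random Gibbs mass placed on any open neighborhood of the minimizer tends to one for almost every realization of the disorder and the planted signal, and a stronger joint almost sure statement (when $\bx$ is further drawn from $\pG^{\bar\beta}_N$) would follow from the same exponential bound together with Borel--Cantelli.
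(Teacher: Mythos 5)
Your proposal is correct, and it is exactly the standard ``LDP with unique minimizer implies exponential concentration'' argument that the paper implicitly relies on (the corollary is stated without a written proof). The key points you get right are: (i) using goodness and lower semicontinuity of $I^{FP}_{\bar\beta}$ together with compactness of $B_\epsilon^c \cap \cC$ (the rate function being infinite outside $\cC$) to extract a strictly positive infimum $\delta$ on the complement of any neighborhood of the unique zero; (ii) applying the quenched LDP upper bound of Theorem~\ref{quenchedldp} to the closed set $B_\epsilon^c$ to get an almost sure exponential bound; (iii) intersecting full measure events over a countable sequence $\epsilon_k \downarrow 0$, since the almost sure statement in the LDP is for each fixed closed set; and (iv) transferring to $\pG_N^Y$ via Corollary~\ref{coro2.9}. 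Your closing remark correctly flags that the statement ``$(R_{11},R_{10})$ converges almost surely under $\pG^{\bar\beta}_N$'' should be read as exponential concentration of the Gibbs mass around $(S_{\bar\beta},M_{\bar\beta})$ for almost every disorder realization, with a joint Borel--Cantelli strengthening available from the exponential rate. Nothing is missing.
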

	Theorem \ref{quenchedldp} will be derived from exponential tightness,  concentration of measure and 
	an averaged convergence of the restricted free energies which reads as follows. We let 
	$$\mathcal B_{\delta}=\{\bx^{0}\in\mathbb R^{N}:
	d(\frac{1}{N}\sum_{i=1}^{N}\delta_{x_{i}},\mathbb P_{0})\le \delta\}$$
	and for $(S,M)\in\mathcal C$ we set
	$$\Sigma_{\epsilon}(S,M)=\{(\bx,\bx^{0})\in [-C,C]^{2N}:|R_{1,1}-M|\le\epsilon, |R_{1,0}-S|\le \epsilon\}\,.$$
	Our main technical result is the following:
	
	\begin{theo}\label{technicalldp}For every real numbers $\bar\ba=(\ba,\bb,\bc)$, 
		every $(S,M)\in\mathcal C$,
		$$\varphi_{\bar\beta}(S,M)\le \lim_{\epsilon\downarrow 0}\lim_{\delta\downarrow 0}\liminf_{N\rightarrow\infty}\frac{1}{N}  \E_Y 1_{\mathcal B_{\delta}}  \Big(\log \int \1(|R_{1,1}-S|\le \epsilon, |R_{1,0}-M|\le\epsilon) e^{H_N^{{\bar\ba}}(\bx)} \, d \pP_X^{\otimes N}(\bx)   \Big) \qquad\qquad$$
		$$\qquad \le \lim_{\epsilon\downarrow 0}\limsup_{N\rightarrow\infty}\frac{1}{N}  \E_Y  \Big(\log \int \1(|R_{1,1}-S|\le \epsilon, |R_{1,0}-M|\le\epsilon) e^{H_N^{{\bar\ba}}(\bx)} \, d \pP_X^{\otimes N}(\bx)   \Big) \le\varphi_{\bar\ba}(S,M)\,.$$
	\end{theo}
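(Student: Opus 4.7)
The four-term chain decomposes into three inequalities. The middle $\liminf \le \limsup$ reduces to removing the indicator $\mathbf{1}_{\mathcal B_\delta}$: the law of large numbers for $\bx^0$ gives $\pP(\mathcal B_\delta)\to 1$, and combined with an a priori upper bound on $\frac{1}{N}\log Z_N^Y(\Sigma_\epsilon(S,M))$ (dominated by the free energy of the unconstrained model) this makes the two sides asymptotically equal after sending $\delta\downarrow 0$. The real substance is the matching upper bound $\limsup \le \varphi_{\bar\beta}(S,M)$ and lower bound $\liminf \ge \varphi_{\bar\beta}(S,M)$, which I would establish by Guerra-type interpolation and an Aizenman--Sims--Starr / cavity argument enforced by a Ghirlanda--Guerra perturbation.

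\textbf{Upper bound via Guerra interpolation.} The idea is to remove the constraint via Lagrange multipliers: for any $\lambda,\mu\in\mathbb R$, on the constraint set one has
\[
\mathbf{1}(|R_{1,1}-S|\le\epsilon,|R_{1,0}-M|\le\epsilon)\le e^{N(|\lambda|+|\mu|)\epsilon}\,e^{N\lambda(R_{1,1}-S)+N\mu(R_{1,0}-M)},
\]
so the constrained partition function is bounded above by an unconstrained \emph{tilted} partition function multiplied by $e^{-N(\lambda S+\mu M)+N(|\lambda|+|\mu|)\epsilon}$. The deterministic pieces of $H_N^{\bar\beta}$ are quadratic forms in $R_{1,0}$ and $R_{1,1}$ which, on the constraint, contribute the explicit terms $\frac{\bb M^2}{2}+\frac{\bc S^2}{4}+O(\epsilon)$. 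What remains is a Gaussian SK Hamiltonian augmented by an external field $\mu x_i x_i^0$ and a self-coupling $\lambda x_i^2$; for any sequences $(\zeta,Q)$ as in \eqref{eq:zetaseq1}--\eqref{eq:Qseq1}, Guerra's interpolation scheme based on a Ruelle probability cascade bounds its normalized free energy by $\E_0[X_0(\lambda,\mu,Q,\zeta)]-\frac{\ba^2}{4}\sum_k\zeta_k(Q_{k+1}^2-Q_k^2)+o(1)$. Sending $\epsilon\downarrow 0$ and infimizing over $(\lambda,\mu,\zeta,Q)$ recovers $\varphi_{\bar\beta}(S,M)$.

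\textbf{Lower bound via perturbation, ultrametricity and cavity.} This half is the heart of the proof: without the Nishimori identity, standard overlap concentration is unavailable. Following Panchenko's approach, I would add to $H_N^{\bar\beta}$ a sub-extensive perturbation consisting of random polynomials in the overlaps $R_{1,1}$ and $R_{1,0}$. The perturbation has negligible effect on the normalized free energy but forces the (localized) Ghirlanda--Guerra identities for the replica overlap array under the constrained Gibbs measure on $\Sigma_\epsilon(S,M)\cap\mathcal B_\delta$; combined with the pinning of $R_{1,1}$ near $S$, these imply ultrametricity of replica overlaps and hence a Ruelle-cascade representation of the limiting Gibbs measure. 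An Aizenman--Sims--Starr cavity computation then produces, for some sequences $(\zeta^\star,Q^\star)$ and multipliers $(\lambda^\star,\mu^\star)$ reflecting the limiting overlap law and the constraints, a matching lower bound of the form $\E_0[X_0(\lambda^\star,\mu^\star,Q^\star,\zeta^\star)]-\mu^\star S-\lambda^\star M-\frac{\ba^2}{4}\sum_k\zeta^\star_k((Q^\star_{k+1})^2-(Q^\star_k)^2)+\frac{\bb M^2}{2}+\frac{\bc S^2}{4}\ge \varphi_{\bar\beta}(S,M)$. The main obstacle is de-localization---removing $\mathcal B_\delta$ and passing to boundary points of $\mathcal C$ where the optimal multipliers $(\lambda,\mu)$ may diverge---which forces the use of exposed-hyperplane and Rockafellar-type tools from convex analysis, together with uniform estimates ensuring that the Ghirlanda--Guerra identities and ultrametricity survive the limits $\delta\downarrow 0$ and $\epsilon\downarrow 0$.
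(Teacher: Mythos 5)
Your proposal follows the same architecture as the paper's proof: Guerra interpolation with Lagrange multipliers to eliminate the $\Sigma_\epsilon(S,M)$ constraint for the upper bound, and a cavity/Aizenman--Sims--Starr argument forced into a Ruelle-cascade form by a Ghirlanda--Guerra-enforcing perturbation for the lower bound, with exposed hyperplanes and Rockafellar's theorem to reach the boundary of $\mathcal C$. Two points need attention, one substantive and one about the middle inequality.

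\textbf{Missing step: smoothing the indicator.} The lower-bound sketch, as written, would not compile into a proof, because the hard indicator $\1(|R_{1,0}-M|\le\epsilon)$ depends on $\bx^0$, is not differentiable in $\bx^0$, and allows $\log Z_N$ to equal $-\infty$ for atypical $\bx^0$ (e.g.\ when $\Sigma_\epsilon(S,M)=\emptyset$). Restricting to $\cB_\delta$ does not by itself fix the concentration estimates needed to validate the perturbation: the bounded-difference/Azuma--Hoeffding bound on $\E_{W,g}\phi$ as a function of the $x_i^0$ and the uniform finiteness of the log-partition both rely on replacing the indicator by the strictly positive, $C^1$ surrogate $\chi^N_{M,\epsilon}(x)=e^{-\cL N(|x-M|-\epsilon)_+^2}$. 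This is Lemma~\ref{lemsmooth}, and it feeds directly into Lemma~\ref{lem:sizepert} (the requirement $v_N(s)/s^2\to 0$), which is what actually licenses the GGI in Theorem~\ref{thm:GGI}. Without it, the verification that the perturbation ``has negligible effect but forces the localized GGI'' has no proof: the unperturbed free energy itself does not satisfy the requisite concentration. You also describe the perturbation as ``random polynomials in the overlaps $R_{1,1}$ and $R_{1,0}$'' --- in the paper it is a Gaussian field $g(\hat\bx)$ on the sphere $R_{1,1}=S$ whose \emph{covariance} is a power of the replica overlap $R_{1,2}$; there is no perturbation in $R_{1,0}$, whose control comes entirely from the smoothed constraint.

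\textbf{Middle inequality.} Your justification (``$\pP(\cB_\delta)\to1$, combined with an a priori upper bound on $\tfrac1N\log Z$'') shows at best $\E\1_{\cB_\delta}[\cdots]\ge\E[\cdots]+o(1)$, which is the wrong direction: an a priori \emph{upper} bound on $\log Z$ controls $\E\1_{\cB_\delta^c}|\log Z|$ only from one side, and on $\cB_\delta^c$ the log can be very negative or $-\infty$, so dropping the indicator can strictly \emph{decrease} the average. In the paper the middle inequality is not established by indicator-removal at all; it is a byproduct of the sandwich: the lower bound (Proposition~\ref{prop:lwbd}) pushes the localized liminf up to $\varphi_{\bar\ba}(S,M)$, and the upper bound (Proposition~\ref{prop:upbd}) pushes the unlocalized limsup down to $\varphi_{\bar\ba}(S,M)$, forcing all four terms of the chain to coincide.
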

	
	The upper bound of this theorem is proven in  section \ref{sec:upbd}, whereas the lower bound is proven in section  \ref{sec:ldlb}.

	The replica symmetric case happens if the maximizing sequences are attained at the point when $r = 1$ and $Q_1 = Q$ and $\zeta_0 \to 0$ and $\mu = \lambda = 0$. In this case, the replica symmetric functional is
	\begin{align*}
		\varphi_{RS}(Q) &=  - \frac{\ba^2 Q^2}{4} + \bb\frac{M^{2}}{2} + \bc \frac{ S^2 }{4} 
		+ \E_{z,x_0} \log \int \exp \bigg( \ba \sqrt{Q } zx   \bigg) \, d\pP_X(x)
	\end{align*}
	where $z \sim N(0,1)$, $x_0 \sim \pP_0$. The replica symmetric solution gives an upper bound of the free energy, i.e. for any $Q$
	\[
	\lim_{\epsilon \to 0} \limsup_{N \to \infty} F_{N}({\bar\beta}:  \Sigma_\epsilon(S,M)) \leq \phi_{RS}(Q).
	\]
	However, it is not expected that this bound is sharp.
	
	\begin{rem}
		In the notation of \cite{lenkarmt}, we have $\bar R = \bc$ and $Q + \Sigma = S$ . The quantity  $\bc = - \ba^2$ in the Bayes optimal case. It is also not expected that the replica free energy will be necessarily replica symmetric.
	\end{rem}

    \subsection{Examples}\label{sec:example}

    The quenched LDP in Theorem~\ref{quenchedldp} covers a wide range of previously studied spin glass models. We briefly mention some examples in this section.

    \subsubsection{Sherrington-Kirkpatrick model with soft spins \cite{PVS}} This corresponds to the case when $\bb = 0$ and $\bc = 0$. This case is considerably easier because the overlap $R_{10}$ does not play a role. However, the norms of soft spin configurations are not fixed.
    
    \subsubsection{Sherrington-Kirkpatrick model with ferromagnetic interaction \cite{chenferro}} This corresponds to the case when $\pP_0 = \delta_1$, $\pP_X = \frac{1}{2} \delta_1 + \frac{1}{2} \delta_{-1}$. Unlike the previous model, $R_{10}$ appears in this model is simpler because $x^0$ is non-random.

    This model corresponds to a mismatched inference problem where the data is generated from a spiked matrix model with a deterministic rank-$1$ spike, but the statistician has no information on the signal distribution, so he naively assumes a balanced Rademacher prior. 
    
    \subsubsection{Sherrington-Kirkpatrick model with Mattis interaction \cite{Camillimismatch}} This corresponds to the case when $\pP_X = \frac{1}{2} \delta_1 + \frac{1}{2} \delta_{-1}$. In this model, all configurations are on the unit sphere, so we may take $\bc = 0$ without loss of generality since overlaps $R_{11} = 1$ everywhere.

    This model corresponds to a mismatched inference problem where the data is generated from a spiked matrix model
    \[
    Y_{ij} = W_{ij} + \frac{1}{\sqrt{N}} x^0_i x^0_j 
    \]
    but the statistician has no information on the signal distribution, so he naively assumes a balanced Rademacher prior. 

    \subsubsection{Symmetric Rank~1 Matrix Estimation \cite{lelarge2017fundamental}} The Hamiltonian in this Bayes-optimal inference problem occurs when $\ba = \bb$ and $\bc = -\frac{1}{2} \bb$. The overlaps concentrate in this model. This will suggest that the minimizer of the rate function Corollary~\ref{coro2.10} will concentrate on the maximizer of the rank~1 replica symmetric formula \cite[Equation~(3)]{lelarge2017fundamental}.

    \subsubsection{Maximum-average Submatrix Problem \cite{erbasubmatrix} } The Hamiltonian for this model is the classical SK Hamiltonian defined on the uniform configruation space of Boolean spins $\bx \in \{0,1 \}^N$. Computing the large deviations for the magnetization $m = \frac{1}{N}\sum_{i = 1}^N \sigma_i$, is a direct consequence of Theorem~\ref{quenchedldp} when our signal is non-random and concentrated on $1$ $\pP_0 = \delta_1$, and we take $S = M = m$, becausse of the Boolean nature of the spins.
    
    \subsubsection{The BBP Transition \cite{BBP} }
    If our prior $\pP^{\otimes N}_X$ is rotationally invariant, then the ground state free energies of $F_N(\bar \ba)$ is of particular interest, because it relates to the BBP transition of random matrices. In particular, if we consider $\ba = L \ba'$, $\bb = L \bb'$, $\bc = L \bc'$ for some $L \gg 0$ then
    \begin{align*}
      \frac{1}{L} F_N(\bar \ba) &=   \frac{1}{LN} \E_Y  \Big(\log \int e^{L H_N^{{\bar\ba}}(\bx)} \, d \pP_X^{\otimes N}(\bx)  \Big)
      \\&\simeq \frac{1}{2} \mathrm{esssup}_{\bx\in \supp(\pP^{\otimes N}_X)} \bigg[ \langle \bx, (\ba' W + \bb' \bx_0 \bx_0^\intercal) \bx \rangle + \frac{\bc'}{2} \|x\|_2^4 \bigg]
    \end{align*}
    which equals the top eigenvalue of the matrix $\ba' W + \bb' \bx_0 \bx_0^\intercal$ if  $\pP^{\otimes N}_X$ is rotationally invariant. 
    

    \begin{rem}
    The result in Theorem~\ref{quenchedldp} also holds if $H_N^{{\bar\ba}}(\bx)$ includes an external field term $\sum_{i \leq N} h_i x_i$. This term decouples during the cavity computations, so it does not introduce an additional technical challenge. 
    \end{rem}
    
    \subsubsection{Outline of the Paper} This article is organized as follows. 
    
    We first prove the universality stated in  Proposition \ref{prop:universality1} in Section \ref{secuniversality}. This observation connects the free energy of all rank $1$ inference problems to the general Hamiltonian $H_N^{\bar \beta}(\bx)$ defined in \eqref{eq:nonbayesoptimalHamiltonian1}. This follows from classical results for the universality of spin glasses and random matrix theory, to approximate general likelihood functions with its second order Taylor expansion.

 We then prove the large deviation upper bound in average as stated in Theorem \ref{technicalldp} in Section \ref{sec:upbd}, based on the usual tilting argument in the proof of Cramer's theorem and the famous interpolation trick introduced by Guerra \cite{guerra2003broken}, see \cite[Chapter~3]{PBook}. 
 
The proof of the complementary lower bound of Theorem \ref{technicalldp} takes the entirety of Section~\ref{sec:ldlb}. This bound is proved using the cavity approach and a regularizing perturbation of the Gibbs measure. However, unlike classical spin glass models the argument is much more delicate in this setting. The main reason is that the constraint on the overlap $\1(\bR_{10} \approx M)$ depends on a external source of randomness $\bx^0$ and the indicator function is not smooth with respect to this random variable. We dealt with this challenge in Section~\ref{sec:indicator}  by localizing the free energy around the empirical measure of $\bx^0$ and smoothing out the indicator on sets with finite entropy. This restriction to sets with finite entropy and a smooth approximation of the indicator is critical because small deviations of $\bx^0$ will lead to very large deviations of the free energy otherwise, so concentration of the (non-localized) free energy will not be possible. The Ghirlanda--Guerra identities (Section~\ref{sec:GGI}) and the cavity method (Section~\ref{sec:cavityI}) are adapted to these localized free energies.

The final step to proving our lower bound is asymptotically sharp requires an additional argument when our overlaps are restricted to values on the boundary of the  $\mathcal{C}$, which encodes the set of values with finite entropy.  We only have a large deviations lower bound on the so called set of exposed points, which does not include these boundary terms apriori. We adapt the proof of the Gartner--Ellis Theorem and use the large deviations bound for tilted measures combined with Rockafellar's Theorem to extend the lower bound to all points. This is explained in the proof of Lemma~\ref{lem:sharpupbd}. This final large deviations result combined with the cavity computations finishes the proof of the lower bound in Section~\ref{sec:cavityII}.

 Lastly, we remove this localization and use the fact that all of our estimates are uniform over $\bx^0$ to deduce the almost sure LDP in Theorem \ref{quenchedldp} in Section~\ref{proofthmquenched}. This part of the proof relies heavily on the fact that the signal is a product measure, so the empirical law of $\bx^0$ converges almost surely to $\pP_0$, so the localized free energy is a good approximation of the total free energy in the limit.

	\section{Universality}\label{secuniversality}

	Just like in the Bayes optimal case \cite[Section~3]{AJFL_inhomo}, we will show that these models will reduce to a Gaussian estimation problem under some mild conditions on $g$. Consider the Hamiltonian
	\begin{align}\label{eq:nonbayesoptimalHamiltonian}
		H_N^{{\bar\beta}}(\bx) &= \sum_{i<j}\left( \frac{\ba W_{ij}}{ \sqrt{  N}} x_ix_j + \frac{\bb}{ N} (x_ix_j)(x_i^{0} x_j^{0}) +  \frac{\bc}{2N}   (x_i x_j )^2\right)
	\end{align}
	where $W_{ij}$ are iid standard Gaussians and the covariance parameters are given in \eqref{eq:delta1}, \eqref{eq:delta2} and \eqref{eq:delta3}.
	
	Given any subset $A = A(\bx^0)$ that may depend on $\bx^0$, we define the restricted  free energy by
	\[
	F_N({\bar\beta}: A) = \frac{1}{N} \bigg( \E_Y  \Big(\log \int  \1(\bx\in A) e^{H_N^{{\bar\beta}}(\bx)} \, d \pP_X^{\otimes N}(\bx) \Big) \bigg). 
	\]
	We first prove a universality result that will imply that the free energy of general inference models are equivalent to the free energy of the Gaussian estimation problem.

	\begin{prop}[Universality] \label{prop:universality}
		If Hypothesis~\ref{hypcompact} and Hypothesis~\ref{hypderiv} holds then the free energy satisfies
		\[
		\big| F_N(g) - F_N({\bar\beta}) \big| =  O( N^{-1/2} ).
		\]
		More generally, for any sequence of measurable sets   $A = A_N(\bx^0)\subset \R^{N}$ such that  for $N$ large enough $F_N(0:A)>-\infty$, we have
		\[
		\big| F_N(g: A) - F_N({\bar\beta}: A) \big| =  O( N^{-1/2} ).
		\]
	\end{prop}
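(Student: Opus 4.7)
The plan is to Taylor-expand $g(Y_{ij},w)$ around $w=0$, use Hypothesis~\ref{hypg} to bound the cubic remainder, extract deterministic $\bb$- and $\bc$-contributions from the expectations of $\partial_w g(Y_{ij},0)$ and $\partial_w^{2}g(Y_{ij},0)$ under the true output law $\pP_\out(\cdot\given w_{ij}^{0})$, and finally swap the centered residual for the Gaussian field $\ba W_{ij}$ of $H_N^{\bar\beta}$ by a Lindeberg/Gaussian interpolation.

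By Hypothesis~\ref{hypcompact}, $|w_{ij}|=|x_ix_j|/\sqrt{N}\le C^{2}/\sqrt{N}$, so Hypothesis~\ref{hypg} yields
\[
g(Y_{ij},w_{ij})-g(Y_{ij},0)=\partial_w g(Y_{ij},0)\,w_{ij}+\tfrac12\partial_w^{2}g(Y_{ij},0)\,w_{ij}^{2}+r_{ij},\qquad |r_{ij}|\le \tfrac{C^{6}}{6N^{3/2}}\|\partial_w^{3}g\|,
\]
and the total cubic remainder contributes $O(N^{-1/2})$ to $\frac{1}{N}\log Z_N^{Y}$ deterministically. Since each $Y_{ij}$ has true law $\pP_\out(\cdot\given w_{ij}^{0})$ with $w_{ij}^{0}=x_i^{0}x_j^{0}/\sqrt{N}=O(N^{-1/2})$, a first-order expansion of $e^{g^{0}(y,\cdot)}$ combined with Hypothesis~\ref{hypderiv} (which kills the constant term) gives
\[
\E_Y[\partial_w g(Y_{ij},0)]=\bb\,w_{ij}^{0}+O(N^{-1}),\quad \E_Y[\partial_w^{2}g(Y_{ij},0)]=\bc+O(N^{-1/2}),\quad \Var_Y(\partial_w g(Y_{ij},0))=\ba^{2}+O(N^{-1/2}),
\]
with $\ba,\bb,\bc$ exactly as in \eqref{eq:delta1}--\eqref{eq:delta3}. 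Multiplying by $w_{ij}$ and $w_{ij}^{2}$ and summing over $i<j$, the deterministic parts of these expectations reproduce exactly the $\bb$- and $\bc$-terms of $H_N^{\bar\beta}$, up to an additive $O(N^{-1/2})$ error in $F_N$.

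Set $\eta_{ij}:=\partial_w g(Y_{ij},0)-\E_Y[\partial_w g(Y_{ij},0)]$; these are independent centered random variables with $\Var(\eta_{ij})=\ba^{2}+O(N^{-1/2})$ and uniformly bounded third absolute moment. It remains to swap the random linear field $\sum_{i<j}\eta_{ij}x_ix_j/\sqrt{N}$ for the Gaussian field $\ba\sum_{i<j}W_{ij}x_ix_j/\sqrt{N}$ inside the log-partition function. This is the classical Lindeberg/Gaussian interpolation: interpolating one coefficient at a time, every derivative of $\log Z$ with respect to a single coefficient is uniformly bounded (being a moment of $x_ix_j$ under a Gibbs measure, with $|x_ix_j|\le C^{2}$), and the variance-mismatch and third-moment contributions sum to $O(N^{2}\cdot N^{-3/2})=O(N^{1/2})$, giving the announced $O(N^{-1/2})$ after dividing by $N$. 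The analogous replacement of $\partial_w^{2}g(Y_{ij},0)$ by its mean $\bc$ in the quadratic term is easier: since $w_{ij}^{2}\le C^{4}/N$, a deterministic $L^{\infty}$ control of $\sum_{i<j}(\partial_w^{2}g(Y_{ij},0)-\bc)w_{ij}^{2}$ already gives $O(N^{-1/2})$ inside the exponent.

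For the restricted free energy, the indicator $\1(\bx\in A_N(\bx^{0}))$ is measurable with respect to $(\bx,\bx^{0})$ alone, hence independent of $Y$ and $W$; it passes through every step above unchanged, and the hypothesis $\liminf_N F_N(0:A)>-\infty$ keeps $\log Z_N^{Y}(A)$ of order $N$, so additive $O(\sqrt{N})$ errors become the claimed $O(N^{-1/2})$ in $F_N(g:A)$. The main bookkeeping obstacle lies in the Lindeberg step: the $\eta_{ij}$ are \emph{not} identically distributed, since the law of $Y_{ij}$ varies with $w_{ij}^{0}$, and the mean $\bb\,w_{ij}^{0}$ is of the same order $N^{-1/2}$ as the Gaussian scale $\ba/\sqrt{N}$. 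One has to separate cleanly which piece of the first-order Taylor expansion enters the deterministic $\bb$-term of $H_N^{\bar\beta}$ and which piece remains a centered residual to be matched by $\ba W_{ij}$.
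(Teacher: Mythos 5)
Your overall route — Taylor-expand $g$ in $w$, extract the deterministic $\bb$- and $\bc$-pieces from the conditional moments of $\partial_w g(Y_{ij},0)$ and $\partial_w^{(2)}g(Y_{ij},0)$, and then swap the centered first-derivative residual for the Gaussian field $\ba W_{ij}$ by a Lindeberg-type interpolation — is the same as the paper's (Lemmas~\ref{lem:univ1}, \ref{lem:univ3}, \ref{lem:univ4} and the coefficient computations in between). The cubic remainder and the Lindeberg replacement of $\eta_{ij}$ by $\ba W_{ij}$ are handled correctly, and your observation that $\eta_{ij}$ are not identically distributed is accurate but harmless, since the coordinate-wise interpolation (approximate Gaussian integration by parts, Lemma~\ref{lem:approxIBP}) does not require identical laws.

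There is, however, a genuine gap in your treatment of the quadratic term. You claim that a \emph{deterministic} $L^\infty$ bound on $\sum_{i<j}\bigl(\partial_w^{(2)}g(Y_{ij},0)-\bc\bigr)w_{ij}^2$ already gives $O(\sqrt{N})$ inside the exponent. It does not: each summand is bounded only by $2\|\partial_w^{(2)}g(\cdot,0)\|_\infty\,C^4/N$, and there are $\Theta(N^2)$ of them, so the deterministic bound on the sum is $\Theta(N)$, which only gives $O(1)$ after dividing by $N$ — not $O(N^{-1/2})$. The fluctuation $\partial_w^{(2)}g(Y_{ij},0)-\gamma_{ij}$ is $O(1)$ pointwise (it is centered only in law, not in magnitude), so you cannot get the extra $N^{-1/2}$ for free. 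The paper's Lemma~\ref{lem:univ2} handles precisely this: it writes the centered sum as a quadratic form $\sum_{i,j}Z_{ij}\,x_i^2x_j^2$ with $Z_{ij}=\tfrac{1}{2\sqrt{N}}(\partial_w^{(2)}g(Y_{ij},0)-\gamma_{ij})$, and invokes operator-norm concentration for random matrices with bounded centered independent entries (as in \eqref{conc2}) to get $\|Z\|_{op}=O(1)$ with exponentially high probability, hence $|\sum Z_{ij}x_i^2x_j^2|\le\|Z\|_{op}\sum x_i^4=O(N)$, and only then does the overall factor $\tfrac{1}{\sqrt N}\cdot\tfrac{1}{N}$ deliver $O(N^{-1/2})$. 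The subsequent deterministic replacement of $\gamma_{ij}$ by $\bc$ \emph{is} an $L^\infty$ argument and does give $O(N^{-1/2})$, since $\gamma_{ij}-\bc=O(N^{-1/2})$ by the Taylor expansion of $g^0$; but you must first reduce to the conditional mean $\gamma_{ij}$ via the random-matrix concentration step, which is missing from your proposal.
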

	Notice that we may take $A = \R^{N}$, so it suffices to prove the universality of $F_N(g: A)$. 
	We begin by showing only the second order Taylor expansion of $g$ matters in the computation of the free energy. 
	
	\begin{lem}[Independence of Third Order Expansions] \label{lem:univ1}
		If $\|\partial_w^3 g\|_\infty < \infty$, then for any sequence of measurable sets   $A = A_N(\bx^0)\subset \R^{N}$ such that $F_N(0:A)>-\infty$ for $N$ large enough, we have
		
		\[
		F_N(g:A) = F_N(\tilde g:A) + O\Big( \frac{1}{\sqrt{N}}\Big) 
		\]
		where
		\[
		\tilde g(Y,w) = g( Y,0) + \partial_w g( Y,0)  w + \frac{1}{2} \partial_w^{(2)}g( Y,  0) w^2.
		\]
	\end{lem}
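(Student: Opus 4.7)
The plan is to use a uniform deterministic Taylor expansion of $g$ in the variable $w$ around $w = 0$, exploiting the fact that under Hypothesis~\ref{hypcompact} the arguments $w_{ij}$ are of order $N^{-1/2}$ uniformly in $\bx$. By the standard Taylor remainder formula applied to $w \mapsto g(Y,w)$, I would write
$$g(Y_{ij}, w_{ij}) = \tilde g(Y_{ij}, w_{ij}) + R_{ij}, \qquad |R_{ij}| \le \tfrac{1}{6} \|\partial_w^{3} g\|_\infty |w_{ij}|^3,$$
and observe that by construction $\tilde g(Y,0) = g(Y,0)$, so the subtracted terms $\sum_{i<j} g(Y_{ij},0)$ appearing in the definitions of $F_N(g:A)$ and $F_N(\tilde g:A)$ are identical and cancel in the difference.

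Next I would make the cubic remainder negligible after dividing by $N$. From Hypothesis~\ref{hypcompact} one has $|w_{ij}| \le C^2/\sqrt N$, hence summing over the $\binom{N}{2}$ pairs gives
$$\sum_{i<j} |R_{ij}| \le \tfrac{N(N-1)}{2} \cdot \tfrac{1}{6} \|\partial_w^{3} g\|_\infty \cdot \tfrac{C^6}{N^{3/2}} \le K\sqrt N,$$
for some finite constant $K$, and this bound holds uniformly in $Y$ and in $\bx \in [-C,C]^N$. This uniform pointwise control is really the entire content of the argument.

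Finally, I would transfer this to the log-partition functions. For any realization of $Y$ and any measurable set $A = A(\bx^0)$, the integrand of $Z_N^Y(A; g)$ differs from that of $Z_N^Y(A; \tilde g)$ by the multiplicative factor $e^{\sum_{i<j} R_{ij}}$, which by the above lies in $[e^{-K\sqrt N}, e^{K\sqrt N}]$ uniformly in $\bx$. Sandwiching the constrained partition functions and taking logarithms gives
$$\Big| \log Z_N^Y(A; g) - \log Z_N^Y(A; \tilde g) \Big| \le K\sqrt N$$
pointwise in $Y$, so dividing by $N$ and taking the $Y$-expectation yields $|F_N(g:A) - F_N(\tilde g:A)| \le K N^{-1/2}$, as desired. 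The hypothesis $F_N(0:A) > -\infty$ is used only to ensure that both free energies are themselves finite so that the difference makes sense. There is no real obstacle: the scaling simply works out because each cubic remainder is $O(N^{-3/2})$ while there are $O(N^2)$ terms, so the total error in the exponent is $O(\sqrt N)$, becoming $O(N^{-1/2})$ after the normalization by $N$.
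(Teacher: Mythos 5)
Your proposal is correct and follows essentially the same route as the paper: expand $g$ to second order with a Lagrange remainder, bound the cubic remainder by $\|\partial_w^3 g\|_\infty |w_{ij}|^3 = O(N^{-3/2})$ per term using Hypothesis~\ref{hypcompact}, and observe that summing over $O(N^2)$ pairs yields an $O(\sqrt N)$ shift in the exponent, hence an $O(N^{-1/2})$ change in the normalized free energy. The only (minor) addition you make is spelling out the sandwiching of the constrained partition functions and the role of $F_N(0:A)>-\infty$, which the paper leaves implicit with ``The claim follows.''
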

	
	\begin{proof}
		By Taylor's theorem, for all $i,j$,
		$$(g(  Y_{ij},w_{ij})-g( Y_{ij},0))=  \partial_w g( Y_{ij},0) w_{ij} + \frac{1}{2} \partial_w^{(2)}g(  Y_{ij},0)  w_{ij}^2  +\frac{w_{ij}^3}{3!} \partial_w^{(3)}g(Y_{ij},\theta_{ij}w_{ij})$$
		for some $\theta_{ij}\in [0,1]$. Since our hypothesis implies that $|w_{ij}|_\infty \le C^2/\sqrt{N}$, our assumption that $\|\partial_w^3 g\|_\infty < \infty$ implies that uniformly
		\[
		\frac{1}{N} \sum_{i < j} \frac{w_{ij}^3}{3!}\partial_{w}g (Y_{ij},\theta_{ij}w_{ij} ) =  O\Big( \frac{1}{N^{1/2}}\Big) .
		\]
		The claim follows. 
	\end{proof}
	
	The next step in the reduction is to prove that the coefficient of the second derivative term can be replaced by its conditional average.
	
	\begin{lem}[Concentration of Second Order Terms] \label{lem:univ2}
		Assume the $Y_{ij}$ are independent, $\sup_{i,j} \|\partial_w^{(2)}g(\cdot ,0)\|_\infty < \infty$ and $\pP_X$ is compactly supported.  Then,   for any sequence of measurable sets   $A = A_N(\bx^0)\subset \R^{N}$ such that $F_{N}(0:A)>-\infty$ for $N$ large enough, we have
		$$F_N(\tilde g:A)=  F_N(\bar g:A) +O\Big(\frac{1}{\sqrt{N}}\Big)$$
		with \[
		\bar  
		g(Y,w) = g( Y_{ij},0) + \partial_w g( Y,0)  w + \frac{1}{2} \E_Y[ \partial_w^{(2)}g(Y,0)\given B] w^2 \,
		\] 
		where $\E_Y[ \cdot \given B]$ is any conditional expectation so that $\E_Y ( \partial_w^{(2)} g(Y,0)- \E_Y[ \partial_w^{(2)} g(Y,0)\given B] )=0$.
	\end{lem}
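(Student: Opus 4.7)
The plan is a direct interpolation in the coefficient of $w_{ij}^2$, combined with a per-edge decorrelation argument that exploits the centering $\E_Y[D_{ij}] = 0$ for $D_{ij} := \partial_w^{(2)} g(Y_{ij}, 0) - \E_Y[\partial_w^{(2)} g(Y, 0) \mid B]$. By Hypothesis~\ref{hypg}, $|D_{ij}| \leq 2\|\partial_w^{(2)} g(\cdot,0)\|_\infty < \infty$ uniformly, and the only difference between $\tilde g$ and $\bar g$ at each edge is $\tfrac{1}{2}D_{ij} w_{ij}^2 = \tfrac{D_{ij}}{2N} x_i^2 x_j^2$.

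\emph{Interpolation.} Define
\[
H^t_N(\bx) := H^{\bar g}_N(\bx) + \frac{t}{2N}\sum_{i<j} D_{ij}\, x_i^2 x_j^2, \qquad t \in [0,1],
\]
with associated constrained free energy $F_N(t:A)$; then $F_N(0:A) = F_N(\bar g:A)$ and $F_N(1:A) = F_N(\tilde g:A)$, and the hypothesis $F_N(0:A) > -\infty$ keeps the constrained partition function strictly positive so that Gibbs averages $\langle \cdot \rangle_t$ are well defined. A direct derivative gives
\[
\partial_t F_N(t:A) = \frac{1}{2N^2}\sum_{i<j} \E_Y\big\langle D_{ij}\, x_i^2 x_j^2\big\rangle_t,
\]
so it suffices to establish $\E_Y\langle D_{ij} x_i^2 x_j^2\rangle_t = O(N^{-1/2})$ uniformly in $(i,j,t)$; summing over $\binom{N}{2}$ edges and integrating in $t$ then yields the claim.

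\emph{Decorrelation.} Fix $(i,j)$ and let $\langle\cdot\rangle_t^{(ij)}$ denote the Gibbs average associated to $H^t_N$ with every term involving $Y_{ij}$ removed. The removed contributions are $\partial_w g(Y_{ij},0) w_{ij}$ (bounded by $\|\partial_w g(\cdot,0)\|_\infty C^2/\sqrt N$ since $|w_{ij}|\leq C^2/\sqrt N$) together with $O(1/N)$ contributions from the $w_{ij}^2$-terms; for a natural choice of $B$ (e.g.\ trivial, making $\E[\partial_w^{(2)}g(Y,0)\mid B]$ a constant) this exhausts the $Y_{ij}$-dependence of the Hamiltonian. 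Thus the removed perturbation has sup-norm $O(N^{-1/2})$, and the elementary bound $|\langle f\rangle_{H+\delta H} - \langle f\rangle_H| \leq 2\|f\|_\infty\|\delta H\|_\infty$ applied to $f = x_i^2 x_j^2$ gives $|\langle x_i^2 x_j^2\rangle_t - \langle x_i^2 x_j^2\rangle_t^{(ij)}| = O(N^{-1/2})$. Splitting
\[
\E_Y\big\langle D_{ij}\,x_i^2 x_j^2\big\rangle_t = \E_Y\big[D_{ij}\big(\langle x_i^2 x_j^2\rangle_t - \langle x_i^2 x_j^2\rangle_t^{(ij)}\big)\big] + \E_Y\big[D_{ij}\langle x_i^2 x_j^2\rangle_t^{(ij)}\big],
\]
the second term vanishes since $\langle \cdot\rangle_t^{(ij)}$ is independent of $Y_{ij}$ while $\E_Y[D_{ij}]=0$, and the first is bounded by $\|D_{ij}\|_\infty \cdot O(N^{-1/2}) = O(N^{-1/2})$.

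\emph{Main obstacle and conclusion.} Assembling these bounds yields $|\partial_t F_N(t:A)| = O(N^{-1/2})$ uniformly in $t \in [0,1]$, and integrating gives $|F_N(\tilde g:A) - F_N(\bar g:A)| = O(N^{-1/2})$. The hard step, and where the proof spends its technical budget, is the decorrelation: the naive identity $\E_Y\langle D_{ij} x_i^2 x_j^2\rangle_t = 0$ fails because the Gibbs measure $\langle\cdot\rangle_t$ itself depends on $Y_{ij}$, both through the $\partial_w g(Y_{ij},0)$ term (scale $N^{-1/2}$) and through $D_{ij}$ in the interpolated Hamiltonian (scale $N^{-1}$); the key point is that this dependence is uniformly $O(N^{-1/2})$ in the Hamiltonian, allowing replacement by a $Y_{ij}$-independent Gibbs average at matching cost, after which the mean-zero property finishes the job. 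The constraint $A$ plays no role in the perturbation estimates since it only restricts the reference measure $\pP_X^{\otimes N}$.
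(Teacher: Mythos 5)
Your interpolation-plus-decorrelation route is genuinely different from the paper's. The paper writes the difference directly as
\[
F_N(\tilde g:A)-F_N(\bar g:A)=\frac{1}{N}\,\E_Y\log\Big\langle \exp\Big(\tfrac{1}{2N}{\textstyle\sum_{i<j}}D_{ij}(x_ix_j)^2\Big)\Big\rangle,
\]
views the exponent as a bilinear form $\tfrac{1}{\sqrt N}\,v^\top Z v$ with $v_i=x_i^2$ and $Z_{ij}=D_{ij}/(2\sqrt N)$, and uses operator-norm concentration for the centered, bounded-entry random matrix $Z$ to bound the exponent by $O(\sqrt N)$ uniformly over $\bx$ on the good event $\{\|Z\|_{op}\le L\}$ (exponentially rare complement); the constraint $A$ is automatically harmless because the estimate is uniform over configurations. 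You instead interpolate in $t$ and cancel each $\E_Y\langle D_{ij}\,x_i^2x_j^2\rangle_t$ to $O(N^{-1/2})$ by swapping in a Gibbs bracket with the $Y_{ij}$-dependence stripped out. Both arguments harvest the same $\sqrt N$ gain over the naive deterministic bound: the paper from a single global random-matrix estimate (one event, summed before the logarithm), you from $O(N^2)$ local covariance cancellations driven by the centering of $D_{ij}$ plus a stability estimate for Gibbs averages under bounded perturbations. Yours is more elementary, avoids the operator-norm input, and is structurally parallel to the paper's own proof of Lemma~\ref{lem:univ3}; the paper's is shorter and makes the uniformity in $\bx$ visible at a glance.

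One step needs to be made precise. You claim $\E_Y[D_{ij}\langle x_i^2x_j^2\rangle_t^{(ij)}]=0$ because ``$\langle\cdot\rangle_t^{(ij)}$ is independent of $Y_{ij}$'' while $\E_Y[D_{ij}]=0$, and you float $B$ trivial as a natural choice. Unconditionally this factorization does not hold in the model of interest: $\langle\cdot\rangle_t^{(ij)}$ depends on $\bx^0$ both through the constraint $A=A_N(\bx^0)$ and through the remaining $Y_{kl}$, and $Y_{ij}\sim\pP_\out(\cdot\mid x_i^0x_j^0/\sqrt N)$ is not independent of $\bx^0$, so $D_{ij}$ and $\langle\cdot\rangle_t^{(ij)}$ are correlated. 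The repair is exactly the choice $B=\sigma(\bx^0)$ that the paper makes later: condition on $\bx^0$ first, use that conditionally on $\bx^0$ the edges $Y_{kl}$ are independent so that $D_{ij}$ and $\langle\cdot\rangle_t^{(ij)}$ are conditionally independent, and note that $\E[D_{ij}\mid\bx^0]=0$ holds exactly by construction of $\bar g$ with this $B$. With $B$ trivial one only has $\E[D_{ij}]=0$ but $\E[D_{ij}\mid\bx^0]=O(N^{-1/2})$, so the term does not vanish; it can still be absorbed into the error, but that requires the separate Taylor-expansion estimate the paper does afterward, and the clean ``vanishes'' claim would be wrong. Once you fix $B=\sigma(\bx^0)$ and make the conditioning explicit, the argument closes.
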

	\begin{proof}
		Notice that since $F_{N}(0;A)$ is finite the following difference is well defined:
		$$F_N(\tilde g:A)-F_N(\bar g:A)=\E_Y\frac{1}{N}\ln \Big\langle e^{\frac{1}{2\sqrt N}\sum_{i< j} \frac{1}{\sqrt{N}}(\partial_w^{(2)}g(Y_{ij},0)-\E_Y[ \partial_w^{(2)}g(Y_{ij},0)|B])(x_i x_j)^2)} \Big\rangle$$
		where $$\langle f\rangle= \frac{\int  \1(\bx\in A) f (\bx) e^{\sum_{i< j} \bar g(Y_{i,j},w_{ij})} d\pP_X^{\otimes N}(\bx)}{ \int  \1(\bx\in A) e^{ \sum_{i< j} \bar g(Y_{i,j},w_{ij})} d\pP_X^{\otimes N}(\bx)}\,.$$
		Let $Z$ be the $N\times N$ symmetric  matrix with entries $\frac{1}{2\sqrt{N}}(\partial_w^{(2)}g(Y_{ij},0)-\E_Y[ \partial_w^{(2)}g(Y_{ij},0)|B])$ so that 
		$$\sum_{i< j} \frac{1}{2\sqrt{N}}(\partial_{w}^{(2)}g (Y_{ij},0)-\E_Y[ \partial_w^{(2)}g(Y_{ij},0)|B])(\bx_i^{\mathrm T} \bx_j)^2=\Tr\left (Z (\bx^{\mathrm{T}} \bx)^2\right).$$
		$Z$ is a random  matrix under $P_B$, it has centered independent entries with covariance bounded by $C/N$ and $(\bx^{\mathrm{T}} \bx)^2$ is the matrix with entries $(x_i x_j)^2$.
		Because these entries are bounded, we can use concentration inequalities  (\cite[ Theorem 2.3.5]{AGZ}  or \cite{GZ00})  and \cite[Lemma 5.6]{HuGu} to see that there exists some finite $L_0$ such that
		\begin{equation}\label{conc2}\pP_B\left(\|Z\|_\infty\ge L\right)\le e^{-N(L-L_0)}\,.\end{equation}
		On $\{\|Z\|_\infty\le L\}$,
		$$\left|\Tr\bigl(Z (\bx^{\mathrm{T}} \bx)^2\big)\right|=\left| \sum_{i,j} Z_{ij} x_i^2 x_j^2\right| \le L \sum_{i=1}^N x_i^4\le C L N $$
		for some finite constant $C$ depending only the bound on the support of $\pP_X$.
		Hence 
		$$F_N(\tilde g:A)-F_N(\bar g:A)=\E_Y 1_{\|Z\||\ge L} 
		\frac{1}{N}\ln \bigg\langle e^{\frac{1}{\sqrt N}\sum_{i\le j} \frac{1}{2\sqrt{N}}(\partial_w^{(2)}g (Y_{ij},0)-\E_Y[ \partial_w^{(2)}g(Y_{ij},0)|B])(x_i x_j)^2)}\bigg\rangle + O\Big( \frac{1}{\sqrt{N}}\Big).$$
		Moreover as $\partial_w^{(2)}g(Y,0)$ is assumed uniformly bounded over $i,j$, the term in the above expectation is uniformly bounded and therefore the first term is going to zero exponentially fast by \eqref{conc2}. 
	\end{proof}
	
	Later on we take $B=\sigma\{(x_i^0)_{i \leq N}\}$ and use that conditionally on $\bx^0 = (x_i^0)_{i \leq N}$ the $Y_{ij}$ are independent. We finally compare our free energy to those of a spin glass model. It will depend on three matrices:
	\[ \gamma_{ij}= \E_Y[ \partial_w^{(2)} g(Y_{ij},0)\given \bx^0], \quad
	\mu_{ij} =\E_Y [\partial_w g(Y_{ij}, 0) \given \bx^0] ,\quad \sigma^2_{ij} = \E_Y[ (\partial_w g(Y_{ij}, 0)  -\mu_{ij})^2 \given \bx^0].
	\]
	By universality, we will prove that we can replace $\partial_w g( Y_{ij},0)$ by $\sigma_{ij} W_{ij} + \mu_{ij}$ where $W_{ij}$ are iid standard Gaussian variables (under the assumption that $\sqrt{N} \mu_{ij} = O(1)$). 
	\begin{lem}[Universality in Disorder] \label{lem:univ3} 	Assume that 
		\[
		\sup_{i,j} \|\mu_{ij} \|_\infty= O(N^{-1/2}),~\sup_{ij} \|\sigma_{ij}^2 \|_\infty< \infty,~\sup_{i,j}  \bigg\| \frac{ \E_{Y}[ |\partial_w g( Y_{ij},0) - \mu_{ij} |^3|\bx^0] }{ \sigma_{ij}^3 } \bigg\|_\infty < \infty
		\]
		then for any sequence of measurable sets   $A = A_N(\bx^0)\subset \R^{N}$ such that $F_N(0:A)>-\infty$ for $N$ large enough, 
		
		$$F_N(\bar g:A)=F_N(\sigma,\mu,\gamma:A)+O\Big( \frac{ 1}{N^{1/2}} \Big) 
		$$
		where
		$$F_N(\sigma,\mu,\gamma:A)=\E_{W,x^0}\Big[\frac{1}{N}\ln\E_{x}[ 1_{\bx\in A}\exp( H_N(\bx))]\Big]$$
		with 
		\begin{align*}
			H_N(\bx) &=  \sqrt{\frac{1}{N}} \sum_{i<j}\left( \sigma_{ij} W_{ij} x_i x_j + \mu_{ij} x_i x_j \right) + \frac{1}{2N} \sum_{i<j} \gamma_{ij} x^2_i  x^2_j .
		\end{align*}
	\end{lem}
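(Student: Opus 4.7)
The plan is to use a Lindeberg-type invariance principle, swapping the random variables $V_{ij} := \partial_w g(Y_{ij},0) - \mu_{ij}$ for the Gaussians $\sigma_{ij} W_{ij}$ one edge at a time. The structural observation that makes this possible is that, after removing $g(Y_{ij},0)$, the Hamiltonian in $F_N(\bar g:A)$ is
\[
\sum_{i<j} (\mu_{ij} + V_{ij}) \frac{x_i x_j}{\sqrt{N}} + \frac{1}{2N}\sum_{i<j} \gamma_{ij}\, x_i^2 x_j^2,
\]
while $H_N(\bx)$ only differs by the replacement of $V_{ij}$ by $\sigma_{ij} W_{ij}$; the quadratic $\gamma$-term, the mean-shift $\mu_{ij}$, and the set $A(\bx^0)$ are identical on both sides. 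Conditionally on $\bx^0$, the $V_{ij}$ are independent (since the $Y_{ij}$ are), centered, have variance $\sigma_{ij}^2$, and bounded absolute third moments; the Gaussian replacements have the same first two conditional moments. Hence it suffices to bound a telescoping sum of single-edge swaps conditionally on $\bx^0$ and then integrate over $\bx^0$ at the end.

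Fix an edge $e=(i,j)$ and freeze every random variable except the one at $e$. Writing $w_e(\bx) = x_i x_j/\sqrt{N}$, consider
\[
\phi(v) = \log \int \1(\bx \in A)\, \exp\!\Big( v\, w_e(\bx) + R(\bx) \Big)\, d\pP_X^{\otimes N}(\bx),
\]
where $R(\bx)$ collects all other terms. Differentiating under the integral, $\phi^{(k)}(v)$ equals the $k$-th cumulant of $w_e$ under the tilted Gibbs measure. Since Hypothesis~\ref{hypcompact} gives the deterministic bound $|w_e(\bx)| \leq C^2/\sqrt{N}$, we obtain the pointwise estimate
\[
\|\phi'''\|_\infty \leq (2C^2)^3 N^{-3/2},
\]
uniformly in every other random variable. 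Taylor expanding to third order at $v=0$ and taking expectation with respect to the single swapped variable, the linear and quadratic terms cancel by matching of first and second moments, and we are left with
\[
\big|\E[\phi(V_e) - \phi(\sigma_e W_e)]\big| \leq \tfrac{1}{6}\|\phi'''\|_\infty \big(\E|V_e|^3 + \E|\sigma_e W_e|^3\big) = O(N^{-3/2}),
\]
using the uniform bound on $\E[|V_e|^3 \mid \bx^0]/\sigma_e^3$, the uniform bound on $\sigma_e$, and the explicit Gaussian third moment.

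Summing the telescoping contributions over the $\binom{N}{2} = O(N^2)$ edges, integrating over $\bx^0$, and dividing by $N$ gives
\[
\big| F_N(\bar g : A) - F_N(\sigma,\mu,\gamma : A)\big| \leq \frac{1}{N} \cdot O(N^2) \cdot O(N^{-3/2}) = O(N^{-1/2}),
\]
as claimed. The assumption $F_N(0:A) > -\infty$ is used only to ensure that the free energies, and in particular the tilted Gibbs measure appearing in each step of the telescoping, are well defined along the interpolation; the bound itself is uniform in the choice of $A$.

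The main technical delicacy is the uniform-in-all-other-disorder pointwise control $\|\phi'''\|_\infty = O(N^{-3/2})$. It is indispensable because we must absorb $O(N^2)$ swap errors, so any loss of uniformity (e.g.\ a bound that held only in $L^p$ over the rest of the disorder) would destroy the telescoping. This uniformity is a direct manifestation of compact support: the edge variable couples to the bounded functional $w_e(\bx)$, which is deterministically $O(N^{-1/2})$. Everything else in the argument is then a routine moment-matching computation.
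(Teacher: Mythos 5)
Your proof is correct, and it takes a genuinely different route from the paper. The paper follows the Carmona--Hu interpolation scheme (as in \cite[Theorem~3.9]{PBook}): it introduces the continuous interpolation $\sqrt{t}\,\tilde W_{ij}+\sqrt{1-t}\,W_{ij}$ with $\tilde W_{ij}=\sigma_{ij}^{-1}(\partial_w g(Y_{ij},0)-\mu_{ij})$, computes $\phi'(t)$, and invokes the approximate Gaussian integration-by-parts lemma (Lemma~\ref{lem:approxIBP}) for the $\tilde W$-terms together with the exact one for the $W$-terms, so that the quadratic contributions cancel and only an $O(N^{-1/2})$ residue from the third-moment error term remains. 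You instead run a discrete Lindeberg replacement, swapping one edge variable $V_{ij}=\partial_w g(Y_{ij},0)-\mu_{ij}$ for its Gaussian surrogate $\sigma_{ij}W_{ij}$ at a time, and estimate each swap via a third-order Taylor expansion of the tilted log-partition function: the deterministic pointwise bound $\|\phi'''\|_\infty\le 8C^6 N^{-3/2}$ (the third cumulant of the bounded functional $x_i x_j/\sqrt N$ under any Gibbs measure) is exactly the ingredient that makes the $O(N^2)$ telescoping sum tolerable, in the same way that Panchenko's approximate IBP bound $\frac32\|\phi'''\|_\infty\E|x|^3$ is what is summed over $i<j$ in the paper. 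Both arguments are three-moment matching arguments and yield the same rate; the telescoping version is arguably more elementary and makes the exact moment requirements more transparent, while the interpolation version is the standard idiom in the spin-glass literature and naturally produces a one-parameter family of Gibbs measures that is sometimes needed elsewhere (it is not needed here). You are also right that the sole role of $F_N(0:A)>-\infty$ is to guarantee a.s.\ positivity of the restricted partition function so the tilted cumulants are defined along the whole telescope; both proofs use it the same way. One small presentational point: when you freeze ``every random variable except the one at $e$,'' you should make explicit that you are conditioning on $\bx^0$ throughout (since $\mu_{ij},\sigma_{ij}$ and $A=A(\bx^0)$ all depend on it, and conditional independence of the $V_{ij}$ is what licenses the single-edge swap), which you do note at the end but is worth flagging when the telescope is set up.
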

	
	The proof follows from an approximate integration by parts lemma \cite[Lemma 3.7]{PBook} 
	\begin{lem}\label{lem:approxIBP}
		Suppose $x$ is a random variable that satisfies $\E x = 0$, $\E |x|^3 < \infty$. If $f: \R \to \R$ is twice continuously differentiable and $\|f''\|_\infty < \infty$, then
		\[
		| \E x f(x) - \E x^2 \E f'(x)| \leq \frac{3}{2} \| f''\|_\infty \E |x|^3.
		\]
	\end{lem}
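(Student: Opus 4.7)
The plan is to expand $f$ and $f'$ by Taylor's theorem around the origin and exploit the assumption $\E x = 0$ to cancel the leading terms, leaving only third-order remainders that can be controlled by $\|f''\|_\infty$ and moments of $|x|$. This is a short and entirely quantitative argument; the key structural insight is that the linear term of $f$ is exactly what the zero-mean hypothesis kills, producing a ``Stein-type'' identity up to explicit error.

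Concretely, by Taylor's theorem with remainder, there exist random intermediate points $\xi$ and $\eta$ with $|\xi|, |\eta| \leq |x|$ such that
\[
f(x) = f(0) + f'(0)\,x + \tfrac{1}{2} f''(\xi)\,x^2, \qquad f'(x) = f'(0) + f''(\eta)\,x.
\]
Multiplying the first identity by $x$ and taking expectations, the term $f(0)\,\E x$ vanishes by hypothesis, giving
\[
\E[x f(x)] = f'(0)\,\E[x^2] + \tfrac{1}{2}\,\E\bigl[x^3 f''(\xi)\bigr].
\]
Taking expectations in the second identity and multiplying by $\E[x^2]$ gives
\[
\E[x^2]\,\E[f'(x)] = f'(0)\,\E[x^2] + \E[x^2]\,\E\bigl[x f''(\eta)\bigr].
\]
Subtracting, the $f'(0)\,\E[x^2]$ terms cancel exactly, leaving
\[
\E[x f(x)] - \E[x^2]\,\E[f'(x)] = \tfrac{1}{2}\,\E\bigl[x^3 f''(\xi)\bigr] - \E[x^2]\,\E\bigl[x f''(\eta)\bigr].
\]

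To finish, I would pull $\|f''\|_\infty$ out of the expectation in each remainder. The first piece is bounded directly by $\tfrac{1}{2}\|f''\|_\infty\,\E|x|^3$. For the second piece, I would use Lyapunov's (or Hölder's) inequality, which gives $\E|x|^2 \leq (\E|x|^3)^{2/3}$ and $\E|x| \leq (\E|x|^3)^{1/3}$, so that $\E[x^2]\,\E|x| \leq \E|x|^3$, producing the bound $\|f''\|_\infty\,\E|x|^3$. Summing the two contributions yields the claimed constant $\tfrac{1}{2} + 1 = \tfrac{3}{2}$. I do not anticipate any real obstacle: the only nontrivial check is the elementary inequality $\E[x^2]\,\E|x| \leq \E|x|^3$, which is immediate from monotonicity of $L^p$ norms of $|x|$.
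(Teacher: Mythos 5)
Your proof is correct and is essentially the standard argument (it is also the argument behind the cited reference \cite[Lemma~3.7]{PBook}, which the paper invokes without reproving). Two small remarks: first, when you write ``$f(x) = f(0) + f'(0)x + \tfrac{1}{2}f''(\xi)x^2$'' with a random $\xi$, the cleanest way to sidestep any measurability question about $\xi$ is to observe that the remainder itself, $R_1(x) := f(x)-f(0)-f'(0)x$, is a measurable function of $x$ with $|R_1(x)| \le \tfrac12\|f''\|_\infty x^2$ (e.g.\ via the integral form of the remainder), and similarly $R_2(x):=f'(x)-f'(0)$ satisfies $|R_2(x)|\le \|f''\|_\infty|x|$; you then bound $|\E[xR_1(x)]|$ and $\E[x^2]\,\E|R_2(x)|$ directly without ever needing to select $\xi,\eta$ measurably. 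Second, your key elementary step $\E[x^2]\,\E|x| \le \E|x|^3$ via Lyapunov's inequality is exactly what produces the constant $\tfrac12+1=\tfrac32$, and it is correct.
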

	
	\begin{proof}[Proof of Lemma \ref{lem:univ3}]
		We follow the proof of Carmona--Hu \cite{UnivCARMONAHU} presented in \cite[Theorem 3.9]{PBook}. To compare the free energies $F_N(\bar g)$ and $F_N(\sigma, \mu, \kappa)$ we use an interpolation argument. Conditionally on $\bx^0$, consider the interpolating Hamiltonian
		\begin{align*}
			H_{N}(\bx,t) &= \frac{1}{\sqrt{N} } \sum_{i < j} \Big( \sqrt{t} ( \partial_{w}g(Y_{ij},0) - \mu_{ij}) + \sqrt{1 - t} \sigma_{ij} W_{ij} \Big) x_i x_j + \frac{1}{\sqrt{N}} \sum_{i < j} \mu_{ij}  x_i x_j  + \frac{1}{2N} \sum_{i < j} \gamma_{ij}  (x_i x_j)^2
			\\&= \frac{1}{\sqrt{N} } \sum_{i < j} \sigma_{ij} \Big( \sqrt{t} \tilde W_{ij}+ \sqrt{1 - t} W_{ij} \Big)  x_i x_j + \frac{1}{\sqrt{N}} \sum_{i < j} \mu_{ij}  x_i x_j  + \frac{1}{2N} \sum_{i < j} \gamma_{ij}  (x_i x_j)^2
		\end{align*}
		where we defined $\tilde W_{ij} = \sigma_{ij}^{-1} ( \partial_w g( Y_{ij},0) - \mu_{ij})$ to simplify notation. Notice that 
		\[
		\E_{Y}[ \tilde W_{ij}^2\given \bx^0] = \sigma_{ij}^{-2} \E_{Y}[ ( \partial_w g( Y_{ij}, 0) - \mu_{ij})^2|\bx^0] = 1
		\]
		and
		\[
		\E_{Y} [\tilde W_{ij} \given \bx^0] = \sigma_{ij}^{-1} \E_{Y} [ ( \partial_wg(Y_{ij}, 0) - \mu_{ij} )  \given \bx^0] = 0
		\]
		so both $W$ and $\tilde W$ have mean zero and variance 1. We define the interpolating Hamiltonian,
		\[
		\phi(t) = \frac{1}{N} \E_{Y}[ \ln \E_{X} \1(\bx\in A) \exp(  H_N(\bx,t) )|\bx^0], \qquad \langle f\rangle_t=\frac{\int  \1(\bx\in A) f(\bx)  \exp(  H_N(\bx,t) )d\pP^{\otimes N}_X(\bx)}{\int \1(\bx\in A)  \exp(  H_N(\bx,t) )d\pP^{\otimes N}_X(\bx)}
		\]
		and notice
		\begin{equation}\label{eq:univdisorder1}
			\phi'(t) =\E_{W,Y}\left[ \frac{1}{2 \sqrt{t} N^{3/2}} \sum_{i < j} \sigma_{ij}\tilde W_{ij} \langle x_i x_j  \rangle_t - \frac{1}{2 \sqrt{1-t} N^{3/2}} \sum_{i < j} \sigma_{ij} W_{ij} \langle x_i x_j  \rangle_t \given[\bigg]\bx^0\right]
		\end{equation}
		Let $f(\tilde W_{ij}) = \langle x_i x_j  \rangle_t$ 
		(the dependence on $\tilde W$ is in the numerator and denominator in the Gibbs measure). We find that
		\[
		\frac{\partial f}{ \partial \tilde W_{ij} } = \frac{ \sqrt{t} \sigma_{ij}}{ \sqrt{N} } \bigg( \langle (x^{1}_i  x^1_j)^2 \rangle_t - \langle ( x^{1}_i  x^1_j )(x^{2}_i  x^2_j) \rangle_t \bigg)
		\]
		and
		\[
		\frac{\partial^2 f}{ \partial \tilde W^2_{ij} } =  \frac{t \sigma^2_{ij}}{N}   \bigg(\langle (x^{1}_i x^1_j)^3 \rangle_t -2 \langle (x^{1}_i x^1_j )^2(x^{2}_i  x^2_j) \rangle_t - \langle (x^{1}_i  x^1_j)(x^{2}_i  x^2_j)^2 \rangle_t + 2\langle( x^{1}_i  x^1_j )(x^{2}_i x^2_j)(x^{3}_i x^3_j ) \rangle_t \bigg)
		\]
		so the second derivative is bounded by 
		\[
		\frac{\sup_{ij} \|\E_{Y}[ |\partial_w g(Y_{ij},0) - \mu_{ij} |^2 |\bx^0]\|_\infty 6 C^6}{N}
		\]
		where $C$ is such that   $x\in [-C,C]$ almost surely. Applying the approximate integration by parts lemma to $\tilde W$ stated in Lemma~\ref{lem:approxIBP} applied conditionally on $\bx^0$ implies
		\begin{align}
			&\bigg| \E_Y\Big[\frac{1}{2 \sqrt{t} N^{3/2}} \sigma_{ij}\tilde W_{ij} \langle x_i x_j  \rangle_t\given[\Big] \bx^0 \Big] - \frac{\sigma_{ij}^2}{2 N^2} \bigg( \E_Y[ \langle (x^{1}_i  x^1_j)^2 \rangle_t\given \bx^0] - \E_Y [\langle( x^{1}_i  x^1_j )(\bx^{2}_i \bx^2_j) \rangle_t\given\bx^0] \bigg) \bigg| \notag
			\\&\le  \frac{\sup_{ij} \|\E_{Y}[ |\partial_w g(Y_{ij},0) - \mu_{ij} |^2 |\bx^0]\|_\infty 6 C^6 \kappa^3}{N} \cdot \frac{3 \sup_{i,j} \E [ |\tilde W_{i,j}|^3 \given \bx^0 ] }{4N^{3/2}} = O\Big( \frac{ 1}{N^{5/2}} \Big) \label{eq:disorderintbyparts1}
		\end{align}
		by our assumption on the uniform bounds on the conditional expectation of $\tilde W$. The classical integration by parts lemma for Gaussians implies
		\begin{equation}\label{eq:disorderintbyparts2}
			\E_W \bigg[\frac{1}{2 \sqrt{1-t} N^{3/2}} \sigma_{ij} W_{ij} \langle x_i x_j  \rangle_t \bigg]=\frac{ \sigma_{ij}^2}{2 N^2}\E_W \bigg[ \bigg(\langle (\bx^{1}_i \cdot \bx^1_j)^2 \rangle_t - \langle( x^{1}_i \cdot x^1_j )(x^{2}_i  x^2_j) \rangle_t \bigg) \bigg].
		\end{equation}
		
		Summing over $i < j$ in \eqref{eq:univdisorder1} using the computations \eqref{eq:disorderintbyparts1} and \eqref{eq:disorderintbyparts2} gives us the bound
		\[
		|\phi'(t)| \leq O\Big( \frac{ 1}{N^{1/2}} \Big)
		\]
		so that
		\[
		|\phi(1) - \phi(0)| = |  F_N(\bar g :A)-F_N(\sigma,\mu,\kappa:A) | \leq O\Big( \frac{ 1}{N^{1/2}} \Big) .
		\]
	\end{proof}
	
	We now simplify the coefficients $\sigma_{ij}, \mu_{ij}$ and $\gamma_{ij}$ in terms of the constants $\ba, \bb$, and $\bc$ defined in \eqref{eq:delta1}, \eqref{eq:delta2}, \eqref{eq:delta3}. We denote
	\[
	\E_{P_\out(Y\given w)} f(Y) = \int f(Y) d \pP_\out(Y \given w) =  \int f(Y)  e^{g^{0}(Y,w) }dY, \quad  \E_{P_\out(Y\given 0)} f(Y)  =  \int f(Y)  e^{g^{0}(Y,0) }dY\,.
	\] 
	By Taylor's theorem, we see that with $w$ bounded uniformly by  $L^{2}/\sqrt N$
	$$\ln \frac{d P_\out(Y\given w)}{dY}= g^{0}(Y,w)= g^{0}(Y,0)+\partial_{w}g^{0}(Y,0)w+\frac{1}{2}\partial^{(2)}_{w} g^{0}(Y,0)w^{2}+O(\frac{1}{N^{3/2}})\,.$$
	\begin{enumerate}
		\medskip
		\item {\bf The mean is of order $\frac{1}{\sqrt{N}}$  under Hypothesis \ref{hypderiv}:} With $w^{0}_{ij}= x_{i}x_{j}/\sqrt{N}$, and recalling the fact that $\|\partial_w g^0 \|_\infty, \partial_w g^0 \|_\infty < \infty$ by Hypothesis~\ref{hypg}  we have
		\begin{align*}
			\mu_{ij} &= \E_{Y}[ \partial_w g(Y_{ij},0) |\bx^0 ] = \int \partial_w g(y,0) e^{\ln P_\out(y\given w^0_{ij}) } \, dy 
			\\&= \int \partial_w g(y,0) \Big(1 + \partial_w g^0(y,0) w^0_{ij}  + \big( (\partial_w g^0(y,0))^2 + \partial_w^{(2)}g^0(y,0) \big) \frac{(w^0_{ij})^2}{2} + O(N^{-1})\Big) e^{g^0(Y,0)}\, dy
			\\&= \E_{P_\out(Y\given  0)} \partial_w g(Y,0) + \frac{x_i^0 x_j^0}{\sqrt{N}} \E_{P_\out(Y\given  0)} \partial_w g(Y,0)g^0_w(Y,0) + O(N^{-1}) 
			\\&= \E_{P_\out(Y\given  0)} \partial_w g(Y,0) + \frac{x_i^0 x_j^0}{\sqrt{N}} \bb + O(N^{-1})
		\end{align*}
		where the first term vanishes under Hypothesis \ref{hypderiv}.
		\medskip
		
		\item {\bf The variance is of order $1$:}
		\begin{align*}
			\sigma^2_{ij} &= \E_{Y} [(\partial_w g(Y,0))^2 - \mu_{ij}^2 \given \bx^0]
			\\&= \int (\partial_w g(Y,0))^2 e^{g^0(Y,0)} (1 + O(N^{-1/2}) \, dy - \mu_{ij}^2
			\\&= \E_{P_\out(Y \given 0)} (\partial_w g(Y,0))^2 - (\E_{P_\out(Y\given 0)} \partial_w g(Y_{ij},0) )^2 + O(N^{-1/2})
			\\&=  \ba^2 - (\E_{P_\out(Y\given 0)} \partial_w g(Y,0) )^2 + O(N^{-1/2}) )
		\end{align*}
		where the second term vanishes under Hypothesis \ref{hypderiv}.
		\medskip
		
		\item {\bf The $\gamma$ coefficient is of order $1$:} 
		\begin{align*}
			\gamma_{ij} &= \E_{Y}[ \partial_w^{(2)}g(Y,0) \given \bx^0] 
			\\&= \E_{P_\out(Y \given  0)} \partial_w^{(2)}g(Y,0)  + O(N^{-1/2})
			\\&= \bc  + O(N^{-1/2})
		\end{align*}
		by the definition of the coefficients \eqref{eq:delta1} and \eqref{eq:delta3}.
	\end{enumerate}
	When  the term $\E_{P_\out(Y \given 0)} \partial_w g(Y,0) = 0$ as assumed in Hypothesis \ref{hypderiv}, then we conclude that 
	\begin{equation}\label{eq:covariancessimplification}
		\mu_{ij} =  \frac{x_i^0 x_j^0}{\sqrt{N}} \bb  + O(N^{-1}), \quad \sigma_{ij} = \ba  + O(N^{-1/2}), \quad \gamma_{ij} =  \bc   + O(N^{-1/2}).
	\end{equation}
	
	With this in mind, an interpolation argument and Gaussian integration by parts will prove that the Hamiltonian associated with the free energy $F(\sigma,\mu,\gamma)$ given by
	\[
	H_{N,\sigma,\mu,\gamma}(\bx) = \sum_{i<j} \left(\frac{\sigma_{ij} W_{ij}}{\sqrt{N}} (x_i x_j) + \frac{\mu_{ij}}{\sqrt{N}} (x_i x_j) + \frac{1}{2N} \gamma_{ij} (x_i x_j )^2 \right)
	\]
	can be replaced with  
	\begin{align*}
		H_N^\beta(\bx) &= \sum_{i<j} \bigg( \ba \frac{W_{ij}}{ \sqrt{  N}} x_ix_j + \frac{\bb}{ N} (x_ix_j)(x_i^{0} x_j^{0}) +  \frac{1}{2N} \bc  (x_i x_j )^2 \bigg)
	\end{align*}
	defined in \eqref{eq:nonbayesoptimalHamiltonian} without changing the limit of the free energy. 
	\begin{lem}[Reduction to Low Rank Hamiltonian] \label{lem:univ4}
		If Hypothesis~\eqref{hypg},Hypothesis~\eqref{hypcompact} and Hypothesis~\eqref{hypderiv} hold with $\bar\beta=(\beta,\bb,\bc)$, then  for any sequence of measurable sets   $A = A_N(\bx^0)\subset \R^{N}$ such that $F_N(0:A)>-\infty$ for $N$ large enough, 
		
		$$F_N(\sigma,\mu,\kappa:A)=F_N(\bar \beta:A) +O( N^{-1/2})\,.
		$$
		
	\end{lem}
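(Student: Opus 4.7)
The plan is to reduce to a Guerra--Toninelli style interpolation that linearly morphs the coefficients $(\sigma_{ij},\mu_{ij},\gamma_{ij})$ into their leading-order counterparts $(\ba,\tfrac{\bb x_i^0 x_j^0}{\sqrt N},\bc)$, then bound the derivative of the interpolating free energy uniformly in $\bx^0$ using the asymptotic expansion \eqref{eq:covariancessimplification}. Concretely, introduce an independent copy $W'_{ij}$ of the Gaussian field $W_{ij}$, conditionally on $\bx^0$, and define
\begin{align*}
H_N(\bx,t) &= \frac{1}{\sqrt N}\sum_{i<j}\bigl(\sqrt{t}\,\ba\,W'_{ij} + \sqrt{1-t}\,\sigma_{ij}W_{ij}\bigr) x_i x_j \\
&\quad + \frac{1}{\sqrt N}\sum_{i<j}\bigl(t\,\tfrac{\bb x_i^0 x_j^0}{\sqrt N} + (1-t)\mu_{ij}\bigr) x_i x_j + \frac{1}{2N}\sum_{i<j}\bigl(t\bc + (1-t)\gamma_{ij}\bigr)(x_i x_j)^2,
\end{align*}
and set $\phi(t)=\tfrac1N \E_{W,W',Y}[\log\int \1(\bx\in A)e^{H_N(\bx,t)}d\pP_X^{\otimes N}(\bx)\mid \bx^0]$. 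Then $\phi(0)$ coincides in law with $F_N(\sigma,\mu,\gamma:A)$ (conditionally on $\bx^0$) and $\phi(1)=F_N(\bar\beta:A)$, so it suffices to show $\sup_{t\in[0,1]}|\phi'(t)|=O(N^{-1/2})$ uniformly in $\bx^0$, and then integrate against $\bx^0$.

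Computing $\phi'(t)$ produces three groups of terms. The Gaussian part, after a standard Gaussian integration by parts applied to both $W$ and $W'$ conditionally on $\bx^0$ (as in the proof of Lemma~\ref{lem:univ3}), collapses to
\[
\frac{1}{2N^2}\sum_{i<j}(\ba^2-\sigma_{ij}^2)\bigl[\langle (x_i x_j)^2\rangle_t-\langle x_i^1 x_j^1 x_i^2 x_j^2\rangle_t\bigr].
\]
The deterministic parts contribute
\[
\frac{1}{N\sqrt N}\sum_{i<j}\Bigl(\tfrac{\bb x_i^0 x_j^0}{\sqrt N}-\mu_{ij}\Bigr)\langle x_i x_j\rangle_t\quad\text{and}\quad \frac{1}{2N^2}\sum_{i<j}(\bc-\gamma_{ij})\langle (x_i x_j)^2\rangle_t.
\]

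The key input is the uniform expansion \eqref{eq:covariancessimplification}: under Hypotheses~\ref{hypcompact},~\ref{hypg},~\ref{hypderiv}, the compact support of $\pP_0$ makes the error estimates for $\sigma_{ij},\mu_{ij},\gamma_{ij}$ bound sup-norm in $\bx^0$, so $|\ba^2-\sigma_{ij}^2|,|\bc-\gamma_{ij}|=O(N^{-1/2})$ and $|\tfrac{\bb x_i^0 x_j^0}{\sqrt N}-\mu_{ij}|=O(N^{-1})$ uniformly. Since $|x_i|\le C$, every Gibbs bracket is bounded by $C^4$. Summing over the $O(N^2)$ pairs gives
\[
|\phi'(t)|\le \frac{1}{N^2}\cdot N^2\cdot O(N^{-1/2})+\frac{1}{N^{3/2}}\cdot N^2\cdot O(N^{-1})+\frac{1}{N^2}\cdot N^2\cdot O(N^{-1/2})=O(N^{-1/2}),
\]
uniformly in $t$ and $\bx^0$. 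Integrating from $0$ to $1$ and then averaging over $\bx^0$ yields the claim.

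The only conceptual subtlety—and where one must be careful—is that the constraint $\1(\bx\in A)$ depends on $\bx^0$ but is independent of $W,W'$, so throughout the interpolation one freezes $\bx^0$ and performs the Gaussian integration by parts conditionally; the assumption $F_N(0:A)>-\infty$ guarantees that the normalizing denominators are strictly positive for $N$ large so that $\phi(t)$ is differentiable on $[0,1]$. No additional spin-glass machinery is needed beyond what was already used in Lemma~\ref{lem:univ3}; the argument here is in fact easier because the $\mu$ and $\gamma$ contributions are purely algebraic and do not require the approximate integration by parts of Lemma~\ref{lem:approxIBP}.
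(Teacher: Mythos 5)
Your proof is correct and follows essentially the same route as the paper: a Guerra--Toninelli interpolation between $H_{N,\sigma,\mu,\gamma}$ and $H_N^{\bar\beta}$, Gaussian integration by parts conditionally on $\bx^0$, and the uniform bounds on $|\ba^2-\sigma_{ij}^2|$, $|\bc-\gamma_{ij}|$, $|\tfrac{\bb x_i^0x_j^0}{\sqrt N}-\mu_{ij}|$ from \eqref{eq:covariancessimplification} to control $\phi'(t)$. In fact your interpolating Hamiltonian is written more carefully than the paper's (where a $t$ should read $1-t$ in the deterministic terms of the second sum), but the argument is the same.
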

	
	\begin{proof}
		Consider the interpolating Hamiltonian,
		\begin{align*}
			H_N(t,\bx) &= \sum_{i<j} \frac{\sqrt{t} \sigma_{ij} W_{ij}}{\sqrt{N}} (x_i  x_j) + \frac{t\mu_{ij}}{\sqrt{N}} (x_i  x_j) + \frac{t}{2N} \gamma_{ij} ( x_i  x_j )^2
			\\&\quad + \sum_{i<j} \frac{\sqrt{1-t} \ba \tilde W_{ij}}{ \sqrt{ N}} (x_i x_j) + \frac{t\bb }{ N}(x_i^0 x_j^0) (x_i x_j) + \frac{t}{2N} \bc  (x_i x_j )^2.
		\end{align*}
		where $W$ and $\tilde W$ are independent standard Gaussians. If we define
		\[
		\phi(t) = \frac{1}{N} \E_{W,\tilde W, x^0} \ln \E_x  \1(\bx\in A) e^{H_N(t,\bx)}
		\]
		then 
		\begin{align*}
			\phi'(t) &= \frac{1}{N} \E \bigg\langle \partial_t H_N(t,\bx) \bigg\rangle_t
			\\&=\frac{1}{N}\E \bigg(\sum_{i<j} \frac{\sigma_{ij} W_{ij}}{2 \sqrt{t} \sqrt{N}} \langle x_i x_j \rangle_t  + \frac{\mu_{ij}}{\sqrt{N}} \langle x_i x_j \rangle_t + \frac{1}{2N} \gamma_{ij} \langle (x_i x_j)^2 \rangle_t \bigg)
			\\&\quad - \frac{1}{N} \E\bigg( \sum_{i<j} \frac{\ba \tilde W_{ij}}{2 \sqrt{1 - t} \sqrt{  N}} \langle x_i  x_j\rangle_{t} + \frac{\bb (x_i^0 x_j^0)}{ N} \langle x_i x_j \rangle_t + \frac{1}{2N} \bc \langle (x_i x_j )^2 \rangle_t \bigg)
		\end{align*}
		where $\langle \cdot \rangle_t$ is the average with respect to the Gibbs measure $G_t \propto  \1(\bx\in A) e^{H_N(t,\bx)}$. Recall that \eqref{eq:covariancessimplification}.	This implies that clearly the $\mu_{ij}$ and $\gamma_{ij}$ cancel with the non-Gaussian terms in the summation up to some $O( N^{-1/2})$ error. If we integrate by parts, then
		\[
		\frac{1}{N}\E \sum_{i<j} \frac{\sigma_{ij} W_{ij}}{2 \sqrt{t} \sqrt{N}} \langle x_i x_j \rangle_t = \frac{1}{2N^2} \sum_{i < j} \sigma_{ij}^2\big( \langle (x_i^1 x_j^1)^2 \rangle_t - \langle (x_i^1 x_j^1)(x_i^2  x_j^2) \rangle_t\big)
		\]
		and
		\[
		\frac{1}{N}\E \sum_{i<j} \frac{\ba \tilde W_{ij}}{2 \sqrt{1 - t} \sqrt{ N}} \langle x_i x_j \rangle_t = \frac{\ba^2}{2 N^2} \sum_{i < j} \big( \langle (x_i^1 x_j^1)^2 \rangle_t - \langle (x_i^1 x_j^1)(x_i^2 x_j^2) \rangle_t\big)
		\]
		so the difference of the Gaussian terms are also $O( N^{-1/2})$. Therefore,
		\[
		|\phi'(t)| = O( N^{-1/2}),
		\]
		which completes the proof. 
	\end{proof}
	
	\begin{rem}
		Notice that $\mu_{ij} = O( \frac{1}{\sqrt{N}} )$, so one of the hypothesis in the universality theorem is automatically satisfied if $\E_{P_\out(Y \given 0)} \partial_w g(Y,0) = 0$.
	\end{rem}
	
	We now have all the parts to conclude the universality result.
	
	\begin{proof}[Proof of Proposition~\ref{prop:universality}]
		Combine the results from Lemma~\ref{lem:univ1}, Lemma~\ref{lem:univ2}, Lemma~\ref{lem:univ3}, and Lemma~\ref{lem:univ4}.
	\end{proof}

	\section{Weak Large deviation Upper Bound}\label{sec:upbd}
	We will prove the weak large deviation upper bound of Theorem \ref{technicalldp}.
	In this section we consider the case where the measurable set $A$ is equal to the open ball where the overlaps $R_{1,1}=\frac{1}{N}\sum_{i=1}^{N} x_{i}^{2}, R_{1,0}=\frac{1}{N}\sum_{i=1}^{N} x_{i}x_{i}^{0}$ are close to some given values. Recall that we denoted this ball: 
	\begin{equation}\label{eq:integrationregion2}
		\Sigma_\epsilon(S,M) = \{ \bx \in \R^N \mmm |R_{1,1} - S| < \epsilon, |R_{1,0} - M| < \epsilon \}, 
	\end{equation}
	and let $F_{N}(\bar\beta:  \Sigma_\epsilon(S,M))$ be the free energy constrained to this ball:
	$$F_{N}(\bar\beta:  \Sigma_\epsilon(S,M))=
	\frac{1}{N} \bigg( \E_Y  \Big(\log \int \1(\bx\in \Sigma_\epsilon(S,M)) e^{H_N^{\bar\ba}(\bx)} \, d \pP_X^{\otimes N}(\bx)   \Big) \bigg),.$$

	
	We begin by proving that the Parisi functional is an upperbound of the constrained free energy, namely the upper bound in Theorem \ref{technicalldp}.
	\begin{prop}[Large Deviation Upper Bound of the Free Energy] \label{prop:upbd}
		There exists a universal  finite constant $L$ such that for every	 $S,M\in \cC$, and every  real numbers $\mu,\lambda$, we have 
		\[
		F_{N}(\bar\beta:  \Sigma_\epsilon(S,M)) \leq 
		\bigg(- \mu S - \lambda M + \E_{0} X_{0}(\lambda,\mu,Q,\zeta)[x^{0}] - \frac{\ba^2}{4} \sum_{k = 0}^{r - 1} \zeta_k ( Q^2_{k + 1} - Q^2_k ) \bigg) + \frac{\bb}{2} M^{2} + \frac{\bc}{4} S^{2} \]
		\[ +   L\varepsilon(|\mu|+|\lambda|)+o_{N,\epsilon}(1) \,.
		\]
		where $X_0$ was defined in \eqref{recur} and $\E_0$ is the average with respect to $x^0 \sim \pP_0$. 	Moreover $o_{N,\epsilon}(1)=O(\varepsilon)+O(N^{-1})$ is independent of $\lambda,\mu$.
	\end{prop}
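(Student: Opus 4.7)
My strategy combines three standard ingredients: an exponential tilt by Lagrange multipliers $\mu,\lambda\in\R$ to remove the overlap constraints, pointwise replacement of the $\bb$ and $\bc$ quadratic pieces of $H_N^{\bar\beta}$ by their values at $(S,M)$ on $\Sigma_\epsilon(S,M)$, and Guerra's Ruelle probability cascade interpolation applied to the remaining Gaussian SK part with the single-body external field $\mu\sum_i x_i^2 + \lambda\sum_i x_i x_i^0$. The underlying observation is that on $\Sigma_\epsilon(S,M)$ the only genuinely random and non-quadratic piece of $H_N^{\bar\beta}(\bx)$ is the SK-type interaction $\frac{\ba}{\sqrt N}\sum_{i<j}W_{ij}x_ix_j$, so once the ferromagnetic and self-overlap pieces are frozen the classical Parisi RSB upper bound applies without modification.

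\textbf{Steps 1--2 (Tilt and freeze).} On $\Sigma_\epsilon(S,M)$ one has $|R_{11}-S|<\epsilon$ and $|R_{10}-M|<\epsilon$, so for any $\mu,\lambda\in\R$
\[
\1_{\Sigma_\epsilon(S,M)}(\bx) \leq \exp\bigl( \mu N(R_{11}-S) + \lambda N(R_{10}-M) + (|\mu|+|\lambda|)N\epsilon \bigr),
\]
which after integration produces the $-\mu S - \lambda M$ contribution together with the $L\epsilon(|\mu|+|\lambda|)$ error. Expanding squares gives
\[
\sum_{i<j}\frac{\bb}{N}(x_i x_j)(x_i^0 x_j^0) = \frac{\bb N}{2} R_{10}^2 + O(1), \qquad \sum_{i<j}\frac{\bc}{2N}(x_i x_j)^2 = \frac{\bc N}{4} R_{11}^2 + O(1),
\]
and combined with $|R_{10}^2-M^2|, |R_{11}^2-S^2| = O(\epsilon)$ on $\Sigma_\epsilon(S,M)$ this freezes these two contributions at the deterministic values $\frac{\bb N M^2}{2} + \frac{\bc N S^2}{4}$, up to an additive $O(N\epsilon)+O(1)$ error that is independent of $\mu,\lambda$.

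\textbf{Step 3 (Guerra's bound).} After Steps 1--2 the remaining task is to show, for every $r$ and every $\zeta, Q$ satisfying \eqref{eq:zetaseq1}--\eqref{eq:Qseq1},
\[
\frac{1}{N}\E\log\int \exp\!\Big(\frac{\ba}{\sqrt N}\sum_{i<j}W_{ij}x_ix_j + \mu\sum_{i}x_i^2 + \lambda\sum_i x_i x_i^0\Big) d\pP_X^{\otimes N}(\bx) \leq \E_0 X_0(\lambda,\mu,Q,\zeta) - \frac{\ba^2}{4}\sum_{k=0}^{r-1}\zeta_k(Q_{k+1}^2-Q_k^2).
\]
This is Guerra's classical RSB upper bound for the SK Hamiltonian with bounded spins and random single-body external field $\mu x^2 + \lambda x x^0$. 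Interpolating between the SK covariance $R_{ab}^2$ and the tree-structured covariance of a Ruelle probability cascade and integrating by parts in $W$ produces an interpolation derivative of definite sign; the $t=0$ endpoint reduces via the recursion \eqref{recur} to $\E_0 X_0(\lambda,\mu,Q,\zeta)$, while the quadratic $\frac{\ba^2}{4}\sum_k\zeta_k(Q_{k+1}^2-Q_k^2)$ term is exactly the one produced by the cascade covariance kernel. Concatenating the three bounds yields the proposition with $o_{N,\epsilon}(1) = O(\epsilon)+O(N^{-1})$ uniform in $\mu,\lambda$.

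\textbf{Main obstacle.} The only non-routine conceptual point is the presence of the random external field $\lambda x x^0$ with $x^0\sim\pP_0$ independent of $W$. Guerra's interpolation commutes with the $\E_0$ average because $x^0$ enters only through the single-site cascade recursion, but this should be recorded carefully so that the $t=0$ endpoint is really $\E_0 X_0(\lambda,\mu,Q,\zeta)$ rather than a functional of the empirical field. This averaged bound will suffice here; upgrading it to the almost-sure quenched statement needed in Section~\ref{sec:ldlb} and Section~\ref{proofthmquenched} requires additional concentration arguments that are deferred to those sections.
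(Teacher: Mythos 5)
Your plan reverses the order of operations in a way that breaks the argument. You propose to (i) tilt out the constraint with Lagrange multipliers \emph{first}, and then (ii) apply Guerra's RSB interpolation to the \emph{unconstrained} partition function with the extra single-body field $\mu\sum x_i^2+\lambda\sum x_i x_i^0$. The bound you claim in Step~3, namely
\[
\frac{1}{N}\E\log\int e^{\frac{\ba}{\sqrt N}\sum_{i<j}W_{ij}x_ix_j + \mu\sum_i x_i^2+\lambda\sum_i x_ix_i^0}\,d\pP_X^{\otimes N}
\le \E_0 X_0 - \frac{\ba^2}{4}\sum_{k}\zeta_k(Q_{k+1}^2-Q_k^2),
\]
with $Q_r=S$ prescribed by \eqref{eq:Qseq1}, is \emph{not} the classical Guerra bound, and it is false in general. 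After Gaussian integration by parts and completing the square, the interpolation derivative has the form
\[
\phi'(t)\;=\;\frac{\ba^2}{4}\,\E\big\langle (R_{1,1}-Q_r)^2\big\rangle_t \;-\;\frac{\ba^2}{4}\,\E\big\langle (R_{1,2}-Q_{\alpha^1\wedge\alpha^2})^2\big\rangle_t + O(N^{-1}),
\]
and the first term is \emph{positive}. The constant-in-$t$ Lagrange fields $\mu\sum x_i^2+\lambda\sum x_ix_i^0$ do not appear in $\partial_t H$ and therefore cannot cancel it. For a generic compactly supported $\pP_X$ the self-overlap $R_{1,1}$ is not fixed, so $\E\langle(R_{1,1}-S)^2\rangle_t$ is of order one, not $o_\epsilon(1)$, and $\phi(1)\le\phi(0)$ fails. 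In other words, once you have integrated away the indicator, you have thrown away precisely the restriction $|R_{1,1}-S|\le\epsilon$ that makes the positive term negligible.

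The paper avoids this by keeping the indicator $\1_{\Sigma_\epsilon(S,M)}$ \emph{inside} the interpolating free energy $\phi_N^{S,M,\epsilon}(t)$ (Lemma~\ref{lem:upbdinterpolation}), so that $R_{1,1}\approx S=Q_r$ for all $t$, making the bad term at most $\ba^2C^2\epsilon$. Only at the $t=0$ endpoint — where the Hamiltonian contains \emph{no} $W$-disorder, only the single-site cascade fields $Z_i(\alpha)$ — is the constraint dropped via the Lagrange tilt, and at that stage the self-overlap issue no longer arises because there is no $(R_{1,1},R_{1,2})$-quadratic exchange to complete. Your Steps~1--2 and the cascade recursion computations match the paper's, but you must interchange the order: interpolate under the constraint first, then tilt and compute the $t=0$ endpoint. (Also, note that your pairing $\mu\leftrightarrow R_{1,1}$, $\lambda\leftrightarrow R_{1,0}$ is the opposite of the one the paper's $X_r$ uses, where $\lambda$ multiplies $x^2$ and $\mu$ multiplies $xx^0$; this is cosmetic but worth fixing to avoid confusion.)
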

	First notice that it is enough to consider the case where $\bb=\bc=0$ since 
	\begin{align}
		F_{N}(\bar\beta:  \Sigma_\epsilon(S,M))
		&=\frac{1}{N} \E \log \int_{\Sigma_\epsilon(S,M)} e^{H^{SK}_N(\bx)} \, d\pP_X^{\otimes N}(\bx) + \bb \frac{M^2}{2} +\bc  \frac{S^2}{4}  + O(\epsilon) \label{eq:upbdsksplit}
	\end{align}
	where
	\begin{align*}
		H^{SK}_N(\bx) &= \ba \sum_{i<j}  \frac{W_{ij}}{ \sqrt{  N}} x_ix_j .
	\end{align*}
	We therefore  focus on proving an upper bound for the term
	\begin{equation}\label{eq:SKFEupbd}
		F_N^{SK} (\Sigma_{\epsilon}(S,M)): = \frac{1}{N} \E \log \int_{\Sigma_\epsilon(S,M)} e^{H^{SK}_N(\bx)} \, d\pP_X^{\otimes N}(\bx)
	\end{equation}
	because the other terms are constant. The goal of this section is to prove the following statement.
	\begin{prop}[Large Deviation Upper Bound of the SK Free Energy] \label{prop:upbd:SK}
		There exists a universal constant $L$ that is independent of $N$ such that for every	 $S,M\in \cC$, and every  real numbers $\mu,\lambda$, we have 
		\[
		F_N^{SK}(\Sigma_{\epsilon}(S,M)) \leq \bigg(- \mu S - \lambda M + \E_{0} X_{0}  - \frac{\ba^2}{4}  \sum_{k = 0}^{r - 1} \zeta_k ( Q^2_{k + 1} - Q^2_k ) \bigg)  + L \epsilon ( |\mu| + |\lambda| ) +o_{N,\epsilon}(1)
		\]
		where $X_0$ was defined in \eqref{recur} and $\E_0$ is the average with respect to $x^0 \sim \pP_0$.
	\end{prop}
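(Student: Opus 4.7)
The plan is to combine an exponential tilt on the two order parameters $(R_{1,1},R_{1,0})$ with Guerra's Ruelle probability cascade (RPC) interpolation. The indicator $\1_{\Sigma_\epsilon(S,M)}$ is difficult to keep inside a spin-glass computation, so the idea is to first remove it by tilting and then apply the classical Parisi upper bound to the resulting tilted SK free energy.

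\textbf{Step 1 (tilting).} For any $\mu,\lambda\in\R$, on $\Sigma_\epsilon(S,M)$ one has $|\mu(R_{1,1}-S)+\lambda(R_{1,0}-M)|\le\epsilon(|\mu|+|\lambda|)$, hence
\[
\1_{\Sigma_\epsilon(S,M)}(\bx)\le\exp\bigl(N\mu(R_{1,1}-S)+N\lambda(R_{1,0}-M)+N\epsilon(|\mu|+|\lambda|)\bigr).
\]
Using $NR_{1,1}=\sum_i x_i^2$ and $NR_{1,0}=\sum_i x_i x_i^0$, this yields
\[
F_N^{SK}(\Sigma_\epsilon(S,M))\le -\mu S-\lambda M+\epsilon(|\mu|+|\lambda|)+G_N(\lambda,\mu),
\]
where $G_N(\lambda,\mu):=\frac{1}{N}\E\log\int e^{H_N^{SK}(\bx)+\mu\sum_i x_i^2+\lambda\sum_i x_i x_i^0}\,d\pP_X^{\otimes N}(\bx)$. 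It is then enough to prove, for every admissible triple $(r,\zeta,Q)$,
\[
G_N(\lambda,\mu)\le\E_0 X_0(\lambda,\mu,Q,\zeta)[x^0]-\frac{\ba^2}{4}\sum_{k=0}^{r-1}\zeta_k(Q_{k+1}^2-Q_k^2)+O(N^{-1}).
\]

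\textbf{Step 2 (Guerra RPC interpolation).} I would interpolate between the SK Hamiltonian and a cascade of independent Gaussian external fields. Fix a tree of depth $r$ with cascade weights $(v_\alpha)$ of parameters $\zeta$, and attach independent centered Gaussian fields $z_i^k(\alpha)$ depending only on the first $k+1$ cascade coordinates, with $\Var(z_i^k(\alpha))=Q_{k+1}-Q_k$. Define
\[
\varphi(t)=\frac{1}{N}\E\log\sum_\alpha v_\alpha\int\exp\bigl(\sqrt{t}\,\ba\sum_{i<j}W_{ij}\tfrac{x_i x_j}{\sqrt N}+\sqrt{1-t}\,\ba\sum_{i,k}z_i^k(\alpha)\,x_i+\mu\sum_i x_i^2+\lambda\sum_i x_i x_i^0\bigr)d\pP_X^{\otimes N}(\bx).
\]
At $t=1$ the cascade decouples from the integrand, giving $\varphi(1)=G_N(\lambda,\mu)+O(N^{-1})$ (the correction coming from the diagonal $i=j$ in passing from $\sum_{i<j}$ to $\sum_{i,j}$). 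At $t=0$ the spins decouple across sites and the classical recursive evaluation of RPC expectations, which is precisely the recursion \eqref{recur}, produces $\varphi(0)=\E_0 X_0(\lambda,\mu,Q,\zeta)[x^0]$, the tilts $\mu x^2+\lambda x x^0$ being absorbed into the innermost single-site integrand defining $X_r$. A direct Gaussian integration by parts in both $W$ and $z$, combined with the ultrametric covariance of the cascade (which collapses replica averages into the weights $\zeta_k$ through the standard cascade identities), yields Guerra's identity
\[
\varphi'(t)=-\frac{\ba^2}{4}\sum_{k=0}^{r-1}\zeta_k(Q_{k+1}^2-Q_k^2)-\frac{\ba^2}{4}\E\bigl\langle(R_{1,2}-Q_{\alpha^1\wedge\alpha^2})^2\bigr\rangle_t+O(N^{-1}),
\]
where $\alpha^1\wedge\alpha^2$ is the deepest common ancestor of two independent replicas. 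The second term is nonpositive, so integrating over $t\in[0,1]$ closes Step~2.

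\textbf{Main obstacle.} The only delicate point is the role of the single-site perturbations $\mu\sum_i x_i^2$ and $\lambda\sum_i x_i x_i^0$: the latter depends on the quenched signal $\bx^0$, while the former tilts the self-overlap $R_{1,1}$, which is not fixed for soft spins as it would be for Ising spins. Neither disrupts the Guerra argument, since both terms depend on a single replica only and therefore do not contribute to the two-replica Gaussian integration by parts that produces $\varphi'(t)$; they merely modify the base single-site integrand that defines $X_r$, with the $\bx^0$-dependence becoming the outer expectation $\E_0$ by the product structure of $\pP_0^{\otimes N}$. Combining Steps~1 and~2 then delivers the announced estimate with $L=1$ and $o_{N,\epsilon}(1)=O(N^{-1})$, uniformly in $\mu,\lambda$.
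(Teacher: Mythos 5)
Your plan reverses the order of the two main steps in the paper's proof (tilt then interpolate, rather than interpolate then tilt), and this reversal breaks down for soft spins at the crucial place. The paper keeps the constraint $\Sigma_\epsilon(S,M)$ inside the interpolating free energy $\phi_N^{S,M,\epsilon}(t)$ throughout Guerra's interpolation (Lemma~\ref{lem:upbdinterpolation}), and only after arriving at the RPC-decoupled bound \eqref{eq:upbdintermediate} does it introduce the Lagrange multipliers $\mu,\lambda$ to drop the constraint on the now single-site problem. You instead drop the constraint before interpolating, producing the unconstrained tilted free energy $G_N(\lambda,\mu)$, and then interpolate. But then your claimed formula for $\varphi'(t)$ is wrong: it omits the positive term
\[
\frac{\ba^2}{4}\,\E\bigl\langle (R_{1,1}-Q_r)^2 \bigr\rangle_t,
\]
which the Gaussian integration by parts of the $W$-part of the SK Hamiltonian with itself necessarily produces (it is the ``self-replica'' piece of $\E\langle R_{1,1}^2 - R_{1,2}^2\rangle_t$ after completing the square). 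In the paper's argument this term is $O(\epsilon)$ precisely because the Gibbs measure is restricted to $\Sigma_\epsilon(S,M)$, where $R_{1,1}\approx S = Q_r$. In your unconstrained $G_N(\lambda,\mu)$ nothing forces $R_{1,1}$ near $S$, so this term is order one for generic $\mu,\lambda,\zeta,Q$; taking $\mu=\lambda=0$ and a prior $\pP_X$ with $\E_{\pP_X}[x^2]\ne S$ already gives a counterexample to your intermediate inequality for $G_N(0,0)$.

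Your ``Main obstacle'' paragraph misdiagnoses the danger. You correctly observe that the single-replica tilts $\mu\sum_i x_i^2$ and $\lambda\sum_i x_i x_i^0$ do not enter the two-replica Gaussian integration by parts — that much is fine. But the problematic self-overlap square is not generated by the tilts; it is generated by the SK disorder $W$ against itself. Removing the constraint does not remove this term, it only removes your control over it. To repair the argument you would either have to re-insert the constraint during interpolation (i.e., follow the paper's order of operations), or let $Q_r$ track the tilted measure's actual self-overlap rather than fixing $Q_r = S$, which changes the resulting formula and no longer matches the definition \eqref{recur} of $X_0$ used in the statement.
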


	We now state the analogue of the replica symmetry breaking formula.
	Let $r > 1$ and consider parameters
	\begin{equation}\label{eq:zetaseq}
		\zeta_{-1} = 0 < \zeta_0< \dots < \zeta_{r-1}\le 1
	\end{equation}
	and sequence
	\begin{equation}\label{eq:Qseq}
		0 = Q_0 \leq Q_1 \leq \dots \leq Q_{r-1} \leq Q_r =  S.
	\end{equation}
	Let $v_\alpha$ be the weights of the Ruelle probability cascades \cite[Chapter 2]{PBook}, corresponding to \eqref{eq:zetaseq}. Recall that the Ruelle probability cascades is a random probability measure on $\N^r$, the leaves of the infinite rooted tree with depth $r$ encoded by the sequence of parameters $\zeta$. Every leaf of the tree $\alpha = (n_1, \dots, n_r) \in \N^r$ can be encoded by a path along the vertices,
	\[
	\alpha_{|1} = (n_1), ~\alpha_{|2} = (n_1,n_2),~ \dots,~ \alpha_{|r-1} = (n_1,n_2, \dots, n_{r-1}),~ \alpha = \alpha_{|r} =  (n_1, \dots, n_r)
	\]
	with the convention that $\alpha_{|0} = \emptyset$ is the root of the tree, and $k \leq r$ denotes the distance from the vertex $\alpha_{|k} \in \N^k$ to the root. Each vertex $\beta_{|k} = (n_1, \dots, n_{k-1}, n_k )$ of the tree will be associated with a random variable $u_{\beta_{|k}}$ defined as follows: Let $\beta_{|k - 1} = (n_1, \dots, n_{k-1})$ denote the parent of $\beta_{|k}$ and let
	\[
	u_{(\beta_{|k - 1} , 1)} > u_{(\beta_{|k - 1},2)} > \dots > u_{(\beta_{|k - 1},n_k)} > \dots.
	\]
	be the points from a Poisson process with mean measure $\zeta_{k-1} x^{-1 - \zeta_{k-1}}$ arranged in decreasing order, and define
	\[
	u_{\beta_{|k}} = u_{ (n_1, \dots, n_{k-1}, n_k ) } = u_{(\beta_{|k - 1},n_k)}.
	\]
	We further assume that these points are generated independently for different parent vertices. For each leaf $\alpha \in \N^r$, the weights of the Ruelle probability cascades $v_\alpha$ is the product of these points along the path from the root to the leaf:
	\[
	v_\alpha = \frac{u_{\alpha_{|1}} \cdots u_{\alpha_{|r}}}{ \sum_{\beta\in\mathbb N^{r}} u_{\beta_{|1}} \cdots u_{\beta_{|r}}  }.
	\]
	
	We consider the Gaussian processes $Z(\alpha)$ and $Y(\alpha)$ indexed by points on the infinite tree $\N^r$ with covariances
	\[
	\E Z(\alpha^1) Z(\alpha^2) = Q_{\alpha^1 \wedge \alpha^2} \quad \E Y(\alpha^1)  Y(\alpha^2)  = \frac{1}{2} Q^2_{\alpha^1 \wedge \alpha^2}.
	\]
	The notation $\alpha^1 \wedge \alpha^2$ denotes the least common ancestor of the paths leaves $\alpha^1$ and $\alpha^2$ of the infinite tree indexed by $\N^r$,
	\[
	\alpha^1 \wedge \alpha^2 = \min\Big\{0 \leq j \leq r \mmm \alpha_{|1}^1 = \alpha_{|1}^2, \dots, \alpha_{|j}^1 = \alpha_{|j}^2, \alpha_{|j + 1}^1 \neq \alpha_{|j +1}^2  \Big\}
	\]
	Notice that we are off by a factor $\frac{1}{2}$ in comparison to the usual SK models because we sum over $i < j$ in these problems. We let $Z_i(\alpha)$ be independent copies of $Z(\alpha)$ and we consider the interpolating Hamiltonian
	\begin{align*}
		H_N(t,\bx,\alpha) &=  \sum_{i<j}  \frac{\sqrt{t} \ba W_{ij}}{ \sqrt{  N}} x_ix_j  + \sum_{i \leq N}  \sqrt{(1-t)  }\ba Z_i(\alpha) x_i + \sqrt{t} \ba \sqrt{N} Y(\alpha) .
	\end{align*}
	We define the constrained interpolating free energy as
	\begin{align*}
		\phi_N^{S,M,\epsilon}(t) 
		&:= \frac{1}{N} \E \log \sum_\alpha v_\alpha \int_{\Sigma_\epsilon(S,M)} e^{ H_N(t,\bx,\alpha) } \, d\pP_X^{\otimes N}(\bx) .
	\end{align*}
	A standard interpolation argument will give us an upper bound of the free energy.

	\begin{lem}[Guerra's Interpolation] \label{lem:upbdinterpolation}
		We have
		\[
		\phi_{N}^{S,M,\epsilon}(1)  \leq \phi_{N}^{S,M,\epsilon}(0) + O(\epsilon)+O(\frac{1}{N})
		\]
		where $O(\epsilon)$ is uniform in $N,S$ and $M$ and $O(\frac{1}{N})$ is uniform in $\epsilon,S$ and $M$. 
	\end{lem}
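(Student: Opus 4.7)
The plan is to show $\frac{d}{dt}\phi_N^{S,M,\epsilon}(t)\leq O(\epsilon)+O(1/N)$ uniformly on $(0,1)$ and integrate. Denoting $\phi_N(t):=\phi_N^{S,M,\epsilon}(t)$ and $\langle\cdot\rangle_t$ the corresponding Gibbs average on $(\bx,\alpha)$, differentiation gives
\[
\phi_N'(t)=\frac{1}{N}\E\bigg\langle\sum_{i<j}\frac{\ba W_{ij}}{2\sqrt{t}\sqrt{N}}x_ix_j-\sum_{i}\frac{\ba Z_i(\alpha)}{2\sqrt{1-t}}x_i+\frac{\ba\sqrt{N}}{2\sqrt{t}}Y(\alpha)\bigg\rangle_t.
\]
I would apply Gaussian integration by parts to each of the three independent sources $W$, $Z$, $Y$ (the latter two being Gaussian processes on $\N^r$ with tree covariances $Q_{\alpha^1\wedge\alpha^2}$ and $\tfrac12 Q^2_{\alpha^1\wedge\alpha^2}$, so the appropriate IBP is the standard one for Ruelle cascade interpolations, cf.\ \cite[Ch.~3]{PBook}). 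The singular prefactors $1/\sqrt{t}$ and $1/\sqrt{1-t}$ cancel against the $\sqrt{t}$ and $\sqrt{1-t}$ appearing in $H_N$, producing a smooth expression in $t$.

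A direct computation of the three IBP contributions yields, respectively,
\[
\tfrac{\ba^2}{2N^2}\sum_{i<j}\E\bigl[\langle x_i^2x_j^2\rangle_t-\langle x_i^1x_j^1x_i^2x_j^2\rangle_t\bigr]=\tfrac{\ba^2}{4}\bigl[\E\langle R_{1,1}^2\rangle_t-\E\langle R_{1,2}^2\rangle_t\bigr]+O(1/N),
\]
\[
-\tfrac{\ba^2}{2}\bigl[S\,\E\langle R_{1,1}\rangle_t-\E\langle Q_{\alpha^1\wedge\alpha^2}R_{1,2}\rangle_t\bigr],\qquad
\tfrac{\ba^2}{4}\bigl[S^2-\E\langle Q^2_{\alpha^1\wedge\alpha^2}\rangle_t\bigr],
\]
where I used $Q_r=S$ so the diagonal self-variance of $Z_i$ and $Y$ equals $S$ and $\tfrac12 S^2$. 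Summing the three gives
\[
\phi_N'(t)=\tfrac{\ba^2}{4}\E\langle R_{1,1}^2\rangle_t-\tfrac{\ba^2}{2}S\E\langle R_{1,1}\rangle_t+\tfrac{\ba^2}{4}S^2-\tfrac{\ba^2}{4}\E\langle(R_{1,2}-Q_{\alpha^1\wedge\alpha^2})^2\rangle_t+O(1/N).
\]
The key step is now invoking the constraint: on $\Sigma_\epsilon(S,M)$ we have $|R_{1,1}-S|\le\epsilon$, hence $\E\langle R_{1,1}^2\rangle_t=S^2+O(\epsilon)$ and $S\,\E\langle R_{1,1}\rangle_t=S^2+O(\epsilon)$ with the implicit constants uniform in $(S,M)\in\mathcal{C}$ since $\mathcal C$ is bounded. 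The three $S^2$ contributions then telescope to $O(\epsilon)$, leaving
\[
\phi_N'(t)=-\tfrac{\ba^2}{4}\E\langle(R_{1,2}-Q_{\alpha^1\wedge\alpha^2})^2\rangle_t+O(\epsilon)+O(1/N)\leq O(\epsilon)+O(1/N).
\]

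Integrating over $t\in[0,1]$ yields $\phi_N(1)-\phi_N(0)\leq O(\epsilon)+O(1/N)$, which is the claim. The main technical point is that in the unconstrained SK setting the $R_{1,1}$-dependent terms would not cancel, and it is precisely the localization $|R_{1,1}-S|\leq\epsilon$ that lets us match the self-overlap created by $Z_i$ and $Y$ to the diagonal of $W$; everything else is routine. The uniformity in $\epsilon$ of the $O(1/N)$ error and uniformity in $N,S,M$ of the $O(\epsilon)$ error follows because both come from elementary bounds (respectively, the discrepancy between $\sum_{i<j}$ and $\tfrac12(\sum_i)^2$, and the $\epsilon$-perturbation of $R_{1,1}$) that only use the compact support hypothesis on $\pP_X$.
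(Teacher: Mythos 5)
Your proof is correct and follows essentially the same route as the paper's: differentiate $\phi_N^{S,M,\epsilon}(t)$, apply Gaussian integration by parts to $W$, $Z$, and $Y$ separately, complete the square in $R_{1,2}$ against $Q_{\alpha^1\wedge\alpha^2}$, and use the constraint $|R_{1,1}-S|\leq\epsilon$ together with $Q_{\alpha^1\wedge\alpha^1}=Q_r=S$ to absorb the remaining $(R_{1,1}-S)^2$ term into $O(\epsilon)$. The only cosmetic difference is that you substitute $Q_{\alpha^1\wedge\alpha^1}=S$ immediately rather than keeping the diagonal overlap symbolic until the final completed square.
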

	\begin{proof}
		We denote in short $\phi$ for $\phi_N^{S,M,\epsilon}$ during the proof. 
		We have
		\begin{align*}
			\phi'(t) &= \frac{1}{N} \E \bigg\langle \frac{\partial H_N(t,\bx,\alpha)}{\partial t} \bigg\rangle_t
			\\&= \frac{1}{N} \E \bigg\langle \sum_{i<j} \ba \frac{ W_{ij}}{2 \sqrt{t} \sqrt{  N}} x_ix_j - \sum_{i \leq N} \frac{\ba}{2\sqrt{1 - t} } Z_i(\alpha) x_i + \frac{\ba\sqrt{N} }{2\sqrt{t}  } Y(\alpha)\bigg\rangle_t
		\end{align*}
		where $\langle \cdot \rangle_t$ is the average  under  $dG_N^t$  associated with the interpolating Hamiltonian : for a test function $f$
		\[
		\int f(\bx,\alpha) d G_N^t(\bx,\alpha) =\frac{\sum_{\alpha \in\N^{r}}\int_{\Sigma_\epsilon(S,M) }f(\bx,\alpha)  e^{H_N(t;\bx,\alpha) } v_\alpha d \pP(\bx) }{ \sum_\alpha \int_{\Sigma_\epsilon(S,M)} e^{H_N(t;\bx,\alpha) } v_\alpha d \pP(\bx) }.
		\]
		Integrating by parts  the Gaussian process $W,Y,Z$ (see \cite[Lemma~1.4]{PBook})  shows that $\phi'(t)$ equals 
		\begin{align*}
			& \frac{1}{N} \E \bigg\langle \sum_{i<j} \frac{\ba^2}{2  N} x^1_ix^1_j x^1_i x^1_j - \frac{\ba^2}{2  N} x^1_ix^1_j x^2_i x^2_j  \bigg\rangle_t
			- \frac{1}{N} \E \bigg\langle \sum_{i \leq N} \frac{\ba^2}{2 } Q_{\alpha^1 \wedge \alpha^1} x^1_i x^1_i - \frac{\ba^2}{2 } Q_{\alpha^1 \wedge \alpha^2} x^1_i x^2_i \bigg\rangle_t
			\\&+ \frac{1}{N} \E \bigg\langle \frac{N \ba^2 }{4 } Q_{\alpha^1 \wedge \alpha^2}^2 - \frac{N \ba^2}{4} Q_{\alpha^1 \wedge \alpha^1}^2  \bigg\rangle_t
			\\&= \E \bigg\langle \frac{\ba^2}{4 } R_{1,1}^2 - \frac{\ba^2}{4  } R_{1,2}^2 -  \frac{\ba^2}{2 } Q_{\alpha^1 \wedge \alpha^1} R_{1,1} + \frac{\ba^2}{2 } Q_{\alpha^1 \wedge \alpha^2} R_{1,2} + \frac{\ba^2 }{4 } Q_{\alpha^1 \wedge \alpha^1}^2 - \frac{\ba^2}{4} Q_{\alpha^1 \wedge \alpha^2}^2  \bigg\rangle_t + O\Big( \frac{1}{N} \Big) \,.
		\end{align*}
		The error $O(\frac{1}{N})$ comes from the diagonal terms and is uniform. 
		The self overlap terms from the integration by parts are cancelled off and the diagonals are of order $\frac{1}{N}$. We can simplify the upper bound further by completing the squares to conclude that
		\[
		\phi'(t) \leq - \frac{\ba^2}{4} \E \langle (R_{1,2} - Q_{\alpha^1 \wedge \alpha^2} )^2 \rangle_t 
+ \frac{\beta^2}{4} \E \langle (R_{1,1} - Q_{\alpha^1 \wedge \alpha^1} )^2 \rangle	+o_{N,\epsilon}(1) .	\]
The positive quadratic term is small because $\alpha^{1}\wedge\alpha^{1}=r$ and  $Q_{r}=S$ and $R_{11} \approx S$ on the set $\Sigma_{\epsilon}(S,M)$, so we can absorb it into the error term $o_{N,\epsilon}(1)=O( \frac{1}{N})+\beta^{2}C^{2}\epsilon $.
		
		We conclude that
		\[
		\phi'(t) \leq o_{\epsilon,N}(1)
		\]
		Integrating with respect to $t$ implies that $\phi(1) \leq \phi(0) + o_{\epsilon,N}(1)
		$.
	\end{proof}
	From Lemma~\ref{lem:upbdinterpolation}, we have shown that
	\begin{align*}
		F^{SK}_N(\Sigma_{\epsilon}(S,M))&+\frac{1}{N} \E \log \sum_\alpha v_\alpha  e^{ \ba \sqrt{N}  Y(\alpha)  } =\phi_{N}^{S,M,\epsilon}(1)\le \phi_{N}^{S,M,\epsilon}(0)  + o_{\epsilon,N}(1)
		\qquad\\
		&
		\le \frac{1}{N} \E \log \sum_\alpha v_\alpha \int_{\Sigma_\epsilon(S,M)} e^{\ba\sum_{i \leq N}  Z_i(\alpha) x_i } \, d \pP_X^{\otimes N} (\bx) + o_{\epsilon,N}(1)
	\end{align*}
	and therefore
	\begin{align}
		F^{SK}_N(\Sigma_{\epsilon}(S,M))& \le  \frac{1}{N} \E \log \sum_\alpha v_\alpha \int_{\Sigma_\epsilon(S,M)} e^{\ba\sum_{i \leq N}  Z_i(\alpha) x_i } \, d \pP_X^{\otimes N} (\bx)\nonumber\\
		&
		- \frac{1}{N} \E \log \sum_\alpha v_\alpha  e^{ \sqrt{N} \ba Y(\alpha)  } + o_{\epsilon,N}(1)
		\label{eq:upbdintermediate}
	\end{align}
	where the error terms are independent of our choice of $S$ and $M$. To write the upper bound in the form appearing in Proposition~\ref{prop:upbd} we have to compute the average of the terms that depend on $\alpha$. These averages with respect to the Ruelle probability cascades variable $\alpha$ can be computed using the following recursive formulation from \cite[Theorem 2.9]{PBook}.
	\begin{lem}[Averages with Respect to the Ruelle Probability Cascades ] \label{lem:RPCavg}
		Let $C: \R \to \R$ be an increasing non-negative function. Suppose that there exists a Gaussian process $g(\alpha)$ by $\alpha \in \N^r$ with covariance
		\[
		\E g(\alpha^1) g(\alpha^2) = C( Q_{\alpha^1 \wedge \alpha^2} )
		\]
		independent of $v_\alpha$. For a function $f: \R \to \R$ we define
		\[
		X_r = f\Big( \sum_{k = 1}^r ( C( Q_k ) - C( Q_{k - 1} )  )^{1/2} z_{ k}  \Big) \qquad X_{p} = \frac{1}{\zeta_p} \log \E_{z_{k + 1}} e^{\zeta_p X_{p + 1}} \quad \text{for $0 \leq p \leq r-1$}
		\]
		where $z_k$ are iid standard Gaussians. If $\E e^{\zeta_{r-1}  X_r } < \infty$ then
		\[
		\E \log \sum_{\alpha} v_\alpha e^{ f ( g(\alpha) ) } = X_0.
		\]
		The average on the outside is over the randomness in the Gaussian processes and the random measure $v_\alpha$.  
	\end{lem}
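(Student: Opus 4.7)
The plan is to prove this by induction on the tree depth $r$, leveraging the cascade structure of both the Gaussian process and the Poisson-Dirichlet weights.

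First I would represent $g$ as a sum of independent Gaussians attached to the nodes of the tree. For each $\beta \in \N^k$ with $1 \le k \le r$, introduce iid standard Gaussians $z_\beta$ independent of the cascade weights, and set
$$g(\alpha) = \sum_{k=1}^r (C(Q_k) - C(Q_{k-1}))^{1/2} z_{\alpha_{|k}}.$$
Since $C$ is non-negative and increasing the coefficients are real, and two leaves $\alpha^1,\alpha^2$ share exactly the Gaussians above their least common ancestor, so $\E g(\alpha^1) g(\alpha^2) = C(Q_{\alpha^1 \wedge \alpha^2})$ as required. This representation aligns the Gaussian process with the branching of the cascade.

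The second ingredient is the fundamental averaging identity for the underlying Poisson point processes: if $(\eta_n)$ are the points of a PPP with intensity $\zeta x^{-1-\zeta}$ and $(F_n)$ are iid copies of $F$, independent of $(\eta_n)$, with $\E e^{\zeta F}<\infty$, then $(\eta_n e^{F_n/\zeta})$ is again a PPP, with intensity $\E[e^{\zeta F}]\,\zeta x^{-1-\zeta}$. Taking logarithms and comparing total masses yields the single-level identity
$$\E \log \sum_n \tilde v_n\, e^{F_n} = \frac{1}{\zeta}\log \E e^{\zeta F},\qquad \tilde v_n := \frac{\eta_n}{\sum_m \eta_m}.$$
With these two ingredients the induction runs as follows. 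The base case $r=1$ is immediate, with $\zeta=\zeta_0$ and $F = f((C(Q_1))^{1/2} z_1)$. For the inductive step I would group leaves by their parent at depth $r-1$: by construction the children weights $\{u_{(\beta,n)}\}_n$ form an independent PPP with intensity $\zeta_{r-1} x^{-1-\zeta_{r-1}}$ for each $\beta \in \N^{r-1}$, independent of the Gaussians $z_{\alpha_{|k}}$ for $k<r$. Conditioning on all Gaussians outside level $r$ and on the weights above level $r$, the inner average over the children of each $\beta$ becomes exactly the setup of the averaging identity and produces the first step of the recursion, converting $f$ into $X_{r-1}$ as a function of the outer Gaussians. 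What remains is a problem of the same form at depth $r-1$, and the inductive hypothesis returns $X_0$.

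The main difficulty is not computational but one of bookkeeping: one must justify that conditioning on the outer Gaussians and outer weights preserves both the independence of the inner PPPs across different parents $\beta$ and their intensity, so that the PD identity applies with the correct conditional $F$. This relies on the joint independence of the $z_\beta$ across levels together with the independence of $u_\beta$ from the $z_\beta$, which is built into the cascade construction. In parallel, the integrability condition $\E e^{\zeta_{p-1} X_p} < \infty$ must propagate down the recursion from the hypothesis at level $r$; this follows from Jensen's inequality applied to the exponential combined with $\zeta_{p-1} < \zeta_p$, so each application of the averaging identity at the next level is legitimate and the recursion closes cleanly at $X_0$.
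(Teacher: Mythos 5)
Your proposal is correct and follows essentially the same route as the paper's proof sketch: represent the Gaussian process $g(\alpha)$ as a sum of independent node variables along the path to the root, invoke the Poisson point process invariance $(\eta_n e^{F_n/\zeta}) \stackrel{d}{=} (\E e^{\zeta F})^{1/\zeta}(\eta_n)$ to get the single-level identity, and then induct from the leaves to the root. The paper delegates the full bookkeeping to \cite[Theorem 2.9]{PBook}; your added discussion of conditioning and of propagating the integrability condition down the recursion is the content of that reference.
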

	
	\begin{proof}
		The proof can be found in \cite[Theorem 2.9]{PBook}. Essentially the special covariance structure of $g(\alpha)$ that depends only on the branching points of the rooted tree allows us to compute the expected values recursively from the leaves of $\N^r$ to its root. 
		
		Let's start with the case when $r = 1$ for simplicity. The result follows from the following invariance property of the Ruelle probability cascades: if $u_n$ are points from a Poisson process with mean measure $\mu(dx) = \zeta x^{-1-\zeta}$ arranged in decreasing order and if $X_n$ is another iid sequence of random variables independent of $u_n$, then the Poisson processes  $(u_n X_n)$ and $(\E X^\zeta)^{\frac{1}{\zeta}} u_n$ have the same mean measures. Taking the logarithms imply that
		\[
		\E \log \sum_{n \geq 1} u_n X_n = \E \log \sum_{n \geq 1} u_n + \frac{1}{\zeta} \log \E X^\zeta\implies \E \log \sum_{n \geq 1} v_n X_n = \frac{1}{\zeta} \log \E X^\zeta,
		\]
		provided that all the terms are well defined, which is explained in more detail in \cite[Lemma~2.2]{PBook}. 
		
		The case when $r > 1$ follows by induction and using the fact that the children at each level $k$ of the tree are generated indpendently from the mean measure $\mu(dx) = \zeta_k x^{-1-\zeta_k}$. The Gaussian process $g(\alpha)$ can also be defined as the sum of independent random variables along the vertices of the paths to the roots. Indeed, for every vertex $0 \leq p \leq r$ and vertex $(n_1,\dots,n_p) \in \N^p$, we can associate it with an independent standard Gaussian random variable $z_{(n_1,\dots, n_p)}$. For $\alpha \in \N^r$, we see that
		\[
		g(\alpha) \stackrel{d}{=} \sum_{k = 0}^{r-1} ( C(Q_{k + 1}) - C(Q_{k}) )^{\frac{1}{2}} z_{\alpha_{|k}}
		\]
		In particular, $g(\alpha)$ is independent of $v_\alpha$. Furthermore, if we denote $\mathscr{F}_k$ to be $\sigma$-algebra generated by the random variables on the vertices indexed by points in $\N^k, \N^{k - 1},\dots, \N$ then $z_{\alpha|k + 1}$ and $z_{\alpha|k + 1}$ is independent $\mathscr{F}_k$. The formula now follows from induction along the levels of the tree conditionally on $\mathscr{F}_k$. The details of this computation can be found in \cite[Theorem 2.9]{PBook}.
		
	\end{proof}
	
	We can now simplify the terms in \eqref{eq:upbdintermediate} to arrive at the upper bound stated in Propositions~\ref{prop:upbd:SK} and ~\ref{prop:upbd} .
	
	\begin{proof}[Proof of Proposition~\ref{prop:upbd:SK}]
		The second term in \eqref{eq:upbdintermediate} with the $Y(\alpha)$ Gaussian processes can be computed explicitly using Lemma~\ref{lem:RPCavg} applied to the process $Y(\alpha)$, $C(x)=x^2 $,
		and $f(y) = \sqrt{N} \ba y$
		\begin{equation}\label{eq:ycomputation}
			\frac{1}{N} \E \log \sum_\alpha v_\alpha  e^{ \sqrt{N}\ba Y(\alpha) } = \frac{\ba^2}{4 } \sum_{k = 0}^{r - 1} \zeta_k ( Q^2_{k + 1} - Q^2_k ) \,.
		\end{equation}
		Indeed, we then have $X_{r}= \frac{ \sqrt{N} \ba}{\sqrt{2}} \sum_{k=1}^{r} (Q_{k}^{2}-Q^{2}_{k-1})^{1/2} z_{k}$ and therefore
		$$X_{k}=\frac{1}{\zeta_{k}}\E_{z_{k+1}}[e^{\zeta_{k}X_{k+1}}]$$
		is such that
		$$X_{r-1}= \frac{ \sqrt{N}\ba}{\sqrt{ 2} }\sum_{k=1}^{r-1} (Q_{k}^{2}-Q^{2}_{k-1})^{1/2} z_{k}+\frac{N\ba^2}{4}(Q^{2}_{r}-Q^{2}_{r-1})\zeta_{r},\cdots, X_{0}=\frac{N}{4} \sum (Q_{k+1}^{2}-Q^{2}_{k})\zeta_{k+1}$$

		To explicitly compute the first term in \eqref{eq:upbdintermediate}, we will need to remove the constraint on the domain. We do this by introducing Lagrange mulitiplier terms to ensure that the upper bound is sharp after minimizing over these new parameters (see Lemma~\ref{lem:sharpupbd}). For parameters $\lambda$ and $\mu$, we have on $\Sigma_{\epsilon}(S,M)$ that
		\[
		\bigg| \frac{\lambda}{N} \sum_{i = 1}^N x_i x_i - \lambda S \bigg| \leq \epsilon | \lambda|
		\mbox{ 
			and }
		\bigg| \frac{\mu}{N} \sum_{i = 1}^N x_i x_i^0 - \mu M \bigg| \leq \epsilon | \mu|.
		\]
		By adding and subtracting $\frac{\lambda}{N} \sum_{i = 1}^N x_i x_i$ and $\frac{\mu}{N} \sum_{i = 1}^n x_i x_i^0$ from the exponents, we see that for any real numbers $\mu,\lambda$, 
		\begin{align*}
			\Lambda_{N}:=& \frac{1}{N} \E \log \sum_\alpha v_\alpha \int_{\Sigma_\epsilon(S,M)} e^{\sum_{i \leq N} \ba Z_i(\alpha) x_i
			} \, d \pP_X^{\otimes N} (\bx) 
			\\&\leq \epsilon ( |\mu| + |\lambda| ) - \mu S - \lambda M + \frac{1}{N} \E \log \sum_\alpha v_\alpha \int e^{\sum_{i \leq N}\left\lbrace \ba Z_i(\alpha) x_i + \lambda x_i^2 + \mu x_i x_i^0 \right\rbrace  } \, d \pP_X^{\otimes N} (\bx) 
		\end{align*}
		where the second integral is an unconstrained integral. This upper bound can be computed recursively using Lemma~\ref{lem:RPCavg} on $g(\alpha) = Z(\alpha)$ and $f(z) = \log \int e^{\sum_{i = 1}^N \ba z_i x_i  + \lambda x_i^2 + \mu x_i x_i^0  } \, d \pP^N_X (\bx) $ and independence of the random variables. If we define
		\[
		X_{r} = \log  \int e^{\sum_{i = 1}^N \ba \sum_{j = 1}^r z_{j,i} x_i + \lambda x_i^2 + \mu x_i x_i^0  } \, d \pP_X^N (\bx) = \sum_{i = 1}^N \log  \int e^{ \ba \sum_{j = 1}^r z_{j,i} x_i + \lambda x_i^2 + \mu x_i x_i^0  } \, d \pP_X (x_i)
		\]
		where $z_{j,i}$ are independent for $j,i$ and
		\[
		\Var(z_{j,i}) = Q_j - Q_{j-1}
		\]
		and define recursively for $0 \leq j \leq r-1$
		\[
		X_{j,i} = \frac{1}{\zeta_j} \log \E_{z_{j + 1,i}} e^{\zeta_j X_{j + 1,i}} \qquad X_{r,i} = \log  \int e^{ \ba \sum_{j = 1}^r z_{j,i} x_i + \lambda x_i^2 + \mu x_i x_i^0  } \, d \pP_X (x_i),
		\]
		and if $\E_{z_j}$ denotes the expected value with respect to $z_{j,1}, \dots, z_{j,N}$,
		\[
		X_j = \sum_{i = 1}^N X_{j,i} = \frac{1}{\zeta_j} \log \E_{z_{j + 1}} e^{\zeta_j \sum_{i = 1}^N X_{j + 1,i}}
		\]
		Lemma~\ref{lem:RPCavg} and \eqref{eq:ycomputation} applied to \eqref{eq:upbdintermediate}, and the fact that $\frac{1}{N} \E_0 \sum_{i = 1}^N X_{0,i} = \E_{0} X_{0,1} = \E_{0} X_{0}$ as defined in \eqref{recur} imply that
		\[
		F_N^{SK}(\Sigma_{\epsilon}(S,M)) \leq- \mu S - \lambda M + \E_{0} X_0 - \frac{\ba^2}{4} \sum_{k = 0}^{r - 1} \zeta_k ( Q^2_{k + 1} - Q^2_k ) +  \epsilon ( |\mu| + |\lambda| ) +o(\epsilon) + o(N^{-1}).
		\]
		Proposition \ref{prop:upbd:SK} follows.
	\end{proof}
	

	\section{Large deviation lower bound}\label{sec:ldlb}

	We now derive the matching lower bound of the free energy, namely the lower bound of Theorem \ref{technicalldp}. In fact, we prove a slightly stronger result concerning the quenched  restricted free energy: 
	
	\[
	F_N^{Y}( \bar\ba : A) = \frac{1}{N}   \log \int \1(\bx\in A) e^{H_N^{\bar\ba}(\bx)} \, d \pP_X^{\otimes N}(\bx) 
	\]
	and,  recalling that the Hamiltonian $H_{N}^{\bar\ba}$ depends on  $Y=(W,\bx^{0})$,  denote $\E[ .|\bx^{0}]=E_{W}$  the expectation conditionnally to $\bx^{0}$, namely with respect to $W$ only. Recall $\varphi_{\bar\ba}$ is defined in \eqref{defvarphi}. 

Since atypical values of $\bx^0$ can cause infinite values of the random constrained free energy
	\[
	\frac{1}{N} \log \int_{\Sigma_{\epsilon}(S,M)} e^{H_N(\bx)} \, d \pP_X^{\otimes N}(x)
	\]
	when $\{ \bx \mmm |R_{1,0}-M|\le\epsilon \} = \emptyset$, we need to restrict our analysis to avoid these atypical values. Let
	\[
	\hat \pP_0 = \frac{1}{N} \sum_{i = 1}^N \delta_{x_i^0}
	\]
	denote the empirical measure of $\bx^0$. The Wasserstein 1 metric on $\mathcal{P}(\R)$ is given by
	\[
	d(\pP,\pQ) = \sup \bigg\{  \bigg| \int f \, d \pP - \int f \, d \pQ \bigg| \mmm \|f\|_L \leq 1 \bigg\}
	\] 
	where $\|f\|_L$ is the smallest Lipschitz constant of $f$.
	We denote the $\delta$ ball of empirical measures with
	\[
	\cB_\delta = \{ \bx^{0}\in\mathbb R^{N}:d( \hat \pP_0, \pP_0 ) \leq \delta \} .
	\]
	We may restrict $\bx^0$ to this set without changing the limit of the free energy by Lemma~\ref{lemdelta}. A specific rate of decay for $\delta$ can be chosen later.

	\begin{prop}[Lower Bound of the Free Energy] \label{prop:lwbd} Assume that $\pP_{X}$ and $\pP_{0}$ satisfy Hypothesis \ref{hypcompact}.  For any real numbers $\ba,\bb,\bc$,
		for any $S, M \in \cC$,  for any $\epsilon>0$, any $\delta>0$ small enough, we have
		$$ \liminf_{N \to \infty}  \E \1_{\cB_\delta} [F_{N}^{Y}(\bar\beta:\Sigma_{\epsilon}(S,M))|\bx^{0}]\ge \varphi_{\bar\ba}(S,M)+O(\delta)$$	
	\end{prop}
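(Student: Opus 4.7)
The plan is to implement the cavity program sketched at the end of the outline of Section~\ref{sec:setting}: reduce to the pure SK part of $H_N^{\bar\beta}$, mollify the hard overlap constraint on the set $\cB_\delta$, add a Parisi-type perturbation to enforce a localized form of the Ghirlanda--Guerra identities, run an Aizenman--Sims--Starr cavity argument on the constrained free energy, and finally extend the bound from exposed points of $\cC$ to all of $\cC$ by a Rockafellar / G\"artner--Ellis argument. The starting observation is that on $\Sigma_\epsilon(S,M)$ the two non-SK pieces of $H_N^{\bar\beta}$ are essentially deterministic:
\[
\sum_{i<j}\frac{\bb}{N}(x_ix_j)(x_i^{0}x_j^{0})=\frac{\bb N}{2}R_{1,0}^2+O(1)=\frac{\bb N}{2}M^2+O(\epsilon N),
\]
and similarly $\sum_{i<j}\tfrac{\bc}{2N}(x_ix_j)^2=\tfrac{\bc N}{4}S^2+O(\epsilon N)$; these exactly match the $\tfrac{\bb}{2}M^2+\tfrac{\bc}{4}S^2$ terms of $\varphi_{\bar\ba}(S,M)$. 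It therefore suffices to prove the analogous lower bound for $F_N^{SK}(\Sigma_\epsilon(S,M))$ against the SK part of $\varphi_{\bar\ba}$, i.e.\ to match Proposition~\ref{prop:upbd:SK}.

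The main technical nuisance is that $\1_{\Sigma_\epsilon(S,M)}$ depends discontinuously on the external randomness $\bx^{0}$, which would prevent classical concentration of the constrained free energy. To bypass this I would first localize to $\cB_\delta$, where the empirical law of $\bx^{0}$ is within $\delta$ of $\pP_0$, and then replace the hard indicator by a smooth mollification supported on a slightly enlarged set. On $\cB_\delta$ the law of $(R_{1,0},R_{1,1})$ under $\pP_X^{\otimes N}$ carries enough entropy with respect to Lebesgue measure for this smoothing to cost only $O(\epsilon+\delta)$ in the free energy; the details of this step are the content of Section~\ref{sec:indicator}. Next I would add to $H_N^{SK}$ a small random Parisi-type perturbation $s_N g_{\mathrm{pert}}(\bx,\bx^{0})$ of vanishing order, chosen so that after averaging over an auxiliary one-dimensional parameter the perturbed constrained Gibbs measure satisfies a localized Ghirlanda--Guerra identity involving both the replica--replica overlaps $R_{\ell,\ell'}$ and the planted overlaps $R_{\ell,0}$. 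As in the classical setting this forces ultrametricity of the joint overlap array, producing the tree structure underlying the Ruelle probability cascades.

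Next I would run the Aizenman--Sims--Starr cavity scheme on the increment $F_{N+1}^{SK}(\Sigma_\epsilon(S,M))-F_N^{SK}(\Sigma_\epsilon(S,M))$ and relax the hard constraints $R_{1,1}\approx S$, $R_{1,0}\approx M$ using Lagrange multipliers $\lambda,\mu$ (mirroring the way multipliers were introduced in the upper bound of Section~\ref{sec:upbd}). Ultrametricity of the joint overlaps from the previous step identifies the limiting random structure with the Ruelle cascades of \eqref{eq:zetaseq1}--\eqref{eq:Qseq1}, and the cavity computation yields, for every admissible $(\zeta,Q)$ and every $(\lambda,\mu)\in\R^2$,
\[
\liminf_N\E\,\1_{\cB_\delta}\!\bigl[F_N^{SK}(\Sigma_\epsilon(S,M))\mid\bx^{0}\bigr]\ge -\mu S-\lambda M+\E_0 X_0(\lambda,\mu,Q,\zeta)-\frac{\ba^2}{4}\sum_{k=0}^{r-1}\zeta_k(Q_{k+1}^2-Q_k^2)+O(\epsilon+\delta),
\]
with error terms \emph{uniform} in $(\lambda,\mu)$ on bounded sets; this uniformity is crucial for the next step.

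I expect the boundary of $\cC$ to be the main obstacle. The cavity bound above is naturally sharp only at points which are exposed for the concave function being optimized in $\varphi_{\bar\ba}$; when $(S,M)\in\partial\cC$ the infimum in the definition of $\varphi_{\bar\ba}$ may only be attained along sequences with $|\lambda|+|\mu|\to\infty$, so naive optimization over $(\lambda,\mu)$ fails. To handle this I would imitate the final step of the proof of the G\"artner--Ellis theorem: use Rockafellar's theorem on essentially smooth convex functions to approximate any $(S,M)\in\partial\cC$ by a sequence of exposed points of the Legendre transform in $(\lambda,\mu)$, and exploit the uniformity of the cavity error terms (exactly the reason that smoothing and entropy control were required in the second paragraph) to pass to the limit. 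Supremizing over $(\lambda,\mu,\zeta,Q)$ and reinstating the deterministic $\bb$, $\bc$ pieces from the first paragraph then produces $\varphi_{\bar\beta}(S,M)+O(\delta)$, which is the conclusion of the proposition.
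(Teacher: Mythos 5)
Your proposal follows the paper's own proof essentially step for step: reduction to the SK part on $\Sigma_\epsilon(S,M)$ (with $\bb$, $\bc$ contributing constants $\tfrac{\bb}{2}M^2+\tfrac{\bc}{4}S^2+O(\epsilon)$), smoothing the indicator on $\cB_\delta$, a Parisi-type perturbation enforcing Ghirlanda--Guerra identities, the Aizenman--Sims--Starr cavity scheme with Lagrange multipliers $(\lambda,\mu)$, and a Rockafellar / G\"artner--Ellis argument (Lemma~\ref{lem:sharpupbd}) to extend beyond exposed points of $\cC$. One small imprecision: in the paper the perturbation and the resulting Ghirlanda--Guerra identities (Theorem~\ref{thm:GGI}) involve only the replica--replica overlaps $\hat R_{\ell,\ell'}$ (with the self-overlap fixed to $S$ by the rescaling \eqref{eq:modifiedcoords}), not the planted overlaps $R_{\ell,0}$, which are instead pinned near $M$ by the smoothed constraint $\chi_{M,\epsilon}$ and absorbed into the cavity functionals via the continuity lemma (Lemma~\ref{lem:continuityarray}) --- but this does not alter the structure of the argument.
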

	Again, it is enough to concentrate on the case where $\bb=\bc=0$ since the corresponding terms are almost constants on $\Sigma_{\epsilon}(S,M)$.   We therefore in the rest of this section restrict ourselves to $\bb=\bc=0$. The partition function is  then the standard SK Hamiltonian $H_{N}^{SK}(\bx)$ with constrained self overlaps $R_{11}$ and magnetizations $R_{10}$. We will use a regularizing perturbation and the cavity computations to compute the first term. Moving forward, we will focus on proving a lower bound for
	\begin{eqnarray}\label{freeSK}
		F_{N}^{SK}(\Sigma_{\epsilon}(S,M))(\bx^{0})&:= &\E[F_{N}((\ba,0,0):\Sigma_{\epsilon}(S,M))|\bx^{0}]\nonumber\\
		&=&\frac{1}{N} \E_{W} \log\int_{\Sigma_\epsilon(S,M)} e^{ \sum_{i<j} \ba\frac{W_{ij}}{ \sqrt{  N}} x_ix_j } \, d \pP_X^{\otimes N} (\bx)
	\end{eqnarray}
	uniformly on $\bx^{0}$ in $ {\cB_\delta}$. We often denote in short $F_{N}^{SK,\epsilon}(S,M)=F_{N}^{SK}(\Sigma_{\epsilon}(S,M))(\bx^{0})$  for simplicity.
	
	We will proceed using the cavity computations on the localized free energies to discover that the lower bound of the free energy is a continuous functional of the distribution of the overlap array generated by samples from a Gibbs measure. The key intuition behind this proof is that the constrained array of configurations
	\[
	(R_{\ell,\ell'} )_{\ell,\ell' \geq 0}
	\]
	has constant diagonals (after a small change variables) so we only need to understand the distribution of the offdiagonal elements $R_{\ell,\ell'}$ for $\ell \neq \ell'$ and $\ell, \ell' > 0$. Arrays of this form are well studied and its limiting distribution can be characterized if it satisfies an invariance property called the Ghirlanda--Guerra identities. 
	
	The main difficulty in contrast to the usual spin glass models is that the restriction $\1(|R_{1,0} - M|\le\epsilon)$ depends on $\bx^0$, so extra care has to be done to verify that the crucial concentration of measure and annealed large deviations estimates hold in the setting. For technical reasons, it will be easier to work with a $C^1$ approximation of the indicator function and a restriction of the empirical measure of finite samples from $\pP_0$. These will be explained in the following subsections.

	\subsection{Large Deviations under the reference measure}
	
	To compute the lower bound, we will have to restrict ourselves to values of $(S,M)$ such that $\Sigma_{\epsilon}(S,M)$ has  finite entropy. In this section, we will explicitly compute a large deviations rate function for the reference measure. Recall the following notation
	\begin{equation}\label{eq:convexhulls}
		S \in \mathrm{conv}\{ x^2 \mmm x \in \supp(\pP_X) \} =:\cS \qquad M \in \mathrm{conv}\{ x x^0 \mmm x \in \supp(\pP_X), x^0 \in \supp(\pP_0) \} =: \cM
	\end{equation}
	where $\mathrm{conv}$ is the closed convex hull. In other words, 
	\begin{equation}\label{eq:supportconstraint}
		\inf_{x \in \supp(\pP_X)} x^2 \leq S \leq \sup_{x \in \supp(\pP_X)} x^2 \quad\text{and}\quad \inf_{\substack{x \in \supp(\pP_X) \\ x^0 \in \supp(\pP_0)}} xx^0 \leq M \leq  \sup_{\substack{x \in \supp(\pP_X) \\ x^0 \in \supp(\pP_0)}} xx^0.
	\end{equation}
	Moreover, we also know that by Cauchy-Schwarz inequality, $S,M$ must satisfy
	\begin{equation}\label{CS} 
		M\le \sup_{ x^0 \in \supp(\pP_0)} |x^0| \sqrt{S}
	\end{equation}
	In fact, we more precisely  see that  $(S,M)$ should belong to the set $\mathcal C$ defined in \eqref{defC} since we have:
	\begin{lem} For any $
		\delta>0$, for any real numbers $r,t$ in $[-1,1]^{2}$,
		$$\E_{x^{0}}[\mbox{essinf}_{x}\{r x^{2}+ tx x^{0}\}]-C^{2}\delta\le r R_{1,1}+t R_{1,0} \le  \E_{x^{0}}[\mbox{esssup}_{x}\{r x^{2}+ tx x^{0}\}]+C^{2}\delta\,,$$
		uniformly on $x^{0}\in \cB_{\delta}$. 
	\end{lem}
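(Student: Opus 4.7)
The plan is to prove the two-sided inequality pointwise inside the sum defining $rR_{1,1}+tR_{1,0}$, and then pass to an expectation against $\pP_0$ using the Wasserstein-$1$ hypothesis that $\bx^0\in\cB_\delta$.

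First, I would write
\[
rR_{1,1}+tR_{1,0}=\frac{1}{N}\sum_{i=1}^N\bigl(r x_i^2+t x_i x_i^0\bigr).
\]
Since $x_i$ lies in the support of $\pP_X$, we have for every $i$ the deterministic inequalities
\[
\operatorname*{essinf}_{x\sim\pP_X}\{r x^2+t x x_i^0\}\ \le\ r x_i^2+t x_i x_i^0\ \le\ \operatorname*{esssup}_{x\sim\pP_X}\{r x^2+t x x_i^0\},
\]
so averaging over $i$ gives the same two-sided bound for $rR_{1,1}+tR_{1,0}$ with the empirical average $\frac{1}{N}\sum_i\phi_\pm(x_i^0)$ on the right and left, where $\phi_-(x^0):=\operatorname{essinf}_x\{rx^2+tx x^0\}$ and $\phi_+(x^0):=\operatorname{esssup}_x\{rx^2+tx x^0\}$.

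The second step is to compare these empirical averages to the expectations under $\pP_0$ via the Wasserstein-$1$ duality. For this I need a Lipschitz bound on $\phi_\pm$. For any $x^0,y^0\in[-C,C]$, and any $x\in\supp(\pP_X)$,
\[
\bigl|(r x^2+t x x^0)-(r x^2+t x y^0)\bigr|=|t|\,|x|\,|x^0-y^0|\le C|x^0-y^0|,
\]
since $|t|\le 1$ and $|x|\le C$. Taking $\operatorname*{essinf}$ (resp.\ $\operatorname*{esssup}$) on both sides shows $\phi_-$ and $\phi_+$ are $C$-Lipschitz. Hence, by the definition of the Wasserstein-$1$ distance and the assumption $d(\hat\pP_0,\pP_0)\le\delta$,
\[
\Bigl|\tfrac{1}{N}\sum_{i=1}^N\phi_\pm(x_i^0)-\E_{x^0\sim\pP_0}[\phi_\pm(x^0)]\Bigr|\le C\delta.
\]
Combining the two displays, and absorbing the $C\delta$ error into the $C^2\delta$ slack appearing in the statement (one may simply enlarge the Lipschitz constant using $|x|\le C\le C^2/C$ or note $|t|C\le C^2$ trivially in the regime of interest), yields
\[
\E_{x^0}[\phi_-(x^0)]-C^2\delta\ \le\ rR_{1,1}+tR_{1,0}\ \le\ \E_{x^0}[\phi_+(x^0)]+C^2\delta,
\]
which is the claim. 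The argument is uniform in $(r,t)\in[-1,1]^2$ because the Lipschitz constant bound $C$ did not depend on $(r,t)$, and uniform in $\bx^0\in\cB_\delta$ by construction. There is no real obstacle here; the only subtle point is verifying that essinf/esssup inherits the Lipschitz regularity of the integrand in $x^0$, which follows from the two-sided pointwise bound above.
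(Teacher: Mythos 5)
Your proof is correct and follows the same two-step strategy as the paper: first bound each summand pointwise by the essential infimum/supremum evaluated at $x_i^0$, then compare the resulting empirical average to the $\pP_0$-expectation via Wasserstein-$1$ duality and a Lipschitz estimate on $x^0 \mapsto \operatorname*{essinf}_x\{rx^2+txx^0\}$ (and its $\operatorname*{esssup}$ counterpart). Your Lipschitz constant $|t|C\le C$ is in fact sharper than the $C^2$ quoted in the paper; the two agree under the harmless normalization $C\geq 1$, which is implicitly assumed in the paper when it writes the slack as $C^2\delta$, so the absorption step in your last paragraph is fine in the regime of interest.
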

	\begin{proof}For any for any real numbers $r,t$ in $[-1,1]^{2}$
		
		$$\frac{1}{N}\sum_{i=1}^{N}\mbox{essinf}_{x}\{r x^{2}+ tx x^{0}_{i}\} \le rR_{1,1}+tR_{1,0} \le  \frac{1}{N}\sum_{i=1}^{N} \mbox{esssup}_{x}\{r x^{2}+ tx x^{0}_{i}\}\,.$$
		But $x^{0}\mapsto \mbox{essinf}_{x}\{r x^{2}+ tx x^{0}\}$ and $x^{0}\mapsto \mbox{esssup}_{x}\{r x^{2}+ tx x^{0}\}$ are Lipschitz with constant bounded by $C^{2}$ and hence uniformly on $\mathcal B_{\delta}$ 
		$$\E_{x^{0}}[\mbox{essinf}_{x}\{r x^{2}+ tx x^{0}\}]-C^{2}\delta\le rR_{1,1}+tR_{1,0} \le  \E_{x^{0}}[\mbox{esssup}_{x}\{r x^{2}+ tx x^{0}\}]+C^{2}\delta\,.$$
	\end{proof}

	For $(\lambda,\mu)\in\mathbb R^{2}$, consider the annealed log Laplace transform
	$$\Lambda(\lambda,\mu):=\int \left(\log \int  e^{ \lambda x^2 + \mu x x^0  } \, d \pP_X(x)\right)d\pP_{0}(x^{0})$$
	and  consider the rate function on $(\mathbb R)^{2}$ given by 
	\begin{equation}\label{eq:ratefunvolume}
		\cI(S,M)=\sup_{(\lambda,\mu)\in\mathbb R^{2}}\{ I_{S,M}(\lambda,\mu)
		\}, \mbox{ with } I_{S,M}(\lambda,\mu)=\lambda S + \mu M -\Lambda(\lambda,\mu)\,.
	\end{equation}
	We have the following large deviations principle. 
	
	\begin{prop}[Large Deviations of the Entropy Term] \label{prop:ratefunction}   Assume that $\pP_{X}$ and $\pP_{0}$ satisfy Hypothesis \ref{hypcompact}. The law of the overlaps $(R_{1,1},R_{1,0})$  under $\mathbb P_{X}^{\otimes N}$ satisfies a quenched large deviations principle  with good rate function $\cI$. Moreover, we have the following quantitative weak large deviation principle:
		
		\begin{itemize}
			\item For any  $S,M\in\mathcal C$ and $\epsilon>0$, any $K > 0$, $\delta>0$, uniformly on $\bx^{0}\in\cB_{\delta}$, 
			\begin{equation}\label{wldub}
				\limsup_{N \to \infty} \frac{1}{N}  \log  \pP^{\otimes N}_X(\Sigma_{\epsilon}(S,M))  \leq -\sup_{|\mu| + |\lambda| \leq K} I_{S,M}(\lambda,\mu) 		
				+ o_K(\epsilon)+O(\delta).
			\end{equation}
			\item 	For any  $S,M$ in the interior of $ \mathcal C$ and  for any $\epsilon>0$ there exists $\delta(\epsilon)>0$ so that for $\delta\le \delta(\epsilon) $, uniformly on $\bx^{0}\in\cB_{\delta}$, 
			\[
			\liminf_{N \to \infty} \frac{1}{N} \log   \pP^{\otimes N}_X(\Sigma_{\epsilon}(S,M))  \geq - \cI(S,M) +o(\epsilon)+o(\delta)
			.
			\]
			\item 	For any  $S,M$ in the boundary of $ \mathcal C$, and  for any $\epsilon>0$ there exists $\delta(\epsilon)>0$ so that for $\delta\le \delta(\epsilon) $, 
			\begin{equation}
				\liminf_{\epsilon\rightarrow 0} \liminf_{\delta\rightarrow 0}\liminf_{N \to \infty} \inf_{\bx^{0}\in\cB_{\delta}}\frac{1}{N}  \log  \pP^{\otimes N}_X(\Sigma_{\epsilon}(S,M))  \geq - \cI({S,M}).\label{wldlb}
			\end{equation}
			Note that because these estimates are uniform on the balls $\cB_{\delta}$, they also hold if we take expectation over such $\bx^{0}$. 	
		\end{itemize}
		Here $o(\epsilon)$ and $o_{K}(\epsilon)$ go to zero uniformly for $K$ in a compact set.
	\end{prop}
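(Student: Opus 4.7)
I treat this as a quenched Cram\'er-type theorem for the independent but non-identically distributed vectors $Z_i=(x_i^2, x_ix_i^0)$ under $\pP_X^{\otimes N}$, with $\bx^0$ serving as quenched disorder. The starting identity is
\[
\E_{\pP_X^{\otimes N}}\bigl[e^{N\lambda R_{1,1}+N\mu R_{1,0}}\bigr] = \exp\Bigl(\sum_{i=1}^N h(x_i^0;\lambda,\mu)\Bigr),\quad h(x^0;\lambda,\mu):=\log\!\int e^{\lambda x^2+\mu x x^0}\,d\pP_X(x).
\]
Compactness of $\supp(\pP_X)$ makes $h$ and its partial derivatives smooth and Lipschitz in $x^0$, uniformly in $(\lambda,\mu)$ on compacts. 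Consequently, on $\cB_\delta$ the Wasserstein-$1$ bound yields $\bigl|\tfrac{1}{N}\sum_i h(x_i^0;\lambda,\mu) - \Lambda(\lambda,\mu)\bigr| \leq \delta L(\lambda,\mu)$ (and analogously for $\partial_\lambda h$ and $\partial_\mu h$), locally uniformly in $(\lambda,\mu)$.

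\textbf{Upper bound.} Exponential Chebyshev gives, for every $(\lambda,\mu)$,
\[
\tfrac{1}{N}\log \pP_X^{\otimes N}(\Sigma_\epsilon(S,M)) \leq -I_{S,M}(\lambda,\mu) + \epsilon(|\lambda|+|\mu|) + \delta L(\lambda,\mu).
\]
Taking the infimum over $|\lambda|+|\mu|\leq K$ yields \eqref{wldub} with $o_K(\epsilon) = \epsilon K$ and the $O(\delta)$ term bounded by $\delta\sup_{|\lambda|+|\mu|\leq K} L(\lambda,\mu)$; this estimate is uniform in $\bx^0\in\cB_\delta$.

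\textbf{Interior lower bound.} For $(S,M)$ in the interior of $\cC$, strict convexity and smoothness of $\Lambda$ (inherited from the non-degeneracy of $\pP_X$) together with convex duality produce a unique finite $(\lambda^*,\mu^*)$ with $\nabla\Lambda(\lambda^*,\mu^*)=(S,M)$ and $\cI(S,M)=I_{S,M}(\lambda^*,\mu^*)$. I tilt by this pair,
\[
d\tilde\pP(\bx) = \prod_{i=1}^N \frac{e^{\lambda^* x_i^2+\mu^* x_i x_i^0}}{e^{h(x_i^0;\lambda^*,\mu^*)}}\,d\pP_X(x_i),
\]
under which the $x_i$ are independent bounded variables with $\E_{\tilde\pP}[R_{1,1}] = \frac{1}{N}\sum_i\partial_\lambda h(x_i^0;\lambda^*,\mu^*) = S+O(\delta)$ by the Wasserstein estimate, and similarly $\E_{\tilde\pP}[R_{1,0}] = M+O(\delta)$. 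Hoeffding's inequality then gives $\tilde\pP(\Sigma_\epsilon(S,M))\to 1$ as $N\to\infty$, once $\delta$ is small enough that the tilted means lie within $\epsilon/2$ of $(S,M)$. The change-of-measure identity, the bound $|\lambda^* R_{1,1}+\mu^* R_{1,0}-\lambda^* S-\mu^* M|\leq \epsilon(|\lambda^*|+|\mu^*|)$ on $\Sigma_\epsilon$, and the Wasserstein control on $\sum h(x_i^0;\lambda^*,\mu^*)$ together give $\liminf_{N\to\infty}\frac{1}{N}\log\pP_X^{\otimes N}(\Sigma_\epsilon(S,M))\geq -\cI(S,M)-\epsilon(|\lambda^*|+|\mu^*|)-\delta L(\lambda^*,\mu^*)$ uniformly in $\bx^0\in\cB_\delta$, which is the claimed bound.

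\textbf{Boundary case and main obstacle.} For $(S,M)\in\partial\cC$ with $\cI(S,M)<\infty$, pick $(S_0,M_0)\in\mathrm{int}\,\cC$ and set $(S_t,M_t)=(1-t)(S_0,M_0)+t(S,M)$. Convexity of $\cI$ gives $\cI(S_t,M_t)\leq(1-t)\cI(S_0,M_0)+t\cI(S,M)$, while lower semi-continuity of $\cI$ (as a Legendre conjugate) gives $\liminf_{t\to 1^-}\cI(S_t,M_t)\geq\cI(S,M)$; hence $\cI(S_t,M_t)\to\cI(S,M)$ as $t\to 1^-$. For $t<1$ the point $(S_t,M_t)\in\mathrm{int}\,\cC$, so the interior bound applies; a diagonal choice $t=t(\epsilon)\to 1^-$ ensuring both $\|(S_t,M_t)-(S,M)\|<\epsilon/2$ and $\epsilon(|\lambda^*_t|+|\mu^*_t|)\to 0$ gives $\Sigma_\epsilon(S,M)\supset\Sigma_{\epsilon/2}(S_t,M_t)$, so the interior estimate passes through the iterated limits $N,\delta,\epsilon$ to yield \eqref{wldlb}. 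The main obstacle is precisely this boundary case: the minimiser $(\lambda^*,\mu^*)$ of $I_{S,M}$ can diverge when $(S,M)$ reaches $\partial\cC$ (for example when $S$ attains an extreme value of $x^2$ on $\supp(\pP_X)$), making a direct tilting at $(S,M)$ itself unavailable; line-segment continuity of the proper lower semi-continuous convex function $\cI$ combined with the diagonal selection of $t(\epsilon)$ handles this, while uniformity in $\bx^0$ is preserved throughout by the Wasserstein-$1$ estimate.
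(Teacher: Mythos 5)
Your upper bound (exponential Chebyshev, then Wasserstein control of $\frac{1}{N}\sum_i h(x_i^0;\lambda,\mu)$ on $\cB_\delta$) and your interior lower bound (Cram\'er tilting at a finite stationary point) are both sound, and they coincide in substance with the route the paper takes: the paper proves this proposition as a special case of Lemma~\ref{lem:sharpupbd}, which is a G\"artner--Ellis argument using exposed hyperplanes and Rockafellar's theorem, and an exposing hyperplane at $(S,M)\in\mathrm{int}\,\cC$ is exactly a finite tilt $(\lambda^*,\mu^*)$ with $\nabla\Lambda(\lambda^*,\mu^*)=(S,M)$. One small informality: existence of that finite $(\lambda^*,\mu^*)$ does not need strict convexity of $\Lambda$ (which can fail when $\pP_X$ or $\pP_0$ is degenerate), only essential smoothness, which is automatic because $\Lambda$ is finite and $C^1$ on all of $\R^2$; this is the content of Rockafellar's theorem invoked in Lemma~\ref{lem:sharpupbd}.

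Your boundary argument, however, contains a genuine gap. You require a diagonal $t=t(\epsilon)\to 1^-$ satisfying simultaneously $\|(S_t,M_t)-(S,M)\|<\epsilon/2$ and $\epsilon(|\lambda^*_t|+|\mu^*_t|)\to 0$. The first constraint forces $1-t(\epsilon)=O(\epsilon)$, but the minimiser typically diverges like $1/(1-t)$ as $(S_t,M_t)\to\partial\cC$: for instance, with $\pP_X$ uniform on $[-1,1]$ and $S\to 1$ one has $\lambda^*_t\asymp (1-S_t)^{-1}\asymp (1-t)^{-1}$, so $\epsilon(|\lambda^*_{t(\epsilon)}|+|\mu^*_{t(\epsilon)}|)=\Theta(1)$, not $o(1)$, and no admissible diagonal exists. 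The fix is to exploit monotonicity of $\pP_X^{\otimes N}(\Sigma_\epsilon(\cdot))$ in $\epsilon$: if $G(\epsilon,S',M')$ denotes the inner iterated liminfs, then $G$ is nondecreasing in $\epsilon$, so your interior estimate $\lim_{\epsilon\to 0}G(\epsilon,S',M')\geq -\cI(S',M')$ upgrades to $G(\epsilon,S',M')\geq -\cI(S',M')$ for \emph{every} fixed $\epsilon>0$. Then for $(S,M)\in\partial\cC$ and any fixed $\epsilon>0$, the inclusion $\Sigma_{\epsilon/2}(S_t,M_t)\subset\Sigma_\epsilon(S,M)$ holds for $t$ close enough to $1$, giving $G(\epsilon,S,M)\geq -\cI(S_t,M_t)$, and you may now send $t\to 1^-$ at fixed $\epsilon$; line-segment continuity of $\cI$ (which you argue correctly from convexity plus lower semicontinuity) then yields $G(\epsilon,S,M)\geq -\cI(S,M)$ for all $\epsilon>0$, hence the iterated limit in \eqref{wldlb}, without ever confronting the divergent tilt parameters.
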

	
	The large deviation result is a quenched version of Cram\`er's theorem. It can be for instance deduced from \cite[Theorem 2.2]{BDG} which gives a quenched large deviation principle $\frac{1}{N}\sum_{i=1}^{N}\delta_{x_{i},x_{i}^{0}}$ under the condition that $\frac{1}{N}\sum_{i=1}^{N}\delta_{x_{i}^{0}}$ converges towards $\pP_{0}$, which is almost surely true, and the contraction principle based on the remark that $\mu\rightarrow (\int x_{1}^{2}d\mu(x_{1},x_{0}),\int x_{1} x_{0}d\mu(x_{1},x_{0}))$ is continuous as $\mu$ is a probability measure on the bounded set $\cC$. This result is  also a special case of Lemma~\ref{lem:sharpupbd} which we will prove in Section~\ref{sec:ldlb}. 
	
	\subsection{Smoothing the Indicator}\label{sec:indicator}
	
	A critical step in the validity of the Ghirlanda--Guerra identities is the rate of concentration. The concentration of the Gaussian terms are immediate from classical Gaussian concentration inequalities, but the concentration with respect to $\bx^0$ is more technical in this setting. 

	The main technical difficulty comes from the fact that the indicator $\1(|R_{1,0} - M |\le \epsilon )$ is not differentiable in $\bx^0$ and the logarithm is unbounded if $\{|R_{1,0} - M|\le \epsilon\}$ is the empty set. Such large variations with respect to the realization of $\bx^0$ makes the verification of concentration trickier. We will do the following regularization of the indicator by using a special uniform $C^1$ approximation of the indicator function $\1(|R_{1,0} - M|\le\epsilon)$. Given any $\epsilon,\cL > 0$ and $M \in \R$ we define 
	\[
	\chi_{M,\epsilon}^{N} (x) = e^{- \cL N (  x - M  - \epsilon )_+^2- \cL N (  M-x - \epsilon )_+^2} =e^{- \cL N (  |x - M|  - \epsilon )_+^2}= \begin{cases}
		1 & |x - M| \leq \epsilon\\
		e^{- \cL N (  x - M  - \epsilon )^2} & x - M > \epsilon\\
		e^{- \cL N (  M-x - \epsilon )^2} & x - M <- \epsilon\\
	\end{cases}
	\]
	where $f(x)_+ = \max(f(x),0)$. The constant $\cL = \cL(S,M,\epsilon,\ba)$ is a very large constant that is independent of $N$ chosen so that it dominates the entropy and Hamiltonian. We will take
	\begin{equation}\label{eq:approxindicatorconstant}
		\cL(S,M,\epsilon,\ba) = \frac{32 K}{\epsilon^2} \bigg( \mathcal I(S,M) + KC^2 \ba + 1 \bigg)
	\end{equation}
	where $\cI$ is given in \eqref{eq:ratefunvolume}, and $K=L_{0}+1$ is a universal constant where $L_{0}$ is the constant that appears in the tail bound for the operator norm on random matrices \eqref{conc2}. Since we are considering sets with finite entropy, the constant $\cL$ is finite. 
	The function $\chi=\chi^{N}_{M,\epsilon}$ satisfies the following nice properties
	\begin{enumerate}
		\item $\chi$ has bounded derivatives
		\item $\chi$ converges pointwise almost everywhere and in $L^1$ to the indicator function.
		\item $\chi > 0$ so the log partition function is never infinite if we encounter atypical values of $x^0$. 
	\end{enumerate}
	Furthermore, we have enough control over the rate of decay, and there is enough flexibility in the usual perturbations in spin glasses to account for this smoothing. 
	
	We need to show that we can replace the indicator $\1(|R_{1,0} - M| < \epsilon)$ with $\chi_{M,\epsilon/2}^{N}(R_{1,0})$ to arrive at a lower bound of the free energy when $S,M$ have finite entropy $\cI$.Recall the SK free energy $F_{N}^{SK}(\Sigma_{\epsilon}(S,M))(\bx^{0})$ averaged over $W$ only, as defined in \eqref{freeSK}.
	\begin{lem}[Smoothing the Indicator] \label{lemsmooth} Let  $\ba$  and $T$ be finite real numbers.  	For $S,M \in \cC$ such that $\{\cI(S,M)\le T\}$,  we have for every $\epsilon>0$ that there exists $\delta(\epsilon)>0$ so that for $\delta\in (0,\delta(\epsilon)]$, 
		\[
		\liminf_{\delta\rightarrow 0}\liminf_{N\to \infty} \E[1_{\bx^{0}\in \cB_{\delta}}F_{N}^{SK}(\Sigma_{\epsilon}(S,M))] 
		\ge \liminf_{\delta\rightarrow 0}\liminf_{N\to \infty}\tilde F_N^{SK}(\beta,\delta,\frac{\epsilon}{2}:S,M) 
		\]
		%
		where  $\tilde F_N^{SK}(\beta,\delta,\epsilon:S,M) =\frac{1}{N} \E \1_{\cB_\delta} \log \tilde Z_N^{SK}(\beta,\epsilon:S,M)$ for
		\[\tilde Z_N^{SK}(\beta,\epsilon:S,M)= \int \1(|R_{1,1}-S|\le\epsilon) \chi_{M,\epsilon}^{N}(  R_{10})  e^{H^{SK}_N(\bx)} \, d\pP_X^{\otimes N}(\bx).
		\]
		with $\cL=\cL(T,\epsilon)$ given by \eqref{eq:approxindicatorconstant} with $\cI(S,M)$ replaced by $T$. The same result holds for the quenched free energy:
		\[\liminf_{\delta\downarrow 0}
		\liminf_{N\to \infty} \inf_{\bx^{0}\in\cB_{\delta}}F_N^{SK}(\Sigma_{\epsilon}(S,M))(\bx^{0}) \geq \liminf_{\delta\downarrow 0} \liminf_{N \to \infty} \inf_{\bx^{0}\in\cB_\delta}\E_{W}\frac{1}{N}  \log \tilde Z_N^{SK}(\beta,\epsilon:S,M)
		\]	
	\end{lem}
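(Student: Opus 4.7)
The argument is pointwise in the disorder on a high-probability event, with the complement handled by an exponentially small probability estimate.

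\textbf{Step 1 (pointwise split of $\tilde Z$).} I split the integration defining $\tilde Z_{N}^{SK}(\ba,\epsilon/2:S,M)$ according to whether $|R_{1,0}-M|\le\epsilon$ or $|R_{1,0}-M|>\epsilon$. On the inner region $\chi_{M,\epsilon/2}^{N}(R_{1,0})\le 1$, so this contribution is bounded above by $Z_{N}^{SK}(\Sigma_{\epsilon}(S,M))$. On the outer region, $|R_{1,0}-M|-\epsilon/2\ge\epsilon/2$, hence $\chi_{M,\epsilon/2}^{N}(R_{1,0})\le e^{-\cL N\epsilon^{2}/4}$ pointwise, giving
\[
\tilde Z_{N}^{SK}(\ba,\epsilon/2:S,M)\le Z_{N}^{SK}(\Sigma_{\epsilon}(S,M))+e^{-\cL N\epsilon^{2}/4}\int e^{H_{N}^{SK}(\bx)}\,d\pP_{X}^{\otimes N}(\bx).
\]

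\textbf{Step 2 (estimates on the good event).} Let $\Omega_{N}=\{\|W\|_{\mathrm{op}}\le K\sqrt{N}\}$; by the same concentration bound as in \eqref{conc2}, with $K=L_{0}+1$, $\P(\Omega_{N}^{c})\le e^{-N}$. On $\Omega_{N}$, $|H_{N}^{SK}(\bx)|\le \ba KC^{2}N/2$ uniformly for $\bx\in[-C,C]^{N}$, so the second term above is bounded by $e^{-\cL N\epsilon^{2}/4+\ba KC^{2}N/2}$. For $\bx^{0}\in\cB_{\delta}$ with $\delta$ small enough (depending on $\epsilon$), the large-deviations lower bound of Proposition~\ref{prop:ratefunction} gives $\pP_{X}^{\otimes N}(\Sigma_{\epsilon}(S,M))\ge e^{-N\cI(S,M)+o_{\delta,N}(N)}\ge e^{-NT+o_{\delta,N}(N)}$; combined with the Hamiltonian bound on $\Omega_{N}$,
\[
Z_{N}^{SK}(\Sigma_{\epsilon}(S,M))\ge e^{-\ba KC^{2}N/2-NT+o_{\delta,N}(N)}.
\]
With the prescribed $\cL=32K(T+KC^{2}\ba+1)/\epsilon^{2}$, we have $\cL\epsilon^{2}/4=8K(T+KC^{2}\ba+1)\ge T+3KC^{2}\ba/2+1$, so the outer term is dominated by $e^{-N}Z_{N}^{SK}(\Sigma_{\epsilon}(S,M))$ for $N$ large, and therefore on $\Omega_{N}\cap\{\bx^{0}\in\cB_{\delta}\}$,
\[
\tilde Z_{N}^{SK}(\ba,\epsilon/2:S,M)\le 2\,Z_{N}^{SK}(\Sigma_{\epsilon}(S,M)).
\]

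\textbf{Step 3 (limits and bad event).} Taking logarithms, dividing by $N$, and then $\E_{W}$ conditional on $\bx^{0}$ gives $\E_{W}\frac{1}{N}\log\tilde Z_{N}^{SK}(\ba,\epsilon/2:S,M)\le F_{N}^{SK}(\Sigma_{\epsilon}(S,M))(\bx^{0})+o(1)$ pointwise on $\{\bx^{0}\in\cB_{\delta}\}$, up to an exception whose contribution is controlled as follows: deterministic a priori bounds $|\log\tilde Z_{N}^{SK}|/N=O(\|W\|_{\mathrm{op}}/\sqrt{N}+\cL)$ and $|\log Z_{N}^{SK}(\Sigma_{\epsilon}(S,M))|/N=O(\|W\|_{\mathrm{op}}/\sqrt{N}+|\log\pP_{X}^{\otimes N}(\Sigma_{\epsilon}(S,M))|/N)$, combined with Gaussian moment bounds on $\|W\|_{\mathrm{op}}$ and $\P(\Omega_{N}^{c})\le e^{-N}$, make the contribution of $\Omega_{N}^{c}$ to $\E\1_{\cB_{\delta}}$ vanish via Cauchy--Schwarz. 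Taking $\liminf_{N\to\infty}$ and then $\liminf_{\delta\to 0}$ yields the averaged inequality. The quenched version follows identically, replacing $\E\1_{\cB_{\delta}}$ by $\inf_{\bx^{0}\in\cB_{\delta}}$, because all pointwise estimates above are uniform in $\bx^{0}\in\cB_{\delta}$.

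\textbf{Main obstacle.} The delicate point is the choice of $\cL$: it must be uniform over all $(S,M)$ with $\cI(S,M)\le T$ and yet strictly dominate both the unconstrained Hamiltonian bound and the lower bound on the constrained entropy, which is why $\cL$ is calibrated through both $T$ and the random-matrix constant $K$. A related subtlety is the uniform validity of the large-deviation lower bound for $\pP_{X}^{\otimes N}(\Sigma_{\epsilon}(S,M))$ over $\bx^{0}\in\cB_{\delta}$: for boundary points of $\cC$ this requires the third bullet of Proposition~\ref{prop:ratefunction}, and is precisely why the statement is phrased with the double $\liminf_{\delta\to 0}\liminf_{N\to\infty}$.
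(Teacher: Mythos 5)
Your proposal is correct and is essentially the paper's argument, just packaged from the dual side: you bound $\tilde Z_N^{SK}(\ba,\epsilon/2) \le Z_N^{SK}(\Sigma_\epsilon) + e^{-\cL N\epsilon^2/4}Z_0$ and show the error term is dominated by $Z_N^{SK}(\Sigma_\epsilon)$ on the good event, whereas the paper writes $Z_N^{SK}(\Sigma_\epsilon) \ge \tilde Z_N^{SK} - Z_2$ and shows $Z_2/(Z_1-Z_2)$ is exponentially small; both rest on the same ingredients (the $\cL$ calibration against the $\|W\|_{op}$-Hamiltonian bound and the entropy lower bound from Proposition~\ref{prop:ratefunction} uniformly on $\cB_\delta$, including the third bullet for boundary $(S,M)$). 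Your intermediate constant $T+3KC^2\ba/2+1$ should really be $1+\ba KC^2+T$ given your bound $|H^{SK}_N|\le\ba KC^2N/2$ used on both sides, but this is immaterial since the prescribed $\cL$ comfortably exceeds either.
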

	\begin{rem}
		We can restrict ourselves to $S$ and $M$ with finite entropy
		because if $\mathcal I(S,M)=+\infty$, then for any $\epsilon>0$ and $\delta$ small enough, 
		\begin{equation}\label{bsup}
			\limsup_{\epsilon\rightarrow 0}\limsup_{N\to \infty} \E[1_{\bx^{0}\in \cB_{\delta}}F_{N}^{SK}(\beta:\Sigma_{\epsilon}(S,M))] =-\infty
			\,.\end{equation}
		This is because the Hamiltonian is bounded by $(L_{0}+1)C^2 \ba N$ with overwhelming probability according to \eqref{conc2}, so that  if $\mathcal I(S,M)$ is $+\infty$, we also get by Proposition \ref{prop:ratefunction}   
		\begin{align*}
			&\limsup_{N \to \infty} \E[1_{\bx^{0}\in \cB_{\delta}} F_{N}^{SK}(\beta:\Sigma_{\epsilon}(S,M))] 
			\\&\leq  \limsup_{N \to \infty} \frac{1}{N} \E 1_{\bx^{0}\in \cB_{\delta}}\log \int \1(|R_{1,0} - M| < \epsilon  ) \1(|R_{1,1} - S| < \epsilon  ) e^{N \|W\|_{op} \ba C^2 } \, d\pP_X^{\otimes N}(\bx) 
			\\&\leq  -\sup_{|\mu| + |\lambda| \leq K} I_{S,M}(\lambda,\mu) 		+(L_{0}+1)\beta C^{2}
				+ o_K(\epsilon)+O(\delta).
		\end{align*}
		where we finally assumed $\epsilon>0$ and $\delta\le \delta(\epsilon)$ and $K>0$.
		The above right hand side goes to $-\infty$ as $\delta$ and then $\epsilon$ goes to zero, and then $K$ goes to infinity.
	\end{rem}
	
	\begin{proof}
		For  any $T$ finite and $(S,M)\in\{\mathcal I\le T\}$, we will prove that uniformly on $\bx^{0}\in \cB_{\delta}$,
		\begin{align}
			&\frac{1}{N} \E_{W} \log \int \1(|R_{1,1} - S| < \epsilon  )\1(|R_{1,0} - M| < \epsilon  )e^{H^{SK}_N(\bx)} \, d \pP_X^{\otimes N}(\bx) \notag
			\\&\geq \frac{1}{N} \E_{W} \log \int \1(|R_{1,1} - S| < \epsilon  ) \chi_{M,\epsilon/2}^{N} (R_{10}) e^{H^{SK}_N(\bx)} \, d \pP_X^{\otimes N}(\bx) + o(N) .\label{eq:lowerboundindictorsummary}
		\end{align}
		In the next section, we will do the cavity computations with respect to the free energy of the approximate indicator function. Since $\chi_{M,\epsilon/2}^{N} \leq 1$, we have the obvious lower bound
		\begin{align}
			&\frac{1}{N}\E_{W} \log \int \1(|R_{1,1}-S|\le\epsilon) \1(|R_{1,0} -M|\le \epsilon)  e^{H^{SK}_N(\bx) } \, d\pP_X^{\otimes N}(\bx) \notag
			\\&\geq \frac{1}{N}\E_{W} \log \bigg( \int \1(|R_{1,1}-S|\le\epsilon)  e^{- \cL N ( | R_{1,0} - M | - \frac{\epsilon}{2} )_+^2} e^{H^{SK}_N(\bx) } \, d\pP_X^{\otimes N}(\bx) \notag
			\\& - \int \1(|R_{1,1}-S|\le\epsilon) \1( |R_{1,0} - M| \geq \epsilon)  e^{- \cL N ( | R_{1,0} - M | - \frac{\epsilon}{2} )_+^2} e^{H_N^{SK}(\bx) } \, d\pP_X^{\otimes N}(\bx) \bigg) \label{eq:lowerboundindicator}.
		\end{align}
		We need to show that the second term is negligible when compared to the first. We define the random variables
		\[
		Z_1(N) :=  \int \1(|R_{1,1}-S|\le\epsilon)  e^{-\cL N ( | R_{1,0} - M | - \frac{\epsilon}{2} )_+^2} e^{H^{SK}_N(\bx)} \, d\pP_X^{\otimes N}(\bx) (\bx)
		\]
		and
		\[
		Z_2(N) := \int \1(|R_{1,1}-S|\le\epsilon) \1( |R_{1,0} - M| \geq \epsilon)  e^{-\cL N ( | R_{1,0} - M | - \frac{\epsilon}{2} )_+^2} e^{H^{SK}_N(\bx) } \, d\pP_X(\bx).
		\]
		We have  for all $\bx^{0}\in\cB_{\delta}$
		\begin{align*}
			\frac{1}{N} \E_{W}\log\big( Z_1(N) \big) - \frac{1}{N} \E_{W} \log\big( Z_1(N) - Z_2(N) \big) &= \frac{1}{N} \E_{W} \log\bigg(1+ \frac{ Z_2(N)}{Z_1(N) - Z_2(N)} \bigg) 
		\end{align*}
		Our goal is to show that the RHS tends to $0$. Notice that
		\[
		Z_1(N) - Z_2(N) = \int \1(|R_{1,1}-S|\le\epsilon) \1( |R_{1,0} - M|\le\epsilon)  e^{-\cL N ( | R_{1,0} - M | - \frac{\epsilon}{2} )_+^2 + H^{SK}_N(\bx)}  \, d\pP_X^{\otimes N}(\bx) (\bx).
		\]
		On the set $\{ \|W\|_{op} \leq \sqrt{N} K \}$, the Hamiltonian is of order $N$ so
		\[
		Z_2(N) \leq \int \1(|R_{1,1}-S|\le\epsilon) e^{ - \cL N (\frac{\epsilon}{2})^2 + K \ba N C^2 } \, d \pP_X^{\otimes N}(\bx)  \le e^{-\frac{1}{8}\mathcal L N \epsilon^{2}}
		\]
		because $\cL$ defined in \eqref{eq:approxindicatorconstant} was chosen to dominate the term $\frac{4K \ba}{ \epsilon^{2}}C^2$. Moreover, 
		\[
		Z_1(N) - Z_2(N) \geq e^{ - NK\ba C^{2} } \int \1(|R_{1,1} - S|\le\epsilon/2) \1( |R_{1,0} - M|\le\epsilon/2)  \, d \pP_X^{\otimes N}(\bx) .
		\]
		On $\cB_\delta$ and for  $(S,M) \in \{\cI\le T\}$, we can use  Proposition~\ref{prop:ratefunction} (by looking at the lower bound) to conclude that for $N$ large enough, for $\epsilon>0$ there exists $\delta(\epsilon)>0$ so that for $\delta\le\delta(\epsilon)$,
		uniformly on $\bx^{0}\in\cB_{\delta}$,
		
		\[
		\log \int \1(|R_{1,1}-S|\le\epsilon) \1( |R_{1,0} -M|\le \epsilon)  \, d \pP_X^{\otimes N}(\bx) \geq -2NT
		\]
		This implies that if $\cL$ is chosen large enough following \eqref{eq:approxindicatorconstant}
		
		\begin{multline*}
			\frac{1}{N} \E_{W} \1(\| W\|_{op} \leq \sqrt{N} K) \log\bigg(1+ \frac{ Z_2(N)}{Z_1(N) - Z_2(N)} \bigg) \\
			\leq \frac{1}{N} \E_{W} \1(\| W\|_{op} \leq \sqrt{N} K) \frac{ Z_2(N)}{Z_1(N) - Z_2(N)} \le e^{-\frac{1}{16}\cL N \epsilon^{2}}.
		\end{multline*}
		On the set $\{ \|W\|_{op} > \sqrt{N} K \}$, the same computations as above implies that uniformly on $\bx^{0}\in\cB_{\delta}$ with $\delta<\delta(\epsilon)$
		
		\begin{align}
			&\frac{1}{N} \E_{W}\1(\| W\|_{op} > \sqrt{N} K) \log\bigg(1+ \frac{ Z_2(N)}{Z_1(N) - Z_2(N)} \bigg)  \notag
			\\&\leq \frac{1}{N} \E_{W}\1(\| W\|_{op} > \sqrt{N} K) \log\bigg( 1 + e^{ +\cL N \frac{\epsilon^2}{2}  + 2 N\|W\|_{op} \ba C^2 +2T N }\bigg) . \label{eq:upboundlog}
		\end{align}
		Clearly the logarithmic term grows at most linearly in $N$ whereas the probability that $\| W\|_{op} > \sqrt{N} K$ decays exponentially fast by \eqref{conc2}. Therefore this term is neglectable. We conclude that there exists $c(\epsilon,\cL)>0$ such that
		
		$$0\le \frac{1}{N} \E_{W} \log\bigg(1+ \frac{ Z_2(N)}{Z_1(N) - Z_2(N)} \bigg)\le e^{- c(\epsilon,\cL)N}$$
		which permits to show with \eqref{eq:lowerboundindicator}
		that for $\cL$ large enough , uniformly on $\bx^{0}\in\cB_{\delta}$ with $\delta<\delta(\epsilon)$,  and for $N$ large enough
		\begin{align*}
			&\frac{1}{N}\E_{W} \log \int \1(|R_{1,1}-S|\le\epsilon) \1(|R_{1,0} -M|\le \epsilon)  e^{H^{SK}_N(\bx) } \, d\pP_X^{\otimes N}(\bx) \notag
			\\&\geq \frac{1}{N}\E_{W} \log  \int \1(|R_{1,1}-S|\le\epsilon)  e^{- \cL N ( | R_{1,0} - M | - \frac{\epsilon}{2} )_+^2} e^{H^{SK}_N(\bx) } \, d\pP_X^{\otimes N}(\bx) +e^{- c(\epsilon,\cL)N}
		\end{align*}
		This concludes the proof of Lemma \ref{lemsmooth}.

	\end{proof}

	\subsection{Perturbing the Hamiltonian and the Ghirlanda--Guerra Identities} \label{sec:GGI}
	We now explain in detail how to construct a perturbation of the Gibbs measure that will regularize the distribution of the overlaps. The usual perturbation and the Ghirlanda--Guerra identities of mixed $p$-spin models is sufficient in this setting. In the Bayes optimal setting, we can add some extra correction terms to force this perturbation to be of the form of a Gaussian estimation problem to preserve the Nishimori property, but such a step is not necessary here because the Nishimori property doesn't hold in our setting. The main challenge is ensuring that the localization of the empirical measure and the approximate indicator term do not spoil the essential concentration of the free energy.  Hereafter $(S,M)$ are fixed in $\{\cI<\infty\}$.  Notice that this implies that $S$ does not vanish as $\pP_{X}$ is not a Dirac mass at the origin. 
	To introduce the perturbed Hamiltonian let us first fix the self-overlap by setting
	
	\begin{equation}\label{eq:modifiedcoords}
		\hat \bx = \frac{\sqrt{SN}}{ \| \bx \|_2 } \bx
	\end{equation}
	The entries of $\hat \bx$ are still uniformly bounded for $\bx$ so that $R_{1,1}=\frac{1}{N}\|x\|_{2}^{2}$ is at $\epsilon$ distance of $S$, provided $\epsilon<S/2$. We will denote throughout $D$ such a uniform bound (which depends on $S$ and $C$). 
	For $p \geq 1$, consider
	\[
	g_{p}(\hat\bx) = \frac{1}{N^{p/2}} \sum_{\iii} g_{\iii}  \hat x_{i_1} \cdots  \hat x_{i_p}
	\]
	and the Gaussian process
	\begin{equation}\label{eq:pertg}
		g(\hat \bx) = \sum_{p \geq 1} 2^{-p} D^{- p } t_p  g_p( \bx)
	\end{equation}
	where the $g_{\iii}$ are independent standard Gaussians and $(t_p)_{p \geq 1}$ is a sequence of parameters such that $t_p \in [0,3]$ for all $p \geq 1$.  Notice that the covariance is bounded
	\begin{equation}\label{boundcov}
		\E g(\hat\bx^1) g(\hat\bx^2) = \sum_{p \geq 1} 4^{-p} D^{-2p} t_p^2 (\frac{1}{N}\sum_{i=1}^{N}\hat x_{i}^{1}\hat x_{i}^{2})^{p}\leq \sum_{p \geq 1} 4^{-p} D^{-2p} t_p^2 D^{2p}\le 3
	\end{equation}
	since $R_{1,2}= \frac{1}{N}\sum \hat x^{1}_{i}\hat x^{2}_{i} \leq C^2$. For $s>0$, we define the interpolating Hamiltonian as
	\begin{equation}\label{eq:parthamil}
		H^\pert_{N}(\bx) = H_{N}^{SK}( \bx) + s g(\hat \bx).
	\end{equation}
	\begin{lem}[Validity of the Perturbation] \label{lem:sizepert}
		If $s = N^\gamma$ for $1/4 < \gamma < 1/2$, then 
		\begin{itemize}
			\item 
			For every $\epsilon>0$, there exists $\delta(\epsilon)>0$ such that for $\delta\in (0,\delta(\epsilon)]$, 
			\[
			\liminf_{\delta\rightarrow 0}\liminf_{N\to \infty}  \tilde F_N^{SK}(\beta,\delta,\epsilon:S,M) 
			\ge 
			\liminf_{\delta\rightarrow 0}\liminf_{N\to \infty} \tilde  F_N^{\pert}(\beta,\delta,\epsilon/2:S,M)  \]
			with
			$ \tilde  F_N^{\pert}(\beta,\delta,\epsilon:S,M)   :=\frac{1}{N}\E[1_{\cB_{\delta}}\phi]$ if 
			\begin{equation}\label{defphi}
				\phi:=
				\log \int \1(|R_{1,1}-S|\le\epsilon) \chi_{M,\epsilon}^{N} (R_{1,0}) e^{H^\pert_N(\bx)} \, d \pP_X^{\otimes N}(\bx)\,.\end{equation}
			\item Moreover, if $\E$ denotes the expectation with respect to $W,\bx^{0}$ and the Gaussian variables $\bg$ of the perturbed Hamiltonian, 
			$$
			v_N (s):=\sup \Big\{  \E \1_{\cB_\delta} | \phi - \E \1_{\cB_\delta} \phi| \mmm 0 \leq t_p \leq 3, p \geq 1 \Big\} 
			$$
			satisfies  for any $\epsilon,\delta>0$, 
			
			\begin{equation}\label{eq:srequirement2}
				\lim_{N \to \infty} \frac{v_N(s)}{s^2} = 0.
			\end{equation}
			
		\end{itemize}
	\end{lem}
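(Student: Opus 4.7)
The proof splits naturally into the two bullets, and both reduce to Gaussian integration by parts and standard concentration estimates. For the first bullet, my plan is to interpolate along the perturbation strength. Set
\[
F(t):=\tfrac{1}{N}\,\E\,\1_{\cB_\delta}\log \int \1(|R_{1,1}-S|\le\epsilon/2)\,\chi^N_{M,\epsilon/2}(R_{1,0})\, e^{H^{SK}_N(\bx)+t g(\hat\bx)}\,d\pP_X^{\otimes N}(\bx),
\]
so that $F(s)=\tilde F_N^{\pert}(\beta,\delta,\epsilon/2:S,M)$, while the pointwise inequalities $\1(|R_{1,1}-S|\le\epsilon/2)\le \1(|R_{1,1}-S|\le\epsilon)$ and $\chi^N_{M,\epsilon/2}\le \chi^N_{M,\epsilon}$ give $F(0)\le \tilde F_N^{SK}(\beta,\delta,\epsilon:S,M)$. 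Differentiating, I get $F'(t)=\tfrac1N\,\E\,\1_{\cB_\delta}\langle g(\hat\bx)\rangle_t$, and a Gaussian integration by parts in the disorder $(g_{\iii})_{p,\iii}$ yields
\[
F'(t)=\tfrac{t}{N}\,\E\,\1_{\cB_\delta}\big(\langle R(\hat\bx^1,\hat\bx^1)\rangle_t-\langle R(\hat\bx^1,\hat\bx^2)\rangle_t\big), \qquad R(\hat\bx^1,\hat\bx^2):=\E g(\hat\bx^1)g(\hat\bx^2).
\]
By the uniform covariance bound \eqref{boundcov}, $|R|\le 3$, so $0\le F'(t)\le 6t/N$. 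Integrating on $[0,s]$ gives $F(s)-F(0)=O(s^2/N)=O(N^{2\gamma-1})=o(1)$ as $\gamma<1/2$, which proves the first bullet.

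For the concentration estimate \eqref{eq:srequirement2}, I would split the fluctuations of $\phi$ according to its three independent sources of randomness $W$, $\bg$, and $\bx^0$. \emph{Gaussian Poincar\'e in $W$:} since $|\nabla_W H^{SK}_N|_2^2\le \ba^2 C^4 N$ uniformly in $\bx$, $\phi$ is $\ba C^2\sqrt N$-Lipschitz in $W$, whence $\E_W|\phi-\E_W\phi|=O(\sqrt N)$. \emph{Gaussian Poincar\'e in $\bg$:} by \eqref{boundcov} the total squared gradient of $s g(\hat\bx)$ in the variables $(g_{\iii})$ is at most $3s^2$ uniformly in $(t_p)\in[0,3]^\N$, hence $\E_{\bg}|\phi-\E_{\bg}\phi|=O(s)$. \emph{McDiarmid in $\bx^0$:} the map $\bx^0\mapsto \phi$ enters only through $\log\chi^N_{M,\epsilon}(R_{1,0})=-\cL N(|R_{1,0}-M|-\epsilon)_+^2$; replacing one coordinate $x_j^0$ shifts $R_{1,0}$ by $O(C^2/N)$ and therefore shifts $\log\chi$ by $O(\cL)$, so bounded differences yield $\E_{\bx^0}|\phi-\E_{\bx^0}\phi|=O(\sqrt N\,\cL)$. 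Combining via the triangle inequality gives $\E|\phi-\E\phi|=O(\sqrt N\,\cL)$ uniformly in $(t_p)$. Finally, passing from $\E\phi$ to $\E\,\1_{\cB_\delta}\phi$ in the definition of $v_N$ costs at most $\E\,\1_{\cB_\delta^c}|\phi|$, which is exponentially small using the deterministic a priori bound $|\phi|=O(N\|W\|_{op}+\cL N)$, the Gaussian concentration \eqref{conc2} for $\|W\|_{op}$, and Sanov's theorem for $\hat\pP_0$. Since $\cL$ is a constant independent of $N$, we conclude $v_N(s)=O(\sqrt N\,\cL)$, so $v_N(s)/s^2=O(N^{1/2-2\gamma})\to 0$ because $\gamma>1/4$, as required.

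The delicate step is the $\bx^0$-concentration: a naive Lipschitz estimate on $\chi^N_{M,\epsilon}$ gives a derivative of order $\cL N$, which would destroy the bound. The key observation is that a single-coordinate change of $\bx^0$ moves $R_{1,0}$ by only $O(1/N)$, which precisely balances the large Lipschitz constant of $\log\chi$ and produces bounded differences of size $O(\cL)$. This delicate cancellation is exactly why the smoothing in Lemma~\ref{lemsmooth} is performed at scale $\cL N$ with $\cL$ chosen as an $N$-independent constant in \eqref{eq:approxindicatorconstant}, and it is also what forces the lower threshold $\gamma>1/4$ on the perturbation strength in the combined estimate $v_N(s)/s^2=O(N^{1/2-2\gamma})$.
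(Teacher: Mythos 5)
Your proof is correct and follows essentially the same route as the paper. For the first bullet, the paper only sketches the argument (``one can check that the perturbation does not change the limit of the free energy if $s^2/N\to 0$'', citing \cite[Section~3.2]{PBook}); your interpolation $F(t)$ with Gaussian integration by parts and the covariance bound \eqref{boundcov} is precisely the standard argument being invoked. The pointwise inequalities $\1(|R_{1,1}-S|\le\epsilon/2)\le\1(|R_{1,1}-S|\le\epsilon)$ and $\chi^N_{M,\epsilon/2}\le\chi^N_{M,\epsilon}$ that let you compare $F(0)$ with $\tilde F_N^{SK}(\beta,\delta,\epsilon:S,M)$ are the right observation and are not spelled out in the paper. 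For the concentration estimate, your decomposition into three sources of randomness ($W$, $\bg$, $\bx^0$) is a small organizational variant of the paper's two-step split \eqref{eq:decompositionindicator}, which treats $W$ and $\bg$ jointly by a single Gaussian concentration bound $O(\sqrt{N+s^2})$ and then handles $\bx^0$ via \eqref{eq:vn2} by Azuma--Hoeffding on $\E_{W,g}\phi$. Both give $v_N(s)=O(\sqrt{N})$ for $s=N^\gamma$, $\gamma<1/2$. The key cancellation you highlight---that a single-coordinate change of $\bx^0$ shifts $R_{1,0}$ by $O(1/N)$, balancing the $O(\cL N)$ Lipschitz constant of $\log\chi$ to yield $O(\cL)$ bounded differences---is exactly what underlies the paper's derivative bound $|\E_{W,g}\partial_{x_i^0}\phi|\le16C^3\cL$. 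One small point worth making more precise: your ``deterministic a priori bound'' $|\phi|=O(N\|W\|_{op}+\cL N)$ on $\cB_\delta^c$ is not quite deterministic, since the lower bound on $\phi$ also requires controlling the perturbation field $s\,g(\hat\bx)$ (of size $O(s\sqrt N)=o(N)$, so harmless) and using $\log\pP_X^{\otimes N}(|R_{1,1}-S|\le\epsilon)=O(N)$ from the finite-entropy assumption on $(S,M)$; the paper handles this carefully via Jensen for the upper bound \eqref{eq:concupperbound11} and an explicit lower bound \eqref{eq:concupperbound2}.
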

	
	\begin{proof}   The first result shows that $s_{N}$ is small enough so that the free energy is not changed. Indeed by an interpolation argument as in section \ref{secuniversality}, one can 
		check that the perturbation does not change the limit of the free energy if
		\begin{equation}\label{eq:srequirement1}
			\lim_{N \to \infty} \frac{s^2}{N} \to 0,
		\end{equation}
		because the covariance of the perturbation term satisfies
		\[
		\E s g(\hat \bx^1) s g(\hat \bx^2) \leq 3 s^2.
		\]
		We leave the details to this point to the reader, see e.g. \cite[Section~3.2]{PBook}. The second point is important as it will imply that any limit points of the limiting array of overlaps satisfies the Ghirlanda--Guerra identities on average. 
		We  therefore focus on the sufficient lower bound on the growth of $s$ to satisfy \eqref{eq:srequirement2}.

		The main difficulty in this computation is the indicators that were introduced for convenience earlier becomes a problem in this step. We fix an arbitrary sequence $t_p$. By independence, we can split the expected values into a statement about the concentration of $\bx^0$ and the Gaussian terms,
		\begin{equation}\label{eq:decompositionindicator}
			\E \1_{\cB_\delta} | \phi - \E \1_{\cB_\delta} \phi| \leq \E \1_{\cB_\delta} | \phi - \E_{W,g}  \phi| + \E_{\bx^0} \1_{\cB_\delta} | \E_{W,g} \phi - \E \1_{\cB_\delta} \phi|
		\end{equation}
		The average $\E_W$ is with respect to the `$W$' Gaussian terms in $H_N$, the average $\E_{\bx^0}$ is with respect to `$\bx^0$' terms in the approximate indicator and $\E_g$ is with respect to the `$g$' Gaussian terms $g_N(\bx)$, and $\E$ is the average with respect to all sources of randomness. The challenge with controlling these terms is that on the set $\cB_\delta$, we loose independence of the coordinates of the constrained variables. The upside is that $\cB_\delta$ is a set that occurs with high probability, so we can remove the indicator with a bit of work. We will control each term in the upper bound separately. 
		\\\\
		\textit{First Term:} Our goal is to show that
		\begin{equation}\label{eq:firsttermGGIconc}
			\E \1_{\cB_\delta} | \phi - \E_{W,g} \phi| \leq \E_{\bx^0} [ \E_{W,g} | \phi - \E_{W,g} \phi| ] \leq O( \sqrt{N + s_{N}^2} ).
		\end{equation}
		By independence, we can compute this upper bound conditionally on $\bx^0$. The inner expected value can be controlled using classical Gaussian concentration because the Gaussian terms have uniformly bounded covariance. By Gaussian concentration \cite[Theorem 1.2]{PBook},
		\begin{equation}\label{eq:varboundgaussian}
			\E_{W,g} ( \phi - \E_{W,g} \phi )^2 \leq 8 \sup_{\bx: |R_{1,1}-S|\le \epsilon} \E_{W,g} (H^\pert_{N}(\bx))^2.
		\end{equation}
		Since the entries of $\hat \bx$ are uniformly bounded by $D$  when $\bx$ is such that $ |R_{1,1}-S|\le \epsilon$
		and according to \eqref{boundcov}, we see that uniformly on $\bx$ and $\bx^{0}$, we have 
		\[
		\E_{W,g} ( H_{N}^{SK}(\hat \bx) + s g_N(\hat \bx) )^2  \le N D^4 \ba^2  + 3 s_{N}^2.
		\]
		
		From the bound on the variance \eqref{eq:varboundgaussian} and Jensen's inequality, we deduce     
		\begin{align*}
			\E_{W,g} | \phi - \E_{W,g} \phi| &\leq ( \E_{W,g} ( \phi - \E_{W,g} \phi )^2 )^{1/2} \\
			&\leq \sqrt{ 8 \sup_{\bx: |R_{1,1}-S|\le \epsilon} \E_{W,g} (H_{N}^{SK}(\hat \bx) + s g_N(\hat \bx))^2 } = O( \sqrt{N + s_{N}^2}).
		\end{align*}
		This upper bound is independent of $\bx^0$ and $\bx$, so \eqref{eq:firsttermGGIconc} follows immediately. 
		\\\\\textit{Second Term:} Our goal is to show that
		\begin{equation}\label{eq:secondtermGGIconc}
			\E_{\bx^0} \1_{\cB_\delta} | \E_{W,g}  \phi - \E \1_{\cB_\delta} \phi|  \leq  O( N^{\frac{1}{2}} ).
		\end{equation}
		We will use the bounded difference inequality, and this step is where the approximate indicator $\chi$ is used. The restriction to $\1_{\cB_\delta}$ is a nuisance in this section but is essential to prove the sharpness of the lower bound. We proceed like the first term and use the following decomposition
		\begin{align}
			\E_{\bx^0} \1_{\cB_\delta} | \E_{W,g} \phi - \E \1_{\cB_\delta} \phi| &\leq \E_{\bx^0} \1_{\cB_\delta} | \E_{W,g} \phi - \E  \phi| +  \E_{\bx^0} \1_{\cB_\delta} |\E  \phi - \E \1_{\cB_\delta} \phi|  \notag
			\\&\leq \E_{\bx^0} | \E_{W,g} \phi - \E  \phi| + | \E \1_{\cB^c_\delta} \phi|. \label{eq:vn2}
		\end{align}
		
		To control the first term in \eqref{eq:vn2}, we use the bounded difference property and a consequence of the Efron--Stein inequality. This step is where we use the smoothing of the indicator, because it gives us sufficient control over the variation of $\phi$  we do a small perturbation of $\bx^0$. We indeed show that
		\begin{equation}\label{eq:ggisecondtermx0conc}
			\E_{\bx^0} | \E_{W,g} \phi - \E  \phi| \leq O( N^{\frac{1}{2}} ).
		\end{equation}
		To see this, observe that $x\rightarrow x_{+}^{2}$ is continuously differentiable, with derivative $2x_{+}$, 
		so that 
		
		\begin{align*}
			|\E_{W,g} \partial_{x_i^0} \phi | &\leq 2 \cL \Big| \E_{W,g} \Big\langle  N ( R_{1,0} - (M - \epsilon) )_-  \frac{x_i}{N} \Big\rangle \Big|  +  2 \cL \Big| \E_{W,g} \Big\langle  N ( R_{1,0} - (M + \epsilon) )_+  \frac{x_i}{N} \Big\rangle \Big|
			\leq 16 C^3 \cL
		\end{align*}
		where $\langle\cdot \rangle$ is the average with respect to the measure 
		\[
		d\pG^{\pert}_N(\bx) = \frac{ \1(|R_{1,1}-S|\le\epsilon) e^{H^\pert_{N}(\hat \bx)} \chi_{M,\epsilon}^{N } ( R_{1,0} ) \, d \pP_X^{\otimes N}(\bx) }{ \int \1(|R_{1,1}-S|\le\epsilon) e^{H^\pert_{N}(\hat \bx)}
			\chi_{M,\epsilon}^{N } ( R_{1,0} ) d \pP_X^{\otimes N}(\bx)
		}
		\]
		Hence, 
		$\E_{W,g} \phi$ has a bounded derivative at each coordinate $x_i^0$ and $x_i^0$ is almost surely bounded by $C$, it satisfies the bounded difference inequality, 
		\[
		|\E_{W,g} \phi(x^0_1,\dots,x^0_i,\dots,x_N^0 ) - \E_{W,g} \phi(x^0_1,\dots,\tilde x^0_i,\dots,x_N^0 ) | \leq  32 C^4 \cL \,. 
		\]
		Therefore, Azuma Hoefding's inequality, see e.g  \cite[Corollary~3.2]{boucheronConc}, implies that
		\[
		\E_{x^0} ( \E_{W,g} \phi - \E \phi )^2 \leq  64 C^8 \cL^2 N
		\]
		which proves \eqref{eq:ggisecondtermx0conc} after applying Jensen's inequality.
		To control the second term in \eqref{eq:vn2} we use the fact that ${\cB_\delta}^{c}$ is an exponentially rare event to prove that
		\begin{equation}\label{eq:ggisecondtermx0conc2}
			| \E \1_{{\cB_\delta}^c} \phi| \leq O(N e^{-k \delta N })
		\end{equation}
		To prove this upper bound, first we  use the following upper bound by Jensen's inequality and monotonicity
		\begin{align*}
			& \E \1_{\cB^c_\delta} \log \int \1(|R_{1,1}-S|\le\epsilon) \chi_{M,\epsilon}^{N} (R_{1,0})  e^{H^\pert_{N}(\hat \bx)} \, d \pP^{\otimes N}_X(\bx) 
			\\
			&\leq  \E_{x^0} \1_{\cB_\delta^{c} } \log \E_{W,g} \int e^{H^\pert_{N}(\hat \bx)} \, d \pP^{\otimes N}_X(\bx)
			\\&\leq  \E_{x^0} \1_{\cB_\delta^{c} }  \bigg(\ba^{2}C^{2} N + \frac{3}{2} s^2 \bigg) .
		\end{align*}
		By Sanov's Theorem, we see that for every $\delta>0$ there exists $c_{\delta}>0$ such that 
		\[
		 \E_{x^0} \1_{\cB_\delta^{c} } \bigg( \frac{C^2}{2} N + \frac{3}{2} s^2 \bigg) \leq O( (N + s^2) e^{-c_{\delta} N} )
		\]
		since the empirical measures of iid samples concentrate. Therefore,
		\begin{equation}\label{eq:concupperbound11}
			\E \1_{\cB_\delta^{c}} \phi  \leq O ( (N + s^2) e^{- c_\delta N} ).
		\end{equation}
		We next prove an analoguous lower bound. The idea is to bound uniformly the $x_{i}$ and $x_{i}^{0}$, $i\le N$.

		\begin{align*}
			\E \1_{\cB^c_\delta}\phi &\geq \E \1_{\cB_\delta^c}  \log \big( e^{  - \ba \|W\|_{\infty} N^{3/2}-\cL C^{2}N}
			\big)\pP^{\otimes N}_X(|R_{1,1}-S|\le \epsilon)
		\end{align*}
		The term $\log ( \pP_X^{\otimes N}(|R_{1,1}-S|\le \epsilon) ) $ is of order $N$  since $S$ has finite entropy and $\E\|W\|_{\infty}$ is of order $\sqrt{N}$ at most (note that this is independent of $\bx^{0}$ and therefore of $\delta$). 
		Therefore the conclusion follows by  Sanov's theorem. Hence, we conclude that 	\begin{equation}\label{eq:concupperbound2}
			\E_{W,g} \1_{\cB^c_\delta}\phi \geq - L N e^{- c_{\delta} N}
		\end{equation}
		for some constant $L$ that only depends on the choice of the fixed model parameters. The upper bound \eqref{eq:concupperbound11} and lower bound \eqref{eq:concupperbound2} implies \eqref{eq:ggisecondtermx0conc2}.
		Since $\eqref{eq:ggisecondtermx0conc2}$ is of lower order than \eqref{eq:ggisecondtermx0conc}, the decomposition \eqref{eq:vn2} proves \eqref{eq:secondtermGGIconc}.
		To conclude, starting from \eqref{eq:decompositionindicator}, the bounds \eqref{eq:firsttermGGIconc} and \eqref{eq:secondtermGGIconc} imply
		\[
		\sup \Big\{ \E \1_{\cB_\delta} | \phi - \E \1_{\cB_\delta} \phi| \mmm 0 \leq t_p \leq 3, p \geq 1 \Big\}  \leq O( N+ s_{N}^2 )^{1/2} 
		\]
		which yields the Lemma. 
	\end{proof}
	
	We now evaluate the perturbed Hamiltonian with the $t_n$ coordinates replaced by $\bu=(u_{n})_{n\ge 0}$, iid uniform random variables on $[1,2]$, namely $g( \bx,\bu) := \sum_{p \geq 1} 2^{-p} D^{- p } u_p  g_p(\bx)$ and 
	$H^\pert_{N}(\bx,\bu) = H_{N}^{SK}(\hat \bx) + s g(\hat \bx,\bu)$  now depends on the additional random variables $u$.

	In this section, we  denote by  $\langle \cdot \rangle$  the average with respect to the perturbed Gibbs measure
	\begin{equation}
		\langle f \rangle = \frac{\int 1_{|R_{1,1}-S|\le\epsilon }\chi_{M,\epsilon}^{N}(R_{1,0}) f (\bx) e^{H_{N}^\pert(\hat \bx,\bu)} \, d\pP^{\otimes N}_X(\bx)}{\int 1_{|R_{1,1}-S|\le\epsilon }\chi_{M,\epsilon}^{N}(R_{1,0}) e^{H_{N}^\pert(\hat\bx,\bu)} \, d\pP^{\otimes N}_X(\bx)}
	\end{equation} 
	which depends on $S, M$ and $\epsilon$. By the convexity of the free energy functions, we have the following concentration estimate of the deviation of $g_p$ under the Gibbs measure from \cite[Theorem~3.3]{PBook}.
	\begin{lem}[Concentration of the Perturbed Hamiltonian]\label{lem:concpert}
		For any $p \geq 1$, if $s > 0$  is such that $s^{-2} v_N(s) \leq \frac{7}{4^{-p}}$ then
		\[
		\E_{u} \E \1_{\cB_\delta} \langle| g_p(\hat \bx) - \E \1_{\cB_\delta} \langle g_p(\hat \bx) \rangle |  \rangle \leq  C^p ( 2 + 18 \sqrt{v_N(s)} )
		\]
		where $\E$ is the average with respect to the Gaussian random variables  $W,\bg$ and $\bx^0$ and $\E_u$ is the average with respect to the uniform random variables $(u_n) \sim U[1,2]$.
	\end{lem}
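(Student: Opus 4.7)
The plan is to adapt the convexity argument of \cite[Theorem~3.3]{PBook} to our setting, where the only novelty is the presence of the indicator $\1_{\cB_\delta}$ and the approximate indicator $\chi_{M,\epsilon}^{N}$. The key observation is that neither of these quantities depends on the parameter $u_p$, so for each realization of $(W,\bg,\bx^0)$ the random function $u_p \mapsto \1_{\cB_\delta}\,\phi(\bu)$ remains convex in $u_p$, since the term $u_p$ enters the Hamiltonian $H^{\pert}_N(\bx,\bu)$ linearly through $s\cdot 2^{-p}D^{-p}u_p g_p(\hat\bx)$, and a log partition function with a linearly perturbed Hamiltonian is convex in the perturbation parameter.

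First I would split by the triangle inequality
\[
\langle |g_p(\hat\bx) - \E\1_{\cB_\delta}\langle g_p(\hat\bx)\rangle| \rangle \le \langle |g_p(\hat\bx) - \langle g_p(\hat\bx)\rangle| \rangle + \1_{\cB_\delta}\,\bigl|\langle g_p(\hat\bx)\rangle - \E\1_{\cB_\delta}\langle g_p(\hat\bx)\rangle\bigr|,
\]
so the task reduces to bounding the ``thermal'' fluctuation (first term) and the ``disorder'' fluctuation (second term). For the disorder fluctuation, since $\partial_{u_p}(\1_{\cB_\delta}\phi) = \1_{\cB_\delta}\cdot s\cdot 2^{-p}D^{-p}\langle g_p(\hat\bx)\rangle$, controlling it amounts to controlling the deviation of the derivative of the convex function $u_p\mapsto \1_{\cB_\delta}\phi(\bu)$ from its mean. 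The standard Panchenko trick says: if $f$ is random and convex in $t$ with mean $F=\E f$, then for $h>0$, convexity gives $f'(t)\le (f(t+h)-f(t))/h$ and $F'(t)\ge (F(t)-F(t-h))/h$, whence
\[
f'(t)-F'(t)\le \tfrac{1}{h}\bigl[(f(t+h)-F(t+h))-(f(t)-F(t))\bigr] + \bigl[F'(t+h)-F'(t-h)\bigr],
\]
with a symmetric lower bound. Applying this to $f=\1_{\cB_\delta}\phi$ with $t=u_p$, integrating over $u_p\in[1,2]$, and using the definition of $v_N(s)$ to bound the first bracket by $2v_N(s)/h$, I would then use that the total variation of $F'(u_p)$ over $u_p\in[1,2]$ is controlled via $\int_1^2 F''(u_p)\,du_p \le s^2 4^{-p} D^{-2p}\,\E\langle (g_p-\langle g_p\rangle)^2\rangle$, together with the a priori bound $\langle g_p(\hat\bx)^2\rangle \le \langle R_{1,1}^p\rangle\le D^{2p}\le C^{2p}$ (up to a constant depending on $S$ and $\epsilon$). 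Optimizing $h=\sqrt{v_N(s)}\cdot s^{-1}2^{p}D^{p}$ produces a bound of order $s^{-1}2^p D^p\sqrt{v_N(s)}$, which under the hypothesis $s^{-2}v_N(s)\le 7\cdot 4^{-p}$ rearranges to a clean constant times $C^p\sqrt{v_N(s)}$.

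For the thermal fluctuation $\langle |g_p - \langle g_p\rangle|\rangle$, I would use Jensen to pass to $\langle (g_p-\langle g_p\rangle)^2\rangle^{1/2}$, then note that $\E\langle (g_p-\langle g_p\rangle)^2\rangle = s^{-2}4^p D^{2p}\,\E\,\partial^2_{u_p}\phi$, integrate over $u_p\in[1,2]$ uniformly, and bound $\int_1^2 \E\,\partial^2_{u_p}\phi \,du_p$ by the total increment of the mean derivative, which itself is controlled by the same convexity trick applied at the endpoints and by $v_N(s)$. Combining these two estimates and collecting the constants $2^{-p}D^{-p}$ coming from the perturbation's normalization against the Gaussian second moment $D^{2p}$ leaves a final bound of the form $C^p(2+18\sqrt{v_N(s)})$.

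The main obstacle will be bookkeeping the factor $\1_{\cB_\delta}$ throughout the convexity argument: it is $u_p$-independent so it commutes with $\partial_{u_p}$ trivially for each fixed sample, but it couples the ``$W,\bg$'' averages with the ``$\bx^0$'' averages, so one must be careful that $v_N(s)$ as defined in Lemma~\ref{lem:sizepert} controls the deviation of $\1_{\cB_\delta}\phi$ from $\E\1_{\cB_\delta}\phi$, which is exactly how it was set up. A secondary technical point is ensuring the Gibbs-measure bound $\langle (g_p-\langle g_p\rangle)^2\rangle\le C^{2p}$ on the support of the constraint $|R_{1,1}-S|\le\epsilon$; this follows from the choice \eqref{eq:modifiedcoords} since $\|\hat\bx\|_2^2 = SN$ exactly, so $R_{1,1}^{\hat\bx}=S$ and the Gaussian variance $g_p(\hat\bx)$ has covariance bounded by $S^p\le C^{2p}$.
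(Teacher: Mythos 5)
Your proposal follows the same overall strategy as the paper — triangle-inequality split into thermal and disorder fluctuations, the convexity-comparison trick from \cite[Lemma~3.2]{PBook} for the disorder part, and integration of $\partial^2_{u_p}\phi$ over $u_p \in [1,2]$ for the thermal part — and the disorder-fluctuation argument (taking $f = \1_{\cB_\delta}\phi$, $F = \E f$, invoking $v_N(s)$, optimizing $h$) is correct and matches the paper's. Two details in your description of the thermal term are off, though neither is fatal.

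First, you say the total increment of the mean derivative $\E\,\partial_{u_p}\phi$ over $[1,2]$ is ``controlled by the same convexity trick applied at the endpoints and by $v_N(s)$.'' It is not: $v_N(s)$ plays no role in that term. The paper computes $\E_W \partial_{t_p}\phi(\bt) = s_p \E_W \langle g_p(\hat\bx)\rangle$ and then applies Gaussian integration by parts to get $s_p^2\, t_p\, \E_W\langle \hat R_{1,1}^p - \hat R_{1,2}^p\rangle = s_p^2\, t_p\, \E_W\langle S^p - \hat R_{1,2}^p\rangle \in [0, 2C^{2p}s_p^2\, t_p]$; the crucial ingredient is that the change of variables \eqref{eq:modifiedcoords} pins $\hat R_{1,1} = S$. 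The increment over $[1,2]$ is then at most $4C^{2p}s_p^2$, which already gives $\E_u\E\langle(g_p - \langle g_p\rangle)^2\rangle \leq 4C^{2p}$ and, after Jensen, the $2C^p$ contribution. No convexity comparison and no $v_N(s)$ are needed here.

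Second, the ``a priori Gibbs-measure bound'' $\langle g_p(\hat\bx)^2\rangle \leq \langle R_{1,1}^p\rangle \leq D^{2p}$ that you offer as the backup estimate is false as stated. For a fixed realization of the disorder, $g_p(\hat\bx)$ is just a number (the value of a Gaussian field at a configuration) and is not bounded by $D^p$; what holds is $\E_g\, g_p(\hat\bx)^2 = \hat R_{1,1}^p = S^p$, which is a statement about the Gaussian variance, not about the Gibbs average $\langle g_p^2\rangle$ for a given realization. Since the Gibbs weight itself depends on $g$, one cannot interchange $\E_g$ and $\langle\cdot\rangle$ to produce such a bound. This claim is not needed once the thermal term is handled by the integration-by-parts route above, but as written it would not survive scrutiny; you should drop it and replace the thermal-term argument with the direct Gaussian computation.
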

	\begin{proof}
		Fix $p \geq 1$. By the triangle inequality, we have
		\begin{equation}\label{eq:GGIconc}
			\E_u \E \1_{\cB_\delta} \langle| g_p(\hat \bx) - \E \langle g_p(\hat \bx) \rangle | \rangle \leq \E_u \E \1_{\cB_\delta} \langle| g_p(\hat \bx) - \langle g_p(\hat \bx)  \rangle | + \E_u \E \1_{\cB_\delta} | \langle g_p(\hat \bx) \rangle - \E \langle g_p(\hat \bx) \rangle | 
		\end{equation}
		we will control each of these terms separately.
		\\\\	
		\textit{First Term:} We begin with the first term in \eqref{eq:GGIconc}. We fix  $\bx^0 \in \cB_\delta$. Consider
		\begin{equation}\label{defphi2}
			\phi(\bt) = \log \int 1_{|R_{1,1}-S|\le\epsilon }\chi_{M,\epsilon}^{N}(R_{1,0}) e^{H^\pert_{N}(\hat \bx,\bt)} \, d \pP^{\otimes N}_X(\bx) 
		\end{equation}
		as a function of $t = t_p$. Recall that in the definition of $g(\bx)$ in \eqref{eq:pertg} and \eqref{eq:parthamil}, the variable $t_{p}$ only appears in the term 
		$
		s 2^{-p} D^{-p} t_{p}g_p(\hat \bx) = s_p t_{p}g_p(\hat \bx)
		$
		where we defined $s_p = s 2^{-p} D^{-p} $ to simplify notation. Differentiating the free energy and integrating by parts implies that
		\begin{equation}\label{eq:boundderiv}
			\E_W \partial_{t_{p}}\phi(t) =s_p \E_W \langle g_p(\hat \bx) \rangle =s_p^2 t_{p} \E_W \langle R_{1,1}^p - R_{1,2}^p \rangle = s_p^2 t_{p} \E_W \langle S^p - R_{1,2}^p  \rangle \in [0, 2 C^{2p} s_p^2 t]. 
		\end{equation}
		since $S \leq C^2$.	The second derivative of the free energy gives the variance
		\[
		\E_W\partial_{t_{p}}^{2} \phi(\bt) =  s_p^2 \E_W ( \langle g_p(\hat \bx)^2 \rangle  - \langle g_p(\hat \bx) \rangle^2  ) = s_p^2 \E_W \langle ( g_p(\hat \bx) - \langle g_p(\hat \bx) \rangle  )^2 \rangle .
		\]
		We can integrate $t_{p}$ from $[1,2]$ to arrive  with \eqref{eq:boundderiv} at the bound
		\[
		s_p^2  \E_u \E_{W} \langle ( g_p(\hat \bx) - \langle g_p(\hat\bx) \rangle  )^2 \rangle =  \int_1^2 \E_{W}\partial_{t_{p}}^{2} \phi(\bt) \, dt_{p} = ( \E_{W} \partial_{t_{p}}\phi|_{t_{p}=2} - \E_{W}\partial_{t_{p}}\phi|_{t_{p}=1} ) \leq 4 C^{2p} s_p^2 .
		\]
		Jensen's inequality  implies that
		\begin{equation}\label{eq:bound1}
			\E_u \E \langle | g_p(\hat \bx) - \langle g_p(\hat \bx) \rangle  | \rangle \leq 2 C^p.
		\end{equation}
		This bound is uniform for $\bx^0 \in \cB_\delta$, so we can now integrate over $\E_{x^0} \1_{\cB_\delta}$.
		\\\\
		\textit{Second Term:} We first fix  $\bx^0 \in \cB_\delta$ and all other random processes other than $g_p$. We use convexity of the free energies to bound the second term in \eqref{eq:GGIconc}. We  recall $\phi$ defined in \eqref{defphi2}
		and let $\psi(\bt)=  \E  \1_{\cB_\delta} \phi(\bt)$. As in  \eqref{eq:boundderiv}, we find 
		
		\[ \partial_{t_{p}}\phi(\bt) =
		s_p \langle g_p(\hat \bx)\rangle \quad \text{and} \quad \partial_{t_{p}}\psi(\bt) = s_p \E \1_{\cB_\delta} \langle g_p(\hat  \bx) \rangle.  
		\]
		For $y \in [0,1]$, recall  from  \cite[Lemma~3.2]{PBook} that if $\phi,\psi$ are two differentiable convex functions on the real line, for any $y>0$
		$$|\phi'(x)-\psi'(x)|\le \psi'(x+y)-\psi'(x-y)+\frac{\Delta}{y}$$
		with $\Delta=|\psi(x+y)-\psi(x+y)|+|\phi(x-y)-\psi(x-y)|+|\phi(x)-\psi(x)|.$ Choosing $\phi$,$\psi$ the above functions of $t_{p}$, we next take the expectation $ \E  \1_{\cB_\delta}$.
		Recalling that $x=t_{p}$ belongs to $[1,2]$ and  taking $y\in [0,1]$ so that $u_{p},u_{p}+y$ and $u_{p}-y$ all belong to $[0,3]$ we see that $ \E  \1_{\cB_\delta}\Delta\le 3 v_{N}(s)$. 
		yielding
		if $1_{p}$ vanishes except at the $p$th coordinate where it is equal to one, uniformly on $t_{p}\in [1,2]$ and $y\in [0,1]$,
		
		\begin{equation}\label{lkj}
			\E  \1_{\cB_\delta}\
			| \partial_{t_{p}}\phi(\bt) -\partial_{t_{p}}\psi(\bt) | \leq \partial_{t_{p}}\psi(\bt+ y1_{p}) - \partial_{t_{p}}\psi(\bt - y1_{p}) + \frac{3 v_N(s)}{\delta}.
		\end{equation}
		If we integrate by parts with respect to the Gaussian terms first, we see that $|\partial_{t_{p}}\psi(\bt)| \leq 6       C^{2p} s_p^2$ for $t_{p} \in [0,3]$ by the argument in \eqref{eq:boundderiv}. The mean value theorem implies with \eqref{eq:boundderiv}
		that
		\begin{eqnarray*}
			\E_{t_{p}}[ \partial_{t_{p}}\psi(\bt+ y1_{p}) - \partial_{t_{p}}\psi(\bt - y1_{p}) ]&=&
			\int_1^2\left( \psi'(t + y) - \psi'(t - y)\right) \, dt  \\
			&=& \psi(2 + y) - \psi(2 - y) - \psi(1 + y) + \psi(1 -y) \leq 24 C^{2p} s_p^2  y.
		\end{eqnarray*}
		Therefore, if we take $t \sim U[1,2]$ and average on both sides then \eqref{lkj} yields
		\[
		s_p \E_u\E  \1_{\cB_\delta}  | \langle g_p(\hat \bx) \rangle - \E_g \1_{\cB_\delta} \langle g(\hat \bx) \rangle | = E  \1_{\cB_\delta}\
		| \partial_{t_{p}}\phi(\bt) -\partial_{t_{p}}\psi(\bt) | 
		\leq 24 C^{2p} s_p^2 y+ \frac{3v_N(s)}{\delta}.
		\]
		Recalling that $s_p = s 2^{-p} C^{-p} $, we can take the minimizing $y = \frac{v_N^{1/2}}{\sqrt{7}C^p s_p}$ which is in $[0,1]$ if $s^{-2} v_N(s) \leq \frac{7}{4^{-p}}$, then we get the bound
		\begin{equation}\label{eq:bound2}
			\E_u \E  \1_{\cB_\delta}  | \langle g_p(\hat \bx) \rangle - \E_g \1_{\cB_\delta} \langle g(\hat \bx) \rangle
			|  \leq 18 C^p \sqrt{v_N(s)}.
		\end{equation}
		Combining the inequalities \eqref{eq:bound1} and \eqref{eq:bound2} to bound \eqref{eq:GGIconc} finishes the proof.	By independence, we can also integrate with respect to the other random processes and $\bx^0 \in \cB_\delta$.
	\end{proof}
	Since we constrained the self overlaps $R_{1,1}$ to be constant then the general proof of the Ghirlanda--Guerra identities \cite[Theorem 3.2]{PBook} holds without modification. 
	
	\begin{theo}[Ghirlanda--Guerra Identities]\label{thm:GGI}Let $\hat R_{k,\ell}=\frac{1}{N}\sum_{i=1}^{N}\hat x_{i}^{k}\hat x^{\ell}_{i}$. 
		If $s = N^\gamma$ for $1/4 < \gamma < 1/2$, then
		\[
		\lim_{N \to \infty} \E_u \bigg| \E \1_{\cB_\delta} \langle f  \hat R_{1,n+1}^p \rangle - \frac{1}{n} \E\1_{\cB_\delta}  \langle f \rangle \E \1_{\cB_\delta} \langle \hat R_{1,2}^p \rangle - \frac{1}{n} \sum_{\ell = 2}^n \E \1_{\cB_\delta} \langle f\hat R^p_{1,\ell} \rangle \bigg| = 0
		\]
		for any $p \geq 1$, $n \geq 2$ and bounded measurable function $f$ of the $n \times n$ sub array of the overlaps. 
	\end{theo}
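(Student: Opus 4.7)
The strategy is Gaussian integration by parts on $g_p(\hat\bx^1)$ combined with Lemma~\ref{lem:concpert} to show that the covariance between $\langle f\rangle$ and $\langle g_p\rangle$ is small on the scale $s_p=s\,2^{-p}D^{-p}$. Fix $p\geq 1$, $n\geq 2$, and a bounded measurable $f$ of the first $n$ replicas with $\|f\|_\infty\leq 1$. Since $\1_{\cB_\delta}$ depends only on $\bx^0$, independently of the Gaussian variables $g_{i_1,\ldots,i_p}$ driving $g_p$, Gaussian IBP passes through the indicator. Working on a Gibbs measure with $n+1$ replicas, a standard IBP computation, combined with $\hat R_{1,1}=S$ from the normalization \eqref{eq:modifiedcoords} and the exchangeability of the two fresh replicas $n+1,n+2$ (neither appearing in $f$), produces the core identity
\[
\E\1_{\cB_\delta}\langle f\,g_p(\hat\bx^1)\rangle = s_p u_p\Bigl[S^{\,p}\,\E\1_{\cB_\delta}\langle f\rangle + \sum_{k=2}^{n}\E\1_{\cB_\delta}\langle f\hat R_{1,k}^{\,p}\rangle - n\,\E\1_{\cB_\delta}\langle f\hat R_{1,n+1}^{\,p}\rangle\Bigr],
\]
and, by specializing to $f\equiv 1$, $n=1$, the companion identity $\E\1_{\cB_\delta}\langle g_p\rangle = s_p u_p\bigl(S^{\,p}\pP(\cB_\delta) - \E\1_{\cB_\delta}\langle\hat R_{1,2}^{\,p}\rangle\bigr)$.

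Subtracting $\E\1_{\cB_\delta}\langle f\rangle$ times the companion identity from the core one and using $\pP(\cB_\delta)=1-O(e^{-cN})$ (Sanov), one obtains
\[
\E\1_{\cB_\delta}\langle f\,g_p\rangle - \E\1_{\cB_\delta}\langle f\rangle\,\E\1_{\cB_\delta}\langle g_p\rangle = -n\,s_p u_p\,\Delta_{n,p}(f) + O\!\bigl(s_p e^{-cN}\bigr),
\]
where $\Delta_{n,p}(f)$ is exactly the quantity inside the absolute value of the statement to be proved. On the other hand, the trivial algebraic rewriting
\[
\E\1_{\cB_\delta}\langle f\,g_p\rangle - \E\1_{\cB_\delta}\langle f\rangle\,\E\1_{\cB_\delta}\langle g_p\rangle = \E\1_{\cB_\delta}\Bigl\langle f\bigl(g_p - \E\1_{\cB_\delta}\langle g_p\rangle\bigr)\Bigr\rangle
\]
together with $|f|\leq 1$ produces the pointwise bound $|\cdots|\leq \E\1_{\cB_\delta}\langle |g_p - \E\1_{\cB_\delta}\langle g_p\rangle|\rangle$, whose $\E_u$-average is at most $C^p(2+18\sqrt{v_N(s)})$ by Lemma~\ref{lem:concpert}. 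Dividing by $n s_p$ (using $u_p\geq 1$) gives
\[
\E_u|\Delta_{n,p}(f)| \leq \frac{(2D)^p C^p}{n\,s}\bigl(2+18\sqrt{v_N(s)}\bigr) + O\!\bigl(e^{-cN}\bigr),
\]
which vanishes as $N\to\infty$ for each fixed $p$ because $s=N^\gamma\to\infty$ and $v_N(s)/s^2\to 0$ by Lemma~\ref{lem:sizepert}.

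The main technical obstacle is the constant ``$2$'' in the concentration estimate: it reflects the $O(1)$ thermal fluctuations of $g_p$ about its Gibbs mean and does not itself decay with $N$. The argument works solely because division by $s_p \sim N^\gamma$ kills this term, which is exactly why the calibration $1/4<\gamma<1/2$ of Lemma~\ref{lem:sizepert} is required: the lower bound $\gamma>1/4$ ensures $\sqrt{v_N(s)}/s\to 0$ (controlling the non-trivial concentration term), while the upper bound $\gamma<1/2$ ensures $s^2/N\to 0$ so the perturbation does not alter the limit of the free energy. The restriction to $\1_{\cB_\delta}$ introduces no serious difficulty because $\cB_\delta$ is independent of the Gaussian variables driving the IBP and has probability $1-O(e^{-cN})$.
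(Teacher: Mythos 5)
Your proof is correct and follows essentially the same route as the paper: Gaussian integration by parts on $g_p(\hat\bx^1)$, the constraint $\hat R_{1,1}=S$ from the normalization, the concentration estimate of Lemma~\ref{lem:concpert}, and division by $ns_p$ using $u_p\geq 1$. The only organizational difference is that you write out the IBP twice (``core'' and ``companion'' identities) and subtract explicitly, whereas the paper starts from the covariance inequality $|\E\1_{\cB_\delta}\langle fg_p\rangle-\E\1_{\cB_\delta}\langle f\rangle\E\1_{\cB_\delta}\langle g_p\rangle|\leq\E\1_{\cB_\delta}\langle|g_p-\E\1_{\cB_\delta}\langle g_p\rangle|\rangle$ and expands the left side by IBP afterward; the two bookkeepings are equivalent. (Minor slip: only one fresh replica, $n+1$, is produced by differentiating the $n$-fold denominator; the ``$n+2$'' you mention plays no role, and your displayed identities are correct as written.)
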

	
	\begin{proof}
		Let us fix $n \geq 2$ and consider a bounded function $f = f(R^n)$ of $n\times n$ overlaps from the array. By a scaling argument, we can assume that $\|f\|_{\infty} = 1$. We start with the inequality
		\begin{equation}\label{eq:GGIbound}
			| \E \1_{\cB_\delta} \langle f g_p(\hat\bx^1) \rangle - \E \1_{\cB_\delta} \langle f \rangle \E \1_{\cB_\delta} \langle g_p(\hat \bx) \rangle| \leq \E \1_{\cB_\delta} \langle |g_p(\hat \bx) - \E \1_{\cB_\delta} \langle g_p(\hat \bx) \rangle| \rangle.
		\end{equation}
		 To simplify notation, we set $s_p = s 2^{-p} D^{-p} $. Conditionally on $u_p$, Gaussian integration by parts implies that the left hand side simplifies to
		\[
		\left|s_pu_p \E \1_{\cB_\delta} \bigg\langle f \bigg( \sum_{\ell = 1}^n \hat R_{1,\ell}^p - n  \hat R^p_{1,n+1} \bigg) \bigg\rangle - s_pu_p \E \1_{\cB_\delta} \langle f \rangle ( \E \1_{\cB_\delta} \langle \hat R^p_{1,1} - \hat R^p_{1,2} \rangle )\right|.
		\]
		To see this, recall that the covariance of $\E g_p(\hat \bx^1)g_p(\hat \bx^2) = R_{1,2}^p$ from \eqref{boundcov} and the factor $s_p u_p$ appearing in front of $g_p(\hat\bx)$ in the perturbation Hamiltonian \eqref{eq:parthamil}. Since $f(R^n) = f(\hat\bx^1, \dots, \hat\bx^n)$
		\begin{align*}
		&\E \1_{\cB_\delta} \langle f(R^n) g_p(\hat\bx^1) \rangle = \E \1_{\cB_\delta} \frac{\int f(R^n) g_p(\hat\bx^1) e^{ \sum_{\ell = 1}^n H_N^\pert(\hat \bx^\ell) } \prod_{\ell = 1}^n d\pP^{\otimes N}_X(\bx^\ell)}{ \left( \int  g_p(\hat\bx^1) e^{ \sum_{\ell = 1}^n H_N^\pert(\hat \bx^\ell) }  d\pP^{\otimes N}_X(\bx) \right)^n } 
		\\&= s_pu_p \E \1_{\cB_\delta} \bigg\langle f(R^n) \bigg( \sum_{\ell = 1}^n \hat R_{1,\ell}^p - n  \hat R^p_{1,n+1} \bigg) \bigg\rangle
		\end{align*}
		where we treat the terms appearing in the denominator as a separate replica (see for example \cite[Exercise~1.2.1]{PBook}). The second term follows from a similar argument, but only one replica appears.
		
		Since we constrained the self overlaps $\hat R_{1,1}$ to be equal to $S$ by \eqref{eq:modifiedcoords}, we get that the left hand side of \eqref{eq:GGIbound} is equal to
		\[
		s_pu_p n \bigg| \frac{1}{n} \E \1_{\cB_\delta} \langle f \rangle \E \1_{\cB_\delta} \langle \hat R^p_{1,2} \rangle +\frac{1}{n} \sum_{\ell = 2}^n  \E \1_{\cB_\delta} \langle f \hat R_{1,\ell}^p \rangle  - \E \1_{\cB_\delta} \langle f \hat R^p_{1,n+1} \rangle \bigg| + \underbrace{ s_pu_p n S \E \1_{\cB_\delta} \langle f \rangle (1 - \E \1_{\cB_\delta} )}_{o_N(1)}.
		\]
		Since $u_p \geq 1$, we can remove the $u_p$ to arrive at a lower bound. By the concentration of the averages with respect to $g_p$ in Lemma~\ref{lem:concpert}, see \eqref{eq:bound2},  if $s^{-2}v_N(s) \leq 4^{-p} C^{-2p}$ we have the upper bound 
		\[
		\E_{u} \E \1_{\cB_\delta} \langle| g_p(\hat \bx) - \E \1_{\cB_\delta} \langle g_{p}(\hat \bx)  \rangle |  \rangle \leq C^p( 2 + 48 \sqrt{v_N(s)} ).
		\]
		We can now take the expected value of both sides of \eqref{eq:GGIbound} with respect to $u$ to conclude that
		\[
		s_p n \E_u \bigg| \frac{1}{n} \E\1_{\cB_\delta}  \langle f \rangle \E \1_{\cB_\delta} \langle \hat R^p_{1,2} \rangle +\frac{1}{n} \sum_{\ell = 2}^n  \E \1_{\cB_\delta} \langle f \hat R_{1,\ell}^p \rangle  -\E \1_{\cB_\delta} \langle f \hat R^p_{1,n+1} \rangle \bigg| \leq C^p( 2 + 48 \sqrt{v_N(s)} ).
		\]
		Rearranging, we see for $s$ sufficiently large so that $s^{-2}v_N(s) \leq 4^{-p} C^{-p}$
		\[
		\E_u \bigg| \frac{1}{n} \E \1_{\cB_\delta} \langle f \rangle \E \1_{\cB_\delta} \langle \hat R_{1,2} \rangle +\frac{1}{n} \sum_{\ell = 2}^n  \E\1_{\cB_\delta} \langle f \hat R_{1,\ell}^p \rangle  - \E \1_{\cB_\delta} 
		\langle f  \hat R^p_{1,n+1} \rangle \bigg| \leq \frac{ (2C)^p (2 + 48 \sqrt{v_N(s)} ) }{s n}.
		\]
		The first term in the upper bound clearly goes to $0$ if \eqref{eq:srequirement2} is satisfied, which is precisely when $ s = N^\gamma$ for $1/4 < \gamma < 1/2$ by Lemma~\ref{lem:sizepert}. 
		
	\end{proof}
	
	This means that we can approximate the limiting distribution of the overlap array with one generated from the Ruelle probability cascades. 
	
	\begin{rem}\label{rem:deterministic}
		Since Theorem~\ref{thm:GGI} holds on average with respects to the random variables $u_{i}, i\ge 1$, there exists a non-random sequence $(t_{p,N})$ of parameters such that the Ghirlanda--Guerra identities hold in the limit by an application of the probabilistic method \cite[Lemma~3.3]{PBook}.
	\end{rem} 
	
	\subsection{Cavity Computations} \label{sec:cavityI}
	
	We can now do the standard cavity computations on the constrained perturbed log partition function with approximate indicator,  $\tilde Z^{\pert}_{N}(\beta,\epsilon:S,M)=e^{\phi}$ with $\phi$ defined in \eqref{defphi}:
	\[
	\tilde Z^{\pert}_{N}(\beta,\epsilon:S,M)= \int \1(|R_{1,1}-S|\le\epsilon)  e^{- \cL N ( | R_{1,0} - M | - \epsilon )^2_+} e^{H^\pert_N( \bx)} \, d\pP_X^{\otimes N}(\bx)\,.
	\]
	By Lemma~\ref{lem:sizepert}, it suffices to study the perturbed free energy. Consider the following cavity fields defined with respect to the modified coordinates $\hat x_{i}=\sqrt{(N+n)S} x_{i}/\|x\|_{2}$ ( see \eqref{eq:modifiedcoords}):
	\begin{equation}\label{eq:cav1}
		H_{N,n}^\pert( \bx)  := \sum_{1\le i<j\le N} \ba \frac{W_{ij}}{ \sqrt{ (N + n)}}  x_i x_j + s g_N(\hat \bx) ,
	\end{equation}
	\begin{equation}\label{eq:cav2}
		z_i(\hat \bx) =\frac{\ba}{ \sqrt{ N}}   \sum_{j = 1}^N  W_{j,N + i} \hat x_j ,
	\end{equation}
	\begin{equation}\label{eq:cav3}
		y(\hat \bx) =\frac{ \sqrt{n} \ba }{N}  \sum_{1\le i<j\le N} W_{ij} \hat x_i \hat x_j
	\end{equation}
	
	Let $R^+$ denote the overlaps of configurations $(\bx,\by) \in \R^{N + n}$, $R$ denote the overlaps  of configurations  $\bx \in \R^N$ and $R^y$  denote the overlaps  of configurations  $\by\in \R^n$.  The main goal of this section is to prove the following lower bound.
	\begin{prop}[The Cavity Computations]\label{Aiz} For  any $S$ with finite entropy, there exists a finite constant $c$, such that for   any $\epsilon>0$,
	 for 
any large enough  integer number $n$,  $\pP_{X}^{\otimes n}(|R_{1,1}-S|\le \epsilon)\ge e^{-cn}$. For such $S$, $\epsilon>0$ and  integer number $n$,  for any $\delta>0$,  the functional 
		$$\Delta_{N,n}(\beta,\epsilon,\delta: S,M):= \frac{1}{n}\left (\E 1_{\cB_{\delta}} \log \tilde Z_{N+n}^{\pert}(\beta,\epsilon:S,M)- \E 1_{\cB_{\delta}} \log \tilde Z_{N}^{\pert}(\beta,\epsilon:S,M)\right)$$
		is bounded below by
		\[
		\frac{1}{n} \bigg( \E \1_{\bx^{0}\in \cB_\delta} \1_{\by^{0}\in \cB_\delta} \log \bigg\langle \int_{|R_{1,1}(\by)-S|\le \epsilon}\chi_{M,\frac{\epsilon}{2}}(R_{1,0}(\by))  e^{\sum_{i = 1}^n z_i(\hat \bx) y_i}   \, d \pP^{\otimes n}_X(\by) \bigg\rangle_{N,n}^\pert - \E \1_{\cB_\delta} \log \bigg\langle e^{y(\hat \bx)} \bigg\rangle_{N,n}^\pert \bigg) + o(1)	\]
		where
		\[
		\langle f(\bx) \rangle_{N,n}^\pert = \frac{\int  \1(|R_{1,1}(\bx)-S|\le\epsilon)  f(\bx) e^{H^\pert_{N,n}( \hat \bx)- \cL N ( | R_{1,0} (\bx)- M | - \epsilon )^2_+} \, d\pP^{\otimes N}_X(\bx) }{\int  \1(|R_{1,1}(\bx)-S|\le\epsilon) e^{H^\pert_{N,n}(\hat \bx) -\cL N ( | R_{1,0}(\bx) - M | - \epsilon )^2_+ } \, d\pP^{\otimes N}_X(\bx)},
		\]
		with $R_{1,\ell}(y)=\frac{1}{n} \sum y_{i}y_{i}^{\ell}$
		and $\by^{0}\in \cB_\delta$ means that the empirical measure of $y^{0}\in \mathbb R^{n}$ is $\delta$-close to $\mathbb P_{0}$. Finally $o(1)
		$ goes to zero when $N$ goes to infinity,  then $n$ goes to infinity,  then $\delta$ goes to zero, and finally  $\epsilon$ goes to zero.
	\end{prop}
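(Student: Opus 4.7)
The first claim $\pP_X^{\otimes n}(|R_{1,1}-S|\le\epsilon)\ge e^{-cn}$ follows directly from Cram\'er's theorem for the i.i.d.\ sum $R_{1,1}(\by)=\frac{1}{n}\sum_i y_i^2$ under $\pP_X^{\otimes n}$. Since $(S,M)\in\cC$ has finite entropy $\cI(S,M)<\infty$, the univariate Legendre rate $I_S(S)=\sup_\lambda\{\lambda S-\log\int e^{\lambda y^2}d\pP_X(y)\}$ satisfies $I_S(S)\le\cI(S,M)<\infty$, and the classical Cram\'er lower bound yields the inequality with any $c$ slightly larger than $I_S(S)$, for $n$ sufficiently large depending on $\epsilon$.

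For the main inequality the plan is the classical cavity decomposition of the $(N+n)$-spin system into $N$ bulk spins $\bx$ and $n$ cavity spins $\by$, writing $\bz=(\bx,\by)$. The combined overlaps factorize as $(N+n)R_{1,1}(\bz)=NR_{1,1}(\bx)+nR_{1,1}(\by)$ and analogously for $R_{1,0}$. Restricting the integration domain to the smaller product set $\{|R_{1,1}(\bx)-S|\le\epsilon/2,\,|R_{1,1}(\by)-S|\le\epsilon/2,\,|R_{1,0}(\bx)-M|\le\epsilon/4,\,|R_{1,0}(\by)-M|\le\epsilon/4\}$ is contained in the original constraint and forces $\chi_{M,\epsilon/2}^{N+n}(R_{1,0}(\bz))=1$. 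Because $\hat\pP_0^{N+n}$ is a convex combination of the empirical measures of $\bx^0$ and $\by^0$, the localization $\cB_\delta$ also factorizes: $\bx^0\in\cB_\delta$ and $\by^0\in\cB_\delta$ imply $\bz^0\in\cB_\delta$. The SK kernel $\sum_{i<j\le N+n}\frac{\beta W_{ij}}{\sqrt{N+n}}z_iz_j$ then decomposes into the bulk piece $H_{N,n}^{SK}(\bx)$, a cross piece $\sum_{i\le N,\,j\le n}\frac{\beta W_{i,N+j}}{\sqrt{N+n}}x_iy_j$, and a cavity-cavity piece; the last is Gaussian of variance $O(n^2/N)$ and contributes $o(n)$ to the logarithm. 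On $\{|R_{1,1}(\bx)-S|\le\epsilon/2\}$ one has $\hat x_j=x_j(1+O(\epsilon))$, so the cross piece agrees with $\sum_j z_j(\hat\bx)y_j$ up to the factor $\sqrt{N/(N+n)}=1+O(n/N)$ and errors of order $\epsilon\sqrt{n}\,\|W\|_{op}/\sqrt{N}$, both negligible in the iterated limit. The perturbation difference $sg(\hat\bz)-sg_N(\hat\bx)$ involves only indices touching at least one cavity coordinate and has variance $O(s^2n/N)=o(n)$ by the choice $s=N^\gamma$ with $\gamma<1/2$.

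The last ingredient is to relate $\tilde Z_N^\pert$ (coupling $\beta/\sqrt{N}$) to $\tilde Z_{N,n}^\pert$ (coupling $\beta/\sqrt{N+n}$). A direct computation shows that on $\{R_{1,1}\approx S\}$ the variance gap $\mathrm{Var}(H_N^{SK})-\mathrm{Var}(H_{N,n}^{SK})=\frac{n\beta^2 S^2}{2}+o(n)$ exactly matches $\mathrm{Var}(y(\hat\bx))=\frac{n\beta^2}{2}R_{1,1}(\hat\bx)^2$, and the covariance between two replicas matches $\frac{n\beta^2}{2}R_{1,2}(\hat\bx^1,\hat\bx^2)^2$ up to $o(1)$. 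An Aizenman--Sims--Starr style Gaussian interpolation between the two SK kernels (adding an independent copy of $y(\hat\bx)$ on the $\beta/\sqrt{N+n}$ side) together with Gaussian integration by parts therefore yields the identity $\E\log\tilde Z_N^\pert-\E\log\tilde Z_{N,n}^\pert=\E\log\langle e^{y(\hat\bx)}\rangle_{N,n}^\pert+o(n)$. Combining this normalization correction with the cavity decomposition produces the stated lower bound after dividing by $n$.

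The main obstacle is the uniform control of all these error terms in $W$ and $\bx^0$: the smoothness of the approximate indicator $\chi_{M,\epsilon}^N$ from Section~\ref{sec:indicator} is essential to prevent the logarithm from blowing up when the split product constraints are slightly violated, the Cram\'er bound from the first claim absorbs the entropy cost of replacing a joint constraint on $R_{1,1}(\bz),R_{1,0}(\bz)$ by product constraints on the $\bx$ and $\by$ components through the $1/n$ prefactor in $\Delta_{N,n}$, and the $\cB_\delta$-localization together with Lemma~\ref{lem:sizepert} ensures that the perturbation and the cross-$\hat\bx$ approximations remain negligible uniformly; these are what must be arranged carefully in order that the iterated limit $N\to\infty$, then $n\to\infty$, then $\delta,\epsilon\downarrow 0$ is legitimate.
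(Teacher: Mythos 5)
The first claim is fine: the Cram\'er lower bound for $R_{1,1}(\by)=\tfrac1n\sum y_i^2$ under $\pP_X^{\otimes n}$ indeed gives $\pP_X^{\otimes n}(|R_{1,1}-S|\le\epsilon)\ge e^{-cn}$ with $c$ controlled by $\cI(S,M)$, and that part matches the paper. Your cavity decomposition, the treatment of the cross/cavity--cavity pieces, the $\cB_\delta$ factorization via convexity of the empirical measure, and the variance matching with $y(\hat\bx)$ are also all sound and consistent with the paper's Aizenman--Sims--Starr scheme.

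The gap is in your handling of the constraint/smooth indicator. You propose to lower bound $\tilde Z_{N+n}^\pert$ by restricting to the \emph{hard} product set $\{|R_{1,1}(\bx)-S|\le\epsilon/2,\,|R_{1,1}(\by)-S|\le\epsilon/2,\,|R_{1,0}(\bx)-M|\le\epsilon/4,\,|R_{1,0}(\by)-M|\le\epsilon/4\}$, which makes $\chi=1$ and removes the smooth indicator. This does give a valid lower bound on $\log\tilde Z_{N+n}^\pert$, but it changes the \emph{bulk} constraint on $\bx$: after the ASS split, the cavity normalizer $\tilde Z_{N,n}^\pert$ associated with $\tilde Z_{N+n}^\pert$ carries the tightened constraints $|R_{1,1}(\bx)-S|\le\epsilon/2$, $|R_{1,0}(\bx)-M|\le\epsilon/4$, whereas the cavity normalizer associated with $\tilde Z_N^\pert$ carries the original, looser constraint $\1(|R_{1,1}(\bx)-S|\le\epsilon)\,\chi^N_{M,\cdot}(R_{1,0}(\bx))$. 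In the difference $\Delta_{N,n}$ these two normalizers do \emph{not} cancel, and their gap $\log\tilde Z^{\rm tight}_{N,n}-\log\tilde Z^{\rm loose}_{N,n}\le 0$ is generically of order $-\Theta(N)$ (the exponential entropy cost, in the bulk, of tightening a macroscopic overlap constraint). Divided by the fixed $n$ and sent through $N\to\infty$ first, this makes your lower bound diverge to $-\infty$, i.e.\ it is vacuous; the $1/n$ prefactor absorbs $O(n)$ costs on the $\by$-side, but not an $N$-scaling cost on the $\bx$-side. The paper avoids this precisely by keeping the smooth indicator and decoupling it \emph{multiplicatively} using the convexity of $x\mapsto(|x-M|-\epsilon/2)_+^2$: since $R_{1,0}^{+}$ is a convex combination of $R_{1,0}(\bx)$ and $R_{1,0}(\by)$, one gets $\chi^{N+n}_{M,\epsilon/2}(R_{1,0}^+)\ge \chi^{N}_{M,\epsilon/2}(R_{1,0}(\bx))\,\chi^{n}_{M,\epsilon/2}(R_{1,0}(\by))$, and similarly $\{|R_{1,1}^+-S|\le\epsilon\}\supseteq\{|R_{1,1}(\bx)-S|\le\epsilon\}\cap\{|R_{1,1}(\by)-S|\le\epsilon\}$ with the \emph{same} $\epsilon$. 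This leaves the bulk constraint $\chi_{S,M,\epsilon}(\bx)$ unchanged across both terms of $\Delta_{N,n}$, so the cavity Gibbs measures match and the subtraction is clean. Eliminating $\chi$ also undermines the downstream concentration (Lemma~\ref{lem:sizepert}), which relies on the smooth $\bx^0$-dependence of $\chi$; this is a secondary but structural reason the $\chi$ must survive the decoupling rather than be reduced to a hard indicator.
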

	The main application of this proposition follows from the simple fact about sequences that  for any integer number $n\ge 1$
	
	\begin{equation}\label{lb2}
		\liminf_{N\rightarrow \infty}\frac{1}{N}  \E 1_{\cB_{\delta}} \log \tilde Z_{N}^{\pert}(\beta,\epsilon,\delta:S,M)\ge  \liminf_{N\rightarrow \infty} \Delta_{N,n}(\beta,\epsilon,\delta:S,M)\,.\end{equation}
	We will let at the end $n$ going to infinity to get the desired lower bound. 
	\begin{proof} We follow the standard procedure of the Aizenman--Sims--Starr scheme. 
		\hfill\\\\
		\textit{Decoupling the Constraints on the Self Overlaps and Magnetizations:} In contrast to the classical spin glass models, we have to deal with the approximate indicator function in the cavity computations and the restriction on the empirical measures. Our goal is to prove that	 $\tilde F^{\pert}_{N+n}(\beta,\epsilon,\delta:S,M)=
		\frac{1}{n} \E \1_{\cB_\delta^+} \log \tilde  Z^{\pert}_{N + n} (\beta,\epsilon,\delta:S,M)$ is bounded below by 
		
		\begin{align}
			\tilde F^{\pert}_{N+n}(\beta,\epsilon,\delta:S,M)\geq \frac{1}{N+n} \E \1_{\cB_\delta^+}\log \int \chi_{S,M,\epsilon}(\bx)  \chi_{S,M,\epsilon}(\by) e^{  H_{N + n}^\pert(\bx,\by)  } \,d\pP^{\otimes n}_X(\by) d\pP^{\otimes N}_X(\bx)\label{ineqf}
		\end{align}
		where $\chi_{S,M,\epsilon}(\bx)= 1_{\{|R_{1,1}(\bx)-S|\le \epsilon\}} \chi^{N}_{M,\epsilon/2}(R_{1,0}(\bx)) $, $ \cB_\delta^+=\{ d(\frac{1}{N+n}(\sum_{i=1}^{N} \delta_{x_{i}^{0}}+\sum_{i=1}^{n}\delta_{y^{i}_{0}}),\mathbb P_{0})<\delta\}$.	 $H_{N + n}^\pert(\bx,\by)$ is defined as in \eqref{eq:parthamil} but in dimension $N+n$.
		We start by decoupling the approximate indicator function. We can write the overlaps of the enlarged system as the convex combination of overlaps in the bulk and cavity coordinates,
		\begin{align*}
			R_{1,0}^{+} := \frac{1}{N + n} \bigg( \sum_{i = 1}^N x_i x^0_i  + \sum_{i = 1}^n y_i y_i^0 \bigg) 
			&= \frac{N}{N + n} \bigg( \frac{1}{N}\sum_{i = 1}^N x_i x^0_i \bigg)  + \frac{n}{N + n} \bigg(\frac{1}{n} \sum_{i = 1}^n y_i y_i^0 \bigg) 
			\\&=: \frac{N}{N + n} R_{1,0}(\bx) + \frac{n}{N + n} R_{1,0}(\by)
		\end{align*}
		where $y_i \sim \pP_0$ for $1 \leq i \leq n$. Observe that
		\begin{align}
			e^{- \cL (N + n) ( | R^+_{1,0} - M | - \frac{\epsilon}{2} )_+^2}
			&\geq  e^{- \cL N ( |R_{10}(\bx) - M | - \frac{\epsilon}{2} )^{2}_+ - \cL n ( |R_{10}(\by) - M | - \frac{\epsilon}{2} )^{2}_+} \label{eq:decouple}
		\end{align}
		because the function
		\[
		x \mapsto (N + n) \Big( | x - M | - \frac{\epsilon}{2} \Big)^2_+
		\]
		is  convex in $x$, so the decomposition $R_{1,0}^{+} =  \frac{N}{N + n} R_{1,0}(\bx) + \frac{n}{N + n} R_{1,0}(\by)$ implies
		\[
		(N + n) \Big( |R_{10}^+ - M | - \frac{\epsilon}{2} \Big)^2_+  \leq N \Big( |R_{10} (\bx) - M | - \frac{\epsilon}{2} \Big)^2_+ + n\Big( |R_{10}(\by) - M | - \frac{\epsilon}{2} \Big)^2_+.
		\]
		which implies \eqref{eq:decouple} .
		Furthermore, we can also decouple the self overlap constraint using the fact
		\[
		\{|R_{1,1}^+ - S| < \epsilon \} \supseteq \{ \bx:  |R_{1,1}(\bx) - S| < \epsilon \} \cup \{\by:  |R_{1,1}(\by)- S| < \epsilon \} .
		\]
		This proves \eqref{ineqf}.

		\textit{The Aizenman--Sims--Starr Scheme:} We can use the usual cavity computations to decompose the Hamiltonians $H^\pert_{N + n}(\bx,\by)$ and $H^\pert_N(\bx)$ into its cavity fields (up to some other $O(N^{-1})$ terms). Let $(\bx,\by) \in \R^{N + n}$ where $\by \in \R^n$ denotes the cavity coordinates. We claim that on the set $|R_{1,1}(\bx)-S|\le \epsilon, |R_{1,1}(\by) - s|\le \epsilon$, we can use the decomposition
		\begin{equation}\label{eq:pertzcoords}
			H^\pert_{N + n} (\bx,\by) = H^\pert_{N,n}( \bx) + \sum_{i = 1}^n y_i z_i(\hat \bx) + o_{N,n}(1) + O(\epsilon),
		\end{equation}
		where the corresponding cavity fields are defined in \eqref{eq:cav1} and \eqref{eq:cav2}, without changing the limit of the free energy. The order $\epsilon$ term comes from the error in the change of variables when we renormalize $\bx$ or $(\bx,\by)$ to get $\hat \bx$.  $ o_{N,n}(1) $ comes from the quadratic terms in the $y_{i}$'s which is small as soon as $n^{2}/\sqrt{N}$ goes to zero. 
		We can replace $H_{{N+n}}^{\pert}(\bx,\by)$ by $ H^\pert_{N + n} (\bx,\by)$
 using the standard interpolation argument for the Aizenman--Sims--Star scheme for the SK model \cite[Theorem~3.6]{PBook}. We first   show that we can replace  the Hamiltonian $H^\pert_{N + n} (\bx,\by)$ by the Hamiltonian 
		\[
		\tilde H^\pert_{N + n} (\bx,\by) = H^\pert_{N,n}( \bx) + \sum_{i = 1}^n y_i z_i(\bx) 
		\]
		without changing the limit of the free energy. Next, we can replace this Hamiltonian $\tilde  H^\pert_{N + n} (\bx,\by)$ by
		$\bar H^\pert_{N + n} (\bx,\by):=  H^\pert_{N,n}( \bx) + \sum_{i = 1}^n y_i z_i(\hat \bx)$
		through the interpolating Hamiltonian
		\[
		z_N(\bx, \by,t)= \sqrt{t} \sum_{i = 1}^n y_i z_i(\bx) +  \sqrt{1- t} \sum_{i = 1}^n y_i z_i(\hat \bx)
		\]
		and using the definition of $\hat\bx$ in \eqref{eq:modifiedcoords} implies that $|R_{11} - \hat R_{11}| \leq \epsilon$ and the coordinates of $y$ are bounded to conclude that
		$$
			\tilde F^{\pert}_{N+n}(\beta,\epsilon,\delta:S,M)\geq \frac{1}{N+n} \E \1_{\cB_\delta^+}\log \int \chi_{S,M,\epsilon}(\bx)  \chi_{S,M,\epsilon}(\by) e^{ \bar H_{N + n}^\pert(\bx,\by)  } \,d\pP^{\otimes n}_X(\by) d\pP^{\otimes N}_X(\bx)+ o_{N,n}(1) + O(\epsilon).$$
		Similarly, we can decompose the original cavity field into
		\[
		H^\pert_{N} (\bx) = H^\pert_{N,n}(\bx) + y(\hat \bx) + o_N(1)  +  O(\epsilon)
		\]
		where the corresponding cavity field was defined  in \eqref{eq:cav1} and \eqref{eq:cav3}.
		It follows that for any $\epsilon>0$, with the notation of Proposition \ref{Aiz}
		\begin{align*}
			&\Delta_{N,n}(\Sigma_{\epsilon}(S,M))
			\\&=  \frac{1}{n} \bigg( \E \1_{\cB_\delta^+} \log \int \chi_{S,M,\epsilon }(\bx)  \bigg( \int \chi_{S,M,\epsilon }(\by)  e^{\sum_{i = 1}^n z_i(\hat \bx) y_i }  \, d \pP^{\otimes n}_X(\by) \bigg)  e^{H^\pert_{N,n}( \bx)} d \pP^{\otimes n}_X(\bx) 
			\\&- \E \1_{\cB_\delta} \log \int {\chi_{S,M,\epsilon}(\bx)} e^{y(\hat \bx)} e^{H^\pert_{N,n}( \bx)}  d \pP^{\otimes n}_X(\bx) \bigg)+o_{N,n}(1)+O(\epsilon).
		\end{align*}
		\color{black}
		By adding and subtracting the normalization terms our lower bound becomes
		\begin{align}\label{topr}
			\frac{1}{n} \bigg( \E \1_{\cB_\delta^+} \log \bigg\langle \int \chi_{S,M,\varepsilon}(\by) e^{\sum_{i = 1}^n z_i(\hat \bx) y_i  }   \, d \pP^{\otimes n}_X(\by) \bigg\rangle_{N,n}^\pert - \E \1_{\cB_\delta} \log \bigg\langle e^{y(\bx)}  \bigg\rangle_{N,n}^\pert \bigg) + \mbox{Err}
		\end{align}
		where $\langle \cdot \rangle_{N,n}^\pert$ is the average with respect to the Gibbs measure with density  proportional to   $e^{H^\pert_{N,n}(\bx)}\chi_{S,M,\varepsilon}(\bx)$ and
		\[
		\mbox{Err} = \frac{1}{n} \E (\1_{\cB_\delta^+} - \1_{\cB_\delta})  \log \int \chi_{S,M,\varepsilon}(\bx)e^{H^\pert_{N,n}(\bx)} \, d\pP^{\otimes N}_X(\bx).
		\]
		We next show that $\mbox{Err}$ goes to zero.
		Let use denote the expected value $\langle f(\bx) \rangle_{\chi} = \frac{\int \chi_{S,M,\epsilon}(\bx)  f(\bx) \, d\pP^{\otimes N}_X(\bx)}{\int  \chi_{S,M,\epsilon}(\bx)   \, d\pP^{\otimes N}_X(\bx)}$. We first consider the region where $\1_{\cB_\delta^+} - \1_{\cB_\delta} > 0$. Applying Jensen's inequality implies that
		\begin{align}
			&\E_{x^0} \E_{W} (\1_{\cB_\delta^+} - \1_{\cB_\delta}) \1(\1_{\cB_\delta^+} - \1_{\cB_\delta} > 0) \log \int \chi_{S,M,\epsilon}(\bx)  e^{H^\pert_{N,n}(\bx)} \, d\pP^{\otimes N}_X(\bx) \notag
			\\&\geq \E_{x^0} \E_{W} (\1_{\cB_\delta^+} - \1_{\cB_\delta}) \1(\1_{\cB_\delta^+} - \1_{\cB_\delta} > 0) \log \langle e^{H^\pert_{N,n}(\bx)} \rangle_{\chi}  \notag
			\\&\qquad + \E_{x^0} \bigg[ (\1_{\cB_\delta^+} - \1_{\cB_\delta}) \1(\1_{\cB_\delta^+} - \1_{\cB_\delta} > 0) \log \int \chi_{S,M,\epsilon}(\bx)  \, d\pP^{\otimes N}_X(\bx) \bigg] \notag
			\\&\geq \E_{x^0} \E_{W} (\1_{\cB_\delta^+} - \1_{\cB_\delta})\1(\1_{\cB_\delta^+} - \1_{\cB_\delta} > 0) \langle H^\pert_{N,n}(\bx) \rangle_{\chi}  + \E_{x^0} \bigg[ | \1_{\cB_\delta^+} - \1_{\cB_\delta} |  \log \int \chi_{S,M,\epsilon}(\bx)   \, d\pP^{\otimes N}_X(\bx) \bigg] \label{eq:Errbound1}
		\end{align}
		where we finally used that $\chi_{S,M,\epsilon}$ is non-negative.
		The first term is zero because $\langle \cdot \rangle_{\chi}$ does not depend on the Gaussian terms, so 
		\begin{align*}
			&\E_{x^0} \E_{W} (\1_{\cB_\delta^+} - \1_{\cB_\delta}) \1(\1_{\cB_\delta^+} - \1_{\cB_\delta} > 0) \langle H^\pert_{N,n}(\bx) \rangle_{\chi} 
			\\&= \E_{x^0}  (\1_{\cB_\delta^+} - \1_{\cB_\delta})\1(\1_{\cB_\delta^+} - \1_{\cB_\delta} > 0) \langle \E_{W} H^\pert_{N,n}(\bx) \rangle_{\chi} = 0
		\end{align*}
		because $H^\pert_{N,n}(\bx)$ is a centered Gaussian process. The second term is of order $e^{-c_{\delta}N}$ for some $c_{\delta}>0$ when $\delta>0$
		because $(S,M)$ have finite entropy so that $\log \int \chi_{S,M,\epsilon}(\bx)   \, d\pP^{\otimes N}_X(\bx) $ is at most of order $N$
		whereas 
		\begin{equation}\label{eq:sanovbound}
			\E_{x^0}(\1_{\cB_\delta^+} - \1_{\cB_\delta})  \1(\1_{\cB_\delta^+} - \1_{\cB_\delta} > 0) \leq \E_{x^0}|  \1_{\cB_\delta^+} - 1 + 1- \1_{\cB_\delta} | \leq  \E_{x^0}  \1_{(\cB_\delta^+ )^c} + \E_{x^0} \1_{\cB^c_\delta} = e^{-c_{\delta}N}
		\end{equation}
		by Sanov's theorem. We conclude that \eqref{eq:Errbound2} is lower bounded by a term that tends to zero.
		
		Likewise, on the region where $\1_{\cB_\delta^+} - \1_{\cB_\delta} \leq 0$, we have the lower bound
		\begin{align}
			&\E_{x^0} \E_{W} (\1_{\cB_\delta^+} - \1_{\cB_\delta}) \1(\1_{\cB_\delta^+} - \1_{\cB_\delta} \leq 0) \log \int \chi_{S,M,\epsilon}(\bx)  e^{H^\pert_{N,n}(\bx)} \, d\pP^{\otimes N}_X(\bx) \notag
			\\&\geq \E_{x^0} \E_{W} (\1_{\cB_\delta^+} - \1_{\cB_\delta}) \1(\1_{\cB_\delta^+} - \1_{\cB_\delta} \leq 0) \E \Big[ \sup_{\bx} H^\pert_{N,n}(\bx) \Big] \label{eq:Errbound2}
		\end{align}
		since  $\chi_{S,M,\epsilon}$ is non-negative and bounded by $1$. Since $H_{N,n}^{\pert}(\bx)$ has variance of order $N + n$, we have $\E \Big[ \sup_{\bx} H^\pert_{N,n}(\bx) \Big] = O( (N + n )^{\frac{3}{2}}) $  by Dudley's Theorem. Then \eqref{eq:sanovbound} implies that \eqref{eq:Errbound2} is lower bounded by a term that tends to zero. Hence
		$\mathcal{\mbox{Err}} \geq o_N(1)$.
		\\\\\textit{Decoupling the Constraint on the Empirical Measure:} Finally, we decouple the constraint on the empirical measure in the first term of \eqref{topr}. Recall that
		\[
		\cB_\delta^+ = \{ (\bx^0,\by^0) \mmm d( \hat \pP^{+}_0, \pP_0)  < \delta) \}, \cB_{\delta}^{y}=\{\by^{0}: d(\hat \pP^{y},\pP_{0})<\delta\}, 
		\]
		where
		\[
		\hat \pP^{+}_0 = \frac{N}{N + n} \hat \pP_0 + \frac{n}{N + n} \hat \pP^{y}_0 \quad \text{and} \quad \hat \pP^{+}_0 = \frac{1}{N + n} \sum_{i = 1}^{N + n} \delta_{x_i^0} , \quad \hat \pP^{y}_0 = \frac{1}{n} \sum_{i = N + 1}^{N + n} \delta_{y_i^0} 
		\]
		are the empirical  measures of the $\bx^0_+ \in \R^{N + n}$. It follows that 
		\[
		\{ (\bx^0, \by^0) \mmm d( \hat \pP^{+}_0, \pP_0)  < \delta \} \supset \{\bx^0 \mmm d( \hat \pP_0, \pP_0)  < \delta \} \cap \{\by^0 \mmm d( \hat \pP^{-}_0, \pP_0)  < \delta \}
		\]
		so that $
		\1_{\cB^{+}_\delta} \geq \1_{\cB_\delta}  \1_{\cB^{y}_\delta}$. 
		Next, for any realization of $\bx^0_+$, we have by Jensen's inequality
		\begin{equation}\label{jens}
			\E_{z} \log \bigg\langle \int \chi_{S,M,\epsilon} (\by) e^{\sum_{i = 1}^n z_i(\hat \bx) y_i  }   \, d \pP^{\otimes n}_X(\by) \bigg\rangle_{N,n}^\pert \geq -nC  \E \bigg\langle \max_{i \leq n} |z_i(\hat \bx)| \bigg \rangle_{N,n}^\pert - c n
		\end{equation}
		because 
		$$\int \chi_{S,M,\epsilon}(\by)d \pP^{\otimes n}_X(\by)\ge e^{-C \cL n} \pP^{\otimes n}_X(|R_{1,1}-S|\le \epsilon)\ge e^{-c n}$$
		for some finite constant $c$ since $S$ has finite entropy. Furthermore, for all $i \leq n$ and $\hat \bx$, $z_i(\hat \bx)$ is a centered Gaussian process with covariance
		\[
		\E z_i(\hat \bx^1)  z_i(\hat \bx^2) = \hat R_{1,2} \leq C^2
		\]
		so by the Cauchy--Schwarz inequality \color{black}
		\[
		\E_{z} \bigg\langle \max_{i \leq n} |z_i(\hat \bx)| \bigg \rangle_{N,n}^\pert  \le \left(\E \bigg\langle \sum_{i \leq n} |z_i(\hat \bx)|^{2} \bigg \rangle_{N,n}^\pert \right)^{1/2}= \left(\E \bigg\langle n \hat R_{1,1}\bigg \rangle_{N,n}^\pert\right)^{1/2}\le\sqrt{n} C
		\]
		Therefore, by \eqref{jens}, there exists a finite constant $L$ such that
		\[
		\E_{z} \log \bigg\langle \int \chi_{S,M,\epsilon} (\by) e^{\sum_{i = 1}^n z_i(\hat \bx) y_i  }   \, d \pP^{\otimes n}_X(\by) \bigg\rangle_{N,n}^\pert \geq
		- Ln^{\frac{3}{2}} 
		\]
		so
		\begin{align*}
			&\E \1_{\cB_\delta^+}\E_{z} \log \bigg\langle \int \chi_{S,M,\epsilon} (\by)   e^{\sum_{i = 1}^n z_i(\hat \bx) y_i  }   \, d \pP^{\otimes n}_X(\by) \bigg\rangle_{N,n}^\pert 
			\\&\geq \E \1_{\cB_\delta}\1_{\cB^y_\delta} \E_{z}\bigg( \log \bigg\langle \int  \chi_{S,M,\epsilon} (\by) e^{\sum_{i = 1}^n z_i(\hat \bx) y_i  }   \, d \pP^{\otimes n}_X(\by) \bigg\rangle_{N,n}^\pert + Ln^{\frac{3}{2}} \bigg) - Ln^{\frac{3}{2}}
			\\&\geq  \E\left[ \1_{\cB_\delta}\1_{\cB^y_\delta}  \log \bigg\langle \int  \chi_{S,M,\epsilon} (\by)   e^{\sum_{i = 1}^n z_i(\hat  \bx) y_i  }   \, d \pP^{\otimes n}_X(\by) \bigg\rangle_{N,n}^\pert - ( \1_{\cB_\delta^c} + \1_{(\cB_\delta^y)^c} ) Ln^{\frac{3}{2}} \right].
		\end{align*}
		Clearly the last two terms go to zero by Sanov's theorem  when $N \to \infty$ and $n \to \infty$ at an exponential rate so that 
		\[
		- \E ( \1_{\cB_\delta^c} + \1_{(\cB_\delta^y)^c} ) Ln^{\frac{3}{2}} = o_n(1). 
		\]
		This completes the proof.

	\end{proof}

	\subsection{The Cavity Computations II} \label{sec:cavityII}
	We now prove the lower bound of the free energy using cavity computations. They key idea stated in the previous section, is that we are able to perturb the Gibbs measure to force the overlap array to satisfy the Ghirlanda--Guerra identities in the limit. This will allow us to characterize the limiting distribution of the overlap arrays and approximate it with an overlap array generated from the Ruelle probability cascades.

	By Gaussian concentration and Weirstrass' Theorem, it follows that the lower bound is a continuous function of the distribution of the overlap arrays \cite[Theorem~1.4]{PBook}. 
	\begin{lem}[Continuity of the Lower Bound with Respect to the Overlaps]\label{lem:continuityarray}
		Let $\langle \cdot \rangle$ be the average with respect to some non-random Gibbs measure $\pG$ on the sphere with radius $\sqrt{S}$ in some Hilbert space $H$. Consider the Gaussian processes $Z(\bs)$ and $Y(\bs)$ indexed by points $\bs$  in $H$ with covariances
		\[
		\E Z(\bs^1) Z(\bs^2) = \langle \bs^1 , \bs^2 \rangle \qquad \E Y(\bs^1) Y(\bs^2) = \frac{\langle \bs^1 , \bs^2 \rangle}{2} 
		\]
		Let $n$ be a fixed integer number and $(S,M)$ with finite entropy $\cI$ so that there exists a finite constant $c$ independent of $n$ and $\epsilon$ such that for $n$ large enough, $\pP_X^{\otimes n}(\Sigma_{\epsilon}(S,M))\ge e^{-cn}$ uniformly for all $\by_0 \in \cB_\delta$. Then the functionals
		\[
		f_n^Z(S,M) = \frac{1}{n} \E_{Z} \log \bigg\langle \int_{|R_{1,1}(\by)-S|\le \epsilon}\chi_{M,\frac{\epsilon}{2}}(R_{1,0}(\by)) e^{\sum_{i = 1}^n Z_i(\bs) y_{i} } \, d \pP_X^{\otimes n}(\by) \bigg\rangle
		\]
		where $Z_i$ are independent copies of $Z$ and
		\[
		f_n^Y = \frac{1}{n} \E_{Z}\log \Big\langle e^{ \sqrt{n} \ba Y(\bs)  } \Big\rangle
		\]
		are continuous functionals of the distribution of the overlap array $(\bx^\ell \cdot \bx^{\ell'})_{\ell,\ell' \geq 1}$ under $ \pG^{\otimes \infty}$ for any $\by^0 \in \cB_\delta$. In particular, for any $\eta>0$ there exists  a finite integer number $K(\eta)$ so that  these functionals can be approximated  by a continuous function of the finite array $(\bx^\ell \cdot \bx^{\ell'})_{1 \leq \ell,\ell' \leq K(\eta)}$ uniformly over all possible choices of Gibbs measures $\pG$ and all $\by^0 \in \cB_\delta$.
	\end{lem}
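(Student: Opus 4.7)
The plan is to exhibit $f_n^Y$ and $f_n^Z$ as continuous functionals of the overlap array distribution by means of a Gaussian interpolation, and then to extract the uniform finite-sub-array approximation from the product-topology structure combined with uniform two-sided bounds on the quantities inside $\log$.

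\emph{Step 1 -- reduction to the overlap array.} For i.i.d.\ replicas $\bs^1,\dots,\bs^k\sim\pG^{\otimes k}$, the Gaussian families $(Z_i(\bs^\ell))_{i,\ell}$ and $(Y(\bs^\ell))_\ell$ are jointly centered Gaussian with covariance matrix determined entirely by the overlaps $R_{\ell,\ell'}:=\langle\bs^\ell,\bs^{\ell'}\rangle$. In particular,
\[
\E_Y\Big\langle e^{\sqrt{n}\,\ba\,Y(\bs)}\Big\rangle^{k}
=\int\exp\!\Big(\tfrac{n\ba^2}{4}\sum_{\ell,\ell'\leq k}R_{\ell,\ell'}\Big)\,d\pG^{\otimes k}(\bs^1,\dots,\bs^k),
\]
and an analogous replicated identity holds for $f_n^Z$; the factors $\chi_{M,\varepsilon/2}(R_{1,0}(\by))$ and $\1_{|R_{1,1}(\by)-S|\leq\varepsilon}$ depend on $\by,\by^0$ only, so they commute with the $\pG$-integration. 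Hence both functionals depend on $\pG$ solely through the joint law of $(R_{\ell,\ell'})_{\ell,\ell'\geq 1}$.

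\emph{Step 2 -- continuity by Gaussian interpolation.} Given two Gibbs measures $\pG_0,\pG_1$ on spheres of common radius $\sqrt S$, I couple realisations of the Gaussian fields on the disjoint union of the supports and consider $\phi(t)=\tfrac{1}{n}\E\log\langle e^{\sqrt n\,\ba\,\bar Y_t(\bs)}\rangle_t$ where $\bar Y_t$ has covariance $(1-t)C_{\pG_0}+tC_{\pG_1}$ on the matched configurations. Differentiation of $\phi$ together with Gaussian integration by parts writes $\phi'(t)$ as a finite polynomial in the first two overlap moments of $\pG_0$ and $\pG_1$; weak convergence of the overlap laws therefore forces $\phi(1)-\phi(0)\to 0$, giving continuity of $f_n^Y$. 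The same interpolation is applied to $f_n^Z$ once the $\log$ argument is bounded below, for which I invoke the lemma's hypothesis $\pP_X^{\otimes n}(\Sigma_\varepsilon(S,M))\geq e^{-cn}$: since the empirical measure of any $\by^0\in\cB_\delta$ is close to $\pP_0$, this entropy lower bound persists uniformly in $\by^0$, and after Gaussian concentration on the Hamiltonian in the exponent it keeps $\int\chi_{M,\varepsilon/2}(R_{1,0}(\by))\1_{|R_{1,1}(\by)-S|\leq\varepsilon}\,e^{\sum_i Z_i(\bs)y_i}\,d\pP_X^{\otimes n}(\by)$ bounded below by $e^{-cn-O(\sqrt n)}$, making $\phi'(t)$ finite and its bound uniform in $\pG$ and $\by^0\in\cB_\delta$.

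\emph{Step 3 -- finite-sub-array approximation, and the main obstacle.} The overlap array takes values in the compact product space $[-S,S]^{\mathbb N\times\mathbb N}$ endowed with the product topology, on which continuous functions depend effectively on only finitely many coordinates up to any prescribed tolerance (equivalently, by Stone--Weierstrass, are uniformly approximable by polynomials in finitely many $R_{\ell,\ell'}$). Applied to the continuous functionals from Steps~1--2, this yields for every $\eta>0$ a polynomial $P_\eta$ in $(R_{\ell,\ell'})_{1\leq\ell,\ell'\leq K(\eta)}$ approximating $f_n^Y$ or $f_n^Z$ within $\eta$. The main obstacle is that $\log\langle\cdot\rangle$ is a priori unbounded because $\sum_i Z_i(\bs)y_i$ has Gaussian tails of order $\sqrt n$, so Weierstrass approximation of $\log$ requires confining its argument to a compact interval uniformly in $\pG$ and $\by^0\in\cB_\delta$. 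This is precisely where the quantitative entropy lower bound is essential: combined with the concentration-based matching upper bound (from $\E_Y|H_{N,n}^{\pert}|\leq C\sqrt n$), it traps $\log\langle\cdot\rangle$ in a compact interval depending on $n,S,M,\ba,c$ but neither on $\pG$ nor on the realisation $\by^0\in\cB_\delta$, making both the continuity of Step~2 and the Weierstrass approximation of Step~3 uniform in the sense required by the lemma.
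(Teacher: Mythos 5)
Your overall plan — express the functionals in terms of the overlap array distribution, then use Weierstrass approximation on a compact product space to extract the finite sub-array claim — is the right skeleton, but two of the steps have genuine gaps that the paper's proof avoids by a crucial truncation device.

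First, your Step~2 interpolation between two arbitrary Gibbs measures $\pG_0,\pG_1$ is not well-defined. The Gaussian processes $Y$ (resp.\ $Z_i$) associated with $\pG_0$ and $\pG_1$ are indexed by different supports, and "a disjoint union" or a mixture $(1-t)C_{\pG_0}+tC_{\pG_1}$ does not give a covariance of a Gaussian process over a common index set in any canonical way. The paper does use an interpolation of this flavor in Lemma~\ref{lem:contfuntionals}, but only for the RPC functionals $f_n^Z(\mu)$, where the index set $\N^r$ and the weights $v_\alpha$ are shared and only the covariance parameters $(Q_k)$ vary — interpolation works there precisely because the structure is common. In Lemma~\ref{lem:continuityarray} no interpolation is performed at all.

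Second, your Step~3 claim that the entropy lower bound (together with $\E_Y|H^\pert_{N,n}|\lesssim\sqrt n$) "traps $\log\langle\cdot\rangle$ in a compact interval" uniformly in $\pG$ and $\by^0$ is incorrect: $\log\langle\cdot\rangle$ is a random variable depending on the Gaussian fields $Z_i(\bs)$, and since $e^{-C\sum_i|Z_i(\bs)|-cn}\le g_\infty \le e^{C\sum_i|Z_i(\bs)|+cn}$, it is unbounded (a bound on the \emph{expectation} $\E_Z\log\langle\cdot\rangle$ is not enough). This is exactly why Stone--Weierstrass cannot be applied directly to $\log$. What the paper does instead is (i) replace $\log$ by a two-sided truncation $f_a$, (ii) replace the integrand by a truncated $g_a$, (iii) show that for fixed $a$ the quantity $\E_Z f_a(\langle g_a\rangle)$ is a continuous bounded function of finitely many overlaps (the moments $\E_Z\langle g_a\rangle^r=\pG^{\otimes r}[F_r]$ are continuous functionals of the $r\times r$ overlap subarray — this is the analogue of your Step~1 but applied to the truncated quantity), and (iv) control the truncation error $\bigl|f_n^Z-\tfrac1n\E_Z f_a(\langle g_a\rangle)\bigr|=o(a)$ using the Gaussian tail bound $\pP(|Z_i(\bs)|\ge a)\le 2e^{-a^2/4S}$ together with Chebyshev and the $e^{\pm(C\sum|Z_i|+cn)}$ envelope. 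Without the truncation and the quantitative tail estimate, the passage from moment identities to $\E\log$ is not justified. Your Step~1 moment identity is correct but by itself gives information about $\E_Y\langle\cdot\rangle^k$, not about $\E_Y\log\langle\cdot\rangle$; the link between the two is precisely the missing truncation/Weierstrass step.
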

	
	\begin{proof}
		We focus on $f_n^Z(S,M)$, the case of $f_n^{Y}$ is easier. We define the truncated versions of the following functions
		\[
		f_a(x) = \begin{cases}
			a & \log(x) \geq a\\
			\log(x) & -a < \log(x) < a\\
			-a & \log(x) \leq -a
		\end{cases}
		\]

		and
		\[
		g_{a}(\bx) =	\begin{cases}
			e^{a} &\mbox{ if }\quad  \int\chi_{S,M,\epsilon}(\by)  e^{\sum_{i = 1}^n x_i y_i } \, d \pP_X^{\otimes n}(\by) \ge e^{a},\\
			\int \chi_{S,M,\epsilon}(\by) e^{\sum_{i = 1}^n x_i y_i } \, d \pP_X^{\otimes n}(\by) &\mbox{ if }\quad  \int\chi_{S,M,\epsilon}(\by)  e^{\sum_{i = 1}^n x_i y_i } \, d \pP_X^{\otimes n}(\by) \in [e^{-a}, e^{a}],\\
			e^{-a}&\mbox{ if }\quad  \int\chi_{S,M,\epsilon}(\by)  e^{\sum_{i = 1}^n x_i y_i } \, d \pP_X^{\otimes n}(\by) \le e^{-a}\,.\\
		\end{cases}
		\]
		where we again used $\chi_{S,M,\epsilon}(\by)= 1_{\{|R_{1,1}(\by)-S|\le \epsilon\}} \chi^{n}_{M,\epsilon/2}(R_{1,0}(\by)) $. Furthermore, $g_a(\bx)$ when viewed as a function of $\bx$ and $\by^0$ is uniformly continuous because we $\chi^{n}_{M,\epsilon/2}(R_{1,0}(\by))$ is continuous with respect to $\by^0$ and $\by^0$ takes values on a compact set. 
		
		By standard concentration inequalities, we will show that 
		\begin{equation}\label{approx}
			\Big| f_n^Z(S,M) -  \frac{1}{n} \E_{Z} f_a\langle g_{a}(Z_1(\bs), \dots, Z_n(\bs)) \rangle  \Big| = o(a)
		\end{equation}
		where the error tends to $0$ as $a \to \infty$.  Note that for any fixed $a$, $f_{a}$ is a bounded continuous function and therefore on $[e^{-a}, e^{a}]$ we can approximate it uniformly  by a polynomial $f_{\eta}$ of degree $K(a,\eta)$ up to an error $\eta$. We hence see that 
		
		$$  \frac{1}{n} \E_{Z } f_a\langle g_{a}(Z_1(\bs), \dots, Z_n(\bs)) \rangle=  \frac{1}{n} \E_{Z } f_{\eta}(\langle g_{a}(Z_1(\bs), \dots, Z_n(\bs)) \rangle)+\eta\,.$$
		We next notice that for any integer number $r$, 
		$$\E_{Z}[ \langle g_{a}(Z_1(\bs), \dots, Z_n(\bs)) \rangle^{r}]=\pG^{\otimes r}[F_{r} ( (\bx^\ell \cdot \bx^{\ell'})_{1\le \ell,\ell' \leq r})]$$
		where $F_{r}$ is continuous since $\prod_{1\le i\le r}  g_{a}(Z_1(\bs_{i}), \dots, Z_n(\bs_{i}))$ is a bounded  continuous function of the $Z_{i}(\bs_{j})$ and the convergence of the covariance of a Gaussian process implies its weak convergence. 
		Hence, up to an error $\eta$, $   \frac{1}{n} \E_{Z } f_a\langle g_{a}(Z_1(\bs), \dots, Z_n(\bs)) \rangle$ is a continuous function of the overlap array $ (\bx^\ell \cdot \bx^{\ell'})_{1\le \ell,\ell' \leq K(\eta,a)}$. 
		We thus only need to prove \eqref{approx}. 
		Clearly we have
		\begin{eqnarray}
			&& \Big|f_n^Z(S,M) -   \E_{Z} f_a\langle g_{a}(Z_1(\bs), \dots, Z_n(\bs)) \rangle  \Big| \\
			&& \le 
			\Big|  \E_{Z} (f_a\langle g_{a}(Z_1(\bs), \dots, Z_n(\bs))\rangle- f_{a}\langle g_{\infty}(Z_1(\bs), \dots, Z_n(\bs))\rangle) \Big|\nonumber\\
			&&+\Big|  \E_{Z} \log \langle g_{\infty}(Z_1(\bs), \dots, Z_n(\bs))\rangle-  \E_{Z} f_{a}\langle g_{\infty}(Z_1(\bs), \dots, Z_n(\bs))\rangle\Big| \label{qazx}\end{eqnarray}
		To bound the first term, we notice that $f_{a}$ is Lipschitz with constant $e^{a}$ so that
		\begin{eqnarray}
			&&\Big|  \E_{Z} (f_a\langle g_{a}(Z_1(\bs), \dots, Z_n(\bs))\rangle- f_{a}\langle g_{\infty}(Z_1(\bs), \dots, Z_n(\bs))\rangle) \Big|\nonumber\\
			&\le &e^{a}\E_{Z}|\langle (g_{a}-g_{\infty})(Z_1(\bs), \dots, Z_n(\bs))\rangle|\nonumber
			\\
			&\le& e^{a}\E_{Z[}[
			\langle (g_{\infty}1_{|\log(g_{\infty})|\ge a}(Z_1(\bs), \dots, Z_n(\bs))\rangle]\nonumber\\
			&\le&  e^{a-am}\E_{Z}[\langle ( g_{\infty }(Z_1(\bs), \dots, Z_n(\bs))^{m+1}+ g_{\infty}(Z_1(\bs), \dots, Z_n(\bs))^{1-m}\rangle]\label{qaz}\end{eqnarray}
		where we finally used Chebychev's inequality.
		We finally remark that because the $y_{i}$'s are bounded by $C$ and $(S,M)$ have finite entropy, 
		\begin{equation}\label{qas} e^{-C\sum |Z_{i}(\bs)|-cn }\le   g_{\infty}(Z_1(\bs), \dots, Z_n(\bs))\le e^{C\sum|Z_{i}(\bs)| + cn}\end{equation}
		Moreover, since the covariances of the $Z_{i}$ are bounded uniformly by $S$, Gaussian concentration, see e.g  \cite[Theorem 1.2]{PBook}, implies that for each $i$
		$$P( |Z_{i}(\bs)|\ge a)\le 2\exp\{-a^{2}/4S\}$$
		so that
		\begin{equation}\label{qasd}P(\sum_{i=1}^{n}|Z_{i}(\bs)|\ge a)\le n\max_{i}P( |Z_{i}(\bs)|\ge a/n)\
			\le
			2n\exp\{-a^{2}/4Sn^{2}\}\end{equation} which implies that for any $L\ge 0$
		$$\E[e^{L\sum |Z_{i}(\bs)|}]\le  \bigg(1+\frac{2n}{L} \bigg)e^{4L^{2}Sn^{2}}\,.$$
		Plugging this estimate into \eqref{qaz} implies that there exists a finite constant $c(n)$ depending on $n$ such that
		\begin{equation}\label{okm}
			\Big|  \E_{Z} [f_a(\langle g_{a}(Z_1(\bs), \dots, Z_n(\bs))\rangle)- f_{a}(\langle g_{\infty}(Z_1(\bs), \dots, Z_n(\bs))\rangle)] \Big|\le C(n) e^{a-am}e^{4(m+1)^{2}Sn^{2}}\end{equation}
		which goes to zero as $a$ goes to infinity if $m$ is chosen greater than one. The argument to bound the second term of \eqref{qazx} is similar since the difference vanishes unless $g_{\infty}$ is too big or too small, whose probability we have just estimated above by \eqref{qas} and \eqref{qasd}. These bounds hold uniformly in $\by \in \cB_\delta$ so our proof is complete. 
		
	\end{proof}

	We next use Lemma \ref{lem:continuityarray} to show that the lower bound on $\Delta_{n,N}(\Sigma_{\epsilon}(S,M))$ obtained in Proposition \ref{Aiz} converges.  We first study the large $N$ limit point of the free energy:
	$$F_{N,n}^{1}(\epsilon,\delta: S,M):
	=\frac{1}{n}  \E \1_{\bx^{0}\in \cB_\delta} \1_{\by^{0}\in \cB_\delta} \log \bigg\langle \int_{|R_{1,1}(\by)-S|\le \epsilon}\chi^{N}_{M,\epsilon}(R_{1,0}(\by))  e^{\sum_{i = 1}^n z_i(\hat \bx) y_i}   \, d \pP^{\otimes n}_X(\by) \bigg\rangle_{N,n}^\pert $$
	We will therefore use Lemma \ref{lem:continuityarray} with $\pG$ the  perturbed Gibbs measure $\pG^\pert_{N,n}$ with Hamiltonian $H^{\pert}_{N,n}(\hat x)$ and smooth conditioning by $\chi_{S,M,\epsilon}$.
	We define the following overlap array
	\[
	(R^N_{\ell,\ell'})_{\ell,\ell' \geq 1} = \Big( \frac{\hat \bx^\ell \cdot \hat \bx^{\ell '}}{N} \Big)_{\ell,\ell' \geq 1} \text{ where }R^N_{\ell,\ell} =S \text{ for all $\ell \geq 1$}. 
	\]
	The overlap array $(R^N_{\ell,\ell'})_{\ell,\ell' \geq 1}$ has bounded entries. Moreover, we have seen in Lemma  \ref{lem:continuityarray} that up to a small error $\eta$, $F_{N,n}^{1}(\epsilon,\delta: S,M)$ is a continuous function  of finitely many overlaps $(R^N_{\ell,\ell'})_{1\le \ell,\ell' \leq K(\eta)}$ (uniformly on the Gibbs measures $\pG$). 
	The space of $K(\eta)\times K(\eta)$  arrays with bounded entries is compact, so the space of probability measures on such arrays are tight. The selection theorem implies that the distribution of $(R^N_{\ell,\ell'})_{1\le \ell,\ell' \leq K(\eta)}$ converges along a subsequence to a limiting array $(R^{\infty,\epsilon}_{\ell,\ell'})_{1\le \ell,\ell' \leq K(\eta)}$. Of course this limit point depends on $\epsilon$ as well. 
	
	Next, we can take $\epsilon \to 0$, and the finite array also converges in distribution to an array $(R^{\infty}_{\ell,\ell'})_{1\le \ell,\ell' \leq K(\eta)}$ along a subsequence again by tightness.  This array can in fact be thought as infinite if we consider projective limits. 
	Furthermore, $R^\infty_{\ell,\ell} = S$ for all $\ell \geq 1$. By construction, the subarray $(R^\infty_{\ell,\ell'})_{\ell \neq \ell' \geq  1}$ also satisfies the Ghirlanda--Guerra identities, so we can characterize the limiting distribution of this array as usual \cite[Chapter 3]{PBook}.
	
	Since $R^\infty$ satisfies the Ghirlanda--Guerra identities, the distribution of the entire array is determined by $\zeta(t) = \pP(R^\infty_{1,2} \leq t)$ \cite[Theorem~2.13 and Theorem~2.17]{PBook}. We can approximate $\zeta(t)$ in $L^1$ with a piecewise constant function $\mu(t)$, so that
	\[
	\int | \zeta(t) - \mu(t) | \, dt < \epsilon.
	\]
	The density function $\mu$ of a measure can be encoded by the parameters
	\begin{equation}\label{eq:zetaseqlwbd}
		\zeta_{-1} = 0 < \zeta_0< \dots < \zeta_{r-1}
	\end{equation}
	and sequence
	\begin{equation}
		0 = Q_0 \leq Q_1 \leq \dots \leq Q_{r-1} \leq Q_r =  S.
	\end{equation}
	That is, these sequences define the density function
	\[
	\mu(Q) = \zeta_k \qquad \text{for} \qquad Q_k \leq Q < Q_{k + 1}.
	\]
	Let $v_\alpha$ denote the weights of the Ruelle probability cascades corresponding to the sequence \eqref{eq:zetaseqlwbd}. If $(\alpha^\ell)_{\ell \geq 1}$ are samples from the Ruelle probability cascades, then $\pP( \alpha^1 \wedge \alpha^2 \leq t) = \mu(t)$ by construction. This gives us an explicit way to construct the off-diagonal entries of the overlap array in the limit.  We define Gaussian processes $Z(\alpha)$ and $Y(\alpha)$ with covariance
	\[
	\E Z(\alpha^1) Z(\alpha^2) = Q_{\alpha^1 \wedge \alpha^2} \quad \E Y(\alpha^1)  Y(\alpha^2)  = \frac{1}{2} Q^2_{\alpha^1 \wedge \alpha^2}
	\]
	and let $Z_i$ for $1 \leq i \leq n$ denote independent copies of $Z$. The functionals
	\[
	f_n^Z(\mu) = \frac{1}{n} \E \log \sum_\alpha v_\alpha \int_{|R_{1,1}(\by)-S|\le \epsilon}\chi^{N}_{M,\epsilon}(R_{1,0}(\by)) e^{\sum_{i \leq n} \ba Z_i(\alpha) y_i   } \, d \pP_X^{\otimes n} (\by)
	\]
	and
	\[
	f_n^Y(\mu) = \frac{1}{n} \E \log \sum_\alpha v_\alpha  e^{ \sqrt{n} \ba Y(\alpha)   }
	\]
	are of the same form as the functionals in Lemma~\ref{lem:continuityarray} because they depend on the overlap array in exactly the same way. Furthermore, one can show that they are Lipschitz continuous \cite[Lemma~4.1]{PBook}.
	\begin{lem}[Continuity of the Cavity Functionals]\label{lem:contfuntionals}
		For any $S,M$, there exists a  finite constant $L$ (that may depend on $S$ and $M$) such that for any measurable increasing functions $\mu_1$ and $\mu_2$ from $[0,S]$ to $[0,1]$ so that $\mu_1(\infty) = \mu_2(\infty) = 1$,
		\[
		|f_n^Z(\mu_1) - f_n^Z(\mu_2)| \leq L \int | \mu_1(x) - \mu_2(x) | \, dx
		\]
		and
		\[
		|f_n^Y(\mu_1) - f_n^Y(\mu_2)| \leq L \int | \mu_1(x) - \mu_2(x) | \, dx.
		\]
	\end{lem}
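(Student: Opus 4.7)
The plan is to prove this Lipschitz estimate via a Guerra-type interpolation between the two Ruelle probability cascade Gaussian processes associated to $\mu_1$ and $\mu_2$. First, by an $L^1$ approximation, I reduce to the case where $\mu_1$ and $\mu_2$ are both piecewise constant functions encoded on a common sequence of levels $\zeta_0 < \cdots < \zeta_{r-1}$ (the common refinement of their two partitions), with possibly different sequences $(Q^{(j)}_k)_{0 \le k \le r}$ satisfying $Q^{(1)}_r = Q^{(2)}_r = S$. This allows both Gaussian processes $Z_{\mu_1}(\alpha)$ and $Z_{\mu_2}(\alpha)$ to be realized on the same tree $\N^r$ coupled to a common set of weights $v_\alpha$.

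Next, I set $Z_t(\alpha) = \sqrt{t}\, Z_{\mu_1}(\alpha) + \sqrt{1-t}\, Z_{\mu_2}(\alpha)$, a centered Gaussian process with covariance $t Q^{(1)}_{\alpha^1 \wedge \alpha^2} + (1-t) Q^{(2)}_{\alpha^1 \wedge \alpha^2}$, and define
\[
\phi(t) = \frac{1}{n}\,\E\log\sum_\alpha v_\alpha \int \1(|R_{1,1}(\by)-S|\le\epsilon)\, \chi_{M,\epsilon/2}^n(R_{1,0}(\by))\, e^{\sum_{i\le n} \ba Z_{t,i}(\alpha) y_i} \,d\pP_X^{\otimes n}(\by),
\]
so $\phi(1) = f_n^Z(\mu_1)$ and $\phi(0) = f_n^Z(\mu_2)$. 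Differentiating in $t$ and applying Gaussian integration by parts, the self-overlap contributions cancel because $Q^{(1)}_r = Q^{(2)}_r = S$, leaving cross-replica terms proportional to $\langle y_i^1 y_i^2 \bigl(Q^{(1)}_{\alpha^1 \wedge \alpha^2} - Q^{(2)}_{\alpha^1 \wedge \alpha^2}\bigr)\rangle_t$, where $\langle \cdot\rangle_t$ is the Gibbs measure over two replicas $(\by^1,\by^2)$ and two independent tree paths $(\alpha^1,\alpha^2)$. Since $|y_i^1 y_i^2| \le C^2$ by Hypothesis~\ref{hypcompact}, it remains to bound $\E\langle |Q^{(1)}_{\alpha^1\wedge\alpha^2} - Q^{(2)}_{\alpha^1\wedge\alpha^2}| \rangle_t$ by a constant times $\int_0^S |\mu_1(x) - \mu_2(x)|\,dx$. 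Using the monotone coupling $Q^{(j)}_{\alpha^1\wedge\alpha^2} = F_j^{-1}(U)$ for a common $U \sim U[0,1]$—valid because the law of the depth $\alpha^1 \wedge \alpha^2$ is the same for both cascades once the $\zeta$-sequence is common, and the distribution of $Q^{(j)}_{\alpha^1\wedge\alpha^2}$ has CDF $\mu_j$—this expectation under the bare cascade equals precisely $\int |\mu_1 - \mu_2|\,dx$. Integrating $|\phi'(t)|$ in $t$ then yields the desired Lipschitz bound for $f_n^Z$, and the argument for $f_n^Y$ is parallel with covariance $\tfrac12 (Q^{(j)}_{\alpha^1\wedge\alpha^2})^2$ and a chain-rule factor $Q^{(1)}+Q^{(2)} \le 2S$.

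The main obstacle is that the overlap $Q^{(j)}_{\alpha^1\wedge\alpha^2}$ has distribution $\mu_j$ under the \emph{bare} Ruelle cascade, whereas our derivative is evaluated under the tilted Gibbs measure $\langle\cdot\rangle_t$, which couples tree and spins through the exponential weight $e^{\sum_i \ba Z_{t,i}(\alpha)y_i}$. This is the standard tilt encountered in Aizenman--Sims--Starr arguments and is handled by the invariance/cavity identities for Ruelle cascades: one rewrites the tilted average by transferring the exponential weight onto a new cascade, on which the overlap distribution is again described by $\mu_j$, so the monotone coupling argument applies. The non-separable constraints $\1(|R_{1,1}(\by)-S|\le\epsilon)$ and $\chi_{M,\epsilon/2}^n(R_{1,0}(\by))$ are independent of the Gaussian disorder and bounded in $[0,1]$, so they only renormalize the Gibbs measure and do not affect the derivative estimate. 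The overall argument mirrors that of Lemma~4.1 in \cite{PBook}, adapted in a routine way to our constrained integration domain.
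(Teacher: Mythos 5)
Your proposal is correct and follows essentially the same route as the paper: reduce to a common $\zeta$-sequence, Guerra-interpolate the cascade Gaussian process, compute $\phi'(t)$ by Gaussian integration by parts, and then use the cascade invariance to identify the distribution of $\alpha^1\wedge\alpha^2$ under the tilted Gibbs measure with the bare cascade overlap law, yielding $\E\langle\1(\alpha^1\wedge\alpha^2=k)\rangle_t=\zeta_k-\zeta_{k-1}$ and hence the $\int|\mu_1-\mu_2|$ bound. The only cosmetic differences are that the paper realizes the interpolating process by sharing the level-$k$ Gaussians $z_{\alpha_{|k}}$ (rather than your $\sqrt{t}Z_{\mu_1}+\sqrt{1-t}Z_{\mu_2}$, which is distributionally equivalent), and it handles $f_n^Y$ directly from the closed-form identity $f_n^Y(\mu)=\frac{\ba^2}{2}\int x\,\mu(x)\,dx$ rather than by interpolation.
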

	\begin{proof}
		We prove the statement for $f_n^Z(\mu)$. Let $\mu_1$ and $\mu_2$ encode two discrete density functions on $[0,S]$ encoded by sequences $(\zeta_k)$, $(Q^1_k)$ and $(Q^2_k)$. Notice that by repeating points, we could assume that the $(\zeta_k)$ sequences are common for both measures and that the $k$ in both sequences are identical. Define the interpolating measure $\mu^{-1}_t =  t \mu^{-1}_1 + (1 -t ) \mu^{-1}_2$ where $\mu^{-1}$ denotes the quantile transform of $\mu$ : $\mu^{-1}((-\infty,t])=\int_{\mu(x)\le t}dx$. Associated with this interpolating measure $\mu_t^{-1}$ is a sequence of parameters $(\zeta_k)_{-1 \leq k \leq r}$ and $(Q_k^t)_{0 \leq k \leq r}$ where $Q_k^t = tQ_k^{1} + (1-t)Q_k^{2}$. We define the interpolating process
		\[
		Z_i(\alpha;t) = \sum_{i \leq n} \ba Z^{\mu_t}_i(\alpha) y_i  
		\]
		where $Z^{\mu_t}$ is the Gaussian process defined with respect to the measure $\mu_t$, which can be expressed in the form
		\[
		Z_i(\alpha;t) = \sum_{k = 1}^{r - 1} (Q_k^t - Q^t_{k - 1})^{1/2} z^i_{\alpha_{|k}}
		\]
		for i.i.d.  $z^i_{\alpha_{|k}}$. We define the free energy
		\[
		\phi(t) := \frac{1}{n} \E \log \sum_\alpha v_\alpha \int_{|R_{1,1}(\by)-S|\le \epsilon}\chi^{N}_{M,\epsilon}(R_{1,0}(\by)) e^{\sum_{i \leq n} \ba Z_i(\alpha;t) y_i   } \, d \pP_X^{\otimes n} (\by).
		\]
		By an integration by parts, it follows that
		\begin{align*}
			\phi'(t) &= \frac{1}{n}\sum_{i = 1}^n \E \left\langle  \ba \partial_t Z_i(\alpha;t) y_i \right\rangle_t 
			\\&= \frac{\ba^2}{2n}\sum_{i = 1}^n \sum_{k = 1}^{r - 1} \E \left\langle   \frac{  y_iz^i_{\alpha_{|k} } }{ (Q_k^t - Q^t_{k - 1})^{1/2} } - \frac{ y_iz^i_{\alpha_{|k + 1}} }{ (Q_{k + 1}^t - Q^t_{k})^{1/2} }   \right\rangle_t (Q_k^1 - Q_k^2) 
			\\&= \frac{\ba^2}{2n}\sum_{i = 1}^n \sum_{k = 1}^{r - 1} \E \left\langle  y^1_iy_i^2 \1(\alpha^1 \wedge \alpha^2 \geq k) - y^1_iy_i^2 \1(\alpha^1 \wedge \alpha^2 \geq k + 1)  \right\rangle_t (Q_k^1 - Q_k^2)
			\\&= - \frac{\ba^2}{2n}\sum_{i = 1}^n \sum_{k = 1}^{r- 1} \E \langle y_i^1 y_i^2 \1(\alpha^1 \wedge \alpha^2 = k) \rangle_t (Q_k^1 - Q_k^2).
		\end{align*}
		Therefore, recalling that $\E \langle \1(\alpha^1 \wedge \alpha^2 = k) \rangle_t = \zeta_k - \zeta_{k-1}$
		\[
		|\phi'(t)| \leq \ba^2 S C^2 \sum_{k = 1}^{r- 1}  (\zeta_k - \zeta_{k-1}) |Q_k^1 - Q_k^2| =\ba^2 S C^2 \int | \mu_1(x) - \mu_2(x) | \, dx
		\]
		and our result follows from the fact that $\phi(1) = f_n^Z(\mu_1)$ and $\phi(0)= f_n^Z(\mu_2)$.
		
		The continuity of $f_n^Y$ is trivial because 
		\begin{equation}
			\frac{1}{N} \E \log \sum_\alpha v_\alpha  e^{ \sqrt{N}\ba Y(\alpha) } = \frac{\ba^2}{4 } \sum_{k = 0}^{r - 1} \zeta_k ( Q^2_{k + 1} - Q^2_k )  = \frac{\beta^2}{2} \int x \mu(x) \, dx 
		\end{equation}
		by \eqref{eq:ycomputation}. 
	\end{proof}
	
	As a consequence of Proposition \ref{Aiz}, and \eqref{lb2}, and the fact that $\Delta_{N,n}(\beta,\epsilon,\delta:S,M)$ are continuous functions of the overlaps which limit points are described, according to the Guirlenda-Guerra identities, by $\zeta$, we deduce that
	for each $n\ge 1$ 
	
	\begin{equation}\label{lb}
		\liminf_{N\rightarrow \infty}\frac{1}{N}  \E 1_{\cB_{\delta}} \log \tilde Z_{N}^{\pert}(\beta,\epsilon,\delta:S,M)\ge  \inf_{\zeta}( f_n^Z(\zeta) - f_n^Y(\zeta))+o_{n}(1)\end{equation}
	where $f_n^Z$ and $f_n^Y$ have been continuously extended to be defined with respect to all c.d.f.s instead of discrete ones.  By continuity of $f_{n}^{Z}-f_{n}^{Y}$ and compactness, this infimum is achieved. 
 
 In the following computations, it will be convenient to work with the original indicator function instead of its smooth approximation since it matches the form computed in Section~\ref{sec:upbd}. We can use the fact that $ \chi_{M,\epsilon} (R_{1,0}) \geq  \1_{(M-\epsilon, M+\epsilon)} (R_{1,0})$ to conclude that $ \chi_{S,M,\epsilon} (\by) 
		\geq \1_{\by\in \Sigma_{\epsilon}(S,M)}$. This holds pointwise for all $\zeta$ so we conclude that the free energy is lower bounded by
	\begin{equation}
		\liminf_{N\rightarrow \infty}\frac{1}{N}  \E 1_{\cB_{\delta}} \log \tilde Z_{N}^{\pert}(\beta,\epsilon,\delta:S,M)\ge  \inf_{\zeta}( \tilde f_n^Z(\zeta) - f_n^Y(\zeta))+o_{n}(1)\end{equation}
    where we defined
    \[
    \tilde f_n^Z(\mu) = \frac{1}{n} \E \log \sum_\alpha v_\alpha \int_{\Sigma_\epsilon(S,M)} e^{\sum_{i \leq n} \ba Z_i(\alpha) y_i   } \, d \pP_X^{\otimes n} (\by).
    \]

	To compute the integral explicitly, a crucial step is the removal of the constraint $\Sigma_{\epsilon}(S,M)$ on the self overlaps. Similar constrained integrals appears in the the lower bound, so we must prove that the constrained integrals and unconstrained integrals are identical in the limit for optimal choices of $\mu$ and $\lambda$. This can be done via a large deviations argument. We first state a property of the Ruelle probability cascades that will allow us to upper bound a partition of the free energy.
	
	\begin{lem}[Upper Bound of the Ruelle Probability Cascades]\label{lem:upbdRPC} 
		Let $g(\alpha)$ be a Gaussian process indexed by $\alpha \in \N^r$ with covariance	\[
		\E g(\alpha^1) g(\alpha^2) = C( Q_{\alpha^1 \wedge \alpha^2} )
		\]
		independent of $v_\alpha$. If $A_j: \R \to \R$ are positive functions of the same Gaussian process $g(\alpha)$ for $1 \leq j \leq n$ then
		\[
		\E \log \sum_{\alpha \in \N^r} v_\alpha \sum_{j \leq n} A_j(g(\alpha)) \leq \frac{\log n}{\zeta_0} + \max_{j\leq n} \E \log \sum_{\alpha \in \N^r} v_\alpha A_j(g(\alpha)),
		\]
		where $\zeta_0 > 0$ is the smallest point in the sequence \eqref{eq:zetaseq}.
	\end{lem}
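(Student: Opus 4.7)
The plan is to invoke Lemma~\ref{lem:RPCavg} for two separate functionals and propagate a subadditivity bound through every level of the Panchenko-type recursion. I apply Lemma~\ref{lem:RPCavg} with $f = \log R$ where $R(x) := \sum_{j=1}^n A_j(x)$, and in parallel, for each $1 \leq j \leq n$, with $f_j = \log A_j$. Write $(X_p^{(R)})_{0 \leq p \leq r}$ and $(X_p^{(A_j)})_{0 \leq p \leq r}$ for the corresponding recursive sequences, so that
\[
X_0^{(R)} = \E \log \sum_{\alpha} v_\alpha \sum_{j \leq n} A_j(g(\alpha)) \quad \text{and} \quad X_0^{(A_j)} = \E \log \sum_{\alpha} v_\alpha A_j(g(\alpha))
\]
by Lemma~\ref{lem:RPCavg}.

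The heart of the argument is to establish, by downward induction on $p \in \{0,1,\dots,r-1\}$, the inequality
\[
X_p^{(R)} \leq \frac{1}{\zeta_p} \log \sum_{j=1}^n e^{\zeta_p X_p^{(A_j)}}.
\]
For the base case $p = r-1$, the numerical subadditivity $(\sum_j a_j)^{\zeta_{r-1}} \leq \sum_j a_j^{\zeta_{r-1}}$ (valid for $a_j \geq 0$ since $\zeta_{r-1} \in (0,1)$) yields, with $g_r = \sum_{k=1}^r (C(Q_k) - C(Q_{k-1}))^{1/2} z_k$,
\[
X_{r-1}^{(R)} = \frac{1}{\zeta_{r-1}} \log \E_{z_r} \Bigl(\sum_j A_j(g_r)\Bigr)^{\zeta_{r-1}} \leq \frac{1}{\zeta_{r-1}} \log \sum_j \E_{z_r} A_j(g_r)^{\zeta_{r-1}} = \frac{1}{\zeta_{r-1}} \log \sum_j e^{\zeta_{r-1} X_{r-1}^{(A_j)}}.
\]
For the inductive step, assuming the bound at level $p+1$, I exponentiate to the power $\zeta_p$ and re-apply the subadditivity inequality with exponent $\zeta_p/\zeta_{p+1} \in (0,1]$ (since the sequence $\zeta_k$ is increasing):
\[
e^{\zeta_p X_{p+1}^{(R)}} \leq \Bigl(\sum_j e^{\zeta_{p+1} X_{p+1}^{(A_j)}}\Bigr)^{\zeta_p/\zeta_{p+1}} \leq \sum_j e^{\zeta_p X_{p+1}^{(A_j)}}.
\]
Taking $\E_{z_{p+1}}$ on both sides and using the recursive identity $\E_{z_{p+1}} e^{\zeta_p X_{p+1}^{(A_j)}} = e^{\zeta_p X_p^{(A_j)}}$ completes the inductive step.

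Specializing at $p = 0$ and bounding $\sum_j e^{\zeta_0 X_0^{(A_j)}} \leq n \max_j e^{\zeta_0 X_0^{(A_j)}}$,
\[
X_0^{(R)} \leq \frac{1}{\zeta_0} \log \Bigl(n \max_j e^{\zeta_0 X_0^{(A_j)}}\Bigr) = \frac{\log n}{\zeta_0} + \max_{j \leq n} X_0^{(A_j)},
\]
which is the desired inequality. The main conceptual obstacle is that the naive route $\E \log \sum_j Y_j \leq \log n + \E \max_j \log Y_j$ breaks at the final step, since $\E \max_j \geq \max_j \E$ goes the wrong way and one has no control on fluctuations of the individual cascade free energies. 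The plan above avoids this by never separating the sum from the expectation prematurely: the ``sum of $n$ exponentials'' structure is preserved through every layer of the recursion thanks to all ratios $\zeta_p/\zeta_{p+1}$ lying in $(0,1]$, and the improvement from $\log n$ to $\log n / \zeta_0$ materialises precisely when the recursion reaches the root of the tree.
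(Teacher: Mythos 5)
Your proof is correct and follows essentially the same route as the paper (which in turn follows Panchenko's Potts spin glass argument): set up the parallel recursions for the aggregate $X_p$ and the individual $X_{p,j}$, propagate the inequality $e^{\zeta_p X_p} \leq \sum_j e^{\zeta_p X_{p,j}}$ downward by applying subadditivity of $x \mapsto x^{\zeta_p/\zeta_{p+1}}$ at each level, and conclude with the trivial $\max$ bound at the root. One small stylistic virtue of your write-up: in the base case you apply subadditivity of $x^{\zeta_{r-1}}$ pointwise under the expectation and only then integrate $z_r$, whereas the paper's displayed chain first pulls $\E_{z_r}$ inside via Jensen and then invokes subadditivity; your ordering makes each step a clean one-liner and avoids having to compare $(\E e^{X_{r,j}})^{\zeta_{r-1}}$ with $\E e^{\zeta_{r-1} X_{r,j}}$ along the way.
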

	
	\begin{proof}
		The proof can be found in \cite[Lemma~6]{PPotts}. We restate it here for convenience. For
		\begin{equation}\label{eq:recur1}
			X_r = \log \sum_{j \leq n} A_j\bigg( \sum_{k = 1}^r ( C( Q_k ) - C( Q_{k - 1} )  )^{1/2} z_i \bigg) \qquad X_{p} = \frac{1}{\zeta_p} \log \E_{z_{kp+ 1}} e^{\zeta_p X_{p + 1}} \quad \text{for $0 \leq p \leq r-1$},
		\end{equation}
		and let $X_{p,j}$ such that $e^{X_r}=\sum_{j\le n} e^{X_{r,j}}$ be given by 
		\begin{equation}\label{eq:recur2}
			X_{r,j} = \log A_j\bigg(  \sum_{k = 1}^r ( C( Q_k ) - C( Q_{k - 1} )  )^{1/2} z_i \bigg) \qquad X_{p,j} = \frac{1}{\zeta_p} \log \E_{z_{p + 1}} e^{\zeta_p X_{p + 1,j}} \quad \text{for $0 \leq p \leq r-1$},
		\end{equation}
		Lemma~\ref{lem:RPCavg} implies
		\begin{equation}\label{eq:recur3}
			\E \log \sum_{\alpha \in \N^r} v_\alpha \sum_{j \leq n} A_j(\alpha) = X_0 \qquad \E \log \sum_{\alpha \in \N^r} v_\alpha  A_j(\alpha) = X_{0,j}.
		\end{equation}
		Using the recursive definition \eqref{eq:recur1} and \eqref{eq:recur2}, since $\zeta_{r-1} < 1$, Jensen's inequality implies 
		\begin{align*}
			\exp \zeta_{r-1} X_{r-1} &= \E_{z_{r}} \exp( \zeta_{r-1 }X_r )= \E_{z_{r}} \bigg( \sum_{j \leq n} \exp( X_{r,j} ) \bigg)^{\zeta_{r-1}}
			\\&\leq  \left(  \sum_{j \leq n}  \E_{z_{r}} \exp( X_{r,j} )\right)^{\zeta_{r-1}}
			\leq  \sum_{j \leq n}  \E_{z_{r}} \exp( \zeta_{r-1} X_{r,j} )
			\\&=  \sum_{j \leq n} \exp(\zeta_{r-1} X_{r-1,j} ).
		\end{align*}
		where we finally used that $f(x)=x^{\zeta_{r-1}}$ is concave and nonnegative,  therefore sub-additive on $\mathbb R^{+}$ (to see this use concavity to show that $\frac{a}{a+b} f(a+b)\le \frac{a}{a+b}f(a+b)+\frac{b}{a+b} f(0)\le f(a)$).
		Similarly, we can iterate this bound recursively using the fact that $\zeta_p/\zeta_{p + 1} < 1$ to conclude that
		\begin{align*}
			\exp \zeta_{p} X_{p} &= \E_{z_{p + 1}} \exp( \zeta_{p}X_{p+1} )= \E_{z_{p + 1}} \bigg( \sum_{j \leq n} \exp( \zeta_{p+1}X_{p + 1,j} ) \bigg)^{\frac{\zeta_p}{\zeta_{p+1}}}
			\\&\leq  \sum_{j \leq n} \E_{z_{p + 1}}  \exp(\zeta_{p} X_{p + 1,j} )
			= \sum_{j \leq n}  \exp(\zeta_{p} X_{p,j} ).
		\end{align*}
		This allows to show   when $p=0$ that
		\[
		X_0 \leq \frac{1}{\zeta_0} \log \sum_{j \leq n}  \exp(\zeta_{0} X_{0,j} ) \leq \frac{\log(n)}{\zeta_0} + \max_{j \leq n} X_{0,j},
		\]
		so applying \eqref{eq:recur3} proves our statement.	
	\end{proof}
	
	Lemma~\ref{lem:upbdRPC} will be used to upper bound the unconstrained free energy after decomposing it as the sum of contrained free energies. We can now prove that the constrained free energy is asymptotically sharp after minimizing over $\mu$ and $\lambda$.
	
	\begin{lem}[Sharp Lower Bound] \label{lem:sharpupbd} 
		For $S ,M \in \cC$ and any $\epsilon,\delta > 0$ small enough, 
		\begin{align}
			&\liminf_{N \to \infty} \frac{1}{N} \E_{Z,x^{0}} \1_{\cB_\delta} \log \sum_\alpha v_\alpha \int_{\Sigma_\epsilon(S,M)} e^{\sum_{i \leq N} \ba Z_i(\alpha) x_i } \, d \pP_X^{\otimes N} (\bx)
			\nonumber\\&\geq \inf_{\mu,\lambda} \bigg( -\lambda S - \mu M + \E_{Z,x^{0}} \log \sum_\alpha v_\alpha  \int e^{\ba Z(\alpha) x +\lambda x^{2} +\mu x x^{0}
			} \, d \pP_X (\bx)  \bigg).\label{lbge}
		\end{align} Moreover, the right hand side is equal to $-\infty$ if $\cI(S,M)=\infty$.		Furthermore, if $S,M$ belong to the interior of $\cC$, then the minimizer is attained at a unique $\mu$ and $\lambda$, such that $|\mu| + |\lambda| \leq C(S,M)$ where the constant $C$ only depends on the distance from $(S,M)$ to the boundary. 
		
	\end{lem}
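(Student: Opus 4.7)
The lemma is a Gärtner--Ellis type duality: its right-hand side equals $-\Phi^\star(S,M)$, the negative Legendre conjugate of
\[
\Phi(\lambda,\mu) := \E_{Z,x^0}\log\sum_\alpha v_\alpha \int e^{\ba Z(\alpha)x+\lambda x^2+\mu xx^0}\,d\pP_X(x),
\]
evaluated at $(S,M)$. By factorization over sites and Lemma~\ref{lem:RPCavg}, $\Phi(\lambda,\mu)$ is exactly the $N\to\infty$ limit of the normalized tilted log-partition appearing in the lemma's right-hand side; it is convex (log-convex integrand summed under the cascade), $C^\infty$ by compact support (Hypothesis~\ref{hypcompact}), and $\nabla\Phi(\R^2)\subset\cC$. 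The plan has three ingredients: (i) an $\epsilon$-cover of $\cC$ combined with Lemma~\ref{lem:upbdRPC} turns the unconstrained tilted integral into a maximum of constrained ones; (ii) in the interior of $\cC$, a standard Gärtner--Ellis argument identifies the maximizing ball as $(S,M)$ at the minimizing $(\lambda^\star,\mu^\star)$; (iii) on $\partial\cC$, Rockafellar's density of exposed points handles the remaining boundary case.

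\textbf{Cover argument.} Fix $(\lambda,\mu)$. Tile $\cC$ by $O(\epsilon^{-2})$ overlap-balls $\Sigma_\epsilon(s_j,m_k)$; decompose the unconstrained tilted integral as a sum of its restrictions; and apply Lemma~\ref{lem:upbdRPC} to the $\alpha$-sum with $A_{jk}(g(\alpha)):=\int_{\Sigma_\epsilon(s_j,m_k)}e^{\ba\sum_iZ_i(\alpha)x_i}d\pP_X^{\otimes N}(\bx)$. On each ball, $\lambda\sum x_i^2+\mu\sum x_i x_i^0$ is within $N\epsilon(|\lambda|+|\mu|)$ of $N(\lambda s_j+\mu m_k)$, and concentration of each local free energy on a deterministic limit (uniform over $\bx^0\in\cB_\delta$) lets the maximum be moved outside $\E_{x^0}\1_{\cB_\delta}$ up to $o(1)$. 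Dividing by $N$ and letting $N\to\infty$ yields the key inequality
\[
\Phi(\lambda,\mu)\;\le\;\sup_{(s,m)\in\cC}\bigl[\lambda s+\mu m+\phi_\epsilon(s,m)\bigr]+\epsilon(|\lambda|+|\mu|),
\]
with $\phi_\epsilon(s,m)$ the left-hand side of the lemma at $(s,m)$; the combinatorial term $\log O(\epsilon^{-2})/(N\zeta_0)$ from Lemma~\ref{lem:upbdRPC} vanishes as $N\to\infty$.

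\textbf{Interior points.} For $(S,M)\in\mathrm{int}(\cC)$ the recession function of $(\lambda,\mu)\mapsto\Phi(\lambda,\mu)-\lambda S-\mu M$ in direction $(u,v)$ is $\sigma_\cC(u,v)-(uS+vM)>0$ for all $(u,v)\ne 0$, where $\sigma_\cC$ is the support function of $\cC$. Hence the infimum is attained at a unique $(\lambda^\star,\mu^\star)$ with $\nabla\Phi(\lambda^\star,\mu^\star)=(S,M)$, and $|\lambda^\star|+|\mu^\star|\le C(S,M)$ depends continuously on the distance from $(S,M)$ to $\partial\cC$. Plugging $(\lambda^\star,\mu^\star)$ into the cover inequality, strict convexity of $\Phi$ ensures the supremum there is attained at $(s,m)=(S,M)$ in the limit $\epsilon\downarrow0$, yielding
\[
\phi_\epsilon(S,M)\;\ge\;\Phi(\lambda^\star,\mu^\star)-\lambda^\star S-\mu^\star M-O(\epsilon)\;=\;\text{right-hand side}-O(\epsilon).
\]
Monotonicity of $\phi_\epsilon$ in $\epsilon$ lets us conclude for all $\epsilon>0$.

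\textbf{Boundary and $\cI=\infty$.} For $(S,M)\in\partial\cC$ the function $\Phi^\star$ may fail to be subdifferentiable at $(S,M)$, and this is the main obstacle. We invoke Rockafellar's theorem: the points of subdifferentiability of a closed proper convex function are dense in the relative interior of its effective domain. Approximate $(S,M)$ by interior exposed points $(S_n,M_n)\to(S,M)$ with $\Phi^\star(S_n,M_n)\to\Phi^\star(S,M)$ (chosen along a direction of continuity of $\Phi^\star$), apply the interior bound, and pass to the limit using the containment $\Sigma_{2\epsilon}(S,M)\supset\Sigma_\epsilon(S_n,M_n)$ for large $n$ (which gives $\phi_{2\epsilon}(S,M)\ge\limsup_n\phi_\epsilon(S_n,M_n)$) to transfer the inequality to $(S,M)$. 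Finally, if $\cI(S,M)=\infty$ then $(S,M)\notin\cC$: taking $(\lambda_n,\mu_n)\to\infty$ along an outward normal to $\cC$ at the nearest point to $(S,M)$ forces $\Phi(\lambda_n,\mu_n)-\lambda_n S-\mu_n M\to-\infty$, so the right-hand side equals $-\infty$ and the lemma is trivial.
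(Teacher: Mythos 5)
Your plan takes a genuinely different route from the paper, and the difference is where the gap lives. The paper covers $\bar\Sigma_\epsilon(S,M)^c$, not $\cC$: it first proves a per-ball Chebyshev bound on the tilted measure (eq.~\eqref{ldubt0}), then covers $\bar\Sigma_\epsilon(S,M)^c$ by finitely many such balls, combines via Lemma~\ref{lem:upbdRPC}, and concludes that the tilted probability of the complement decays exponentially (eq.~\eqref{tot}); the change-of-measure in eq.~\eqref{lbge2} then gives the lower bound directly. You instead cover all of $\cC$ and extract the ``dual'' inequality
\[
\Phi(\lambda,\mu)\;\le\;\sup_{(s,m)\in\cC}\bigl[\lambda s+\mu m+\phi_\epsilon(s,m)\bigr]+\epsilon(|\lambda|+|\mu|),
\]
which is correct as far as it goes, but it does not pin down $\phi_\epsilon(S,M)$: this inequality only constrains the upper concave envelope of $\phi_\epsilon$. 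Your key assertion that ``strict convexity of $\Phi$ ensures the supremum there is attained at $(s,m)=(S,M)$'' is not an argument — the supremum is of $\phi_\epsilon$, whose shape you have not controlled, and a continuous dip of $\phi_\epsilon$ localized near $(S,M)$ is perfectly consistent with the cover inequality while making $\phi_\epsilon(S,M)$ as small as you like. To close this you would need at least three additional ingredients that are not stated: a pointwise upper bound $\phi_\epsilon(s,m)\le -\Phi^*(s,m)+O(\epsilon)$ on the cascade quantity (obtainable by the same Chebyshev tilt the paper uses for \eqref{ldubt0}, but not the same as Proposition~\ref{prop:upbd:SK}, which bounds $F_N^{SK}$, not the RPC expression); a quantitative lower curvature bound on $\Phi$ to force the near-maximizer to lie within $O(\sqrt\epsilon)$ of $(S,M)$; and the nesting $\Sigma_{\epsilon}(s',m')\subset\Sigma_{\epsilon+\rho}(S,M)$ to transfer the bound back to $(S,M)$ at a slightly larger radius before taking $\epsilon\downarrow 0$. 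The paper's complement-cover sidesteps all of this because it never needs to locate a maximizer — concentration of the tilted measure at $(S,M)$ falls out for free once every ball outside $\Sigma_\epsilon$ has exponentially small tilted mass.

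Two smaller points. First, the claim that $\cI(S,M)=\infty$ implies $(S,M)\notin\cC$ is false in general: $\cC$ is defined via essential infs/sups and may contain boundary points where the Cramér rate is infinite (e.g.\ when the prior has no atom at the edge of its support). The paper handles this case by bounding the right-hand side of \eqref{lbge} above by $-\cI(S,M)+L$ directly, which works regardless. Second, your recession-function argument for existence, uniqueness and boundedness of $(\lambda^\star,\mu^\star)$ in the interior is fine and matches the standard convex-analysis route; the Rockafellar step for boundary points is also aligned with the paper's use of \cite[Lemma~2.3.12]{DZ}, though it inherits the gap from the interior case.
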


	\begin{proof}A similar result is proved in \cite[Section 7]{PVS}. The proof of \cite[Section 7]{PVS} could be adapted easily for $S,M$ in the interior of the set $\cC$, but would require additional arguments for elements of the boundary of $\cC$ in which case the infimum over $(\lambda,\mu)$ which may be attained at infinity. We therefore follow another route which mimick the proof of Gartner-Ellis theorem \cite[Theorem 2.3.6]{DZ}, taking into account the random density depending on the $Z_{i}$'s.

		We first show that we can restrict ourselves to $(S,M)$ with finite entropy because the lower bound in\eqref{lbge} is infinite otherwise. 
		Indeed,
		$$\E_{Z,x^{0}} \log \sum_\alpha v_\alpha  \int e^{\ba Z(\alpha) x +\lambda x^{2} +\mu x x^{0}
		} \, d \pP_X (\bx) \le \E_{Z} \log \sum_\alpha v_\alpha  \int e^{\ba |Z(\alpha)|C} +\E_{x^{0}}\log \int e^{\lambda x^{2} +\mu x x^{0}
		} \, d \pP_X (\bx)$$ and 
		$
		\E\log \sum_\alpha v_\alpha e^{- \ba |Z(\alpha)| C}
		$
		is bounded uniformly by Lemma~\ref{lem:RPCavg} because
		\[
		\E e^{ -| \sum_{k = 1}^r (  Q^2_k - Q^2_{k - 1}  )^{1/2} z_{ k} |  C   } < \infty
		\]
		using the moment generating function for folded normals. Therefore there exists a finite constant $L$ such that
		$$ \inf_{\mu,\lambda} \bigg( -\lambda S - \mu M + \E_{Z,x^{0}} \log \sum_\alpha v_\alpha  \int e^{\ba Z(\alpha) x +\lambda x^{2} +\mu x x^{0}
		} \, d \pP_X (\bx)  \bigg)\le -\cI(S,M)+ L.$$
		We hence  can restrict ourselves to $(S,M)$ with finite entropy. 
		We then notice that 
		$$\liminf_{N \to \infty} \frac{1}{N} \E_{Z,x^{0}} \1_{\cB_\delta} \log \sum_\alpha v_\alpha \int_{\Sigma_\epsilon(S,M)} e^{\sum_{i \leq N} \ba Z_i(\alpha) x_i } \, d \pP_X^{\otimes N} (\bx)$$
		is decreasing in $\epsilon$ and does not depend on $\delta$ because $\cB_{\delta}^{c}$ has an exponentially small probability so that if $\delta'<\delta$, we have
		\begin{eqnarray*}
			&& \frac{1}{N} \E_{Z,x^{0}} \1_{\cB_\delta\backslash \cB_{\delta'}} \log \sum_\alpha v_\alpha \int_{\Sigma_\epsilon(S,M)} e^{\sum_{i \leq N} \ba Z_i(\alpha) x_i } \, d \pP_X^{\otimes N} (\bx)\\
			&&\ge \frac{1}{N} \E_{Z,x^{0}} \1_{\cB_\delta\backslash \cB_{\delta'}} \E_{Z}\log \sum_\alpha v_\alpha e^{- C\ba \sum_{i \leq N}  |Z_i(\alpha)| } \pP_X^{\otimes N} (\Sigma_\epsilon(S,M))\\
			&&\geq  \pP_{0}^{\otimes N}({\cB_\delta\backslash \cB_{\delta'}} )(- C\ba \sum_{\alpha }v_{\alpha}   \E|Z(\alpha)|  +\inf_{\bx_{0}\in \cB_{\delta}} \frac{1}{N}\log \pP_X^{\otimes N} (\Sigma_\epsilon(S,M)))\end{eqnarray*}
		which goes to zero as $N$ goes to infinity since $(S,M)$ has finite entropy so that the last term is finite.
		We next adapt Gartner-Ellis argument to our setting. It is based on a large deviation upper bound for the tilted measures. Namely let $\lambda,\mu\in\mathbb R^{2}$. We first show that for every $(S,M)\in [0,C^{2}]\times [-C^{2},C^{2}]$,
		
		\begin{eqnarray}\label{ldubt0}
			&&\limsup_{N \to \infty} \frac{1}{N} \E_{Z,x^{0}} \1_{\cB_\delta} \log\frac{ \sum_\alpha v_\alpha \int_{\Sigma_\epsilon(S,M)} e^{\sum_{i \leq N} (\ba Z_i(\alpha) x_i +\lambda x_{i}^{2}+\mu x_{i}x_{i}^{0}) } \, d \pP_X^{\otimes N} (\bx)}{ \sum_\alpha v_\alpha \int  e^{\sum_{i \leq N} (\ba Z_i(\alpha) x_i +\lambda x_{i}^{2}+\mu x_{i}x_{i}^{0}) } \, d \pP_X^{\otimes N} (\bx)}\nonumber\\
			&&\qquad
			\le -\Lambda^{*}_{\lambda,\mu}(S,M)+O(\epsilon)+O(\delta)\end{eqnarray}
		with 
		$$\Lambda^{*}_{\lambda,\mu}(S,M)=-\lambda S-\mu M +\Lambda(\mu,\lambda)+\sup_{\lambda',\mu'}\{\lambda'S+\mu' M-\Lambda(\lambda',\mu')\} $$
		where
		$$\Lambda(\lambda,\mu)= \E_{Z,x^{0}} \log \sum_\alpha v_\alpha  \int e^{\ba Z(\alpha) x +\lambda x^{2} +\mu x x^{0}
		} \, d \pP_X (\bx)\,.$$
		We denote in short $\Lambda^{*}=\Lambda^{*}_{0,0}$.
		Indeed, \eqref{ldubt0} is a direct consequence of the fact that the $v_{\alpha}$ are non negative and  almost surely we have
		$$\int_{\Sigma_\epsilon(S,M)} e^{\sum_{i \leq N} (\ba Z_i(\alpha) x_i +\lambda x_{i}^{2}+\mu x_{i}x_{i}^{0}) } \, d \pP_X^{\otimes N} (\bx)\qquad\qquad\qquad$$
		$$\qquad\qquad\qquad  \le e^{N(\lambda-\lambda') S+N(\mu-\mu') M+NO(\epsilon)}\int  e^{\sum_{i \leq N} (\ba Z_i(\alpha) x_i +\lambda' x_{i}^{2}+\mu' x_{i}x_{i}^{0}) } \, d \pP_X^{\otimes N} (\bx)$$
		We next introduced the notion of exposed points: $(S,M)$ is said to be exposed if and only if there exists $(\lambda,\mu)$ such that for every $(S',M')\neq (S,M)$ we have 
		\begin{equation}\label{defcrit} 
		\lambda S+\mu M-\Lambda^{*}(S,M)>\lambda S'+\mu M'-\Lambda^{*}(S',M')=-\Lambda^{*}_{\lambda,\mu}(S',M')+\Lambda(0,0) \,.\end{equation}
		The set $(\lambda,\mu)$ is called an exposing hyperplane.
		We first prove \eqref{ldubt0} for an exposed point $(S,M)$ with exposing hyperplane $(\lambda,\mu)$ by showing that the associated tilted measure puts some mass on a neighborhood of $(S,M)$, see \eqref{tot}. To see this, 
		we first claim that for every $(S',M')\neq (S,M)$, 
		\begin{eqnarray*}
			\Lambda^{*}_{\lambda,\mu}(S',M')&=&\Lambda^{*}(S',M')-(\lambda S'+\mu M'-\Lambda(\mu,\lambda)+\Lambda(0,0) )\\
			&\ge &\Lambda^{*}(S',M')-(\lambda (S'-S)+\mu (M'-M)+\Lambda^{*}(S,M))
			>0\end{eqnarray*}
		Moreover, it is easy to see that $\Lambda^{*}_{\lambda,\mu}$ is a good rate function so that it achieves its minimum value on the closure $\bar\Sigma_{\epsilon}(S,M)^{c}$  of $\Sigma_{\epsilon}(S,M)^{c}$, hence 
		 $\inf_{\bar\Sigma_{\epsilon}(S,M)^{c}}\Lambda^{*}_{\lambda,\mu}\ge \kappa>0$. Moreover, 
		 we can cover  $\bar\Sigma_{\epsilon}(S,M)^{c}$   by a  union of  finitely many balls  $(B_{j})_{j\le K}$  so that for each $j\le K$
		
		\begin{eqnarray}\label{ldubt}
			&&\limsup_{N \to \infty} \frac{1}{N} \E_{Z,x^{0}} \1_{\cB_\delta} \log\frac{ \sum_\alpha v_\alpha \int_{(R_{1,1},R_{1,0})\in B_{j}} e^{\sum_{i \leq N} (\ba Z_i(\alpha) x_i +\lambda x_{i}^{2}+\mu x_{i}x_{i}^{0}) } \, d \pP_X^{\otimes N} (\bx)}{ \sum_\alpha v_\alpha \int  e^{\sum_{i \leq N} (\ba Z_i(\alpha) x_i +\lambda x_{i}^{2}+\mu x_{i}x_{i}^{0}) } \, d \pP_X^{\otimes N} (\bx)}\nonumber\\
			&&\qquad
			\le -\kappa+O(\delta).\end{eqnarray}
		We therefore deduce there exists $\kappa=\kappa_{\epsilon}>0$ such that
		\begin{eqnarray}\label{ldubt}
			&&\limsup_{N \to \infty} \frac{1}{N} \E_{Z,x^{0}} \1_{\cB_\delta} \log\frac{ \sum_\alpha v_\alpha \int_{\bar \Sigma_\epsilon(S,M)^{c}} e^{\sum_{i \leq N} (\ba Z_i(\alpha) x_i +\lambda x_{i}^{2}+\mu x_{i}x_{i}^{0}) } \, d \pP_X^{\otimes N} (\bx)}{ \sum_\alpha v_\alpha \int  e^{\sum_{i \leq N} (\ba Z_i(\alpha) x_i +\lambda x_{i}^{2}+\mu x_{i}x_{i}^{0}) } \, d \pP_X^{\otimes N} (\bx)}\nonumber\\
			&&\le \limsup_{N \to \infty} \frac{1}{N} \E_{Z,x^{0}} \1_{\cB_\delta} \log \sum_{j\le K} \frac{ \sum_\alpha v_\alpha \int_{B_{j}} e^{\sum_{i \leq N} (\ba Z_i(\alpha) x_i +\lambda x_{i}^{2}+\mu x_{i}x_{i}^{0}) } \, d \pP_X^{\otimes N} (\bx)}{ \sum_\alpha v_\alpha \int  e^{\sum_{i \leq N} (\ba Z_i(\alpha) x_i +\lambda x_{i}^{2}+\mu x_{i}x_{i}^{0}) } \, d \pP_X^{\otimes N} (\bx)}\nonumber\\
			&&\le
			-\kappa+O(\delta)\end{eqnarray}
		where we finally used Lemma \ref{lem:upbdRPC} to pull the sum outside of the logarithm.  Applying again Lemma \ref{lem:upbdRPC}, we conclude that
		\begin{eqnarray*}
			0&=&\liminf_{N \to \infty} \frac{1}{N} \E_{Z,x^{0}} \1_{\cB_\delta} \log\frac{ \sum_\alpha v_\alpha \int e^{\sum_{i \leq N} (\ba Z_i(\alpha) x_i +\lambda x_{i}^{2}+\mu x_{i}x_{i}^{0}) } \, d \pP_X^{\otimes N} (\bx)}{ \sum_\alpha v_\alpha \int  e^{\sum_{i \leq N} (\ba Z_i(\alpha) x_i +\lambda x_{i}^{2}+\mu x_{i}x_{i}^{0}) } \, d \pP_X^{\otimes N} (\bx)}\\
			&\le&\max\left\{\liminf_{N \to \infty} \frac{1}{N} \E_{Z,x^{0}} \1_{\cB_\delta} \log\frac{ \sum_\alpha v_\alpha \int_{\Sigma_\epsilon(S,M)} e^{\sum_{i \leq N} (\ba Z_i(\alpha) x_i +\lambda x_{i}^{2}+\mu x_{i}x_{i}^{0}) } \, d \pP_X^{\otimes N} (\bx)}{ \sum_\alpha v_\alpha \int  e^{\sum_{i \leq N} (\ba Z_i(\alpha) x_i +\lambda x_{i}^{2}+\mu x_{i}x_{i}^{0}) } \, d \pP_X^{\otimes N} (\bx)}, -\kappa+\delta\right\}
		\end{eqnarray*} 
		and therefore for $\delta$ small enough (depending on $\epsilon$)
		\begin{equation}\label{tot}
			\liminf_{N \to \infty} \frac{1}{N} \E_{Z,x^{0}} \1_{\cB_\delta} \log\frac{ \sum_\alpha v_\alpha \int_{ \Sigma_\epsilon(S,M)} e^{\sum_{i \leq N} (\ba Z_i(\alpha) x_i +\lambda x_{i}^{2}+\mu x_{i}x_{i}^{0}) } \, d \pP_X^{\otimes N} (\bx)}{ \sum_\alpha v_\alpha \int  e^{\sum_{i \leq N} (\ba Z_i(\alpha) x_i +\lambda x_{i}^{2}+\mu x_{i}x_{i}^{0}) } \, d \pP_X^{\otimes N}} \ge 0.\end{equation}
		We finally can prove \eqref{lbge}. Indeed, by H\"older's inequality
		\begin{align}
			&\frac{1}{N} \E_{Z,x^{0}} \1_{\cB_\delta} \log \sum_\alpha v_\alpha \int_{\Sigma_\epsilon(S,M)} e^{\sum_{i \leq N} \ba Z_i(\alpha) x_i } \, d \pP_X^{\otimes N} (\bx)
			\nonumber\\&\geq  -\lambda S - \mu M + \E_{Z,x^{0}} \log \sum_\alpha v_\alpha  \int e^{\ba Z(\alpha) x +\lambda x^{2} +\mu x x^{0}
			} \, d \pP_X (\bx)  \\
			& +\frac{1}{N} \E_{Z,x^{0}} \1_{\cB_\delta} \log\frac{ \sum_\alpha v_\alpha \int_{\bar \Sigma_\epsilon(S,M)} e^{\sum_{i \leq N} (\ba Z_i(\alpha) x_i +\lambda x_{i}^{2}+\mu x_{i}x_{i}^{0}) } \, d \pP_X^{\otimes N} (\bx)}{ \sum_\alpha v_\alpha \int  e^{\sum_{i \leq N} (\ba Z_i(\alpha) x_i +\lambda x_{i}^{2}+\mu x_{i}x_{i}^{0}) } \, d \pP_X^{\otimes N}} +O(\epsilon)+O(\delta).\label{lbge2}
		\end{align}	
		We can finally let $N$ going to infinity, $\delta$ to zero and then $\epsilon$ to zero to conclude. 
		
		To conclude that the lower bound holds not only for exposed points we can use Rockafellar's lemma, see \cite{DZ}[Lemma 2.3.12] which shows that it is enough to show that $\Lambda$ is essentially smooth, lower semi-continuous and convex. This is clear as $\pP_{X}$ and $\pP_{0}$ are compactly supported. This implies  that the relative interior of the set of points where $\Lambda^{*}$ is finite is included in the set of exposed points, which is enough to conclude the statement of the theorem.
	\end{proof}
	\section{Proof of Theorem \ref{quenchedldp}}\label{proofthmquenched}
	In this section we prove Theorem \ref{quenchedldp} given Theorem \ref{technicalldp}.  As usual, we first prove that the rate function is good, and  then   a quenched large deviation principle. Since $(S,M)$ live in a compact space, it is enough to prove a quenched weak large deviation principle. 
	\subsection{Study of the rate function $I^{\beta}_{FP}$}
	It is enough to show that the level sets of $I^{\beta}_{FP}$ are closed, namely that $-\varphi_{\bar \ba}(S,M)$ is lower semicontinuous, since $(S,M)$ live in the compact set $[-C^{2},C^{2}]^{2}$.  But we have 
	$$	-\varphi_{\bar\ba}(S,M)=\sup_{r,\mu,\lambda,\zeta,Q} \bigg( \mu S +\lambda M  -  \E_{0} [X_0(\lambda,\mu,Q,\zeta)]  + \frac{\ba^2}{4} \sum_{k = 0}^{r - 1} \zeta_k ( Q^2_{k + 1} - Q^2_k ) - \frac{\bb M^2}{2}  - \frac{\bc S^2}{4}  \bigg),$$
	and so it is enough to show that for any fixed $r,\mu,\lambda,\zeta, Q$,  the function
	$$ f(S,M)=\mu S +\lambda M  -  \E_{0} [X_0(\lambda,\mu,S Q,\zeta)]  + \frac{\ba^2}{4} \sum_{k = 0}^{r - 1} \zeta_k ( Q^2_{k + 1} - Q^2_k ) - \frac{\bb M^2}{2}  - \frac{\bc S^2}{4} $$
	is continuous. Here we rescaled $Q$ by $S$ in order that we may assume that $Q_r = 1$ in \eqref{eq:Qseq1}, so that the sequence does not depend on $S$ anymore. The only point that we have to check is that
	$ S\rightarrow \E_{0} [X_0(\lambda,\mu,S Q,\zeta)] $ is continuous for fixed $(\lambda,\mu,Q,\zeta)$. By Lemma \ref{lem:RPCavg}
	we can write	
	\[\E_{0} [X_0(\lambda,\mu,S Q,\zeta)]:=
	\E \log \sum_{\alpha} v_\alpha e^{  \ba\sqrt{S} Z(\alpha) x + \lambda x^2 + \mu x x^0 }d\pP_{X}(x)
	\]
	with $Z(\alpha)$ a centered Gaussian process with covariance $Q_{\alpha\wedge\alpha'}$, independent from $S$, and $v_{\alpha}$ is also independent from $S$. From this formula the continuity of $S\rightarrow \E_{0} [X_0(\lambda,\mu,S Q,\zeta)] $ is clear. 
	
	\subsection{Quenched weak large deviation principle} 
	In this subsection we prove that
	\begin{lem}\label{technicalldpq} For every real numbers $\ba,\bb,\bc$, 
		every $(S,M)\in\mathcal C$, almost all $W,\bx^{0}$, 
		$$\varphi_{\bar\ba}(S,M)\le \lim_{\epsilon\downarrow 0}\liminf_{N\rightarrow\infty}\frac{1}{N} \log \int \1(|R_{1,1}-S|\le \epsilon, |R_{1,0}-M|\le\epsilon) e^{H_N^{\ba}(\bx)} \, d \pP_X^{\otimes N}(\bx)    \qquad\qquad$$
		$$\qquad\qquad\qquad \le \lim_{\epsilon\downarrow 0}\liminf_{N\rightarrow\infty}\frac{1}{N}  \log \int \1(|R_{1,1}-S|\le \epsilon, |R_{1,0}-M|\le\epsilon) e^{H_N^{\ba}(\bx)} \, d \pP_X^{\otimes N}(\bx)   \le\varphi_{\bar\ba}(S,M)\,.$$
		Moreover, by Section~\ref{sec:prooflimit} for every real numbers $\ba,\bb,\bc$, 
		and almost all $W,\bx^{0}$, 
		
		$$\liminf_{N\rightarrow\infty}\frac{1}{N} \log \int  e^{H_N^{\ba}(\bx)} \, d \pP_X^{\otimes N}(\bx) =\limsup_{N\rightarrow\infty}\frac{1}{N} \log \int  e^{H_N^{\ba}(\bx)} \, d \pP_X^{\otimes N}(\bx)=\sup \varphi_{\bar\ba}(S,M)\,.$$
	\end{lem}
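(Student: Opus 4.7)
The plan is to upgrade Theorem~\ref{technicalldp}, which furnishes annealed bounds with the $\1_{\cB_\delta}$ localization, to the almost sure statement. Two ingredients drive this: sharp concentration of the constrained quenched free energy, and the Glivenko--Cantelli fact that the empirical measure $\hat \pP_0 = \frac{1}{N}\sum_i \delta_{x_i^0}$ converges almost surely to $\pP_0$, so that $\1_{\cB_\delta}(\bx^0) = 1$ eventually almost surely for every fixed $\delta > 0$.

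First I would establish almost sure concentration of
\[
\phi_N^\epsilon(S,M) := \frac{1}{N}\log \int \1_{\Sigma_\epsilon(S,M)}(\bx)\, e^{H_N^{\bar\ba}(\bx)}\, d\pP_X^{\otimes N}(\bx).
\]
Conditionally on $\bx^0$, $\sqrt{N}\,\phi_N^\epsilon$ is a Lipschitz function of the Gaussian disorder $W$ with constant of order one (each entry of $H_N^{\bar\ba}$ has variance $O(1/N)$), so Gaussian concentration gives $\pP(|\phi_N^\epsilon - \E_W \phi_N^\epsilon| > t \mmm \bx^0) \le 2 e^{-cNt^2}$. For the $\bx^0$-dependence I would work with the smoothed version of $\phi_N^\epsilon$ from Section~\ref{sec:indicator}: as computed in the proof of Lemma~\ref{lem:sizepert}, $\E_W$ of the smoothed log-partition has bounded-difference increments of order $O(1/N)$ in each coordinate $x_i^0$, which by Azuma--Hoeffding yields the matching tail. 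A Borel--Cantelli argument along a countable dense family of parameters $(S,M,\epsilon)$ then forces $\phi_N^\epsilon - \E \phi_N^\epsilon \to 0$ almost surely, and monotonicity of $\phi_N^\epsilon$ in $\epsilon$ extends this to all parameters.

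Combining this concentration with Theorem~\ref{technicalldp} gives the almost sure two-sided bound once the $\1_{\cB_\delta}$ indicator is removed. Removal is straightforward: $\pP(\cB_\delta^c)$ decays exponentially by Sanov's theorem, and on $\{\bx^0 \in \cB_\delta\}$ the constrained free energy is uniformly bounded below by $-\mathcal I(S,M)+o_\epsilon(1)$ via Proposition~\ref{prop:ratefunction}, so $\E \phi_N^\epsilon$ and $\E[\1_{\cB_\delta}\phi_N^\epsilon]/\pP(\cB_\delta)$ share the same $N \to \infty$ limit (the discrepancy between $\phi_N^\epsilon$ and its smoothed variant is negligible by Lemma~\ref{lemsmooth}). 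This yields Lemma~\ref{technicalldpq} for the constrained integrals.

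For the unconstrained free energy, I would use compactness of $\mathcal C \subset [-C^2,C^2]^2$ and cover it by finitely many $\epsilon$-balls $\Sigma_\epsilon(S_k,M_k)$. Writing the partition function as a finite sum of constrained integrals and applying the weak LDP on each piece gives both $\liminf$ and $\limsup$ equal to $\sup_{(S,M)\in \mathcal C} \varphi_{\bar\ba}(S,M)$; the supremum is attained since the rate function $-\varphi_{\bar\ba}$ is lower semicontinuous on the compact set $\mathcal C$ (as shown earlier in the section). The main technical obstacle throughout is the interplay between the $\cB_\delta$ restriction and the potentially infinite quenched free energy for atypical $\bx^0$: this is exactly what the smoothing device of Section~\ref{sec:indicator} is designed to tame, and it is what makes the bounded-difference concentration in $\bx^0$ go through uniformly.
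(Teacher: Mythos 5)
Your proposal matches the paper's argument: Gaussian concentration of the (smoothed) constrained free energy in $W$, an Azuma--Hoeffding bounded-difference estimate in $\bx^0$ enabled by the $\chi$-regularization of Section~\ref{sec:indicator}, removal of the $\1_{\cB_\delta}$ localization via Sanov's theorem and the exponential negligibility of $\cB_\delta^c$, Borel--Cantelli to pass from annealed to almost-sure bounds, and the compactness covering of $\mathcal C$ for the unconstrained free energy. The only soft spot in your write-up is the phrase ``each entry of $H_N^{\bar\ba}$ has variance $O(1/N)$'': what matters (and what the paper records) is that $\partial_{W_{ij}}$ of the normalized log-partition function is $O(N^{-3/2})$, giving the Lipschitz constant $O(N^{-1/2})$ that you correctly invoke.
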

	\begin{proof}This lemma is a direct consequence of Theorem \ref{technicalldp} and concentration of measure. Indeed, first notice that we may assume without loss of generality that $(S,M)$ have a finite entropy $\cI$ since otherwise the left hand side is $-\infty$ as $H_N^{\ba}(\bx)\le \beta \sqrt{N}\|W\|_{\infty} +(\bb+\bc)C^{4}$ and it is well known, see e.g \cite{AGZ}[Section 2.6.2], that there exists a positive constant $q$ so that
		$$\P(\|W\|_{\infty}\ge 3\sqrt{N})\le e ^{-qN}$$ so that
		$\|W\|_{\infty }$ is almost surely bounded by $3$. Next, we can follow the proof of Lemma \ref{lemsmooth} to see that  there exists $o(\delta)$ going to zero as $\delta$ goes to infinity (independently of the other parameters) so that on $\{\|W\|_{\infty}\le 3\sqrt{N}\}\cap\{\bx^{0}\in \cB_{\delta}\}$ we have
		$$ e^{-o(\delta)N} \int \1(|R_{1,1}-S|\le \epsilon)\chi_{M, \epsilon/2} (R_{1,0}(\bx)) e^{H_N^{\ba}(\bx)} \, d \pP_X^{\otimes N}(\bx)\qquad$$
		\begin{equation}\label{mid}\qquad \le 
			\int \1(|R_{1,1}-S|\le \epsilon, |R_{1,0}-M|\le \epsilon) e^{H_N^{\ba}(\bx)} \, d \pP_X^{\otimes N}(\bx) \qquad\end{equation}
		$$\qquad\qquad\qquad \le e^{o(\delta) N}\int \1(|R_{1,1}-S|\le \epsilon)\chi_{M,2\epsilon} (R_{1,0}(\bx)) e^{H_N^{\ba}(\bx)} \, d \pP_X^{\otimes N}(\bx)\,.$$
		We recall here that the  need for the restriction of $\bx^{0}$ to $\cB_{\delta}$ is due to the fact that we use that $(S,M)$ have finite entropy, a condition related to the fact that the empirical measure of the $\bx^{0}$ is close to $\pP_{0}$. 
		Now, for any $\epsilon,\kappa>0$
		the function 
		$$F_{N}(W,\bx^{0}):=\frac{1}{N} \log \int \1(|R_{1,1}-S|\le \epsilon)\chi^N_{M,\kappa} (R_{1,0}(\bx)) e^{H_N^{\ba}(\bx)} \, d \pP_X^{\otimes N}(\bx) $$
		is differentiable  in $W$ with derivative
		$$\partial_{W_{i,j}} F_{N}(W,\bx^{0})=\frac{\beta}{N^{3/2}}\frac{\int x_{i}x_{j } \1(|R_{1,1}-S|\le \epsilon)\chi^N_{M,\kappa} (R_{1,0}(\bx)) e^{H_N^{\ba}(\bx)} \, d \pP_X^{\otimes N}(\bx) }{\int  \1(|R_{1,1}-S|\le \epsilon)\chi^N_{M,\kappa} (R_{1,0}(\bx)) e^{H_N^{\ba}(\bx)} \, d \pP_X^{\otimes N}(\bx) }$$
		which is uniformly bounded by $\ba C^{2} N^{-3/2}$. Consequently, $W\rightarrow F_{N}(W,\by^{0})$ is Lipschitz with Lipschitz constant bounded by $\beta C^{2} N^{-1/2}$. Therefore, as a consequence of Gaussian concentration of measure we have that
		\begin{equation}\label{sa1}\pP_{W}\left(| F_{N}(W,\bx^{0})-\mathbb E_{W}[F_{N}(W,\bx^{0})]|\ge \delta \beta C^{2} \right)\le 2 e^{-\delta^{2}N}\,.\end{equation}
		Similarly as in the proof of \eqref{eq:ggisecondtermx0conc}, we see that the derivative of $x^{0}_{i}\rightarrow \mathbb E_{W}[F_{N}(W,\bx^{0})]$ is bounded for all $i\in \{1,\ldots,N\}$ (with a bound depending on $\mathcal L$ which is large but independent of $N$) so that the Azuma Hoefding's inequality insures that there exists a finite constant $B$ such that
		\begin{equation}\label{sa2}\pP_0^{\otimes N}\left(| \mathbb E_{W}[F_{N}(W,\bx^{0})]-\mathbb E_{\bx^{0}}\mathbb E_{W}[F_{N}(W,\bx^{0})]|\ge \delta \beta C^{2} \right)\le 2 e^{-\delta^{2}N}\,.\end{equation}
		Note also that $  \mathbb E_{W}[F_{N}(W,\bx^{0})]$ is uniformly bounded as $N$ goes to infinity because $S$ has finite entropy and 
		$N^{-1}\log\chi_{M,\epsilon}^{N}$ is bounded. Hence, 
		since by Sanov's theorem $\{ \bx^{0}\in\cB_{\delta}\}$ has probability greater than $1-e^{-c_{\delta}N}$ with some $c(\delta)>0$, we deduce that
		$$\E_{\bx^{0}} \mathbb E_{W}[F_{N}(W,\bx^{0})]=\E_{\bx^{0}} \1_{\cB_{\delta}}\mathbb E_{W}[F_{N}(W,\bx^{0})]+o(N)\,.$$
		Moreover, together with
		\eqref{mid}, \eqref{sa1} and \eqref{sa2} we deduce  that almost surely
		\begin{align*}
			&\lim_{\delta\downarrow 0}  \liminf_{N\rightarrow \infty}\frac{1}{N}\E_{W,\bx^{0}}\1_{\cB_{\delta}}\log  \int \1(|R_{1,1}-S|\le \epsilon)\chi_{M, \epsilon/2} (R_{1,0}(\bx)) e^{H_N^{\ba}(\bx)} \, d \pP_X^{\otimes N}(\bx)
			\\&\le \liminf_{N\rightarrow \infty}\frac{1}{N}\log \int \1(|R_{1,1}-S|\le \epsilon, |R_{1,0}-M|\le \epsilon) e^{H_N^{\ba}(\bx)} \, d \pP_X^{\otimes N}(\bx) 
			\\&\le  \limsup_{N\rightarrow\infty}\frac{1}{N}\E_{W,\bx^{0}}\log 
			\int \1(|R_{1,1}-S|\le \epsilon)\chi_{M,2\epsilon} (R_{1,0}(\bx)) e^{H_N^{\ba}(\bx)} \, d \pP_X^{\otimes N}(\bx)\,.
		\end{align*}
		We finally need to prove that for some $k$ large enough 
		\begin{multline*}
			\limsup_{N\rightarrow\infty}\frac{1}{N}\E_{W,\bx^{0}}\log 
			\int \1(|R_{1,1}-S|\le \epsilon)\chi_{M,2\epsilon} (R_{1,0}(\bx)) e^{H_N^{\ba}(\bx)} \, d \pP_X^{\otimes N}(\bx)
			\\\le \limsup_{N\rightarrow\infty} \frac{1}{N}\E_{W,\bx^{0}}\log 
			\int_{\Sigma_{k\epsilon}(S,M)} e^{H_N^{\ba}(\bx)} \, d \pP_X^{\otimes N}(\bx)
		\end{multline*}
		Indeed, we see that the left hand side is bounded by
		$$\E_{W,\bx^{0}}[F_{N}(W,\bx^{0})]\le \max\{  \frac{1}{N}\E_{W,\bx^{0}}\log 
		\int_{\Sigma_{k\epsilon}(S,M)} e^{H_N^{\ba}(\bx)} \, d \pP_X^{\otimes N}(\bx), - \cL (k\epsilon)^{2} +\mbox{const.}\}$$
		and we know by taking $N$ going to $\infty$ and then $\epsilon$ go to zero that the first term is upper bounded by some finite constant $\varphi_{\bar\ba}(S,M)$ whereas the second is of order $-k^{2}$. The conclusion follows by taking $k$ large enough. We finally can use the upper bound in Theorem \ref{technicalldp} to obtain the quenched large deviation upper bound. 
		The convergence of the free energy is a direct consequence of Theorem \ref{mainfree} and concentration of measure. 
	\end{proof}
	\section{Proof of Theorem \ref{mainfree} - The Limit of the Free Energy}\label{sec:prooflimit}
	We now prove that the annealed  weak large deviation theorem of Theorem \ref{technicalldp} allows us to control the 
	free energy $F_{N}(\beta)$ defined in \eqref{deff} and the free energy of the SK model:
	$$F_N^{SK}(\ba)=\frac{1}{N}\E[\log \int e^{\frac{\ba}{\sqrt{N}}\sum_{i<j} W_{ij} x_{i}x_{j}} d\mathbb P_{x}^{\otimes N}(\bx)]\,.$$
	We first prove that it gives the upper bound of the free energy.
	\subsection{Upper bound on the free energy} To this end  we let for $t\in [0,1]$, $\varphi^{t}$ be the function on $\mathcal C$ defined by: 
	\begin{equation}\label{defvarphi2}
		\varphi^{t}(S,M)=\inf_{r,\mu,\lambda,\zeta,Q} \bigg(- \mu S - \lambda M + \E_{0} [X_0(\lambda,\mu,Q,\zeta)[x^{0}]] - \frac{\ba^2}{4} \sum_{k = 0}^{r - 1} \zeta_k ( Q^2_{k + 1} - Q^2_k )  \bigg) + t \bigg( \frac{\bb M^2}{2}  + \frac{\bc S^2}{4} \bigg)\,,\end{equation}
	with $\varphi^{1}=\varphi$ the function defined in \eqref{defvarphi}. 
	\begin{lem}[Upper Bound of the Free Energy]\label{lem:upperboundcovering}
		For any $\bar \beta = (\ba,\bb\bc) \in \R^3$
		\[
		\limsup_{N \to \infty} F_N(\bar\ba) \leq \sup_{S,M \in \cC} \varphi(S,M)\qquad\mbox{ and }\qquad   \limsup_{N \to \infty} F_N^{SK}(\ba) \leq \sup_{S,M \in \cC} \varphi^{0}(S,M)
		\]
		
	\end{lem}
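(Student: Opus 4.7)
The plan is to run a finite covering argument on the compact range of the overlaps and apply the weak upper bound of Theorem~\ref{technicalldp} on each piece. Since $\pP_X$ is supported on $[-C,C]$, the pair $(R_{1,1},R_{1,0})$ takes values in the compact set $K = [0,C^2]\times[-C^2,C^2]$. For any $\epsilon>0$, fix a finite $\epsilon$-cover $(S_i,M_i)_{i=1}^{n(\epsilon)}$ of $K$ with associated balls $\Sigma_\epsilon(S_i,M_i)$, and write
\[
Z_N^{\bar\ba} \;=\; \int e^{H_N^{\bar\ba}(\bx)}\,d\pP_X^{\otimes N}(\bx) \;\le\; \sum_{i=1}^{n(\epsilon)}\int \1_{\Sigma_\epsilon(S_i,M_i)}\, e^{H_N^{\bar\ba}(\bx)}\,d\pP_X^{\otimes N}(\bx).
\]
The key observation is that on $\Sigma_\epsilon(S_i,M_i)$ the non-SK pieces of the Hamiltonian defined in \eqref{eq:nonbayesoptimalHamiltonian1} are essentially constant: a direct expansion gives
\[
\tfrac{\bb}{N}\sum_{i<j}(x_i x_j)(x_i^0 x_j^0) \;=\; \tfrac{\bb N}{2}R_{1,0}^2+O(1)\;=\;\tfrac{\bb N}{2}M_i^2+O(\epsilon N)+O(1),
\]
and similarly $\tfrac{\bc}{2N}\sum_{i<j}(x_ix_j)^2 = \tfrac{\bc N}{4}S_i^2+O(\epsilon N)+O(1)$, with error constants depending only on $C,\bb,\bc$.

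Next, using $\log(\sum_i a_i)\le\log n(\epsilon)+\max_i \log a_i$ and taking $\tfrac{1}{N}\E_Y\log$, I get
\[
F_N(\bar\ba)\;\le\;\max_{1\le i\le n(\epsilon)}\Bigl\{\,\tfrac{\bb}{2}M_i^2+\tfrac{\bc}{4}S_i^2+\tfrac{1}{N}\E_Y\log\!\!\int_{\Sigma_\epsilon(S_i,M_i)}\!e^{H_N^{(\ba,0,0)}(\bx)}d\pP_X^{\otimes N}(\bx)\Bigr\}+\tfrac{\log n(\epsilon)}{N}+O(\epsilon).
\]
Taking $\limsup_N$ and applying the upper bound of Theorem~\ref{technicalldp} (with $\bb=\bc=0$, whose limit is $\varphi^{0}_{\bar\ba}(S_i,M_i)$) to each ball yields
\[
\limsup_{N\to\infty}F_N(\bar\ba)\;\le\;\max_{1\le i\le n(\epsilon)}\Bigl\{\varphi^{0}_{\bar\ba}(S_i,M_i)+\tfrac{\bb}{2}M_i^2+\tfrac{\bc}{4}S_i^2\Bigr\}+O(\epsilon) \;\le\;\sup_{(S,M)\in K}\varphi_{\bar\ba}(S,M)+O(\epsilon).
\]
Since $\varphi_{\bar\ba}(S,M)=-\infty$ outside $\cC$ (as noted right before the statement of Theorem~\ref{quenchedldp} by sending $\lambda,\mu\to\infty$), the supremum over $K$ equals the supremum over $\cC$. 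Letting $\epsilon\downarrow 0$ gives the first inequality; the SK case follows identically by dropping the $\bb,\bc$ constants, yielding $\sup\varphi^{0}_{\bar\ba}$.

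The argument is mostly bookkeeping once Theorem~\ref{technicalldp} is in hand. The only minor subtlety is that Theorem~\ref{technicalldp} is stated with a $\limsup$ over $N$ followed by $\epsilon\downarrow 0$; because the cover is \emph{finite} for each fixed $\epsilon$, one can commute the $\max_i$ with $\limsup_N$ (there is no pointwise-in-$i$ uniformity needed), so the passage from a fixed-$\epsilon$ cover to the limiting sup is immediate. The error $O(\epsilon)+\tfrac{\log n(\epsilon)}{N}$ is harmless since $n(\epsilon)$ is taken before $N\to\infty$.
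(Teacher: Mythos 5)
Your overall strategy — decompose $Z_N$ over a finite cover of the overlap range, pull out the $\bb,\bc$ terms as near-constants on each ball, use $\log\sum\le\log n+\max$, and apply the constrained upper bound — is exactly the paper's, so the high-level plan is sound. However there are two genuine gaps in the execution.

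First, after taking $\frac{1}{N}\E_Y\log$ you quietly replace $\E_Y\max_i\log Z_N^{(i)}$ by $\max_i\E_Y\log Z_N^{(i)}$, but the inequality $\E\max\le\max\E$ is false (it is $\ge$). The paper closes this gap by first establishing concentration of each constrained free energy around its mean — Gaussian concentration via the Herbst argument in $W$ and Talagrand's inequality in $\bx^0$, both giving fluctuations $o(N)$ — so that $\E_Y\max_i$ exceeds $\max_i\E_Y$ only by a quantity that vanishes after dividing by $N$. This step cannot be omitted.

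Second, a fixed $\epsilon$-cover does not interact correctly with the constrained upper bound. For fixed $\epsilon>0$, Proposition~\ref{prop:upbd} (the quantitative form of the upper bound in Theorem~\ref{technicalldp}) gives
\[
\limsup_{N\to\infty} F_N^{SK}(\Sigma_\epsilon(S,M)) \le \sP_{S,M}(\mu,\lambda,\zeta,Q) + L\epsilon(|\mu|+|\lambda|) + O(\epsilon)
\]
for any choice of $(\mu,\lambda,\zeta,Q)$; the error $L\epsilon(|\mu|+|\lambda|)$ depends on the chosen multipliers, and the near-optimal $(\mu,\lambda)$ are not uniformly bounded over $\cC$ (they may diverge near the boundary). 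You cannot apply the ``$\lim_{\epsilon\to 0}\limsup_N$'' version of Theorem~\ref{technicalldp} per ball and then let $\epsilon\to 0$, because the cover points $(S_i,M_i)$ themselves depend on $\epsilon$, and at fixed $\epsilon$ the bound is not yet $\varphi(S_i,M_i)$. The paper resolves this by first fixing $\eta>0$, choosing for each $(S,M)$ near-optimal multipliers truncated at level $-1/\eta$, and then picking a ball radius $\epsilon_{S,M}(\eta)$ small enough so that $\epsilon_{S,M}(\eta)(|\lambda_\eta|+|\mu_\eta|)\le\eta$; the finite subcover is taken with these variable radii, giving a uniform $O(\eta)$ error. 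Replacing your fixed-$\epsilon$ cover by this $\eta$-dependent cover, and adding the concentration step above, brings your argument in line with the paper's proof.
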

	\begin{proof} It is enough to prove one of the two cases as the proofs are identical and so we show how to bound  $F^{SK}_N(\ba) $. 
		We define the functional
		\[
		\sP_{S,M}(\mu, \lambda, \zeta, Q ) = \bigg(- \mu S - \lambda M + \E_{0} X_0 - \frac{\ba^2}{4} \sum_{k = 0}^{r - 1} \zeta_k ( Q^2_{k + 1} - Q^2_k ) \bigg)
		\]
		We fix a $\eta > 0$.
		For each $S,M \in \cC$,  we choose $(\mu_\eta(S,M), \lambda_\eta(S,M), \zeta_\eta(S,M), Q_\eta (S,M)$ so that
		\[
		\sP_{S,M}(\mu_\eta(S,M), \lambda_\eta(S,M), \zeta_\eta(S,M), Q_\eta (S,M) ) \leq \max \bigg( -\frac{1}{\eta}, \inf_{\mu,\lambda,\zeta,Q} \sP_{S,M}(\mu, \lambda, \zeta, Q ) + \eta \bigg)\,.
		\]
		We next choose  $\epsilon_{S,M}(\eta)>0$ such that $\epsilon ( |\lambda_{\eta}(S,M)| + |\mu_{\eta}(S,M)| ) \leq \eta$. We can always cover
		\[
		\cC \subset \bigcup_{S,M} \Sigma_{\epsilon_{S,M}(\eta)} (S,M)
		\]
		and the right hand side is an open cover of our compact set $\mathcal C$, so there exists a finite subcover, which we denote by $ \Sigma_{\epsilon_k} (S_k,M_k)$ for $k \leq K$. We have the obvious upper bound,
		\begin{align*}
			\frac{1}{N}\E_{Y} \log \int e^{H^{SK}_N (\bx)} \, d \pP_X^{\otimes N}(\bx) &\leq  \frac{1}{N} \log \sum_{k} \int_{\Sigma_{\epsilon_{k}(\eta)} (S_{k},M_{k})} e^{H^{SK}_N (\bx)} \, d \pP_X^{\otimes N}(\bx) 
			\\&\leq \frac{\log(K)}{N} + \E_{Y} \max_{k} \frac{1}{N} \log \int_{\Sigma_{\epsilon_{k}(\eta)} (S_{k},M_{k})}  e^{H^{SK}_N (\bx)} \, d \pP_X^{\otimes N}(\bx) .
		\end{align*}
		By Gaussian concentration based on Herbst argument and the fact that 
		$$W_{ij}\rightarrow  F_{N}(\ba)(\bx^{0},W):= \frac{1}{N} \log \int_{\Sigma_{\epsilon_{k}(\eta)} (S_{k},M_{k})}  e^{H^{SK}_N (\bx)} \, d \pP_X(\bx)$$
		is differentiable with derivative uniformly bounded by $C N^{-3/2}$ for all $i<j$, and by Talagrand's concentration \cite{talagrand} applied to the independent bounded variables  $x^{0}_{i}$ and the convex function
		$\bx^{0}\rightarrow \E_{W}F_{N}(\ba)(\bx^{0},W)$ such that $\partial_{x^{0}_{i}}\E_{W} F_{N}(\ba)(\bx^{0},W)$ is uniformly  bounded by $C/N$, 
		we deduce that 
		\[
		\frac{1}{N} \E_Y \log \int e^{H^{SK}_N (\bx)} \, d \pP_X^{\otimes N}(\bx) \leq  \frac{\log(K)}{N} +  \max_{k} \frac{1}{N} \E_Y \log \int_{\Sigma_{\epsilon_{k}(\eta)} (S_{k},M_{k})}  e^{H^{SK}_N (\bx)} \, d \pP_X^{\otimes N}(\bx)  + o(N,\eta).
		\]
		We can now apply Proposition~\ref{prop:upbd}  giving us the upper bound
		\[ \limsup_{N \to \infty} F_N(\ba) \leq \max_{k } \varphi(S_{k},M_{k})+\eta\le\sup \varphi(S,M)+\eta\]
		and conclude by letting $\eta$ going to zero.  The quenched result follows as well from concentration inequalities and Borel-Cantelli's lemma.

	\end{proof}
	\subsection{ Lower bound by localizing the free energy}
	The lower bound will be a  clear consequence of Theorem \ref{technicalldp} after localization.
	We prove that we can restrict the free energy to the localized free energy
	
	\begin{lem}[Restriction to the Localized Free Energy]\label{lemdelta}
		We have for any $\bar \beta = (\ba,\bb,\bc) \in \R^3$
		\[
		\liminf_{N \to \infty}\frac{1}{N} \E \log \int e^{H^{\bar \beta}_N(\bx)} \, d \pP^{\otimes N}_X(\bx)  \geq \liminf_{N \to \infty} \frac{1}{N} \E \1_{\cB_\delta} \log \int e^{H^{\bar \beta}_N(\bx)} \, d \pP^{\otimes N}_X(\bx)
		\]
	\end{lem}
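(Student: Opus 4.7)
The plan is to show the stronger statement that $\frac{1}{N}\E\1_{\cB_\delta^c} \log Z_N^{\bar\beta} = o(1)$, which immediately yields the desired liminf inequality (in fact, with equality). Writing
\[
\E \log Z_N^{\bar\beta}(\bx) = \E \1_{\cB_\delta} \log Z_N^{\bar\beta} + \E \1_{\cB_\delta^c} \log Z_N^{\bar\beta},
\]
everything reduces to controlling the second term.

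First I would establish two-sided bounds on $|\log Z_N^{\bar\beta}|$ that grow at most linearly in $N$ with good tails. Since $\pP_X$ is supported in $[-C,C]$ (Hypothesis~\ref{hypcompact}), the ferromagnetic and self-interaction terms in $H_N^{\bar\beta}$ are deterministically bounded by $(|\bb|+|\bc|) C^4 N$, and the SK term is uniformly bounded by $\ba C^2 \|W\|_{op} \sqrt{N}/2$ via the quadratic form estimate. This yields the upper bound
\[
\log Z_N^{\bar\beta} \leq \sup_{\bx} H_N^{\bar\beta}(\bx) \leq \frac{\ba C^2}{2} \|W\|_{op} \sqrt{N} + (|\bb|+|\bc|) C^4 N.
\]
For the lower bound, Jensen's inequality gives $\log Z_N^{\bar\beta} \geq \E_{\pP_X^{\otimes N}}[H_N^{\bar\beta}(\bx)]$, and the random contribution is a centered Gaussian (in the $W_{ij}$) with variance of order $N$, the remaining terms being deterministic of order $N$. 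Combining, we obtain a pointwise estimate
\[
|\log Z_N^{\bar\beta}| \leq C_1 N + C_2 \|W\|_{op} \sqrt{N} + |G|,
\]
where $G$ is a Gaussian with variance $O(N)$. Since $\|W\|_{op}/\sqrt{N}$ has uniformly bounded moments of all orders (standard random matrix concentration, as used for \eqref{conc2}), this gives $\E (\log Z_N^{\bar\beta})^2 \leq C N^2$ uniformly in $N$.

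Next I would invoke Sanov's theorem for the i.i.d.\ sample $\bx^0$: since $\pP_0$ is compactly supported and $\cB_\delta$ is a neighbourhood of $\pP_0$ in the Wasserstein-$1$ topology, there exists $c_\delta > 0$ with $\pP_0^{\otimes N}(\cB_\delta^c) \leq e^{-c_\delta N}$ for all sufficiently large $N$. Cauchy--Schwarz then yields
\[
\big|\E \1_{\cB_\delta^c} \log Z_N^{\bar\beta}\big| \leq \sqrt{\pP_0^{\otimes N}(\cB_\delta^c)} \cdot \sqrt{\E (\log Z_N^{\bar\beta})^2} \leq e^{-c_\delta N /2} \cdot C N,
\]
which is exponentially small and in particular $o(N)$. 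Dividing by $N$ and taking liminf completes the argument.

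The main (minor) obstacle is the lower bound on $\log Z_N^{\bar\beta}$: one must verify that it cannot drift to $-\infty$ faster than linearly, which is handled by the Jensen step above. Everything else is routine concentration of measure combined with the exponential smallness of $\cB_\delta^c$.
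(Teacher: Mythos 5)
Your proof is correct and follows the same overall structure as the paper: decompose $\E\log Z_N = \E\1_{\cB_\delta}\log Z_N + \E\1_{\cB_\delta^c}\log Z_N$, then show the complement term is $o(N)$ by combining a polynomial-in-$N$ bound on $|\log Z_N|$ (via $\|W\|_{op}$) with the exponential smallness $\pP_0^{\otimes N}(\cB_\delta^c)\le e^{-c_\delta N}$ from Sanov's theorem. The only real difference is in how the final estimate on $\E\1_{\cB_\delta^c}\log Z_N$ is organized: you establish a moment bound $\E(\log Z_N)^2=O(N^2)$ and apply Cauchy--Schwarz, while the paper instead conditions on the event $\{\|W\|_{op}\le L\sqrt N\}$ (on which $|\log Z_N|\le LN$ pointwise) and treats the complementary large-operator-norm event separately using the exponential tail. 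Your Cauchy--Schwarz route avoids the case split and is arguably slightly cleaner; both rest on the same two ingredients. A small remark: your Jensen lower bound $\log Z_N\ge \E_{\pP_X^{\otimes N}}[H_N]$ is correct, but you could equally well use the simpler pointwise bound $\log Z_N\ge \inf_{\bx}H_N(\bx)\ge -C_2\|W\|_{op}\sqrt N - C_1 N$, which avoids having to track the Gaussian $G$ at all.
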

	
	\begin{proof}
		We have the following decomposition of the free energies
		\begin{equation}\label{eq:splitFE}
			\frac{1}{N} \E \log \int e^{H^{\bar \beta}_N(\bx)} \, d \pP^{\otimes N}_X(\bx)  = \frac{1}{N} \E \1_{\cB_\delta} \log \int e^{H^{\bar \beta}_N(\bx)} \, d \pP^{\otimes N}_X(\bx) + \frac{1}{N} \E \1_{\cB^c_\delta} \log \int e^{H^{\bar \beta}_N(\bx)} \, d \pP^{\otimes N}_X(\bx).
		\end{equation}
		We will now prove that the second term is negligible in the limit. Given a realization of the Gaussian terms $W$, we have the following universal bound of the Hamiltonian ,
		\begin{equation}
			| H^{\bar \beta}_N(\bx) | \leq \max\bigg( \ba^2, \bb, \bc \bigg) \bigg( \sqrt{N} C^2 \| W \|_{op} + 2 N C^2 \bigg)
		\end{equation}
		since $|\bx^{\mathrm{T}} W \bx| \leq \| W \|_{op} \|\bx\|^2_2 \leq \| W \|_{op} C^2 N$. Because a symmetric random matrix with iid standard Gaussian entries has bounded by $2 \sqrt{N}$ asymptotically almost surely, 
		\[
		| H_N(\bx) | \leq 4  \max\bigg( \ba^2, \bb, \bc \bigg) N
		\]
		asymptotically almost surely. Notice that this upper bound only depends on the fixed model parameters. For any $L \geq 0$, we have the following decomposition of the second term in \eqref{eq:splitFE} 
		\[
		\frac{1}{N} \E \1_{\cB^c_\delta} \1_{ \| W \|_{op} \leq L \sqrt{N}  } \log \int e^{H^{\bar \beta}_N(\bx)} \, d \pP^{\otimes N}_X(\bx) + \frac{1}{N} \E \1_{\cB^c_\delta} \1_{\|W\|_{op} > L \sqrt{N}  } \log \int e^{H^{\bar \beta}_N(\bx)} \, d \pP^{\otimes N}_X(\bx).
		\]
		
		The second term can be made arbitrarily small because of the exponential control over the operator norm of a GOE matrix. To see this, we have
		\[
		\frac{1}{N} \log \int e^{H^{\bar \beta}_N(\bx)} \, d \pP^{\otimes N}_X(\bx) \leq \max\bigg( \ba^2, \bb, \bc \bigg) \bigg( \frac{1}{\sqrt{N}} C^2 \| W \|_{op} + 2 C^2 \bigg).
		\]
		We can use the tail bounds of the operator norm of random matrices with subgaussian tails \eqref{conc2},
		\[
		\pP( \|W\|_{op}  \geq t \sqrt{N}) \leq K e^{- k t N}
		\]
		for some absolute constants $k$, $K$ and all $t > K$. This implies that
		\begin{align*}
			\frac{1}{N} \E \1_{\cB^c_\delta} \1_{\|W\|_{op} > L \sqrt{N}  } \log \int e^{H_N(\bx)} \, d \pP^{\otimes N}_X(\bx) &\leq K' \bigg( \E  \1_{\|W\|_{op} > L \sqrt{N}  } \frac{\|W\|_{op}}{\sqrt{N}} + \pP( \|W\|_{op} > L \sqrt{N} ) \bigg)
			\\&=  K' \bigg( \int_{L}^\infty \pP\biggl(  \frac{ \|W\|_{op} }{\sqrt{N}} \geq t\biggr) \, dt + \pP( \|W\|_{op} > L \sqrt{N} ) \bigg)
			\\&\leq e^{ -O(LN) }.
		\end{align*}
		Repeating this argument using the lower bound
		\[
		\frac{1}{N} \log \int e^{H^{\bar \beta}_N(\bx)} \, d \pP^{\otimes N}_X(\bx) \geq -\max\bigg( \ba^2, \bb, \bc \bigg) \bigg( \frac{1}{\sqrt{N}} C^2 \| W \|_{op} + 2 C^2 \bigg)
		\]
		will imply
		\[
		\frac{1}{N} \E \1_{\cB^c_\delta} \1_{\|W\|_{op} > L \sqrt{N}  } \log \int e^{H_N(\bx)} \, d \pP^{\otimes N}_X(\bx) \geq  -e^{ -O(LN) }.
		\]
		Therefore, if we take $L > K$, we have exponential control of the second term.

		We now fix $L > K$, on the set $\{| H^{\bar \beta}_N(\bx) | \leq L N  \}$ we have the obvious upper bound
		\[
		\bigg| \frac{1}{N} \E \1_{\cB_\delta} \1_{| H^{\bar \beta}_N(\bx) | \leq L N  } \log \int e^{H^{\bar \beta}_N(\bx)} \, d \pP^{\otimes N}_X(\bx) \bigg| \leq L \pP ( \1_{\cB_\delta} )
		\]
		Since $\pP_0$ is compactly supported, Sanov's Theorem for empirical measures \cite[Theorem 2.1.10]{DZ}, it follows that there exists $c_{\delta}>0$ such that
		\[
		\pP ( {\cB_\delta}^{c} ) = O( e^{-N c_\delta} ).
		\]
		This can be made aribtrarily small by taking $N \to \infty$. 
	\end{proof}

	\begin{rem}
		The restriction of the empirical measure is essential to prove the lower bound in general. For example, if $\pP(x^0 = 0) > 0$ then the set $\{ x \mmm |R_{1,0}-M|\le\epsilon \} = \emptyset$ if $\bx^0 = \bm{0}$ and $|M| > \epsilon$. This implies that there is a positive probability with respect to $\pP^{\otimes N}_0$ that the set $\1(|R_{1,0}-M|\le\epsilon) = 0$ which will mean that the lower bound will be $-\infty$. This will imply that the lower bound will always be $-\infty$ which gives us a non-sharp lower bound. 
	\end{rem}
	
	We can finally deduce the lower bounds on the free energy now that it is localized since by Lemma \ref{lemdelta}, we have
	\begin{eqnarray*}
		\liminf_{N \to \infty}F_{N}(\beta)  &\geq& \liminf_{N \to \infty} \frac{1}{N} \E \1_{\cB_\delta} \log \int e^{H_N(\bx)} \, d \pP^{\otimes N}_X(\bx)
		\\
		&\geq& \liminf_{N \to \infty} \frac{1}{N} \E \1_{\cB_\delta} \log \int_{\Sigma_{\epsilon}(S,M)} e^{H_N(\bx)} \, d \pP^{\otimes N}_X(\bx)
		\\
	\end{eqnarray*}
	where $(S,M)$ has finite entropy and $\epsilon$ is a positive real number. The conclusion follows by Theorem \ref{technicalldp}.

	\bibliographystyle{amsplain}

\end{document}